\definecolor{shadecolor}{gray}{0.875}
\newtheorem{thrm}{Theorem}[section]
\newtheorem{lem}[thrm]{Lemma}
\newtheorem{cor}[thrm]{Corollary}
\newtheorem{prop}[thrm]{Proposition}
\newtheorem{conj}[thrm]{Conjecture}
\theoremstyle{definition}
\newtheorem{defn}[thrm]{Definition}
\newtheorem{exmple}[thrm]{Example}
\newtheorem{rmk}[thrm]{Remark}
\newtheorem{ques}[thrm]{Question}
\newtheorem{notn}[thrm]{Notation}
\newtheorem{constr}[thrm]{Construction}
\newtheorem{conv}[thrm]{Convention}
\newtheorem*{caut}{Caution}
\newenvironment{claim}
            {\par \bigskip \noindent \textbf{Claim:}}
            {$\Box$ \par \noindent}
\newcommand{\factor}[2]{\left. \raise 0pt\hbox{\ensuremath{#1}} \right/
        \hskip -3pt\raise 0pt\hbox{\ensuremath{#2}}}
\DeclareMathOperator{\Sym}{Sym}
\DeclareMathOperator{\Eff}{\overline{Eff}}
\DeclareMathOperator{\Nef}{Nef}
\DeclareMathOperator{\Mov}{\overline{Mov}}
\DeclareMathOperator{\Supp}{Supp}
\DeclareMathOperator{\Sing}{Sing}
\DeclareMathOperator{\vol}{vol}
\DeclareMathOperator{\mc}{mc}
\DeclareMathOperator{\mob}{mob}
\DeclareMathOperator{\pl}{PL}
\DeclareMathOperator{\bpf}{BPF}
\newcommand{\M}{\overline{M}_{0,7}^{S_{7}}}
\begin{document}

\author{Mihai Fulger}
\address{Department of Mathematics, Princeton University\\
Princeton, NJ \, \, 08544}
\address{Institute of Mathematics of the Romanian Academy, P. O. Box 1-764, RO-014700,
Bucharest, Romania}
\email{afulger@princeton.edu}
\title{Zariski decompositions of numerical cycle classes}
\author{Brian Lehmann}
\thanks{The second author is supported by NSF Award 1004363.}
\address{Department of Mathematics, Rice University \\
Houston, TX \, \, 77005}
\email{blehmann@rice.edu}

\begin{abstract}
We construct a Zariski decomposition for cycle classes of arbitrary codimension.  This decomposition is an analogue of well-known constructions for divisors.   Examples illustrate how Zariski decompositions of cycle classes reflect the geometry of the underlying space.  We also analyze the birational behavior of Zariski decompositions, leading to a Fujita approximation-type result for curve classes. 
\end{abstract}

\maketitle
\setcounter{tocdepth}{1}
\tableofcontents

\section{Introduction}

In \cite{zariski62} Zariski introduced a fundamental tool for understanding linear series on surfaces now known as the Zariski decomposition. This was extended to pseudo-effective divisors by \cite{fujita79}. Given a pseudo-effective divisor $L$ on a smooth surface, the Zariski decomposition is an expression $L = P + N$ where $P$ is a nef $\mathbb{Q}$-divisor and $N$ is an effective $\mathbb{Q}$-divisor that is ``rigid'' in a strong sense. 
The key property of the decomposition -- and Zariski's original motivation for studying it -- is that $P$ captures all the information about sections of $L$.  More precisely, the natural inclusion map yields an identification $H^{0}(X,\mathcal{O}_{X}(\lfloor mP \rfloor)) = H^{0}(X,\mathcal{O}_{X}(mL))$ for every positive integer $m$.

Zariski's construction is generalized to divisors on smooth varieties of arbitrary dimension by the $\sigma$-decomposition of \cite{nakayama04} (and also by many related decompositions 
due to other authors; 
see \cite{pro04} 
for a survey of these constructions). 
Just as for surfaces, the $\sigma$-decomposition of a pseudo-effective divisor $L$ is a sum 
$$L = P_{\sigma}(L) + N_{\sigma}(L)$$ 
where $P_{\sigma}(L)$ is the ``positive'' part containing all the information about sections of $L$ and $N_{\sigma}(L)$ is the rigid ``negative'' part.  One key distinction from the surface case is that it is no longer possible to require the positive part to lie in the nef cone; rather, $P_{\sigma}(L)$ lies in the movable cone generated by divisors whose base locus has codimension at least two.

Our goal is to develop a decomposition theory for numerical classes of cycles of any codimension.  To define the decomposition, we first establish some preliminaries on numerical classes of cycles.  Let $X$ be a projective variety over an algebraically closed field.  We let $N_{k}(X)$ denote the $\mathbb{R}$-vector space of $k$-cycles with $\mathbb{R}$-coefficients modulo numerical equivalence. The \emph{pseudo-effective cone} $$\Eff_{k}(X) \subset N_{k}(X)$$ is the closure of the cone generated by classes of effective cycles; it is full-dimensional and does not contain any nonzero linear subspaces. It was previously studied in this arbitrary dimension context for example in \cite{delv11}, and \cite{fulger11}. Of particular interest are classes in the interior of $\Eff_{k}(X)$, called \emph{big classes}.  

\begin{defn}
Let $X$ be a projective variety.  The movable cone $\Mov_{k}(X) \subset N_{k}(X)$ is the closure of the cone generated by members of irreducible families of $k$-cycles which dominate $X$.  
\end{defn}

\noindent We show that the movable cone of cycles satisfies a number of natural geometric properties;
it is full-dimensional and does not contain nonzero linear
subspaces. For any ample Cartier divisor $H$ on $X$, the class of $H^{n-k}$ lies in the interior of $\Mov_{k}(X)$.

By analogy with the $\sigma$-decomposition, our Zariski decomposition of a pseudo-effective class $\alpha \in N_{k}(X)$ is a decomposition $\alpha = P(\alpha) + N(\alpha)$ such that the positive part $P(\alpha)$ is movable and retains all of the ``positivity'' of $\alpha$.  Since we may no longer apply the theory of linear systems, we describe the ``positivity'' of a class in a different way.  We use the mobility function, an analogue of the volume function for divisors (\cite[\S 2.2.C]{lazarsfeld04}) that is defined for arbitrary cycle classes.  

\begin{defn}[\cite{lehmann13}]
Let $X$ be a projective variety of dimension $n$ and let $\alpha \in N_{k}(X)$ be a class with integer coefficients.  The mobility of $\alpha$ is
\begin{equation*}
\mob(\alpha):= \limsup_{m \to \infty} \frac{\max \left\{ b \in \mathbb{Z}_{\geq 0} \, \left| \, 
\begin{array}{c} 
\textrm{For any }b \textrm{ general points on }X\textrm{ there is an}\\
\textrm{effective }\mathbb{Z}\textrm{-cycle of class } m\alpha \textrm{ containing them}
\end{array} \right. \right\}}{m^{\frac{n}{n-k}}/n!}
\end{equation*}
\end{defn}

\noindent The theory of mobility is developed by the second author in \cite{lehmann13}, building on \cite[Conjecture 6.5]{delv11}. He shows that the mobility extends to a homogeneous continuous function on all of $N_{k}(X)$.  In particular, the mobility is positive precisely for big classes $\alpha$.

\begin{defn} \label{introzardecomdef}
Let $X$ be a projective variety.  A Zariski decomposition for a big class $\alpha$ is a sum $\alpha = P(\alpha) + N(\alpha)$ where $P(\alpha)$ is movable, $N(\alpha)$ is pseudo-effective, and $\mob(P(\alpha)) = \mob(\alpha)$.
\end{defn}

\noindent Note that (via the mobility) the positive part captures all of the ``positivity'' of $\alpha$.  Conjecture \ref{negativepartispushedforward} predicts that the negative part satisfies a ``rigidity'' property analogous to the divisor case.

The definition of a Zariski decomposition extends to all pseudo-effective classes by taking limits.  Note that in contrast to the divisor case, we define the Zariski decomposition for numerical classes (and not  for cycles).  For divisors this distinction is unimportant; the negative part $N_{\sigma}(L)$ is uniquely determined by the numerical class of $L$ as a divisor.

\begin{exmple}
Suppose that $\alpha$ is a big divisor class on a smooth projective variety.  Proposition \ref{divisorzardecomprop} shows that Definition \ref{introzardecomdef} is the numerical generalization of the $\sigma$-decomposition.  Note however that our definition also makes sense on arbitrarily singular projective varieties.
\end{exmple}

\begin{exmple}
Suppose that $X$ is a smooth variety.  \cite[0.2 Theorem]{bdpp04} shows that the movable cone of curves on $X$ coincides with the nef cone of curves.    (Although \cite{bdpp04} is only stated over $\mathbb{C}$, the result can be extended to arbitrary characteristic; see Section \ref{divisorcharpsection}.)  Thus, the Zariski decomposition of a pseudo-effective curve class $\alpha$ has nef positive part $P(\alpha)$, as in the classical setting of divisors on surfaces.
\end{exmple}

\begin{caut}
Our notation conflicts with established notation for divisors.  A $\sigma$-decomposition of a pseudo-effective divisor $L$ is usually said to be a Zariski decomposition if $P_{\sigma}(L)$ is nef, but we do not impose any such condition for our decomposition.  \cite{nakayama04} shows that even after pulling back to a birational model a divisor $L$ may not admit a Zariski decomposition in the traditional sense.  Our choice of notation reflects the conviction that the most natural decomposition is Definition \ref{introzardecomdef}; decompositions where the positive part is nef occur in special geometric circumstances.
\end{caut}

Our main theorem is:

\begin{thrm} \label{maintheorem}
Let $X$ be a projective variety and let $\alpha \in N_{k}(X)$ be a pseudo-effective class.  Then $\alpha$ admits a Zariski decomposition $\alpha = P(\alpha) + N(\alpha)$.
\end{thrm}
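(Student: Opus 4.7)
The plan is to construct $P(\alpha)$ as a maximizer of the mobility function on a compact slice, then verify the mobility equality. First I would reduce to the case where $\alpha$ is big: for a general pseudo-effective $\alpha$, apply the construction to the big perturbations $\alpha_{n} := \alpha + \tfrac{1}{n} H^{n-k}$ for a fixed ample $H$, and pass to a limit using compactness and the continuity of $\mob$. For the big case, consider the set
$$K(\alpha) := \Mov_{k}(X) \cap (\alpha - \Eff_{k}(X)),$$
which is closed, convex, and nonempty (it contains $0$). Boundedness, hence compactness, follows from $K(\alpha) \subset \Eff_{k}(X) \cap (\alpha - \Eff_{k}(X))$ together with the fact that $\Eff_{k}(X)$ contains no nonzero linear subspaces.

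Because $\mob$ is continuous, it attains a maximum $\mu$ on $K(\alpha)$ at some $\beta_{0}$; set $P(\alpha) := \beta_{0}$ and $N(\alpha) := \alpha - \beta_{0}$. The inequality $\mu \leq \mob(\alpha)$ comes from monotonicity of $\mob$ under the pseudo-effective order: for any $\beta \in K(\alpha)$ and $\epsilon > 0$, the class $\alpha - \beta + \epsilon H^{n-k}$ is big, so large multiples of it are represented by effective cycles that one may add to cycles realizing $\mob(\beta)$; sending $\epsilon \to 0$ and using continuity of $\mob$ gives $\mob(\alpha) \geq \mob(\beta)$, hence $\mu \leq \mob(\alpha)$.

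The crux is the reverse inequality $\mu \geq \mob(\alpha)$. For each large $m$, choose an effective integral cycle $Z_{m}$ of class $m\alpha$ passing through a number of very general points close to the value defining $\mob(\alpha)$. The class $m\alpha$ has bounded degree with respect to a fixed polarization, so only finitely many components of the Chow variety parameterize representatives of $m\alpha$; partition these into dominating and non-dominating families and decompose $Z_{m} = M_{m} + R_{m}$ accordingly. The support of $R_{m}$ lies in a proper subvariety $V_{m} \subset X$, namely the union of images of the non-dominating families of class $m\alpha$. Choosing the general points outside $V_{m}$ forces $M_{m}$ to carry every incidence. Extracting a subsequence using compactness of $K(\alpha)$, the classes $[M_{m}]/m$ converge to some $\beta \in \Mov_{k}(X)$ (each summand is a class in a dominating family, so $[M_{m}]/m$ already lies in $\Mov_{k}(X)$, which is closed), while $[R_{m}]/m$ converge to some $\gamma \in \Eff_{k}(X)$ with $\alpha = \beta + \gamma$. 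Thus $\beta \in K(\alpha)$, and the incidence count together with continuity of $\mob$ yields $\mob(\beta) \geq \mob(\alpha)$, so $\mu \geq \mob(\beta) \geq \mob(\alpha)$. The main obstacle will be coordinating the ``very general'' points (which depend on the target count $b_{m}$, and hence on $m$) with the proper subvarieties $V_{m}$ whose images grow with $m$; the right setup is probably a diagonal selection that chooses the $b_{m}$-tuples avoiding $V_{m}$ for every $m$ simultaneously along a subsequence, so that the point-count bound transfers cleanly to the limit class $\beta$.
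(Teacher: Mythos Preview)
Your overall architecture---split each representing family into components dominating $X$ and components that do not, pass to a limit of the normalized movable parts, and verify the mobility is preserved---is exactly the paper's approach. The framing via a maximizer on the compact slice $K(\alpha)$ is harmless but redundant: once the decomposition argument produces a $\beta \in K(\alpha)$ with $\mob(\beta) = \mob(\alpha)$, that $\beta$ is automatically a maximizer.

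There is, however, a genuine gap in your final step, and it is not the obstacle you name. The issue of ``coordinating the very general points with $V_m$'' is a non-problem: once you work at the level of families (not individual cycles $Z_m$), the family of maximal mobility count for $m\alpha$ already dominates $X^{b}$, and removing its non-dominating components cannot lower the mobility count---no diagonal selection is needed. The real difficulty is this: you obtain integer classes $\alpha_m := [M_m]$ with $\mc(\alpha_m) = \mc(m\alpha)$ and $\alpha_m/m \to \beta$, and you must conclude $\mob(\beta) \geq \mob(\alpha)$. Continuity of $\mob$ gives $\mob(\alpha_m/m) \to \mob(\beta)$, but knowing $\mc(\alpha_m)$ is large says nothing about $\mob(\alpha_m/m)$, which is a $\limsup$ over \emph{all} multiples $j\alpha_m$. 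And $\alpha_m$ is only approximately $m\beta$; it need not equal any integer multiple of $\beta$, so you cannot feed $\mc(\alpha_m)$ into the definition of $\mob(\beta)$ directly.

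The paper resolves this with the ``acceptable cone'' device: one defines $\mc_{\mathcal{K}}(\gamma) := \max\{\mc(\delta) : \delta \in (\gamma - \mathcal{K}) \cap N_k(X)_{\mathbb Z}\}$ for a suitable full-dimensional cone $\mathcal{K} \subset \Eff_k(X)$, proves that the associated $\mob_{\mathcal{K}}$ coincides with $\mob$, and then observes that for any $\epsilon > 0$ and infinitely many $m$ one has $m(\beta + \epsilon\gamma) - \alpha_m \in \mathcal{K}$, whence $\mc_{\mathcal{K}}(m(\beta+\epsilon\gamma)) \geq \mc(\alpha_m) = \mc(m\alpha)$. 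Dividing and taking $\limsup$ gives $\mob(\beta + \epsilon\gamma) \geq \mob(\alpha)$, and then $\epsilon \to 0$. Your proposal needs this (or an equivalent mechanism) to close; as written the inequality $\mob(\beta) \geq \mob(\alpha)$ is asserted, not proved. A secondary point: carry out the splitting $M_m + R_m$ at the level of the family achieving $\mc(m\alpha)$, not of a single cycle---otherwise there is no reason $[M_m]$ is a movable class.
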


\noindent In Section \ref{zardecomsection} we show that Zariski decompositions for arbitrary cycles share many of the good geometric properties of the $\sigma$-decomposition of Nakayama.  The main unresolved question is whether the Zariski decomposition of a big class is unique; this subtle question is closely related to the properties of the derivative of the mobility function.  (Using our definition, the Zariski decomposition of a class on the pseudo-effective boundary need not be unique even for divisors.)  Another fundamental open question is Conjecture \ref{negativepartispushedforward}, which would offer geometric structure to the negative parts of decompositions.

\begin{rmk}
Several other possible definitions of Zariski decompositions for cycles have been proposed.  In particular, \cite{nakayama04} explains how the $\sigma$-decomposition for divisors can be generalized to cycles using similar arguments.  For cycles of codimension at least $2$, our definition is usually different from the $\sigma$-decomposition: $N(\alpha)$ can be much larger than the negative part of the $\sigma$-decomposition.  We compare these notions, as well as several others, in Section \ref{comparisonsection}.
\end{rmk}

\begin{rmk}
Let $X$ be a complex projective variety, and let $N'_{k}(X) \subset H_{2k}(X,\mathbb{R})$ denote the subspace generated by classes of algebraic cycles.  \cite[Remark 1.7]{lehmann13} discusses an analogue of the mobility function on $N'_{k}(X)$.  This function shares the properties of $\mob$: it is continuous and is positive precisely on the interior of the closed cone generated by effective cycles.  The analogue of Theorem \ref{maintheorem} in this setting is also true, as can be verified by the same arguments.
\end{rmk}

The following examples illustrate how the Zariski decomposition of cycle classes yields new insights into the geometry of varieties.

\begin{exmple}
Let $C$ be a smooth curve and let $E$ be a locally free sheaf of rank $n$ and degree $d$ on $C$.  Set $X=\mathbb P(E)$. In \cite{fulger11}, the first author computed the effective cones $\Eff_k(X)$ in terms of the numerical data in the Harder--Narasimhan filtration of $E$.  In Section \ref{ssec:projbundles} we show that the movable cones and Zariski decompositions are also determined by the Harder-Narasimhan filtration of $E$.  Let
\begin{equation*}
0 = E_{s} \subset E_{s-1} \subset \ldots \subset E_{1} \subset E_{0} = E
\end{equation*}
denote the Harder-Narasimhan filtration of $E$ and let $\mu_{j}$ denote the slope of the quotient $\factor{E_{j-1}}{E_{j}}$.  Each subbundle $E_{j}$ determines an inclusion $\mathbb{P}(\factor{E}{E_{j}}) \to \mathbb{P}(E)$.  The subvarieties $\mathbb{P}(\factor{E}{E_{j}})$ represent the ``most rigid'' subvarieties of $X$ in their respective dimensions.

For an integer $1 \leq k \leq n-1$, let $j$ denote the smallest index such that $\mathrm{rk}(E_{j}) \leq n-k$.  Then the discrepancy between $\Eff_{k}(X)$ and $\Mov_{k}(X)$ is measured by the difference in slope $\mu_{s} - \mu_{j}$.  A class $\alpha \in \Eff_{k}(X)$ has a Zariski decomposition whose negative part is supported on $\mathbb{P}(\factor{E}{E_{j}})$ and whose positive part removes this ``rigid'' piece.
\end{exmple}

\begin{exmple}
Consider the Hilbert scheme $\mathbb{P}^{2[2]}$ parametrizing $0$-dimensional subschemes of $\mathbb{P}^{2}$ of length $2$.   Zariski decompositions for classes of arbitrary codimension have a similar description as in the divisor case: the ``rigid'' classes are exactly those parametrizing non-reduced $0$-cycles.

More precisely, suppose that $Z$ is an effective cycle on $\mathbb{P}^{2[2]}$ that is not movable.  We construct a Zariski decomposition for $[Z]$ whose negative part is the class of an effective cycle parametrizing non-reduced degree $2$ subschemes.  The positive part of $[Z]$ lies in the closure of the cone generated by cycles parametrizing reduced subschemes.
\end{exmple}

\begin{exmple}
Moduli spaces of pointed curves carry a rich combinatorial structure.  While the geometry of divisors has been analyzed extensively, there are fewer results on higher codimension cycles.  The easiest case to consider is surfaces on $\overline{M}_{0,7}$.  Since the divisor theory is already very complicated, we will instead work with the simpler space $\M$.

We give a partial description of $\Eff_{2}(\M)$ and $\Mov_{2}(\M)$, allowing us to describe Zariski decompositions in certain regions of the pseudo-effective cone.  In particular, we show that exactly three of the four combinatorial types of vital surfaces can occur as the negative part of a Zariski decomposition.  To illustrate our methods, we also analyze the Zariski decompositions that are controlled by the birational morphism $\M \to \overline{M}_{0,\mathcal{A}}^{S_{7}}$ to the symmetrization of Hassett's weighted moduli space which assigns weight $\frac{1}{3}$ to each point.
\end{exmple}

\vskip.25cm

An important application of the $\sigma$-decomposition is the study of exceptional loci of morphisms.  In Section \ref{birmapsection} we use Zariski decompositions to analyze the behavior of arbitrary classes under a birational map.  (In \cite{fl14} we extended these techniques to more general morphisms.)

Suppose that $\pi: Y \to X$ is a birational morphism of smooth varieties.  Any big class $\alpha \in N_{k}(X)$ admits a numerical pull-back $\pi^{*}\alpha \in N_{k}(Y)$.  However, in contrast to the situation for divisors, the pull-back $\pi^{*}\alpha$ need not be pseudo-effective.  Nevertheless there is some big class $\beta \in N_{k}(Y)$ that pushes forward to $\alpha$. Instead of looking for a ``distinguished'' big preimage $\beta$, we look for a construction in the spirit of Zariski decompositions.
The following definition encodes a slightly different perspective on the problem.  We require the class $\beta \in N_{k}(Y)$ to be movable at the cost of allowing an inequality $\pi_{*}\beta \preceq \alpha$.

\begin{defn}
Let $\pi: Y \to X$ be a birational morphism of projective varieties and let $\alpha \in N_{k}(X)$ be a big class.  A movable transform for $\alpha$ is a class $\beta \in \Mov_{k}(Y)$ such that $\pi_{*}\beta \preceq \alpha$ and $\mob(\beta) = \mob(\alpha)$.
\end{defn}

\noindent We prove the existence of movable transforms for arbitrary birational morphisms.  We do not know if they are unique -- this question is closely related to the uniqueness of Zariski decompositions.  
Nevertheless, movable transforms suggest themselves as a useful construction to analyze the birational theory of cycles.

\begin{exmple}
Suppose that $\pi: Y \to X$ is a birational morphism of smooth projective varieties and $L$ is a big divisor on $X$.  Then $[P_{\sigma}(\pi^{*}L)]$ is the (unique) movable transform for $[L]$. See Example \ref{ex:movtransformdiv}.
\end{exmple}

\begin{exmple}
Suppose that $\pi: Y \to X$ is a birational morphism of smooth projective varieties and $\alpha$ is a big curve class.  \cite{bdpp04} shows that movability coincides with nefness for curve classes; in particular movability is preserved by pullback.  Proposition \ref{movpullbackforcurves} shows that there is some positive part $P(\alpha)$ for $\alpha$ such that the pullback $\pi^{*}P(\alpha)$ is a movable transform for $\alpha$.
\end{exmple}

\begin{exmple}
Suppose that $X$ is a projective variety over $\mathbb{C}$ admitting a transitive action by a group variety.  Proposition \ref{prop:homvarpullback} shows that for any birational map $\pi: Y \to X$ and for any big class $\alpha \in \Eff_{k}(X)$ the class $\pi^{*}\alpha$ is a movable transform for $\alpha$.
\end{exmple}

\begin{thrm}
Let $\pi: Y \to X$ be a birational morphism of projective varieties.  Then any big class $\alpha \in N_{k}(X)$ admits a movable transform on $Y$.
\end{thrm}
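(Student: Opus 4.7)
The plan is to reduce to the main theorem: construct $\gamma\in\Eff_k(Y)$ with $\pi_*\gamma\preceq\alpha$ and $\mob(\gamma)=\mob(\alpha)$, then apply Theorem~\ref{maintheorem} to obtain a Zariski decomposition $\gamma=P(\gamma)+N(\gamma)$ with $P(\gamma)\in\Mov_k(Y)$ and $\mob(P(\gamma))=\mob(\gamma)=\mob(\alpha)$. Since $\pi_*P(\gamma)\preceq\pi_*\gamma\preceq\alpha$, the class $\beta:=P(\gamma)$ is then a movable transform.

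The upper bound $\mob(\gamma)\leq\mob(\alpha)$ for any $\gamma$ with $\pi_*\gamma\preceq\alpha$ follows from two elementary facts. First, mobility is monotone in the pseudo-effective order: append a fixed effective representative of $m(\alpha_2-\alpha_1)$ to a cycle of class $m\alpha_1$ through $b$ general points to obtain one of class $m\alpha_2$ through the same points. Second, for birational $\pi$ one has $\mob(\pi_*\gamma)\geq\mob(\gamma)$, since general points of $X$ chosen in the isomorphism locus of $\pi$ lift uniquely to general points of $Y$, and the $\pi$-pushforward of an effective cycle of class $m\gamma$ through the lifts is an effective cycle of class $m\pi_*\gamma$ through the originals.

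For the matching lower bound, I would build $\gamma$ by lifting near-optimal cycles from $X$ to $Y$. Fix $m\gg 0$; for each general $b_m$-tuple $\mathbf{p}$ on $X$ with $b_m n!/m^{n/(n-k)}\to\mob(\alpha)$, let $\widetilde Z_{m,\mathbf{p}}$ be the strict transform on $Y$ of an effective cycle of class $m\alpha$ through $\mathbf{p}$. The classes $\beta_{m,\mathbf{p}}=[\widetilde Z_{m,\mathbf{p}}]$ satisfy $\pi_*\beta_{m,\mathbf{p}}\preceq m\alpha$, and because strict transforms have bounded degree with respect to any fixed ample class on $Y$, they form a bounded family in $N_k(Y)$. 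I would then construct a single class $\beta_m\in\Eff_k(Y)$ with $\pi_*\beta_m\preceq m\alpha$ that dominates each $\beta_{m,\mathbf{p}}$ in pseudo-effective order, by adjoining a suitable correction $\delta_m\in\Eff_k(Y)\cap\ker(\pi_*)$. Then any general $b_m$-tuple on $Y$ lies on a cycle of class $\beta_m$, namely $\widetilde Z_{m,\mathbf{p}}$ plus an effective representative of $\beta_m-\beta_{m,\mathbf{p}}$. Normalizing $\gamma_m:=\beta_m/m$ and extracting a convergent subsequence gives a limit $\gamma\in\Eff_k(Y)$ with $\pi_*\gamma\preceq\alpha$; combining the point-incidence data at all levels with continuity of $\mob$ then yields $\mob(\gamma)\geq\mob(\alpha)$.

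The principal obstacle has two components. First, the dominating class $\beta_m$ must be constructed with its correction $\delta_m$ of norm growing at most linearly in $m$, so the normalized classes $\gamma_m$ stay bounded in $N_k(Y)$; this amounts to a quantitative control on the numerical $\pi$-exceptional content of a strict transform in terms of the cycle's degree, using that strict transforms by definition have no components contained in the exceptional locus of $\pi$. Second, the per-level data (one cycle of class $\beta_m$ through a general $b_m$-tuple) must be converted into a genuine lower bound on $\mob(\gamma)$, which is defined as a $\limsup$ over all multiples $M$ rather than at a single level; this is handled by arranging the $\beta_m$ to be mutually compatible (e.g., $\beta_{Mm}\preceq M\beta_m$ up to a controlled error), so that the cycles at level $Mm$ furnish representatives of $M\gamma_m$ through $\sim(Mm)^{n/(n-k)}\mob(\alpha)/n!$ points. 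Both steps require careful bookkeeping of the interaction between $\pi_*$ and the exceptional cone $\Eff_k(Y)\cap\ker(\pi_*)$, but are tractable given the structure theory of pseudo-effective cones developed earlier in the paper.
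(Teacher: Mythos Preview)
Your high-level plan---produce a pseudo-effective preimage with the same mobility, then peel off its movable positive part---is sound, and the paper's proof can be read in that spirit. But your execution creates exactly the two obstacles you flag, and the paper dissolves both by a single change of viewpoint: work with \emph{families} of cycles rather than with individual cycles $Z_{m,\mathbf p}$.

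Concretely: for each $m$ the paper takes a family $p_m:U_m\to W_m$ of effective $\mathbb{Z}$-cycles of class $m\alpha$ realising $\mc(m\alpha)$, discards the components of $U_m$ not dominating $X$ (which does not lower the mobility count), and then takes the strict transform family on $Y$. Because all fibres of a family are algebraically equivalent, this strict transform family has a \emph{single} class $\beta_m\in N_k(Y)$, and by construction it is strictly movable. This eliminates your first obstacle entirely: there is no spread of classes $\beta_{m,\mathbf p}$ to dominate, and no correction $\delta_m\in\Eff_k(Y)\cap\ker(\pi_*)$ to control. Boundedness of $\tfrac1m\beta_m$ then comes for free from the compactness of $\{\beta\in\Mov_k(Y):\pi_*\beta\preceq\alpha\}$ (Corollary~\ref{compactpushforwards}), which uses only that $\beta_m$ is movable and $\pi_*\beta_m\preceq m\alpha$. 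An accumulation point $\beta$ is already movable, so the appeal to Theorem~\ref{maintheorem} is unnecessary.

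Your second obstacle---passing from per-level data to a genuine mobility lower bound on the limit---is handled by the $\mob_{\mathcal K}$ formalism of Section~\ref{mobilitysection}. One has $\mc(\beta_m)\geq\mc(m\alpha)$ (Lemma~\ref{stricttransformlem}); then for any acceptable cone $\mathcal K$ and any $\epsilon>0$, the inequality $\mc_{\mathcal K}(m\beta+m\epsilon\gamma)\geq\mc(\beta_m)$ holds for infinitely many $m$ once $\tfrac1m\beta_m$ is close enough to $\beta$, and Proposition~\ref{kmobismob} converts this into $\mob(\beta+\epsilon\gamma)\geq\mob(\alpha)$. Your proposed device of arranging $\beta_{Mm}\preceq M\beta_m$ up to error is not needed.

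In short, your obstacles are genuine for the cycle-by-cycle approach, and the family-based construction is precisely the idea that removes them.
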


\noindent We produce movable transforms by taking an asymptotic limit of classes of strict transforms.   When $X$ is a smooth variety, a movable transform can also be constructed by showing that the classes $\pi^{*}\alpha + \gamma$ have a common positive part as we vary $\gamma$ over all sufficiently large  $\pi$-exceptional effective classes $\gamma$.  This ``stable'' positive part is then a movable transform.  

Another important application of the $\sigma$-decomposition is the study of Fujita approximations for divisors.  Similarly, we use the Zariski decomposition to establish a ``Fujita approximation'' result for curves.  A crucial property of the movable cone of divisors is that it coincides with the birational ample cone (that is, the closure of the cone generated by pushforwards of the ample cone on birational models).  To formulate an analogue for the movable cone of cycles, we first need to identify a ``positive cone'' to play the role of the ample cone of divisors.  We will use a notion from \cite{fl13}.  (While the results of \cite{fl13} motivate the construction, we will in fact not need to use any of properties of this cone except for those proved in this paper.)

\begin{defn}
Let $X$ be a projective variety of dimension $n$.  We say that $\alpha \in N_{k}(X)$ is a strongly basepoint free class if there is:
\begin{itemize}
\item an equidimensional quasi-projective scheme $U$ of finite type over $K$,
\item a flat morphism $s: U \to X$,
\item and a proper morphism $p:U \to W$ of relative dimension $k$ to a quasi-projective variety $W$ such that each component of $U$ surjects onto $W$
\end{itemize}
such that
\begin{equation*}
\alpha= (s|_{F_p})_{*}[F_{p}]
\end{equation*}
where $F_{p}$ is a general fiber of $p$.

The basepoint free cone $\bpf_{k}(X)$ is the closure of the cone generated by all strongly basepoint free classes on $X$.
\end{defn}

The terminology is motivated by the following property of strongly basepoint free classes: for every subvariety $V \subset X$ there is an effective cycle of class $\alpha$ that intersects $V$ in the expected dimension.  Thus basepoint free classes provide a good geometric analogue in arbitrary codimension of nef divisor classes.

\begin{thrm} \label{movisbirbpf}
Let $X$ be a projective variety.  Then
\begin{equation*}
\Mov_{k}(X) = \overline{ \sum_{\pi} \pi_{*}\bpf_{k}(Y) }
\end{equation*}
where $\pi: Y \to X$ varies over all birational models of $X$.
\end{thrm}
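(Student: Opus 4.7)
The plan is to prove the two containments separately. For the containment $\overline{\sum_{\pi}\pi_{*}\bpf_{k}(Y)} \subseteq \Mov_{k}(X)$, since $\Mov_{k}(X)$ is closed and convex it suffices to check that every class of the form $\pi_{*}\beta$ with $\beta$ strongly basepoint free on $Y$ is movable on $X$. Given data $(s\colon U\to Y,\ p\colon U\to W)$ presenting $\beta$, the flatness of $s$ together with the hypothesis that every component of $U$ dominates $W$ implies that a general fiber $F_{p}$ is equidimensional of dimension $k$ and that its image under $s$ sweeps out a dense open of $Y$. Hence $(s|_{F_{p}})_{*}[F_{p}] \in \Mov_{k}(Y)$, and movability is preserved under pushforward by the birational morphism $\pi$: a dominating family on $Y$ pushes forward to a dominating family on $X$, because the only members whose pushforward could drop dimension are those contained in the exceptional locus of $\pi$, and a general member is not. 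Summation and taking closures are closed operations inside $\Mov_{k}(X)$, yielding this inclusion.

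For the reverse containment, note that by definition $\Mov_{k}(X)$ is the closure of the cone generated by classes $[V_{w}]$ of members of irreducible families $p\colon U\to W$ equipped with a dominant evaluation $s\colon U\to X$. By linearity and closure it suffices to realize each such $[V_{w}]$ as the $\pi$-pushforward of a strongly basepoint free class on some birational model. I would apply Raynaud--Gruson flattening to the dominant morphism $s$: this yields a projective birational map $\pi\colon Y\to X$ such that the strict transform $\widetilde U$ of $U$ inside $U\times_{X}Y$ is flat over $Y$ via the induced evaluation $s'\colon \widetilde U\to Y$. The composite $p'\colon \widetilde U\to U\to W$ remains proper, and over the open locus where $\pi$ is an isomorphism — which meets the general fiber of $p$, since that fiber dominates $X$ — the map $p'$ has fibers of dimension $k$ birational to those of $p$. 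After possibly replacing $W$ by a suitable modification, $p'$ becomes equidimensional of relative dimension $k$ over a dense open, so a general fiber $\widetilde F_{w}$ presents a strongly basepoint free class $\beta = (s'|_{\widetilde F_{w}})_{*}[\widetilde F_{w}] \in \bpf_{k}(Y)$. The identity $\pi\circ s' = s$ on $\widetilde U$ then gives $\pi_{*}\beta = [V_{w}]$, completing the argument.

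The main obstacle is packaging the flattening output into a legitimate strongly basepoint free datum: after replacing $U$ by its strict transform, one must ensure that every component of $\widetilde U$ dominates a (possibly modified) base $W$ and that the fibers of $p'$ have dimension exactly $k$, so that the conditions in the definition of $\bpf_{k}$ really apply. This requires discarding extraneous components of $U\times_{X}Y$ introduced by the fiber product and selecting the irreducible component that carries the desired numerical data. Raynaud--Gruson flattening is available for arbitrary projective $X$ in any characteristic, so singularities and characteristic pose no further obstacle; the delicate point is strictly the bookkeeping of components and the verification that the equidimensionality of $p'$ survives the modification.
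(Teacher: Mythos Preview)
Your proposal is correct and follows essentially the same route as the paper. Both arguments treat the inclusion $\supseteq$ as immediate from $\bpf_k(Y)\subset\Mov_k(Y)$ together with the preservation of movability under dominant pushforward, and both establish $\subseteq$ by applying Raynaud--Gruson flattening to the evaluation map $s\colon U\to X$ of a strongly movable family and checking that the resulting data on the birational model $X'$ presents a strongly basepoint free class pushing forward to the original one. The paper's handling of what you call the ``main obstacle'' is slightly cleaner: since the flattened map $s'\colon U'\to X'$ is flat, every component of $U'$ dominates $X'$, so $U'$ is automatically irreducible (as $U$ was); one then simply shrinks $W$ to the flat locus of $p'$ rather than modifying it.
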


\begin{exmple}
Suppose $X$ is a smooth projective variety.  For divisors, the movable cone is the birational basepoint free (=nef) cone by \cite[Proposition 2.3]{boucksom04}.  For curves, the movable cone is the birational basepoint free cone by \cite[0.2 Theorem]{bdpp04}.  Theorem \ref{movisbirbpf} generalizes these results to cycle classes of arbitrary codimension.
\end{exmple}

It is then natural to wonder how well we can approximate the geometric properties of a movable class by a basepoint free class on a birational model.  For divisors, this question is answered by the Fujita approximation theorem of \cite{fujita94}.  By combining the theory of movable transforms with the orthogonality theorem of \cite{bdpp04}, we are able to prove a ``Fujita approximation''-type theorem for curve classes on smooth projective varieties over $\mathbb{C}$.  The two statements in the following theorem are essentially equivalent to each other.

\begin{thrm} \label{introfujapprox}
Let $X$ be a smooth projective variety over $\mathbb{C}$ and let $\alpha \in \Eff_{1}(X)$ be a big curve class.
\begin{itemize}
\item For any $\epsilon > 0$, there is a birational morphism $\pi: Y \to X$ and an element $\beta \in \bpf_{1}(Y)$ such that $\pi_{*}\beta \preceq \alpha$ and $\mob(\beta)>\mob(\alpha) - \epsilon$.
\item Fix $\xi$ in the interior of $\bpf_{1}(X)$.  For any $\epsilon>0$, there is a birational model $\pi: Y \to X$, a movable transform $P$ of $\alpha$, and an element $\beta \in \bpf_{1}(Y)$ such that $P \preceq \beta + \epsilon \pi^{*} \xi$.
\end{itemize}
\end{thrm}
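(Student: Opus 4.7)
The plan is to combine three ingredients: the existence of movable transforms (established earlier), the identification $\Mov_1(Y) = \Nef_1(Y)$ on smooth projective $Y$ from \cite{bdpp04}, and Fujita approximation for divisors. The two bullets are essentially equivalent via continuity of the mobility function, so I will focus on the first.

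First I would apply the movable transform theorem to obtain a birational morphism $\pi': Y' \to X$, with $Y'$ smooth after resolution, and a movable transform $P \in \Mov_1(Y')$ satisfying $\pi'_* P \preceq \alpha$ and $\mob(P) = \mob(\alpha)$. By \cite{bdpp04}, $P$ is nef. The structure theorem $\Mov_1(Y') = \overline{\sum_\mu \mu_* \bpf_1(Z)}$ proved earlier in the paper writes $P$ as a limit of finite sums $\sum_i c_i \mu_{i,*} \beta_i$ with $\beta_i \in \bpf_1(Z_i)$. Passing to a common birational model $\mu: Z \to Y'$ dominating all the $Z_i$ and using that $A^{n-1}$ for a big and nef Cartier divisor $A$ defines a strongly basepoint free curve class (via the universal complete intersection family), I would consolidate this into a single class $\beta \in \bpf_1(Z)$ whose pushforward approximates $P$ in $N_1(Y')$ to arbitrary precision.

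Continuity of the mobility and the monotonicity $\mob(\pi_*\beta) \geq \mob(\beta)$ under birational pushforward then give $\mob(\beta)$ close to $\mob(P) = \mob(\alpha)$. To guarantee strict domination $(\pi'\circ\mu)_* \beta \preceq \alpha$ rather than only approximate domination, I would rescale $\beta$ by $1 - \delta$ for a small $\delta > 0$; the homogeneity $\mob(t\beta) = t^{n/(n-1)} \mob(\beta)$ keeps the mobility bound. For the second bullet, the movable transform $P$ and the $\beta$ from the first bullet are close in $N_1(Y)$ by continuity and homogeneity of $\mob$ near the interior of $\Mov_1$; since $\xi$ is in the interior of $\bpf_1(X)$, the class $\pi^*\xi$ lies in the interior of a cone containing the small difference $P - \beta$, so that a small multiple $\epsilon \pi^* \xi$ dominates it in the pseudo-effective order.

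The main obstacle is the consolidation step: producing a single basepoint free class on one birational model whose pushforward approximates $P$ with a quantitative mobility bound. This is where BDPP's orthogonality theorem enters essentially. Orthogonality reduces the curve-class approximation problem to Fujita approximation for divisors by pairing against pseudo-effective divisors, and the resulting bounds transfer to mobility via the comparison between the mobility of complete intersections $A^{n-1}$ and the top self-intersection $A^n$ for big and nef $A$ on a sufficiently high birational model.
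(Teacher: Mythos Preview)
Your proposal has a genuine gap at exactly the point you flag as ``the main obstacle,'' and the fix you sketch in the last paragraph does not work. Given $\beta \in \bpf_1(Z)$ on a higher model with $\mu_*\beta$ close to $P$ in $N_1(Y')$, continuity of mobility on $Y'$ only tells you that $\mob(\mu_*\beta)$ is close to $\mob(P)$. The monotonicity $\mob(\mu_*\beta) \geq \mob(\beta)$ goes the wrong way: it gives an \emph{upper} bound on $\mob(\beta)$, not the lower bound you need. So nothing prevents $\mob(\beta)$ from being much smaller than $\mob(\alpha)$. Your closing appeal to orthogonality and to a comparison between $\mob(A^{n-1})$ and $A^n$ does not supply the missing lower bound; the paper never uses such a comparison.

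What the paper actually does is obtain a two-sided sandwich on the higher model. The key curve-specific input is that for a suitable positive part $P(\alpha)$ one has $\mob(\pi^*P(\alpha)) = \mob(\alpha)$ for \emph{every} smooth birational model $\pi: Y \to X$ (Proposition \ref{movpullbackforcurves} and Corollary \ref{stablemovpullbackforcurves}). This is what transports the mobility lower bound upward. The approximation itself (Theorem \ref{pospartbecomesbpfforcurves}) is then carried out via the positive product: by BDPP one writes an interior movable curve class as a finite sum of classes $\langle L_i^{n-1}\rangle$ for big divisors $L_i$; Fujita approximation for each $L_i$ produces big and nef divisors $B_m$ on higher models with $P_\sigma(\phi_m^*L_i) - \tfrac{1}{m}\phi_m^*G \leq B_m \leq P_\sigma(\phi_m^*L_i)$; and the pullback compatibility $\phi_m^*\langle L_i^{n-1}\rangle = \langle \phi_m^*L_i^{n-1}\rangle$ (Lemma \ref{positiveproductandcurvepullback}) yields $\phi_m^*(\textrm{something close to }\alpha) \preceq B_m^{n-1} \preceq \phi_m^*\alpha$. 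The left inequality, combined with the mobility-preservation of pullback for the positive part, is what gives $\mob(\beta) \geq \mob(\alpha) - \epsilon$.

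Your argument for the second bullet has a related problem: you assert that $P$ and $\beta$ are close in $N_1(Y)$ because their mobilities are close, but mobility is far from injective, and closeness of $\pi_*\beta$ to $\pi_*P$ on $X$ says nothing about $P - \beta$ on $Y$ (exceptional classes can be large). The paper instead reads the inequality $P \preceq \beta + \epsilon\pi^*\xi$ directly off the sandwich $\pi^*(P(\alpha) - \mu\xi) \preceq \beta$ from Theorem \ref{pospartbecomesbpfforcurves}, with $P = \pi^*P(\alpha)$ serving as the movable transform.
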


\begin{rmk}
In fact, the Fujita approximations $\beta$ in Theorem \ref{introfujapprox} are sums of complete intersections of ample divisors on $Y$, so they are ``positive'' in a very strong sense.
\end{rmk}

\subsection{Organization} Section \ref{backgroundsection} gives an overview of cycles and numerical equivalence.  It also contains a discussion on how to generalize several results known for divisor of curve classes over $\mathbb{C}$ to arbitrary algebraically closed fields.  Section \ref{sec:positive cones} defines the movable cone and explores its basic properties.  Section \ref{mobilitysection} describes the theory of the mobility function for $\mathbb{R}$-classes.  Zariski decompositions are defined and studied in Section \ref{zardecomsection}.  We then apply Zariski decompositions to analyze birational maps in Section \ref{birmapsection}.  Section \ref{examplesection} is devoted to examples.  Finally, Section \ref{comparisonsection} compares our definition of a Zariski decomposition with several other possibilities existent in the literature.

\subsection{Acknowledgments:} We are indebted to the anonymous referee for his many comments and suggestions. The authors thank Brendan Hassett for helpful discussions concerning Section \ref{mbarsubsection}.

\section{Background and preliminaries} \label{backgroundsection}
Throughout we will work over an algebraically closed ground field $K$.  A variety is an irreducible reduced separated scheme of finite type over the base field.  Projective bundles and Grassmannians parametrize quotients (not subspaces) unless explicitly stated otherwise.  We will frequently use the following important result of \cite{dejong96}: for any projective variety over $K$, there is an alteration $\pi: Y \to X$ from a smooth projective variety $Y$ over $K$. A closed convex cone inside a real vector space is called \emph{strictly convex}, if
it does not contain nonzero linear subspaces. The literature also calls them pointed or salient. 

\subsection{Cycles and dual cycles}
\label{ssec:cycles}

A \textit{cycle} on a projective variety $X$ is a finite formal linear combination 
$Z=\sum_i a_iV_i$ of closed subvarieties of $X$. We use the denominations \textit{integral}, \textit{rational}, or \textit{real} when the coefficients lie in $\mathbb Z$, $\mathbb Q$, or $\mathbb R$ respectively. When all $V_i$ have dimension $k$, we say that $Z$ is a \textit{$k$-cycle}. When for all $i$ we have $a_i\geq 0$, we say that the cycle is \textit{effective}.

To any closed subscheme $V\subset X$ we associate its \textit{fundamental} $\mathbb{Z}$-cycle
as follows: Let $V_i$ be the irreducible components of $V$ and let $\eta_i$ be their generic points. The fundamental cycle of $V$ is the linear combination of the supports of the $V_i$ with coefficients given by the lengths of the Artinian rings $\mathcal O_{V,\eta_i}$.

The group of $k$-cycles with $\mathbb{Z}$-coefficients is denoted $Z_k(X)$ and the Chow group of $k$-cycles with $\mathbb{Z}$-coefficients up to rational equivalence is denoted $A_{k}(X)$. 
\cite[Chapter 19]{fulton84} defines a $k$-$\mathbb{Z}$-cycle $Z$ to be \textit{numerically trivial} if
\begin{equation} \label{eq:numtriv}
\deg(P(E_I)\cap Z)=0\end{equation} 
for any weight $k$ homogeneous polynomial $P(E_I)$ in Chern classes of a finite set of vector bundles $E_I$ on $X$. Here $\deg:A_0(X)\to\mathbb Z$ is the group morphism that sends any point to 1, and $P(E_I)\cap Z\in A_0(X)$ is defined as in \cite[Chapter 3]{fulton84}. The resulting quotient $N_k(X)_{\mathbb Z}$ is a free abelian group of finite rank by \cite[Example 19.1.4]{fulton84}. It is a lattice inside $N_k(X)_{\mathbb Q}:=N_k(X)_{\mathbb Z}\otimes_{\mathbb Z}\mathbb Q$ and inside
$$N_k(X):=N_k(X)_{\mathbb Z}\otimes_{\mathbb Z}\mathbb R.$$
We call the latter the \textit{numerical group}. It is a finite dimensional real vector space. If $Z$ is a $k$-cycle with $\mathbb{R}$-coefficients, its class in $N_k(X)$ is denoted $[Z]$.

It is useful to consider the abstract duals $N^k(X)_{\mathbb Z}$, $N^k(X)_{\mathbb Q}$, and $N^k(X)$ of $N_k(X)_{\mathbb Z}$, $N_k(X)_{\mathbb Q}$, and $N_k(X)$ (with coefficients $\mathbb Z$, $\mathbb Q$, and $\mathbb R$ respectively).  We call $N^k(X)$ the \textit{numerical dual group}.  Any weighted homogeneous polynomial in Chern classes induces an element of $N^k(X)$.  Conversely, it follows formally from the definition that every $\beta\in N^k(X)$ is the equivalence class of a weighted homogeneous $\mathbb{R}$-polynomial in Chern classes of vector bundles.

\begin{rmk}\label{rmk:chowattributes}The quotient map $Z_k(X)\to N_k(X)_{\mathbb Z}$ factors through $A_k(X)$. 
Using \eqref{eq:numtriv} we deduce that many of the attributes of Chow groups descend to numerical groups with their natural grading:
\begin{itemize}

\item Proper pushforwards $\pi_*$ and dually proper pullbacks $\pi^*:=(\pi_*)^{\vee}$ for the groups $N^k(X)$. The flat pullback $\pi^*$ defined at the cycle level in \cite[\S1.7]{fulton84} descends to numerical equivalence if the target of $\pi$ is nonsingular.  (See \cite{fl13}.)

\item Chern classes for vector bundles operate on $N_k(X)$: For any $i$ and any vector bundle $E$ on $X$, the numerical class $[c_i(E)\cap Z]$ depends only on the numerical class of $Z$ for any cycle $Z$. We denote this class $c_i(E)\cap[Z]$. Since this operation is commutative and associative (see \cite[\S 3.2]{fulton84}), there is a 
natural way of defining $P(E_I)\cap [Z]$ for any finite collection $E_I$ of vector bundles on $X$ and any polynomial $P(E_I)$ of the Chern classes of each bundle in the collection.

\item The projection formula: If $\pi:Y\to X$ is a proper morphism, and $P(E_I)$ is a 
polynomial in the Chern classes of a finite set of vector bundles on $X$, then for any 
$\alpha\in N_k(X)$,
$$\pi_*(P(\pi^*E_I)\cap \alpha)=P(E_I)\cap \pi_*\alpha.$$
\end{itemize}

\noindent Let $n$ denote the dimension of $X$.  The association $[P]\to P\cap[X]$ induces a natural map 
\begin{equation*}
\varphi:N^{n-k}(X)\to N_k(X).\end{equation*} 
Its dual is the corresponding natural map $\varphi:N^k(X)\to N_{n-k}(X)$. Usually $\varphi$ is neither injective nor surjective; however, when $X$ is smooth $\varphi$ is an isomorphism by \cite[Example 15.2.16.(b)]{fulton84}.  We have similar statements for $\mathbb Q$-coefficients.
\end{rmk}

\begin{notn}Where there is little danger of confusion, we use $\cdot$ instead of $\cap$.
\end{notn}

\begin{conv} For the rest of the paper, the term cycle will always refer to a cycle with $\mathbb{R}$-coefficients unless otherwise qualified (and similarly for divisors and numerical classes).

When we discuss $k$-cycles on a projective variety we will implicitly assume that $0 \leq k < \dim X$.  Note that this range captures all the interesting behavior of cycles; some of our theorems will be nonsensical outside of this range.
\end{conv}

\subsubsection{The pseudo-effective cone}

\begin{defn}\label{def:eff}The closure of the convex cone in $N_{k}(X)$ generated by effective $k$-cycles on $X$ is denoted $\Eff_k(X)$. It is called the \textit{pseudo-effective} cone.  A class $\alpha\in N_k(X)$ is called \textit{pseudo-effective} (resp. \textit{big}) if it belongs to $\Eff_k(X)$ (resp. to its interior). For classes $\alpha,\beta \in N_{k}(X)$, we use the notation $\alpha \preceq \beta$ to denote that $\beta-\alpha$ is pseudo-effective.

We say that $\beta\in N^k(X)$ is \textit{pseudo-effective} if $\beta \cap [X] \in\Eff_{n-k}(X)$.  The pseudo-effective dual classes form a closed cone in $N^k(X)$ that we will denote $\Eff^k(X)$.
\end{defn}

The pseudo-effective cone is a full-dimensional strictly convex cone in $N_{k}(X)$.  
Pseudo-effectiveness is preserved by pushforward.  

\begin{caut}
Note that $\Eff^{k}(X) \subset N^{k}(X)$ may not be strictly convex when $k>1$ and $X$ is not smooth.
\end{caut}

\begin{prop}[\cite{fl13}]\label{prop:surjeff}
If $\pi:Y\to X$ is a surjective morphism of projective varieties, then
$\pi_*\Eff_{k}(Y)=\Eff_{k}(X)$.
\end{prop}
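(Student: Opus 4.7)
The easy direction $\pi_*\Eff_k(Y)\subseteq\Eff_k(X)$ follows because pushforward at the cycle level sends any integral $k$-subvariety $W\subseteq Y$ either to $0$ (when $\dim\pi(W)<k$) or to a positive integer multiple of the integral subvariety $\pi(W)\subseteq X$, so effective cycles push forward to effective cycles. Continuity of the linear map $\pi_*:N_k(Y)\to N_k(X)$ then extends this to the closure defining $\Eff_k(Y)$.

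For the reverse inclusion, the first step is to produce lifts of individual effective cycles. Given an irreducible $k$-dimensional subvariety $V\subseteq X$, surjectivity of $\pi$ guarantees an irreducible component $W\subseteq\pi^{-1}(V)$ that dominates $V$; set $e=\dim W-k\geq 0$. For $e$ sufficiently general very ample divisors $H_1,\ldots,H_e$ on $Y$, a Bertini-style general position argument shows that the fundamental cycle $W'$ of $W\cap H_1\cap\cdots\cap H_e$ has pure dimension $k$ and maps finitely and surjectively onto $V$ with some positive degree $d$, giving $\pi_*[W']=d[V]$, so $[V]=\pi_*([W']/d)\in\pi_*\Eff_k(Y)$. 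Since $\pi_*\Eff_k(Y)$ is a convex cone, taking $\mathbb{R}_{\geq 0}$-combinations shows it contains every class of an effective $\mathbb{R}$-cycle on $X$.

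To extend to all of $\Eff_k(X)$, I need to show that $\pi_*\Eff_k(Y)$ is closed. This is the main technical obstacle: linear images of closed convex cones need not be closed, and indeed $\Eff_k(Y)\cap\ker\pi_*$ is typically nontrivial (e.g.\ it contains classes of $\pi$-exceptional cycles whenever $\pi$ is birational), so the standard Rockafellar-type criterion does not apply. The plan is to prove closedness via a uniform boundedness argument for lifts. Fix ample classes $h$ on $X$ and $H$ on $Y$; then $h^k$ and $H^k$ are strictly positive and hence coercive linear functionals on the salient closed cones $\Eff_k(X)$ and $\Eff_k(Y)$ respectively. Given $\alpha_n\to\alpha$ in $\pi_*\Eff_k(Y)$, approximate each $\alpha_n$ by a positive $\mathbb{R}$-combination of classes of integer effective cycles whose $h$-degrees are bounded by a constant multiple of $h^k\cdot\alpha_n$, which is itself bounded since $\alpha_n$ converges. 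Such cycles lie in a projective Chow scheme of bounded degree, so the Bertini lifting construction of the previous paragraph can be performed in a bounded family on $Y$, yielding lifts $\beta_n\in\Eff_k(Y)$ of uniformly bounded $H^k$-degree. By coercivity of $H^k$ on $\Eff_k(Y)$, the $\beta_n$ are uniformly bounded in $N_k(Y)$, and a convergent subsequence produces $\beta\in\Eff_k(Y)$ with $\pi_*\beta=\alpha$. The technical heart of the proof lies in extracting the uniform degree bound on the lifts from the compactness of Chow varieties together with the projection formula.
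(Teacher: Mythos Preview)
The paper does not actually prove this proposition; it is quoted from the companion paper \cite{fl13}. What the present paper \emph{does} contain is the parallel argument for the movable cone, namely Proposition~\ref{prop:deg cover bound} and Corollary~\ref{cor:surj mov}, and your strategy is exactly the same one used there: lift effective cycles, establish a uniform degree bound on the lifts, and then use compactness of the set of pseudo-effective classes of bounded degree (Corollary~\ref{cor:norms}) to pass to limits. So at the level of architecture you are spot on.

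The difference is in how the uniform degree bound is obtained. You gesture at ``compactness of Chow varieties together with the projection formula,'' but this is precisely the step you have not carried out, and it is not automatic. Concretely: if $V\subset X$ has $h$-degree $d$, your lift is $W'=W\cap H_1\cap\cdots\cap H_e$ for $W$ a component of $\pi^{-1}(V)$, and you need $H^k\cdot W'/m\le C\,d$ where $m=\deg(\pi|_{W'})$. There is no a priori reason $m$ is not tiny while $\deg_H W$ is large, so the bound genuinely requires work. The paper's argument for Proposition~\ref{prop:deg cover bound} does this explicitly by factoring $\pi$ into a flat map over a smooth base and a generically finite map: in the flat case one uses $\pi^*\alpha\cdot A^d$ and the injectivity of $\pi^*$ to compare norms directly; in the generically finite case one chooses $A=\pi^*H-E$ for a suitable effective $E$ with $-E$ relatively ample, then argues inductively that $\beta\cdot(\pi^*H-E)^k\le\beta\cdot\pi^*H^k$ by representing $\beta\cdot(\pi^*H-E)^{k-1}$ by a cycle meeting $E$ properly. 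Your Chow-variety idea can presumably be made to work as well, but you would need to show that the lifting construction is uniform over each bounded Chow family, and you have not done that.
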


\subsubsection{The nef cone}

\begin{defn}The cone dual to $\Eff_k(X)$ in $N^k(X)$ is the \textit{nef} cone $\Nef^k(X)$.  Any element $\beta\in\Nef^k(X)$ is called \textit{nef}.
\end{defn}

\noindent The nef cone is a full-dimensional strictly convex cone in $N^{k}(X)$.  Nefness is preserved under pullback.  When $X$ is smooth, we will sometimes write $\Nef_{k}(X)$ for the image of $\Nef^{n-k}(X)$ under the isomorphism $\cap [X]$.

\begin{thrm}[\cite{fl13}] \label{cor:ci nef}
Let $X$ be a projective variety.  If $h_1,\ldots,h_k$ are ample divisor classes on $X$, then $h_1\cdot\ldots\cdot h_k$ is in the interior of $\Nef^k(X)$.
\end{thrm}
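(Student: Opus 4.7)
My plan is to characterize the interior of $\Nef^k(X)$ via strict positivity and then verify this for $h_1 \cdots h_k$. Since $\Eff_k(X)$ is full-dimensional and salient, so is its dual $\Nef^k(X)$, and the interior of $\Nef^k(X)$ consists precisely of those $\xi \in N^k(X)$ with $\xi \cdot \alpha > 0$ for every nonzero $\alpha \in \Eff_k(X)$. Thus the theorem reduces to showing that $h_1 \cdots h_k \cdot \alpha > 0$ for all such $\alpha$.

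First I would treat effective cycles directly. For a $k$-dimensional irreducible subvariety $V \subset X$, the restrictions $h_i|_V$ are again ample, so the projection formula yields
\[
h_1 \cdots h_k \cdot [V] \;=\; (h_1|_V) \cdots (h_k|_V) \;>\; 0
\]
by the classical positivity of top intersections of ample divisor classes on a projective variety of dimension $k$. By linearity this gives strict positivity on every nonzero effective cycle, and continuity of the natural pairing $N^k(X) \times N_k(X) \to \mathbb{R}$ then gives the weak inequality $h_1 \cdots h_k \cdot \alpha \geq 0$ for all $\alpha \in \Eff_k(X)$.

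To promote this nonnegativity to strict positivity, I would first reduce to the case of a single ample class. Fix an ample Cartier class $a \in N^1(X)$, and choose $\epsilon_i > 0$ so small that every $h_i - \epsilon_i a$ is still ample (possible since the ample cone is open). Expanding the product,
\[
h_1 \cdots h_k \;=\; \epsilon_1 \cdots \epsilon_k \cdot a^k \;+\; R,
\]
where $R$ is a sum of monomials, each a product of $k$ ample divisor classes on $X$. Applying the previous step to each such monomial individually gives $R \cdot \alpha \geq 0$ for all $\alpha \in \Eff_k(X)$, so it is enough to show that $a^k \cdot \alpha > 0$ for every nonzero $\alpha \in \Eff_k(X)$.

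For this last step I would use boundedness of Chow schemes: effective $k$-cycles on $X$ of $a$-degree at most $d$ form a bounded family, and hence their numerical classes constitute a bounded subset of $N_k(X)$. Passing to closures, the slice $\{\alpha \in \Eff_k(X) : a^k \cdot \alpha \leq d\}$ is compact. If some nonzero $\alpha_0 \in \Eff_k(X)$ satisfied $a^k \cdot \alpha_0 = 0$, then the entire half-line $\mathbb{R}_{\geq 0} \cdot \alpha_0$ would lie in this compact slice, a contradiction. This boundedness step is the main obstacle, since it is precisely what distinguishes higher-codimension cycles from divisors (where strict positivity of an ample class is automatic from openness of the ample cone); it uses finiteness of the components of the Chow scheme in each bounded degree together with continuity of the map from Chow to $N_k$.
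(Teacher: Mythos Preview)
Your reduction to strict positivity of $a^{k}$ on $\Eff_{k}(X)\setminus\{0\}$ is fine, and the first two steps (positivity on irreducible subvarieties, reduction from $h_{1}\cdots h_{k}$ to $a^{k}$) are correct.  The gap is in the last step, ``passing to closures.''  Chow boundedness tells you that the set of numerical classes of effective $\mathbb{Z}$-cycles with $a$-degree $\le d$ is finite in $N_{k}(X)$.  But the slice $\{\alpha\in\Eff_{k}(X):a^{k}\cdot\alpha\le d\}$ is \emph{not} the closure of that finite set; a class in this slice is a limit of real effective combinations $\sum c_{i}[V_{i}]$ with $\sum c_{i}\deg(V_{i})\le d$, and here the individual $\deg(V_{i})$ can be arbitrarily large (with $c_{i}$ correspondingly small).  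Controlling such a sum in norm is exactly the statement that $\lVert[V]\rVert/\deg(V)$ is uniformly bounded over all irreducible $V$, which is equivalent to the compactness of $\Eff_{k}(X)\cap\{a^{k}=1\}$, i.e.\ to the strict positivity you are trying to prove.  So the argument is circular as written: boundedness of the Chow scheme in each fixed degree does not by itself yield a degree-linear bound on an arbitrary element of $N^{k}(X)$ against effective cycles.

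The paper's route (from \cite{fl13}) avoids this by producing, independently of any compactness of slices, a full-dimensional subcone of $\Nef^{k}(X)$---the pliant cone, generated by Schur classes of globally generated bundles---and then showing via the algebra of Chern classes under direct sums and tensor products that $h_{1}\cdots h_{k}$ lies in its interior.  This supplies, in effect, enough nef functionals to span $N^{k}(X)$ and to dominate $a^{k}$ from above and below on $\Eff_{k}(X)$; that is precisely the missing ingredient in your approach.  If you want to repair your line of argument, you would need an a priori inequality of the form $|P\cdot[V]|\le C_{P}\,(a^{k}\cdot[V])$ for every Chern monomial $P$ and every irreducible $V$, with $C_{P}$ independent of $V$; establishing this without something like the pliant-cone machinery is the nontrivial content of the theorem.
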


\noindent The idea of the proof is to consider Schur characteristic classes of
globally generated vector bundles on $X$. Adding and multiplying such
classes constructs the pliant cone $\pl^k(X)$ of \cite{fl13}, which is a full-dimensional subcone of $\Nef^k(X)$ with very good positivity features.
The behavior of Chern classes of vector bundles under direct sums
and tensor products then allows us to prove
that complete intersections lie in the interior of the pliant cone.

\begin{cor}[\cite{fl13}] \label{cor:norms} Let $X$ be a projective variety.  If $h$ is an ample divisor class on $X$, then for all $k$
there exists a norm $\|\cdot\|$ on $N_k(X)$ such that $\|\alpha\|=h^k\cdot\alpha$
for any $\alpha\in\Eff_k(X)$.
\end{cor}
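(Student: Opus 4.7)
The plan is to construct the norm as the Minkowski functional of a symmetric compact convex neighborhood of the origin in $N_{k}(X)$ built from a slice of the pseudo-effective cone. The key input is the previous theorem, which places $h^{k}$ in the interior of $\Nef^{k}(X)$. By convex duality this is equivalent to the statement that $h^{k}\cdot\alpha>0$ for every nonzero $\alpha\in\Eff_{k}(X)$. In particular, the slice
\[
C:=\Eff_{k}(X)\cap\{\alpha\in N_{k}(X):h^{k}\cdot\alpha\leq 1\}
\]
is bounded; being also the intersection of two closed convex sets, it is compact and convex. Since $\Eff_{k}(X)$ is full-dimensional, $C$ has nonempty interior as well.

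Next, set $K:=\operatorname{conv}(C\cup -C)$. Then $K$ is compact, being the continuous image of $[0,1]\times C\times C$ under $(s,\beta_{1},\beta_{2})\mapsto s\beta_{1}-(1-s)\beta_{2}$, and it is convex and symmetric about the origin. Moreover $K$ contains $\tfrac{1}{2}(C-C)$, which is an open neighborhood of $0$ because $C$ is a convex body containing $0$. Hence the Minkowski functional
\[
\|\alpha\|:=\inf\{\,t>0\,:\,\alpha\in tK\,\}
\]
is a genuine norm on $N_{k}(X)$.

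To verify $\|\alpha\|=h^{k}\cdot\alpha$ on $\Eff_{k}(X)$, first observe that for $t=h^{k}\cdot\alpha$ we have $\alpha\in tC\subseteq tK$, giving $\|\alpha\|\leq h^{k}\cdot\alpha$. Conversely, any representation $\alpha=s\beta_{1}-(1-s)\beta_{2}$ with $s\in[0,1]$ and $\beta_{1},\beta_{2}\in tC$ yields $h^{k}\cdot\alpha\leq s(h^{k}\cdot\beta_{1})\leq st\leq t$ using $h^{k}\cdot\beta_{2}\geq 0$ by nefness of $h^{k}$; passing to the infimum gives $h^{k}\cdot\alpha\leq\|\alpha\|$. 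The one delicate point is ensuring the Minkowski functional is truly a norm and not merely a seminorm; this reduces to boundedness of $K$, which is precisely the compactness of $C$ supplied by Theorem~\ref{cor:ci nef}, and this is why the symmetrization via $-C$ (rather than a larger symmetric body) is chosen so that no enlargement occurs on the pseudo-effective side.
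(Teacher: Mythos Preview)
Your argument is correct. The paper does not supply its own proof of this corollary---it is simply cited from \cite{fl13}---so there is nothing to compare against beyond noting that your derivation from Theorem~\ref{cor:ci nef} via the Minkowski functional of the symmetrized slice is exactly the natural convex-geometric route one would expect.

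One cosmetic remark: the set $\tfrac{1}{2}(C-C)$ is a neighborhood of $0$ but need not itself be open (you only need that it has nonempty interior, which follows since $C$ does), so ``open neighborhood'' slightly overstates things. This does not affect the argument.
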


\subsubsection{The basepoint free cone}

The basepoint free cone is an analogue of the nef cone of divisors.

\begin{defn}
Let $X$ be a projective variety of dimension $n$.  We say that $\alpha \in N_{k}(X)$ is a strongly basepoint free class if there is:
\begin{itemize}
\item an equidimensional quasi-projective scheme $U$ of finite type over $K$,
\item a flat morphism $s: U \to X$,
\item and a proper morphism $p:U \to W$ of relative dimension $k$ to a quasi-projective variety $W$ such that each component of $U$ surjects onto $W$
\end{itemize}
such that
\begin{equation*}
\alpha= (s|_{F_p})_{*}[F_{p}]
\end{equation*}
where $F_{p}$ is a general fiber of $p$.  Note that the resulting class is independent of the choice of fiber.

The basepoint free cone $\bpf_{k}(X)$ is defined to be the closure of the cone generated by strongly basepoint free classes.  If $X$ is smooth, then $\cap [X]$ is an isomorphism, we may also consider the cone $\bpf^{n-k}(X) \subset N^{n-k}(X)$.
\end{defn}

\begin{prop}[\cite{fl13}] Let $X$ be a smooth projective variety.  Then $\bpf_{k}(X)$ is a full-dimensional strictly convex cone.
\end{prop}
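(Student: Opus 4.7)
The proposition makes two assertions about $\bpf_k(X)$: salience and full-dimensionality. I plan to handle them in turn.

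\emph{Salience.}  I would first show the inclusion $\bpf_k(X)\subseteq \Eff_k(X)$.  If $\alpha=(s|_{F_p})_*[F_p]$ is a generator of $\bpf_k(X)$, the general fiber $F_p$ of the proper map $p$ is proper over $K$, its fundamental cycle is effective, and $s|_{F_p}\colon F_p\to X$ is proper.  Since proper pushforward preserves effectiveness, $\alpha$ is represented by an effective cycle and hence is pseudo-effective.  Passing to the closed cone yields $\bpf_k(X)\subseteq \Eff_k(X)$, and since the right-hand side is salient, so is $\bpf_k(X)$.

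\emph{Full-dimensionality.}  My plan is to identify a full-dimensional subcone of $\bpf_k(X)$.  Exhibiting only complete intersections is not enough in general (on varieties with Picard rank $1$ but larger numerical groups in middle codimension, products of very ample classes fail to span), so the natural candidate is the (image under $\cap[X]$ of the) pliant cone $\pl^{n-k}(X)\subset N^{n-k}(X)$ of \cite{fl13}, which is full-dimensional by the proof of Theorem \ref{cor:ci nef}.  It would then suffice to show that every generator of $\pl^{n-k}(X)$---a product of Schur classes $\prod_j s_{\lambda_j}(E_j)$ of globally generated vector bundles---is a strongly basepoint free class.

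For a single Schur class $s_\lambda(E)$ of a globally generated bundle $E$ of rank $r$, I would use the classical universal Schubert construction: a generating surjection $\mathcal O_X^N\twoheadrightarrow E$ gives a morphism $f\colon X\to \mathrm{Gr}(r,N)$ with $E\cong f^*Q$, and the universal family of $\lambda$-Schubert cycles parametrized by a suitable flag variety $W$ pulls back under $f$ to an incidence variety $U\subset X\times W$.  Homogeneity of the Grassmannian makes $s\colon U\to X$ flat, $p\colon U\to W$ is proper, and by Kleiman--Bertini the pushforward of the general fiber of $p$ realizes $s_\lambda(E)\cap[X]$.  For a product of Schur classes, I would take the fiber product over $X$ of the corresponding incidence data: given $(U_i,s_i,p_i,W_i)$ realizing $\alpha_i$, the fiber product $U_1\times_X U_2$ with the induced maps to $X$ and $W_1\times W_2$ realizes $\alpha_1\cdot \alpha_2$, provided Kleiman--Bertini ensures generic transversality and hence the correct relative dimension.

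\emph{Main obstacle.}  The delicate step is the product construction: one must verify flatness of the fiber product over $X$, properness with the correct relative dimension over $W_1\times W_2$, and generic transversality so that the fiber cycle genuinely computes the intersection product.  This relies on Kleiman--Bertini for translates of Schubert cycles on a Grassmannian; in positive characteristic the transitivity of the $GL_N$-action provides the stronger form of the theorem needed to carry out the argument uniformly.
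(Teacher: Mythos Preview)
The paper does not prove this proposition here; it is quoted from \cite{fl13} without argument. So there is no in-paper proof to compare against directly. That said, your proposal matches the circle of ideas the paper invokes around Theorem~\ref{cor:ci nef}: the pliant cone $\pl^{n-k}(X)$ built from products of Schur classes of globally generated bundles is full-dimensional, and your plan---realize each Schur class via the universal Schubert family pulled back along $X\to\mathrm{Gr}(r,N)$, then close under fiber products over $X$---is the natural way to exhibit $\pl^{n-k}(X)\cap[X]\subseteq\bpf_k(X)$. Your salience argument via $\bpf_k(X)\subseteq\Eff_k(X)$ is correct and immediate.

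One refinement on your stated obstacle: in positive characteristic the dimension statement in Kleiman's theorem already holds for transitive actions of connected linear groups such as $GL_N$, so the relative-dimension check for the fiber product goes through without appealing to a ``stronger form.'' The genuine subtlety is rather the cycle-class identity $(s|_{F_p})_*[F_p]=\prod_j s_{\lambda_j}(E_j)\cap[X]$: generic transversality (hence reducedness of the intersection) can fail in characteristic $p$, so you cannot simply read off the class from the set-theoretic fiber. This is handled either by expressing Schubert classes as polynomials in Chern classes of the tautological bundles (so the identity becomes functoriality of Chern classes under pullback), or by a refined-intersection argument as in \cite[Chapter 8]{fulton84}. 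With that adjustment your outline is correct.
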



\begin{rmk}
Just as with the ample cone of divisors, the basepoint free cone is most naturally considered as a  ``cohomological cone'' that is preserved under pullback.  This issue is discussed in more depth in \cite{fl13}.
\end{rmk}

\subsubsection{Families of cycles}
\label{sss:families}

\begin{defn}  \label{familydef}
Let $X$ be a projective variety.  A family of $k$-cycles (with $\mathbb{Z}$-coefficients) on $X$ consists of a variety $W$, a reduced closed subscheme $U \subset W \times X$, and an integer $a_{i}$ for each component $U_{i}$ of $U$ such that for each component $U_{i}$ of $U$ the first projection map $p: U_{i} \to W$ is flat dominant of relative dimension $k$.  We say that $p: U \to W$ is a family of effective $\mathbb{Z}$-cycles if each $a_{i} \geq 0$.  We will only consider families of cycles where each $a_{i} > 0$.
\end{defn}

We say that a family of effective $\mathbb{Z}$-cycles $p: U \to W$ is irreducible if $U$ has only one component.  Any irreducible component $U_{i}$ of a family $p: U \to W$ naturally yields an irreducible family $p_{i}: U_{i} \to W$ which assigns the coefficient $a_{i}$ to $U_{i}$ and removes all the other components of $U$.

For a closed point $w \in W$, the base change $w \times_{W} U_{i}$ is a $k$-dimensional subvariety of $X$ and thus defines a fundamental $k$-cycle $Z_{i}$; the cycle-theoretic fiber of $p: U \to W$ over $w$ is defined to be the cycle $\sum a_{i}Z_{i}$ on $X$.

Note that any two cycle-theoretic fibers of a family are algebraically equivalent.    We say that a family of $\mathbb{Z}$-cycles $p: U \to W$ represents a class $\alpha \in N_{k}(X)$ if $\alpha$ is the class of any cycle-theoretic fiber.

\begin{rmk} \label{closureoffamilies}
Given a family of $k$-$\mathbb{Z}$-cycles $p: U \to W$, we can extend $p$ to a projective closure of $W$ in the following way. Let $\overline{W}$ be any projective closure of $W$ and let $\overline{U} \subset \overline{W} \times X$ be the (reduced) closure of $U$.  Let $p': U' \to W'$ be a simultaneous flattening of the projection maps $\overline{p}_{i}: \overline{U}_{i} \to \overline{W}$ for the components $\overline{U}_{i}$ of $\overline{U}$.  Since there is a bijection between components of $U$ and components of $U'$, we can assign coefficients to the new family $p'$ in a natural way.
\end{rmk}

\begin{constr}[Strict transform families] \label{stricttransformconstr}
Let $X$ be a projective variety and let $p: U \to W$ be a family of effective $k$-$\mathbb{Z}$-cycles on $X$.  Suppose that $\phi: X \dashrightarrow Y$ is a birational map.  We define the strict transform family of effective $k$-$\mathbb{Z}$-cycles on $Y$ as follows.

First, modify $U$ by removing all irreducible components whose image in $X$ is contained in the locus where $\phi$ is not an isomorphism.  Then define the (reduced) closed subset $U'$ of $W \times Y$ by taking the strict transform of the remaining components of $U$.  Over an open subset $W^{0} \subset W$, the projection map $p': (U')^{0} \to W^{0}$ is flat on each component of $(U')^{0}$.  Since each component of $U'$ is the transform of a unique component of $U$, we can assign coefficients to the new family $p'$ in a natural way.
\end{constr}

\subsection{Birational theory of divisors} \label{divisorcharpsection}

In this section we review some important results in the theory of divisors and discuss how to establish them over an arbitrary algebraically closed field.  The key tool for extending to characteristic $p$ is the existence of Fujita approximations in arbitrary characteristic as established by \cite{takagi07} and \cite{lm09}.

\subsubsection{Zariski decompositions for divisors}
The $\sigma$-decomposition and its basic properties are described in \cite{nakayama04}.  \cite{mustata11} verifies that these constructions work in characteristic $p$ as well.

\begin{defn} \label{sigmagammadef}
Let $X$ be a smooth projective variety 
and let $L$ be a pseudo-effective divisor on $X$.  Fix an ample divisor $A$.  For any prime divisor $\Gamma$ on $X$ we define
\begin{equation*}
\sigma_{\Gamma}(L) = \lim_{\epsilon \to 0^{+}} \inf \{\mathrm{mult}_{\Gamma}(L') | L' \geq 0 \textrm{ and } [L'] = [L + \epsilon A] \}.
\end{equation*}
\end{defn}

\noindent Using this definition, $\sigma_{\Gamma}$ is clearly a numerical invariant.  It does not depend on the choice of $A$.

We verify briefly that Definition \ref{sigmagammadef} agrees with the definitions in \cite{nakayama04} and \cite{mustata11}.  Nakayama defines $\sigma_{\Gamma}(L)$ for a big divisor $L$ as the infimum of the multiplicity of $\Gamma$ in any effective divisor numerically equivalent for $L$, and then extends this to pseudo-effective divisors by taking a limit.  The equivalence with the definition above is stated explicitly as \cite[III.1.5 Lemma]{nakayama04}.   \cite{mustata11} defines $\sigma_{\Gamma}(L)$ for a big divisor $L$ by taking an asymptotic order of vanishing of $L$ along a general point in $\Gamma$, and then extends the definition to pseudo-effective divisors by taking a limit.  \cite[Theorem B]{mustata11} shows that, using this definition, $\sigma_{\Gamma}$ is a numerical invariant and defines a continuous function on the big cone, and the equivalence with Definition \ref{sigmagammadef} above follows immediately.

\cite[III.1.11 Corollary]{nakayama04} shows that for any pseudo-effective divisor $L$ there are only finitely many prime $\mathbb{Z}$-divisors $\Gamma$ on $X$ with $\sigma_{\Gamma}(L) > 0$, allowing us to make the following definition.

\begin{defn}[\cite{nakayama04} III.1.16 Definition] \label{sigmazardecom}
Let $X$ be a smooth projective variety and let $L$ be a pseudo-effective divisor on $X$.  Define
\begin{equation*}
N_{\sigma}(L) = \sum_{E \textrm{ prime}} \sigma_{E}(L) E \qquad \qquad P_{\sigma}(L) = L - N_{\sigma}(L)
\end{equation*}
The decomposition $L = P_{\sigma}(L) + N_{\sigma}(L)$ is called the $\sigma$-decomposition of $L$.
\end{defn}

\cite[III.1.10 Proposition]{nakayama04} shows that $N_{\sigma}(L)$ is the unique effective divisor in its numerical equivalence class.  The $\sigma$-decomposition has close ties to the diminished base locus.

\begin{defn} \label{nonnefdef}
Let $X$ be a smooth projective variety 
and let $L$ be a pseudo-effective divisor on $X$.  We define the $\mathbb{R}$-stable base locus of $L$ to be the subset of $X$ given by
\begin{equation*}
\mathbf{B}_{\mathbb{R}}(L) = \bigcap \{ \Supp(L') | L' \geq 0 \textrm{ and }L' \sim_{\mathbb{R}} L \}.
\end{equation*}
The diminished base locus of $L$ is
\begin{equation*}
\mathbf{B}_{-}(L) = \bigcup_{A \textrm{ ample divisor}}  \mathbf{B}_{\mathbb{R}}(L+A).
\end{equation*}
\end{defn}

\begin{prop}[\cite{nakayama04} V.1.3 Theorem and \cite{mustata11} Theorem C]
Let $X$ be a smooth projective variety and let $L$ be a pseudo-effective divisor.  The union of the codimension $1$ components of $\mathbf{B}_{-}(L)$ coincides with $\Supp(N_{\sigma}(L))$.
\end{prop}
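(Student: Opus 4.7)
The plan is to prove the two containments separately.

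First, $\Supp(N_{\sigma}(L)) \subseteq \mathbf{B}_-(L)$ is an immediate consequence of the definitions. Fix any ample divisor $A$ and suppose $\Gamma$ is a prime divisor with $\sigma_\Gamma(L) = c > 0$. By the definition of $\sigma_\Gamma$, there is an $\epsilon_0 > 0$ such that for every $\epsilon \in (0, \epsilon_0)$, every effective $\mathbb{R}$-divisor $L'$ with $[L'] = [L + \epsilon A]$ satisfies $\mult_\Gamma(L') \geq c/2$. In particular every effective $L' \sim_{\mathbb{R}} L + \epsilon A$ contains $\Gamma$ in its support, so $\Gamma \subseteq \mathbf{B}_{\mathbb{R}}(L + \epsilon A) \subseteq \mathbf{B}_-(L)$; being irreducible of codimension one, $\Gamma$ is then a codimension $1$ component of $\mathbf{B}_-(L)$.

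For the reverse containment, I would argue contrapositively: if $\sigma_\Gamma(L) = 0$ for a prime divisor $\Gamma$, then $\Gamma \not\subseteq \mathbf{B}_-(L)$. The hypothesis supplies, for every $\epsilon, \delta > 0$, an effective $L_{\epsilon,\delta}$ with $[L_{\epsilon, \delta}] = [L + \epsilon A]$ and $\mult_\Gamma(L_{\epsilon, \delta}) < \delta$. To conclude $\Gamma \not\subseteq \mathbf{B}_{\mathbb{R}}(L + A')$ for every ample $A'$, two upgrades are required: (a) the representatives $L_{\epsilon, \delta}$ are only numerically equivalent to $L + \epsilon A$, whereas $\mathbf{B}_{\mathbb{R}}$ is defined via $\mathbb{R}$-linear equivalence; and (b) the multiplicity along $\Gamma$ must be reduced from arbitrarily small to exactly zero.

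For (a), I would use the slack provided by $A'$: given $A'$ ample and $\epsilon$ small enough that $A' - \epsilon A$ is ample, the class $[L + A'] - [L_{\epsilon, \delta}]$ equals $[A' - \epsilon A]$ (since $L_{\epsilon,\delta} - L - \epsilon A$ is numerically trivial), hence is big, and so is $\sim_{\mathbb{R}}$ to an effective $G_{\epsilon, \delta}$; consequently $L_{\epsilon, \delta} + G_{\epsilon, \delta} \sim_{\mathbb{R}} L + A'$. For (b), I would apply Fujita approximation, available in arbitrary characteristic by \cite{takagi07} and \cite{lm09}, to the big class $[L + \epsilon A]$: on a suitable birational model $\pi: Y \to X$ one obtains a decomposition $\pi^*(L + \epsilon A) = A'' + E''$ with $A''$ an ample $\mathbb{Q}$-divisor on $Y$ and $\mult_{\widetilde{\Gamma}}(E'')$ arbitrarily small, where $\widetilde{\Gamma}$ is the strict transform of $\Gamma$. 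Varying the $\mathbb{R}$-linearly equivalent representative of the globally generated multiple of $A''$, one can arrange zero multiplicity along $\widetilde{\Gamma}$, and pushing forward to $X$ yields the desired representative. The main obstacle is precisely the upgrade in (b) from ``arbitrarily small multiplicity'' to ``exactly zero multiplicity''; this is where the strength of Fujita approximation in positive characteristic, as developed in \cite{takagi07,lm09,mustata11}, plays the essential role.
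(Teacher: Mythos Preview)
The paper does not give its own proof of this proposition; it is quoted as a known result with the citation in the header, so there is nothing to compare against directly.

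Your first containment is fine. For the second, however, there is a genuine gap in step~(b). Fujita approximation asserts that on a suitable birational model one can write $\pi^*(L+\epsilon A)\sim_{\mathbb{Q}} A''+E''$ with $A''$ ample and $\vol(A'')$ close to $\vol(L+\epsilon A)$; it says nothing about $\mult_{\widetilde{\Gamma}}(E'')$ for a \emph{specified} prime divisor $\Gamma$. The assertion that this multiplicity can be made arbitrarily small is precisely the hard content you are trying to prove, not a consequence of Fujita approximation as usually stated.

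The route taken in the cited references (and already set up in the paper immediately before this proposition) is different and cleaner. One first uses \cite[III.1.5 Lemma]{nakayama04} and \cite[Theorem~B]{mustata11} to know that for a \emph{big} divisor $D$ the infimum $\inf\{\mult_\Gamma(L'):L'\ge 0,\ L'\equiv D\}$ coincides with the infimum taken over $L'\sim_{\mathbb{R}}D$. Granting this, if $\sigma_\Gamma(L)=0$ then for small $\epsilon$ there exists $L'\sim_{\mathbb{R}}L+\tfrac{\epsilon}{2}A$ effective with $\mult_\Gamma(L')=\delta$ as small as we like. Now $L'-\delta\Gamma\ge 0$ has zero multiplicity along $\Gamma$, and for $\delta$ small the class $\tfrac{\epsilon}{2}A+\delta\Gamma$ is ample, hence $\sim_{\mathbb{R}}$ a general effective divisor $H$ not containing $\Gamma$. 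Then $(L'-\delta\Gamma)+H\sim_{\mathbb{R}}L+\epsilon A$ witnesses $\Gamma\not\subseteq\mathbf{B}_{\mathbb{R}}(L+\epsilon A)$. So the ``numerical~$=$~linear'' statement (already cited in the paper) replaces your appeal to Fujita approximation, and a short perturbation replaces your step~(b).
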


\subsubsection{Positive product}

The positive product of \cite{bdpp04} is an important tool for understanding the positivity of divisor classes.  We will use the interpretation described by \cite{bfj09}.  \cite{cutkosky13} then extends this construction to arbitrary fields.

\begin{constr}[Characteristic $0$] \label{posprodchar0}
Let $X$ be a smooth projective variety over an algebraically closed field of characteristic $0$ and let $L_{1},\ldots,L_{k}$ be big $\mathbb{Q}$-divisors on $X$.  For any birational map $\phi: Y \to X$ from a normal variety $Y$ and for nef $\mathbb{Q}$-Cartier divisors $A_{i}$ on $Y$ such that $\phi^{*}L_{i} - A_{i}$ is numerically equivalent to an effective $\mathbb{Q}$-Cartier divisor, consider the class
\begin{equation*}
\gamma := \phi_{*}(A_{1} \cdot \ldots \cdot A_{k})
\end{equation*}
on $X$.  \cite{bfj09} shows that as we vary over all birational maps $\phi$ and choices of the $A_{i}$, the classes $\gamma$ form a directed set under the relation $\preceq$ and admit a unique maximum class under this relation.  The positive product of the $L_{i}$ is defined to be this maximal class and is denoted $\langle L_{1} \cdot \ldots \cdot L_{k} \rangle$.  By \cite[Proposition 2.9]{bfj09}, as a function on the $k$-fold product of the interior of $\Eff_{k}(X)$ the positive product is symmetric and continuous and is super-additive (with respect to $\preceq$) in each variable.  In particular it naturally extends to big $\mathbb{R}$-divisors.

When $L_{1},\ldots,L_{k}$ are only pseudo-effective, we define
\begin{equation*}
\langle L_{1} \cdot \ldots \cdot L_{k} \rangle = \lim_{t \to 0} \langle (L_{1} + tB) \cdot \ldots \cdot (L_{k} + tB) \rangle
\end{equation*}
where $B$ is any fixed big divisor; this definition is independent of the choice of $B$ as explained in \cite[Definition 2.10]{bfj09}.
\end{constr}

\begin{constr}[Characteristic $p$]
\cite{cutkosky13} extends the construction of \cite{bfj09} to fields of any characteristic.  However, Cutkosky works with a different notion of numerical equivalence.  Since his results do not directly apply in our situation, we will give a brief summary of how the positive product works for the spaces $N_{k}(X)$.

There are two potential difficulties in extending the results of \cite{bfj09} to characteristic $p$.  The first is the equality between the top positive product of a big divisor and its volume.  Over an arbitrary algebraically closed field this is a consequence of the existence of Fujita approximations as in \cite{takagi07} and \cite{lm09}.

The second is the lack of resolution of singularities.  In the first section of \cite{bfj09}, Lemma 1.2 and the comparison between the notion of pseudo-effectiveness for Weil and Cartier $\mathbf{b}$-divisors rely on resolution of singularities; however, these statements are not essential for the rest of the paper.  In the second and third sections of \cite{bfj09}, all the constructions only involve $\mathbb{Q}$-Cartier divisors so there is never an issue with $\mathbb{Q}$-factoriality.  (The key fact to check is \cite[Lemma 2.6]{bfj09} which proves that the $\gamma$ form a directed set.)   This yields the existence of the positive product and verifies that it satisfies the same basic properties as in the characteristic $0$ case.  See \cite{cutkosky13} for a detailed analysis of this argument in a slightly different situation.

In the fourth section we must be slightly more careful; all the material after \cite[Theorem 4.9]{bfj09} implicitly relies on the existence of embedded resolutions.  However, if we work on a fixed smooth variety $X$ (instead of just a normal variety), then any prime $\mathbb{Z}$-divisor $\mathcal{D}$ is already Cartier and so Theorem 4.9, Lemma 4.10, and Corollary 4.11 are valid.  We will only apply these results to prove Proposition \ref{divisorzardecomprop}, where we only consider smooth varieties.
\end{constr}

\begin{prop} \label{posprodprop}
Let $X$ be a smooth projective variety of dimension $n$ and let $L$ be a pseudo-effective divisor on $X$.
\begin{enumerate}
\item $\vol(L) = \langle L^{n} \rangle = \langle L^{n-1} \rangle \cdot P_{\sigma}(L)$ and $\langle L^{n-1} \rangle \cdot N_{\sigma}(L) = 0$.
\item $\langle L \rangle$ is the class of $P_{\sigma}(L)$.
\end{enumerate}
\end{prop}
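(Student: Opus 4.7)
The plan is to transfer the characteristic-$0$ proofs of the analogous statements in \cite{bfj09} (in particular Proposition 4.5 and Theorem 4.9) to arbitrary characteristic, using the Fujita approximation of \cite{takagi07, lm09} in place of the one invoked there. The main obstacle is this transfer itself: one must audit the Hodge-index style manipulation underlying their orthogonality theorem to rule out implicit appeals to embedded resolution or to $\mathbb{Q}$-factoriality of an auxiliary birational model. This audit has essentially been performed in the preceding Construction, where it is noted that \cite[Theorem 4.9, Lemma 4.10, Corollary 4.11]{bfj09} remain valid when the base $X$ is smooth, because every prime $\mathbb{Z}$-divisor on $X$ is already Cartier.

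I would establish $(2)$ first. The inequality $\langle L \rangle \preceq [P_\sigma(L)]$ is essentially formal: for any Fujita-type datum $\phi\colon Y \to X$ with $\phi^{*}L = A + E$, $A$ nef and $E$ effective, the pushforward $\phi_{*}E$ is an effective divisor numerically equivalent to $L - \phi_{*}A$, and its multiplicity along each prime $\Gamma \subset X$ is at least $\sigma_\Gamma(L)$ by the very definition of $\sigma_\Gamma$; hence $\phi_{*}A \preceq L - N_\sigma(L) = P_\sigma(L)$, and taking the supremum over Fujita data gives the inequality. For the reverse inequality, fix an ample divisor $H$ and $\epsilon > 0$ and apply Fujita approximation to the big class $P_\sigma(L) + \epsilon H$ to produce data $\phi^{*}(P_\sigma(L)+\epsilon H) = A' + E'$ with $\phi_{*}A'$ approximating $P_\sigma(L) + \epsilon H$ from below; adding $\phi^{*}N_\sigma(L)$ to the effective part produces Fujita data for $L$ itself whose pushforward approximates $P_\sigma(L)$ up to an error controlled by $\epsilon H$. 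Letting $\epsilon \to 0$ yields $[P_\sigma(L)] \preceq \langle L \rangle$. The pseudo-effective but non-big case follows by taking limits $L + tB \to L$ for $B$ big.

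For $(1)$, the equality $\vol(L) = \langle L^{n} \rangle$ is the top-degree case of Fujita approximation, available in arbitrary characteristic by \cite{takagi07, lm09}. The orthogonality statement $\langle L^{n-1} \rangle \cdot N_\sigma(L) = 0$ is the transferred \cite[Theorem 4.9]{bfj09}. The remaining equality $\langle L^{n}\rangle = \langle L^{n-1}\rangle \cdot P_\sigma(L)$ then follows from the bilinear decomposition
\[
\langle L^{n-1}\rangle\cdot L \;=\; \langle L^{n-1}\rangle\cdot P_\sigma(L) + \langle L^{n-1}\rangle\cdot N_\sigma(L) \;=\; \langle L^{n-1}\rangle\cdot P_\sigma(L)
\]
together with the identification $\langle L^{n-1}\rangle\cdot L = \langle L^{n}\rangle$: from any Fujita datum $\phi^{*}L = A + E$ one computes $\phi_{*}(A^{n-1})\cdot L = A^{n} + A^{n-1}\cdot E$, and as the data is chosen to compute $\langle L^{n-1}\rangle$ optimally one has $A^{n} \to \vol(L) = \langle L^{n}\rangle$ while $A^{n-1}\cdot E \to 0$ by orthogonality applied in degree one, yielding the desired equality in the limit.
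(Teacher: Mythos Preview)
Your approach is essentially the same as the paper's: both reduce to invoking the positive-product machinery of \cite{bfj09} together with the Fujita approximations of \cite{takagi07,lm09}, relying on the audit (in the preceding Construction) that the relevant results survive in positive characteristic on a smooth base. The paper is terser---it simply cites \cite[Theorem 5.2]{cutkosky13} for $\vol(L)=\langle L^n\rangle$ and \cite[Theorem~3.1, Corollary~3.6]{bfj09} for the remaining identities in (1), and for (2) it characterizes $\langle L\rangle$ as the unique maximal movable class $\preceq [L]$ via \cite[III.1.14~Proposition]{nakayama04}---but the underlying content is the same as what you unwind.

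Two points need tightening. First, in your reverse inequality for (2): when you ``apply Fujita approximation to $P_\sigma(L)+\epsilon H$ to produce data with $\phi_*A'$ approximating $P_\sigma(L)+\epsilon H$ from below,'' note that Fujita approximation in the sense of \cite{takagi07,lm09} only controls $\vol(A')$, not the numerical class $\phi_*A'$. What you actually need is that a movable big $\mathbb{Q}$-class $M$ can be realized as $\phi_*A$ with $A$ nef on a birational model; this follows by resolving the base ideal of a suitable multiple (since the base locus has codimension $\geq 2$, the fixed part is $\phi$-exceptional and pushes forward to zero). The paper makes exactly this step explicit by proving $\Mov^1(X)=\overline{\bigcup_\pi \pi_*\Nef^1(Y)}$ before concluding. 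Second, your citation for the orthogonality $\langle L^{n-1}\rangle\cdot N_\sigma(L)=0$ should be \cite[Corollary~3.6]{bfj09} (a consequence of the differentiability Theorem~3.1), not Theorem~4.9, which concerns the converse direction via $E$-bigness. Relatedly, in your final computation for (1) you want $A^{n-1}\cdot E\to 0$ and $\phi_*(A^{n-1})\to\langle L^{n-1}\rangle$ along the \emph{same} sequence of data; this is true but is precisely the content of \cite[Theorem~3.1]{bfj09}, so it is cleaner to cite that directly rather than rederive it.
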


\begin{proof}
The first equality in (1) is proved by \cite[Theorem 5.2]{cutkosky13}.  The others follow from \cite[Theorem 3.1]{bfj09} and \cite[Corollary 3.6]{bfj09} (which work equally well in any characteristic as discussed above).  

To show (2), first note that
\begin{equation*}
\Mov^{1}(X) = \overline{\cup_{\pi: Y \to X} \pi_{*}\Nef^{1}(Y)}
\end{equation*}
where $\pi$ varies over all birational morphisms from normal varieties.  Indeed, given any Cartier $\mathbb{Z}$-divisor $D$ whose base locus has codimension at least $2$, after resolving the base locus we can write $\pi^{*}D = M + E$ where $M$ is a basepoint free Cartier $\mathbb{Z}$-divisor and $E$ is effective and $\pi$-exceptional; the claim follows.  Note also that if $[D] \preceq [L]$ then $[M] \preceq [\pi^{*}L]$.  Using a perturbation argument, we see that $\langle L \rangle$ is a maximal element under the relation $\preceq$ over all movable classes $[D]$ such that $[D] \preceq [L]$.  We conclude by applying \cite[III.1.14 Proposition]{nakayama04} (which holds in any characteristic) which shows that $[P_{\sigma}(L)]$ is the unique class that satisfies this property.
\end{proof}

\subsubsection{The movable cone of curves.}

The following result is an important application of this circle of ideas.

\begin{thrm}[\cite{bdpp04}, Theorem 0.2]
Let $X$ be a smooth projective variety.  Then $\Mov_{1}(X) = \Nef_{1}(X)$.
\end{thrm}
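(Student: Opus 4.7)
The plan is to adapt the BDPP argument to arbitrary characteristic by substituting the characteristic-$p$ positive product constructed above for the characteristic-$0$ one of \cite{bfj09}; nothing else in the argument is characteristic-specific. The easy inclusion $\Mov_1(X)\subseteq\Nef_1(X)$ is almost definitional: a general member of an irreducible family of $1$-cycles dominating $X$ meets any prime divisor properly, hence non-negatively, and linearity plus continuity of intersection extend this to pairings between all of $\Mov_1(X)$ and $\Eff^1(X)=\Nef_1(X)^{\vee}$.

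For the reverse inclusion, I would argue by Hahn--Banach duality: it is enough to show that every $L\in N^1(X)$ which is not pseudo-effective is negative on some movable curve class. Fix an ample divisor $A$, let
\[
t_0 \; := \; \inf\bigl\{t\geq 0\;:\;L+tA\in\Eff^1(X)\bigr\}\;>\;0,
\]
and for $t>t_0$ define
\[
\alpha_t \; := \; \langle (L+tA)^{n-1}\rangle \;\in\; N_1(X).
\]
I would check $\alpha_t\in\Mov_1(X)$ by writing $\alpha_t$ as a limit of pushforwards $\pi_*(M_1\cdots M_{n-1})$ on smooth birational models $\pi:Y\to X$ (available through de Jong alterations followed by further blowups, as emphasized in Section \ref{backgroundsection}) with $M_i$ nef on $Y$; perturbing each $M_i$ to a very ample class realizes their intersection as the class of a general complete intersection curve on $Y$, which moves in a dominating family, so its pushforward is movable.

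The crux is the identity
\[
\alpha_t\cdot L \;=\; v(t)\;-\;\tfrac{t}{n}\,v'(t),\qquad v(t) \;:=\; \vol(L+tA),
\]
which combines the orthogonality $\alpha_t\cdot(L+tA)=v(t)$ provided by Proposition \ref{posprodprop}(1) (applied to the big class $L+tA$, noting $\langle (L+tA)^{n-1}\rangle\cdot N_\sigma(L+tA)=0$ so the identity collapses to $\langle(L+tA)^{n-1}\rangle\cdot(L+tA)=\vol(L+tA)$) with the derivative formula $v'(t)=n\,\alpha_t\cdot A$ for the volume along the ample ray. Since $v(t_0)=0$ and $v$ vanishes at $t_0$ to some finite order $k\geq 1$, writing $v(t)=c(t-t_0)^k+O((t-t_0)^{k+1})$ gives
\[
v(t)-\tfrac{t}{n}v'(t)\;=\;c(t-t_0)^{k-1}\!\left[(t-t_0)-\tfrac{kt}{n}\right]+O((t-t_0)^k),
\]
which is strictly negative for $t$ slightly greater than $t_0$ (since $t_0>0$), so $\alpha_t$ is the desired separating movable curve class.

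The main obstacle is securing, in positive characteristic, the two inputs to the crux identity: the orthogonality statement and the differentiability/derivative formula for $\vol$ along an ample perturbation. Both of these are the parts of \cite{bfj09} that a priori depend on resolution of singularities and on Fujita approximation. Fujita approximation in arbitrary characteristic is exactly what \cite{takagi07} and \cite{lm09} provide, and the discussion preceding Proposition \ref{posprodprop} has already explained that on a fixed smooth $X$ the relevant statements of \cite{bfj09} survive without embedded resolution; once these are in hand, the BDPP argument is formal.
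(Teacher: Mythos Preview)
There is a genuine gap in your argument at the Taylor expansion step. You write ``since $v(t_0)=0$ and $v$ vanishes at $t_0$ to some finite order $k\geq 1$,'' and then expand $v(t)=c(t-t_0)^k+O((t-t_0)^{k+1})$. But the volume function is only known to be $C^1$ on the big cone (this is precisely the BFJ differentiability result you invoke), and its behavior as one approaches the pseudo-effective boundary is not controlled well enough to justify a Taylor expansion there, let alone finite-order vanishing. Nothing rules out, a priori, behavior like $v(t)\sim\exp(-1/(t-t_0))$.

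Your key identity $\alpha_t\cdot L = v(t)-\tfrac{t}{n}v'(t)$ is correct, and it \emph{can} be made to yield the conclusion, but not via Taylor expansion. One clean fix: if $v(t)-\tfrac{t}{n}v'(t)\geq 0$ for all $t>t_0$, then $(\log v)'(t)\leq n/t$ on $(t_0,\infty)$; integrating from $s$ to $t$ with $t_0<s<t$ gives $v(s)\geq v(t)(s/t)^n$, and sending $s\to t_0^+$ contradicts $v(t_0)=0$ with $v(t)>0$. Alternatively, and this is what the paper (following \cite{bdpp04}) actually does, you bound $\alpha_t\cdot A$ from below by the Khovanskii--Teissier/Hodge inequality $\alpha_t\cdot A\geq v(t)^{(n-1)/n}(A^n)^{1/n}$, whence
\[
\alpha_t\cdot L \;\leq\; v(t)^{(n-1)/n}\bigl[v(t)^{1/n}-t(A^n)^{1/n}\bigr],
\]
and the bracket is negative for $t$ close to $t_0$ since $v(t)^{1/n}\to 0$ while $t(A^n)^{1/n}\to t_0(A^n)^{1/n}>0$.

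Note that the paper's proof does not invoke the BFJ derivative formula at all: its listed ingredients are Fujita approximation, Hodge inequalities, and continuity of volume, and it simply asserts that with these in hand the original \cite{bdpp04} argument goes through verbatim. Your route via $v'(t)=n\,\alpha_t\cdot A$ is a legitimate alternative packaging (differentiability is itself a consequence of Fujita approximation and orthogonality, hence of those same ingredients), but you should replace the unjustified analyticity step with one of the arguments above.
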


\begin{proof}
There are several ingredients in the proof in \cite{bdpp04}:
\begin{itemize}
\item Fujita approximations.  These are constructed in arbitrary characteristic by \cite{takagi07} and \cite{lm09}.
\item Hodge inequalities.  As discussed in \cite[Remark 1.6.5]{lazarsfeld04} these hold true in arbitrary characteristic.
\item Continuity of the volume function.  The proof of \cite[Theorem 2.2.44]{lazarsfeld04} holds in arbitrary characteristic.
\end{itemize}
Given these facts, the proof in \cite{bdpp04} works over an arbitrary algebraically closed field with no changes.
\end{proof}

\section{The movable cone}
\label{sec:positive cones}

The movable cone of Cartier divisor classes is the closed cone generated by classes of $\mathbb{Z}$-divisors whose base locus has codimension at least two.  We introduce the analogous notion for $k$-cycles.

\begin{defn}We say that a $k$-$\mathbb{Z}$-cycle $Z$ on $X$ is \textit{strictly movable}
if it is a member of a family $p:U\to W$ of effective $k$-$\mathbb{Z}$-cycles on $X$ 
(in the sense of \S\ref{sss:families}) such that all the irreducible components of
$U$ dominate $X$. If $U$ is irreducible, we say that $Z$ is \textit{strongly movable}.

A class $\alpha\in N_k(X)_{\mathbb Z}$ is \textit{strictly movable} if it is
represented by a strictly movable $k$-$\mathbb{Z}$-cycle. The closed convex subcone of $N_k(X)$
generated by strictly movable classes is the \textit{movable} cone $\Mov_k(X)$.
Its elements are called \textit{movable}.
\end{defn}

\begin{rmk}\label{rmk:move away} A $k$-$\mathbb{Z}$-cycle $Z$ is strictly movable if and only if it is a member of an effective family of $\mathbb{Z}$-cycles $p:U\to W$ 
on $X$ such that for every proper closed subset $Y\subsetneq X$, the
support of a general cycle in $p$ has no components in $Y$.
(One implication is clear. For the other it is enough to assume that $U$ is irreducible and that $Y$ is a possibly reducible Cartier divisor. If $Y'$ is the pullback to $U$, then the general
fiber over $W$ is either empty, or has dimension $k-1$.)
\end{rmk}

\begin{caut}A strictly (resp. strongly) movable $\mathbb{Z}$-cycle $Z$ should always be presented together with a family (resp. irreducible family) $p:U\to W$ of $\mathbb{Z}$-cycles on $X$ and dominating $X$ that it sits in. At least in characteristic zero, one can extract such
a family from the Chow variety corresponding to the numerical class of $Z$ on $X$,
but not canonically. This is a distinguishing feature from Cartier divisors where one
constructs families canonically from complete linear series.
\end{caut}

\begin{exmple}For $k=1$ this definition agrees with the the cone of movable curves studied by \cite{bdpp04}.\end{exmple}

\begin{exmple}For any projective variety $X$ the map $\cdot [X]: N^{1}(X) \to N_{n-1}(X)$ is injective (see \cite{fl13}).  Thus one can ask whether the restriction of the movable cone $\Mov_{n-1}(X)$ to $N^{1}(X)$ coincides with the movable cone of Cartier divisors $\Mov^{1}(X)$.  Recall that the movable cone of Cartier divisors $\Mov^{1}(X)$ is defined to be the closure of the cone generated by the classes of $\mathbb{Z}$-divisors whose base locus has codimension at least two.  It is clear that $\Mov^{1}(X) \cdot [X] \subset \Mov_{n-1}(X) \cap N^{1}(X)$, and the problem is whether the converse holds.

Equality holds for smooth varieties $X$.  Indeed, the definition implies formally that the interior of $\Mov^{1}(X)$ consists exactly of those divisors such that the augmented base locus has codimension at least $2$.  Thus if $\alpha \not \in \Mov^{1}(X)$, there is some prime divisor $T$ and a positive constant $c$ such that $T$ occurs in every effective divisor of class $\alpha$ with a coefficient at least $c$.  This shows that $\alpha \not \in \Mov_{n-1}(X)$ as well.
\end{exmple}

\begin{rmk}\label{strict transform strictly movable}Let $\pi:Y\to X$ be a birational morphism, and let $p:U\to W$
be a family of $\mathbb{Z}$-cycles on $X$ such that each component of $U$ dominates $X$.
Let $U'\subset W\times Y$ be the strict transform cycle of $U\subset W\times X$ inducing a family $p':U'\to W$ after maybe shrinking $W$. Then the
general cycle in $p'$ is the strict transform of the corresponding cycle in $p$.
(We can assume that $U$ is irreducible, hence so is $U'$, and let $D\subset Y$ be an effective Cartier divisor containing the locus where $\pi$ is not an isomorphism in its support. Since $U'$ dominates $Y$, the intersection $Z:=U'\cap W\times D$ is a divisor on $U'$  (or is empty). Thus $Z$ has codimension at least $1$ in the general fiber of $p'$, yielding the conclusion.
\end{rmk}

\begin{lem} \label{lem: pullpush mov cone}
\begin{enumerate}
\item Let $\pi: Y \to X$ be a dominant morphism of projective varieties.  Then $\pi_{*}\Mov_{k}(Y) \subset \Mov_{k}(X)$.
\item Let $\pi: Y \to X$ be a flat morphism of projective varieties. Let $Z$ be a strictly movable $\mathbb{Z}$-cycle. Then $[\pi^{-1}Z]$ is also strictly movable. If additionally $X$ is nonsingular and $\pi$ has relative dimension $d$, then $\pi^{*}\Mov_{k}(X) \subset \Mov_{k+d}(Y)$.
\end{enumerate}
\end{lem}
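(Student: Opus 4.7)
For part (1), the plan is to start with a strictly movable $\mathbb{Z}$-cycle $Z$ on $Y$ witnessed by a family $p: U \to W$, with $U \subset W \times Y$ whose components each dominate $Y$ via the second projection. I would consider the map $\mathrm{id}_W \times \pi : W \times Y \to W \times X$ and let $U' \subset W \times X$ be the reduced image of $U$. By generic flatness, after shrinking $W$ each component of $U'$ is flat over $W$. For each component $U_i$ of $U$, either its general fiber maps generically finitely onto its image (so the image contributes a component to the family on $X$ with fibers of the correct dimension $k$), or it is contracted by $\pi$ to lower dimension (so it contributes zero to $\pi_*[Z]$ and is discarded). Assigning to each surviving image component $U'_j$ the coefficient $\sum_{U_i \mapsto U'_j} a_i \cdot \deg(U_i|_w \to U'_j|_w)$ produces a family on $X$ whose general cycle-theoretic fiber is $\pi_* Z$ and whose components dominate $X$ (since $\pi$ is dominant and each surviving $U_i$ dominates $Y$). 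Hence $\pi_*[Z] \in \Mov_k(X)$, and the inclusion extends to all of $\Mov_k(Y)$ by taking nonnegative combinations and closures.

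For part (2), for the cycle-level statement I would form the base change $U \times_X Y \subset W \times Y$, using the second projection $U \to X$ and $\pi: Y \to X$. Flatness of $\pi$ transfers under base change, so $U \times_X Y \to U$ is flat of relative dimension $d$, and the composition $U \times_X Y \to W$ is flat of relative dimension $k+d$. Each component of $U \times_X Y$ surjects onto a component $U_i$ of $U$, which dominates $X$; pulling back along the flat surjective morphism $\pi$ shows this component dominates $Y$. The cycle-theoretic general fiber over $W$ is precisely $\pi^{-1}$ of a general fiber of $p$ equipped with the natural flat-pullback multiplicities, so this family represents $[\pi^{-1} Z]$ as a strictly movable class on $Y$. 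For the numerical statement, I would invoke Remark~\ref{rmk:chowattributes} to see that flat pullback descends to numerical classes when $X$ is nonsingular, approximate any class in $\Mov_k(X)$ by nonnegative combinations of strictly movable classes, apply the first half of (2) to each, and conclude by closedness of $\Mov_{k+d}(Y)$.

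The main obstacle is the bookkeeping in (1): one must choose the coefficients on the image family so that the general cycle-theoretic fiber is literally $\pi_*Z$ (not merely supported on the same set), and verify that discarding the contracted components is consistent with this. Part (2) is cleaner because flat base change preserves relative dimensions and multiplicities automatically, so the work reduces to checking that the resulting family satisfies the strict-movability conditions and that the coefficient of each component behaves correctly under pullback.
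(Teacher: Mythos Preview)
Your approach is essentially the same as the paper's, but the paper streamlines both parts. For (1), rather than carry along a reducible family and track the coefficients $\sum_{U_i \mapsto U'_j} a_i \cdot \deg(U_i/U'_j)$ on each image component, the paper reduces at the outset to a \emph{strongly} movable class (irreducible $U$). Then the image $U'$ is irreducible, and the pushforward of a general member of $p$ is simply \emph{proportional} to a member of the family $U' \to W$; this sidesteps the coefficient bookkeeping you flag as the main obstacle. Your more explicit version is fine, but the reduction is cleaner.

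For (2), one point you gloss over: the fiber product $U \times_X Y$ need not be reduced, so it is not literally a family in the sense of Definition~\ref{familydef}. The paper passes to $U' = (U \times_X Y)_{\mathrm{red}}$ and assigns to each component the multiplicity it carries in the fundamental cycle of $U \times_X Y$; this is what makes the general fiber match the flat pullback of a member of $p$. Your claim that each component of $U \times_X Y$ dominates $Y$ is correct but deserves a word: flatness of $U_i \times_X Y \to U_i$ forces every component to have dimension $\dim U_i + d$, so none can live over the complement of the flat locus of $U_i \to X$, and over that locus the map to $Y$ is flat hence dominant on each component.
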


\noindent Corollary \ref{cor:surj mov} proves  that in situation (1) there is actually an equality $\pi_{*}\Mov_{k}(Y) = \Mov_{k}(X)$.

\begin{proof}
(1) It is enough to prove that if $Z$ is a strongly movable $\mathbb{Z}$-cycle on $Y$ then $\pi_{*}[Z]$ is proportional to a strongly movable $\mathbb{Z}$-class on $X$. If $Z$ sits in the irreducible family $p:U\to W$ of $k$-$\mathbb{Z}$-cycles on $Y$, then the pushforward of the cycle $U$ under $\pi\times W$ is supported on an integral subscheme $U' \subset X \times W$.  Let $q: U' \to W$ denote the projection map; after shrinking $W$ to the flat locus of $q$, we can identify $q$ as an irreducible family of $k$-$\mathbb{Z}$-cycles on $X$. Since $U$ and $Y$ dominate $W$ and $X$ respectively, $U'$ dominates $X$ also. The push-forward of the general member of $p$ is proportional to a member of $q$.

(2) Note that $\pi$ is dominant since it is flat.  We can assume that $Z$ is strongly movable, and let $p:U\to W$ be an irreducible family of $\mathbb{Z}$-cycles
on $X$ deforming $Z$ and dominating $X$.  Every component of the base change $U \times_{X} Y$ dominates $Y$.  Let $U' = (U \times_{X} Y)_{red}$; after shrinking $W$, the induced morphism $U' \to W$ yields a family of $\mathbb{Z}$-cycles on $Y$ dominating $Y$, where to each component of $U'$ we assign the corresponding coefficient of the fundamental cycle of $U \times_{X} Y$.  By construction, a general member of this family is the flat pullback of a member of our original family $p$.

When $X$ is nonsingular, the pullback $\pi^*:N_k(X)\to N_{k+d}(Y)$ (where $d$ is the relative dimension of $\pi$) sends $[Z]$ to $\pi^*[Z]=[\pi^{-1}Z]$.
\end{proof}

\begin{exmple} If $X$ is a smooth projective variety of dimension $n$, then $\bpf_{k}(X) \subseteq \Mov_{k}(X)$. This follows from the fact that a general fiber of a morphism has movable class and the compatibility of movability with surjective pushforwards.
\end{exmple}

\begin{prop}
Let $X$ be a projective variety.  Then $\Mov_{k}(X)$ is a full-dimensional strictly convex cone in $N_{k}(X)$.
\end{prop}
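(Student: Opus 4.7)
The plan is to prove saliency and full-dimensionality separately, with saliency following from the inclusion $\Mov_k(X) \subseteq \Eff_k(X)$ and full-dimensionality reducing (via de Jong alterations) to the smooth case already covered by $\bpf_k$.

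For saliency, I would start from Definition \ref{familydef}: every family of $\mathbb{Z}$-cycles there is assumed to have positive coefficients $a_i > 0$, so a strictly movable cycle is effective and its numerical class lies in $\Eff_k(X)$. Passing to the cone generated by strictly movable classes and taking its closure gives $\Mov_k(X) \subseteq \Eff_k(X)$, and since $\Eff_k(X)$ is salient, so is $\Mov_k(X)$.

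For full-dimensionality, I would first handle the case when $X$ is smooth. The example immediately preceding this statement records $\bpf_k(X) \subseteq \Mov_k(X)$, and the proposition from \cite{fl13} quoted just above tells us $\bpf_k(X)$ is itself full-dimensional in $N_k(X)$, so the smooth case is immediate. For a general projective $X$, I would bootstrap from the smooth case using a de Jong alteration $\pi : Y \to X$ with $Y$ smooth projective. Lemma \ref{lem: pullpush mov cone}(1) yields $\pi_* \Mov_k(Y) \subseteq \Mov_k(X)$, while Proposition \ref{prop:surjeff} gives $\pi_* \Eff_k(Y) = \Eff_k(X)$; since $\Eff_k(X)$ spans $N_k(X)$, this forces the linear map $\pi_* : N_k(Y) \to N_k(X)$ to be surjective. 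The image of the full-dimensional cone $\Mov_k(Y) \subseteq N_k(Y)$ under a surjective linear map is full-dimensional, so $\Mov_k(X)$ contains the full-dimensional subcone $\pi_* \Mov_k(Y)$.

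I do not anticipate any real obstacle here: every step is a short formal consequence of results already stated in the paper. The only point not covered by a single citation is the surjectivity of $\pi_*$ on the numerical vector spaces, and this follows immediately by combining Proposition \ref{prop:surjeff} with the fact that $\Eff_k(X)$ is full-dimensional.
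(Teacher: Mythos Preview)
Your proposal is correct and follows essentially the same route as the paper: saliency via $\Mov_k(X)\subset\Eff_k(X)$, the smooth case via $\bpf_k(X)\subset\Mov_k(X)$, and the general case via a smooth alteration and Lemma~\ref{lem: pullpush mov cone}(1). The only addition is that you spell out the surjectivity of $\pi_*$ on numerical groups (via Proposition~\ref{prop:surjeff} and full-dimensionality of $\Eff_k(X)$) to justify that $\pi_*\Mov_k(Y)$ is full-dimensional, a step the paper leaves implicit.
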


\begin{proof}
Directly from the definition we see that $\Mov_k(X)\subset\Eff_k(X)$, hence $\Mov_k(X)$ is strictly convex.  

When $X$ is smooth of dimension $n$, we see that $\Mov_{k}(X)$ is full-dimensional since it contains the full-dimensional cone $\bpf_{k}$.  In the general case, let $\pi:Y\to X$ be a nonsingular alteration. Then $\Mov_k(X)$ contains the full-dimensional cone $\pi_*\Mov_{k}(Y)$.\end{proof}

Complete intersections are interior to the movable cone.

\begin{lem} Let $X$ be a projective variety of dimension $n$. If $h_1,\ldots,h_k$ are ample classes in $N^1(X)$, then $h_1\cdot\ldots\cdot h_k\cap[X]$ is in the interior of $\Mov_{n-k}(X)$.\end{lem}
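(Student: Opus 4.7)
The plan is to handle the smooth case via the pliant cone of \cite{fl13} and then reduce the general case to it by taking an alteration. Recall from the proof of Theorem~\ref{cor:ci nef} that $h_{1}\cdots h_{k}$ lies in the interior of the pliant cone $\pl^{k}(X)\subseteq \Nef^{k}(X)$, which by definition is the closed cone generated by Schur polynomials in Chern classes of globally generated vector bundles on $X$.

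For smooth $X$, the map $\cap[X]\colon N^{k}(X)\to N_{n-k}(X)$ is an isomorphism, so it suffices to prove the containment $\pl^{k}(X)\cap[X]\subseteq \Mov_{n-k}(X)$: the image of the interior of $\pl^{k}(X)$ is then an open subset of $N_{n-k}(X)$ contained in $\Mov_{n-k}(X)$, hence contained in its interior. To verify the containment, each Schur class $s_{\lambda}(E)$ of a globally generated vector bundle $E$ is represented by a degeneracy locus of general maps between globally generated bundles; varying the maps produces a family of cycles of the correct codimension dominating $X$, so $s_{\lambda}(E)\cap[X]$ is strictly movable.

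For general projective $X$, let $\pi\colon Y\to X$ be a de Jong alteration with $Y$ smooth of dimension $n$, and let $d=\deg\pi$. By Proposition~\ref{prop:surjeff} and Lemma~\ref{lem: pullpush mov cone}, $\pi_{*}\colon N_{n-k}(Y)\to N_{n-k}(X)$ is a surjective linear map carrying $\Mov_{n-k}(Y)$ into $\Mov_{n-k}(X)$; being a surjective linear map between finite-dimensional vector spaces, it is open, so it sends interior points to interior points. The main obstacle is that $\pi^{*}h_{i}$ is only big and nef, not ample, and indeed $\pi^{*}h_{1}\cdots\pi^{*}h_{k}\cap[Y]$ may lie on the boundary of $\Mov_{n-k}(Y)$, so the smooth case on $Y$ does not apply directly. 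My approach is to perturb by a fixed ample class $H_{Y}$ on $Y$: the classes $\pi^{*}h_{i}+\epsilon H_{Y}$ are ample for every $\epsilon>0$, so the smooth case gives that $\prod_{i}(\pi^{*}h_{i}+\epsilon H_{Y})\cap[Y]$ is interior to $\Mov_{n-k}(Y)$, and its pushforward equals $d\cdot h_{1}\cdots h_{k}\cap[X]+\epsilon\gamma(\epsilon)$ and is interior to $\Mov_{n-k}(X)$ for every $\epsilon>0$. Passing to the limit $\epsilon\to 0$ requires a uniform (in $\epsilon$) lower bound on the radius of an interior neighborhood, which I would obtain by exploiting the pliant-cone decomposition on $Y$: the Schur expansion of $\prod_{i}(\pi^{*}h_{i}+\epsilon H_{Y})$ varies continuously in $\epsilon$ and provides an open neighborhood in $\Mov_{n-k}(Y)$ whose $\pi_{*}$-image remains open in $N_{n-k}(X)$ independently of $\epsilon$, yielding an open neighborhood of $d\cdot h_{1}\cdots h_{k}\cap[X]$ inside $\Mov_{n-k}(X)$.
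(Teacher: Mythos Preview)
Your smooth case is fine and essentially matches the paper: both of you invoke a full-dimensional positive cone from \cite{fl13} (you use $\pl^{k}$, the paper uses $\bpf_{n-k}$) that sits inside $\Mov_{n-k}(X)$ and contains complete intersections of ample classes in its interior.

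The singular case, however, has a real gap. You correctly observe that $\pi^{*}h_{i}$ is only big and nef, so that $\beta_{0}:=\prod_{i}\pi^{*}h_{i}\cap[Y]$ may sit on the boundary of $\Mov_{n-k}(Y)$; this genuinely happens (e.g.\ for the blow-up of $\mathbb{P}^{n}$ at a point, $(\pi^{*}h)^{n-1}$ has zero intersection with the exceptional divisor, hence lies on $\partial\Mov_{1}(Y)$). Your proposed fix, a ``uniform lower bound on the radius of an interior neighborhood'' coming from a Schur expansion, is not an argument: as $\epsilon\to 0$ the open ball around $\beta_{\epsilon}$ inside $\Mov_{n-k}(Y)$ really can shrink to a point, and a surjective linear map sends such a shrinking family of open sets to a shrinking family of open sets. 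Saying that the Schur coefficients vary continuously does not help, since some of those coefficients may tend to zero. All you end up proving is that $d\cdot h_{1}\cdots h_{k}\cap[X]$ is a limit of interior points of $\Mov_{n-k}(X)$, i.e.\ that it lies in $\Mov_{n-k}(X)$, which is strictly weaker than what is claimed.

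The paper sidesteps this entirely with a different idea. After reducing to a single ample integral class $h$, it first uses an alteration only to produce an \emph{integral subvariety} $Z\subset X$, not contained in $\Sing(X)$, whose class $[Z]$ already lies in the interior of $\Mov_{n-k}(X)$. Then it blows up $Z$ to get $\pi:X'\to X$ with exceptional Cartier divisor $E$, takes $h$ large enough that $\pi^{*}h-E$ is ample, and computes via the projection formula and the Segre-class identity $\pi_{*}((-E)^{k}\cap[X'])=-[Z]$ that
\[
\pi_{*}\bigl((\pi^{*}h-E)^{k}\cap[X']\bigr)=h^{k}\cap[X]-[Z]\in\Mov_{n-k}(X).
\]
Since $[Z]$ is interior, $h^{k}\cap[X]=(h^{k}\cap[X]-[Z])+[Z]$ is interior as well. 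No limiting process is needed.
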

\begin{proof}
We may assume that $0<k<n$. In particular $n\geq 2$.
When $X$ is nonsingular, then the result is true because by \cite{fl13} complete
intersections are in the interior of the basepoint free cone which is
is full-dimensional and a subcone of the movable cone.

In the general case, it suffices to prove that $h^{k}$ is in the interior of $\Mov_{n-k}(X)$ for an ample $\mathbb{Z}$-class $h$.  Indeed, if we choose an integer $m$ sufficiently large so that $mh_{i} - h$ is ample for every $i$, then $m^{k} h_{1} \cdot \ldots \cdot h_{k} = h^{k} + \beta$ for some $\beta \in \Mov_{n-k}(X)$.  Thus if $h^{k}$ is interior to $\Mov_{n-k}(X)$, so is $h_{1} \cdot \ldots \cdot h_{k}$.

There is an integral subvariety $Z$ that does not lie in $\Sing(X)$ whose numerical class lies in the interior of $\Mov_{n-k}(X)$.  To see this, let $\pi: Y \to X$ be a smooth alteration.  By the smooth case proved above, for an ample $\mathbb{Z}$-class $A$ on $Y$ we have that $A^{k}$ is in the interior of $\Mov_{n-k}(Y)$.  By Bertini's hyperplane theorem we can find an integral subvariety $Z'$ of $Y$ with class proportional to $A^{k}$ that is not contained in the preimage of $\Sing(X)$ or in the $\pi$-exceptional locus.  Since $\Mov_{n-k}(Y)$ is full-dimensional, so is $\pi_{*}\Mov_{n-k}(Y)$; thus $\pi_{*}[Z']$ lies in the interior of $\Mov_{n-k}(X)$.  Let $Z$ be the integral subvariety underlying $\pi(Z')$ and note that $[Z]$ is in the interior of $\Mov_{n-k}(X)$ since it is proportional to $\pi_{*}[Z']$.

Let $\pi:X'\to X$ be the blow-up of $Z$ with exceptional (Cartier) $\mathbb{Z}$-divisor $E$. Replacing $h$ by a multiple, we can assume that $\pi^*h-E$ is ample on $X'$. Then $(\pi^*h-E)^k\cap[X']\in\Mov_{n-k}(X')$, so $\pi_*((\pi^*h-E)^k\cap[X'])\in\Mov_{n-k}(X)$. 
\par Because $\pi$ contracts cycles of codimension less than $k$ contained in $E$, we have for $0<i<k$ that $\pi_*(E^i\cap[X'])=0$. We note that $$\pi_*((-E)^k\cap[X'])=-[Z].$$ This is the statement for Segre classes of normal cones that $s_0(Z,X)=[Z]$ (see \cite[\S3.1, \S4.3, and \S7.2]{fulton84}). Our computations and the projection formula now yield $$\Mov_{n-k}(X)\ni\pi_*((\pi^*h-E)^k\cap[X'])=h^k-[Z].$$ Since by assumption $[Z]$ is an interior class, the result follows.
\end{proof}

Except for dimension $1$ cycles, it seems difficult to describe the movable cone using intersections.  However, the following lemmas encapsulate the most important intersection-theoretic properties.

\begin{lem}\label{lem:movint} 
Let $\alpha\in\Mov_{k}(X)$. Then
\begin{enumerate}[i)]
\item For any $\beta\in\Eff^1(X)$ we have  $\beta \cdot \alpha \in \Eff_{k-1}(X)$. \label{lem:psefintmov}
\item If $H$ is a big and nef Cartier divisor, then $H\cdot \alpha =0$ if and only if $\alpha=0$. \label{cor:bigandnefcor}
\item If $h\in\Nef^1(X)$, then $h \cdot \alpha \in \Mov_{k-1}(X)$. \label{lem:movintnef} 
\end{enumerate}
\end{lem}

\begin{proof}
We first prove $i)$. Up to limits and linear combinations we may suppose that $\alpha$ is a strongly movable $\mathbb{Z}$-class defined by an irreducible family $p: U \to W$ that dominates $X$ and that $\beta$ is the class of an effective Cartier $\mathbb Z$-divisor $L$.  By Remark \ref{rmk:move away}, we can choose a cycle $Z$ in the family $p$ such that no component is contained in $\Supp(L)$. Then $L$ and $Z$ meet properly. By \cite[\S 7]{fulton84}, the intersection $L\cdot Z$ exists at the cycle level and is effective. It represents $\beta \cdot \alpha$.

For $ii)$, a big and nef divisor $H$ satisfies $[H] = [A] + [E]$ for an effective divisor $E$ and an ample divisor $A$.  By part $i)$, the intersection $\alpha \cdot E$ is pseudo-effective, so
\begin{equation*}
\alpha {\cdotp} H \succeq \alpha {\cdotp} A \succeq 0
\end{equation*}
by Theorem \ref{cor:ci nef}. By Corollary \ref{cor:norms} we see that $\alpha \cdot H=0$ if and only if $\alpha=0$.  

For $iii)$, by rescaling and taking limits we may assume that $\alpha$ is a strongly movable $\mathbb{Z}$-class defined by an irreducible family $p: U \to W$ that dominates $X$ and that $h$ is the class of a very ample $\mathbb{Z}$-divisor $H$. We construct a suitable family of $\mathbb{Z}$-cycles on $X$ by intersecting the general element of $p$ with the general element of the complete linear series $|H|$. We may assume $k>0$. 

Let $\mathbb P$ be 
the projective space of lines in $|H|$. Let $D$ be the universal hyperplane
in $\mathbb P\times X$. On $\mathbb P\times X\times W$, the Cartier $\mathbb{Z}$-divisor $D'=D\times W$ and the subvariety $U'=\mathbb P\times  U$ meet properly. This is because the general cycle in $p$ is not contained in the general element of $|H|$. There exists a well-defined effective cycle $T$ representing the intersection
$D'\cap U'$. It restricts generically over a point in $\mathbb P\times W$ to the cycle-theoretic intersection of the corresponding elements of $|H|$ and $p$ respectively.

By generic flatness, we find an open subset $S$ of $\mathbb P\times W$ such that over it $T$ is
a family $q:T\to S$ of $(k-1)$-$\mathbb{Z}$-cycles on $X$ with numerical class $H\cdot\alpha$.

We show that each component of the support of $T$ dominates $X$. It suffices to show that for any irreducible divisor $Y\subset X$, there is some cycle in $q$ with no component contained in $Y$.
Choose a general cycle $V$ in $p$; the dimension of the set-theoretic intersection of $V$ with $Y$ is $k-1$. 
By the freeness of $|H|$ a general element $A \in |H|$ does not contain any component of $V \cap Y$.  Then the cycle-theoretic intersection of $A$ with $V$ has no component contained in $Y$.  Since $A$ and $V$ are general, this cycle-theoretic intersection lies in the family $q$.
\end{proof}

We now return to the situation where we have a surjective morphism $\pi: Y \to X$.

\begin{prop} \label{prop:deg cover bound}
Let $\pi: Y \to X$ be a surjective morphism of projective varieties.  Let $A$ and $H$ be ample Cartier $\mathbb{Z}$-divisors on $Y$ and $X$ respectively.  There is some constant $C = C(\pi,A,H)$ such that for any effective strictly movable class $\alpha \in N_{k}(X)_{\mathbb{Z}}$, there is a movable effective
$\mathbb Q$-class $\alpha' \in N_{k}(Y)_{\mathbb{Q}}$ with $\pi_{*}\alpha' = \alpha$ satisfying
\begin{equation*}
A^k \cdotp \alpha' \leq C(H^k \cdotp \alpha).
\end{equation*}
\end{prop}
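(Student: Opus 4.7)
The plan is to construct the lift $\alpha'$ explicitly from the family defining $\alpha$ and to bound $A^{k}\cdot\alpha'$ via the projection formula together with the norm supplied by Corollary~\ref{cor:norms}. Since a strictly movable $\mathbb{Z}$-class decomposes as a positive integer combination of strongly movable ones (one per irreducible component of the defining family), it suffices to prove the statement for strongly movable $\alpha$ represented by an irreducible family $p: U\to W$ with $U\to X$ surjective; the general case follows by taking positive integer combinations of the lifts.

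Set $d:=\dim Y-\dim X$ and fix $m$ large enough that $|mA|$ is very ample on $Y$. I would construct a family $q: V\to \mathcal W$ of $k$-cycles on $Y$ parametrized by $\mathcal W:=W\times|mA|^{d}$, whose member over a generic $(w,[\Sigma_{1}],\ldots,[\Sigma_{d}])$ is the cycle-theoretic intersection
\[ V_{w,\Sigma_\bullet}\ :=\ \pi^{-1}(F_{w})\cdot \Sigma_{1}\cdots \Sigma_{d}, \]
where $F_{w}$ is the cycle fibre of $p$ over $w$. By generic flatness of $\pi$ over a dense open of $X$ (which meets a generic $F_{w}$ in a dense subset) and Bertini applied to the very ample series $|mA|$, this is a well-defined effective $k$-cycle whose class $\beta\in N_{k}(Y)$ is independent of generic choices. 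Movability of $\beta$ follows because $\pi^{-1}(U)\to Y$ is surjective (as $\pi$ is surjective and $U\to X$ dominant), so the total space of $q$ covers $Y$, and the Bertini step ensures each component of the total space dominates $Y$.

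For the relation to $\alpha$, the restriction $\pi|_{V_{w,\Sigma_\bullet}}: V_{w,\Sigma_\bullet}\to F_{w}$ is generically finite with generic fibre degree $(mA)^{d}\cdot[F_{\pi}]=m^{d}e_{A}$, where $e_{A}:=A^{d}\cdot[F_{\pi}]$ is the $A$-degree of a generic fibre $F_{\pi}$ of $\pi$. Hence $\pi_{*}\beta=m^{d}e_{A}c_{\pi}\alpha$ with $c_{\pi}\in\mathbb{Z}_{>0}$ depending only on $\pi$ (equal to $1$ if $\pi$ has generically reduced fibres), and $\alpha':=\beta/(m^{d}e_{A}c_{\pi})\in\Mov_{k}(Y)_{\mathbb{Q}}$ satisfies $\pi_{*}\alpha'=\alpha$. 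Unwinding the intersection gives
\[ A^{k}\cdot\alpha'\ =\ \frac{A^{k+d}\cdot[\pi^{-1}(F_{w})]}{e_{A}c_{\pi}}, \]
and the projection formula, applied on the flat locus of $\pi$, yields $A^{k+d}\cdot[\pi^{-1}(F_{w})]=B\cdot[F_{w}]$ where $B:=\pi_{*}(A^{k+d}\cap[Y])\in N_{n-k}(X)$ depends only on $\pi$ and $A$. Intersection with $B$ is a continuous linear functional on $N_{k}(X)$, and since $H^{k}\cdot(\cdot)$ is a norm on $\Eff_{k}(X)$ by Corollary~\ref{cor:norms}, there exists $C_{0}$ with $B\cdot\gamma\leq C_{0}H^{k}\cdot\gamma$ for all $\gamma\in\Eff_{k}(X)$; taking $C:=C_{0}/(e_{A}c_{\pi})$ gives the required bound, and $C$ depends only on $\pi$, $A$, and $H$ as required.

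The main obstacle is verifying that each component of the total space of $q$ dominates $Y$: the fibre product $\pi^{-1}(U)=U\times_{X}Y$ can be reducible when the geometric generic fibre of $\pi$ is not irreducible, and one must argue that the Bertini step still produces a family whose components individually dominate $Y$. If this cannot be arranged directly, one restricts to the union of components of $\pi^{-1}(U)$ that do dominate $Y$ and adjusts $c_{\pi}$ accordingly; a Galois-orbit argument ensures that the adjusted constant still depends only on $\pi$ and $A$, preserving the form of the bound.
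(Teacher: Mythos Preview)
Your argument is essentially the paper's treatment of the special case where $\pi$ is flat and $X$ is smooth, but applied without those hypotheses; the gap lies precisely in making sense of $\pi^{-1}(F_w)$ as a cycle and in the projection-formula step.

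First, without equidimensionality of $\pi$ the scheme $\pi^{-1}(F_w)$ need not be pure of dimension $k+d$. For instance, let $\pi:Y\to\mathbb{P}^3$ be the blow-up of a point $p$ (so $d=0$) and take for the strongly movable family the lines through $p$; this family does dominate $\mathbb{P}^3$. Then every $\pi^{-1}(\ell_w)$ contains the exceptional $\mathbb{P}^2$, so $U\times_X Y\to W$ has relative dimension $2$ everywhere, and no $d=0$ Bertini step produces a family of curves. Discarding the offending components means passing to strict transforms, whose numerical class depends on the family and not only on $\alpha$; the degree bound then needs a new argument.

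Second, even granting a well-defined $(k+d)$-cycle, the identity $A^{k+d}\cdot[\pi^{-1}(F_w)]=B\cdot[F_w]$ with $B=\pi_*(A^{k+d}\cap[Y])$ requires the flat-pullback form of the projection formula, which fails over the non-flat locus, and the right-hand side is only a numerical invariant of $\alpha=[F_w]$ when $X$ is smooth (otherwise there is no pairing $N_{n-k}(X)\times N_k(X)\to\mathbb{R}$). ``Applying the projection formula on the flat locus'' ignores the contribution of components of $\pi^{-1}(F_w)$ supported over the complement, and these are exactly what can make the degree large.

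The paper resolves both issues by a case division. When $\pi$ is flat and $X$ is smooth, the numerical pullback $\pi^*:N_k(X)\to N_{k+d}(Y)$ is defined and injective, and your norm-comparison via Corollary~\ref{cor:norms} (which matches the paper's) finishes that case. When $\pi$ is generically finite, the paper lifts via \emph{strict transforms} (through an auxiliary flattening) and bounds degrees using an effective Cartier divisor $E$ with $-E$ $\pi$-ample: after replacing $A$ by $\pi^*H-E$, movability of the lifted class lets one inductively compare $(\pi^*H-E)^k\cdot\beta$ with $(\pi^*H)^k\cdot\beta=\deg\pi\cdot H^k\cdot\alpha$. The general case is then obtained by flattening $\pi$ and composing the two special cases. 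Your final paragraph flags the component-domination issue, but the degree bound in the non-flat, non-smooth situation requires this genuinely different idea with the $\pi$-ample divisor, not a restriction to the flat locus.
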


\begin{proof} Replacing $A$ and $H$ by fixed multiples,
we can assume that they are very ample $\mathbb{Z}$-divisors.  Furthermore, it suffices to consider the case when $\alpha$ is strongly movable.  Denote by $d$ the relative dimension of $\pi$.

\vskip.5cm
\textit{The flat nonsingular case.} Assume that $\pi$ is flat and that $X$ is nonsingular.  
Let $t$ be the positive integer such that 
$$\pi_*(A^d \cdotp [Y])=t[X].$$
Let $p: U \to W$ be a strongly movable family of class $\alpha$.  The flat pull-back construction in the proof of Lemma \ref{lem: pullpush mov cone} (2) yields a strictly movable family $q: U' \to W^{0}$ of class $\pi^{*}\alpha$.  The construction in the proof of Lemma \ref{lem:movint}.\ref{lem:movintnef} allows us to intersect the family $q$ by the linear series $|A|$; repeating this construction $d$ times yields a strictly movable family of effective $k$-$\mathbb{Z}$-cycles $r: T \to S$ of class $\beta := \pi^{*}\alpha \cdot A^{d}$.  By the projection formula, $\pi_{*}\beta = t \alpha$.

Note that $\pi^*:N_k(X)\to N_{k+d}(Y)$ is injective; this is because $\pi_*(\pi^*\alpha\cap A^d)=C\alpha$, where $C\neq 0$ is determined by
$\pi_*(A^d\cap[Y])=C\cdot [X]$. 

Fix the norms $\Vert \cdot \Vert_{1}$ on $N_{k+d}(Y)$ and $\Vert \cdot \Vert_{2}$ on $N_{k}(X)$ as in Corollary \ref{cor:norms} with respect to the ample divisors $A$ and $H$ respectively.  Note that
$$\|\pi^*\alpha\|_{1} = A^{k+d}\cap \pi^*\alpha = A^{k} \cdot \beta.$$
Restricting through the injective linear map $\pi^*$, we see $\|\cdot\|_{1}$ as a norm on $N_k(X)$ as well.
The existence of the constant $C=C(\pi,A,H)$ follows from the equivalence of this norm with $\Vert \cdot \Vert_{2}$ and the independence of $t$ from the construction.

\vskip.5cm
\textit{The generically finite dominant case.} There exists an effective Cartier $\mathbb{Z}$-divisor $E$ on $Y$ such that
$-E$ is $\pi$-ample. Replacing $H$ by a fixed multiple depending only on $\pi$ and $E$, we can assume that $\pi^*H-E$ is ample on $Y$. 
Using Corollary \ref{cor:norms}, the ample divisor classes $\pi^*H-E$ and $A$ determine equivalent norms on $N_k(Y)$, so without loss of generality we can assume that $A=\pi^*H-E$. 

Let $p: U \to W$ be a strongly movable family of class $\alpha$.  We construct a corresponding family on $Y$.   There exist blow-ups $f:Y'\to Y$ and $g:X'\to X$ and a (finite) flat morphism $\pi':Y'\to X'$ such that $\pi\circ f=g\circ \pi'$.
Let $U' \subset W \times X'$ be the strict transform of $U$.  After shrinking $W$ we can assume that $U'$ defines a strongly movable family of $\mathbb{Z}$-cycles $p': U' \to W$.  Let $Z'$ be a general cycle-theoretic fiber of $p'$.  Since $\pi'$ is flat, Lemma \ref{lem: pullpush mov cone} (2) shows that $(\pi')^{-1}Z'$ is strictly movable.  We set $\beta \in N_{k}(Y)$ to be the strictly movable class $f_{*}[(\pi')^{-1}(Z')]$. By \cite[Example 1.7.4]{fulton84}, we have
\begin{equation*}
\pi_{*}\beta = g_{*}\pi'_*[(\pi')^{-1}Z']=\deg\pi' \cdot g_{*}[Z'] = \deg\pi' \cdot \alpha.
\end{equation*}
Since $\beta$ is a strongly movable class and $\pi^{*}H-E$ is ample, there is an effective $\mathbb{Q}$-cycle $V$ of class $\beta \cdot (\pi^{*}H-E)^{k-1}$ such that no component of the support of $V$ is contained in $E$.  Thus
\begin{equation*}
\beta \cdot (\pi^{*}H - E)^{k} \leq \beta \cdot (\pi^{*}H - E)^{k-1} \cdot \pi^{*}H.
\end{equation*}
Repeating this argument inductively, we see that
\begin{equation*}
\beta \cdot (\pi^{*}H - E)^{k} \leq \beta \cdot \pi^{*}H^{k} = (\deg\pi)\cdot \alpha \cdot H^{k}.
\end{equation*}
The existence of the constant $C = C(\pi,A,H)$ follows.

\vskip.5cm
\textit{The general case.} There exist blow-ups $f:Y'\to Y$ and $g:X'\to X$ and a flat morphism $\pi':Y'\to X'$ such that $\pi\circ f=g\circ \pi'$.  After taking a generically finite base change to a nonsingular alteration of $X'$ we can assume that $X'$ is nonsingular.  Fix an ample $\mathbb{Z}$-divisor $H'$ on $X'$ and choose an ample $\mathbb{Z}$-divisor $A'$ on $Y'$ such that $A' - f^{*}A$ is ample.  By the projection formula, we may deduce the desired statement for $\pi: (Y,A) \to (X,H)$ from the statement for $\pi \circ f: (Y',A') \to (X,H)$.  However, this follows formally from the generically finite case $g: (X',H') \to (X,H)$ and the flat case $\pi': (Y',A') \to (X',H')$ over a nonsingular base.
\end{proof}

The following corollary is the analogue for the movable cone of Proposition \ref{prop:surjeff}.

\begin{cor} \label{cor:surj mov}
If $\pi:Y\to X$ is a surjective morphism of projective varieties, then
$\pi_*\Mov_k(Y)=\Mov_k(X)$.
\end{cor}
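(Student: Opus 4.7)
The inclusion $\pi_*\Mov_k(Y) \subset \Mov_k(X)$ is already established in Lemma \ref{lem: pullpush mov cone} (1), so the task reduces to proving $\Mov_k(X) \subset \pi_*\Mov_k(Y)$. My plan is to realize an arbitrary movable class on $X$ as the pushforward of a limit of movable classes on $Y$, using Proposition \ref{prop:deg cover bound} to produce the lifts and the norm from Corollary \ref{cor:norms} to control them.

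Fix ample $\mathbb{Z}$-divisors $H$ on $X$ and $A$ on $Y$, and let $C = C(\pi,A,H)$ be the constant supplied by Proposition \ref{prop:deg cover bound}. Given $\alpha \in \Mov_k(X)$, write $\alpha = \lim_{m\to\infty} \alpha_m$ with each $\alpha_m$ a positive $\mathbb{Q}$-linear combination $\sum_i c_{m,i}\beta_{m,i}$ of effective strictly movable $\mathbb{Z}$-classes $\beta_{m,i}$ on $X$; this is possible by definition of $\Mov_k(X)$ and density of $\mathbb{Q}$. For each summand, Proposition \ref{prop:deg cover bound} produces an effective movable $\mathbb{Q}$-class $\beta'_{m,i} \in \Mov_k(Y)$ with $\pi_*\beta'_{m,i} = \beta_{m,i}$ and $A^k\cdot \beta'_{m,i} \leq C(H^k\cdot\beta_{m,i})$. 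Setting $\alpha'_m := \sum_i c_{m,i}\beta'_{m,i} \in \Mov_k(Y)_{\mathbb{Q}}$ and taking positive combinations preserves both the pushforward identity $\pi_*\alpha'_m = \alpha_m$ and the estimate
\begin{equation*}
A^k \cdot \alpha'_m \;\leq\; C\,(H^k\cdot\alpha_m).
\end{equation*}

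Since $\alpha_m \to \alpha$, the right-hand side is bounded uniformly in $m$. Each $\alpha'_m$ lies in $\Mov_k(Y) \subset \Eff_k(Y)$, so Corollary \ref{cor:norms} applied to $A$ gives a norm $\|\cdot\|$ on $N_k(Y)$ with $\|\alpha'_m\| = A^k \cdot \alpha'_m$; hence $\{\alpha'_m\}$ is bounded in the finite-dimensional space $N_k(Y)$. Extracting a convergent subsequence, we obtain a limit $\alpha' \in N_k(Y)$. Because $\Mov_k(Y)$ is closed, $\alpha' \in \Mov_k(Y)$; because pushforward is continuous, $\pi_*\alpha' = \lim \pi_*\alpha'_m = \lim \alpha_m = \alpha$.

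The main obstacle is ensuring the uniform boundedness of the lifts $\alpha'_m$, for which the explicit constant $C$ in Proposition \ref{prop:deg cover bound} is crucial: without it there is no reason the lifts should remain in a compact region, and one could not extract a limit. The combination of that quantitative bound with the ``norm equals intersection with an ample class'' feature of Corollary \ref{cor:norms} is what converts an existence statement for lifts of integral classes into a surjectivity statement at the level of real cones.
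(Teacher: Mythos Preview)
Your proof is correct and follows essentially the same route as the paper: approximate $\alpha$ by rational movable classes, lift each via Proposition \ref{prop:deg cover bound} with a uniform degree bound, and use Corollary \ref{cor:norms} to extract a convergent subsequence whose limit lies in $\Mov_k(Y)$ and pushes forward to $\alpha$. The only cosmetic difference is that you write each approximant as a $\mathbb{Q}$-combination of several strictly movable classes rather than a single $\mathbb{Q}$-multiple, but since finite positive integer combinations of strictly movable classes are again strictly movable, this amounts to the same thing.
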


\begin{proof}Let $\alpha$ be a movable class on $X$. 
Write $\alpha$ as a limit of $\mathbb{Q}$-multiples $\alpha_{i}$ of strictly movable $\mathbb{Z}$-classes.
For each $i$, Proposition \ref{prop:deg cover bound} constructs
an effective movable class $\beta_i$ on $Y$ such that $\pi_*\beta_i=\alpha_i$
whose degree with respect to some polarization on $Y$ is bounded
independently of $i$.  Since the set of pseudo-effective classes of bounded degree is compact,
we can find an accumulation point $\beta$ for the 
sequence $\beta_i$.  Note that $\beta$ is movable.
Since $\pi_*$ is continuous, $\pi_*\beta=\alpha$.
\end{proof}

Finally, we show that the movable cone is the birational basepoint free cone.

\begin{prop} \label{movbirbpf}
Let $X$ be a projective variety.  Then
\begin{equation*}
\Mov_{k}(X) = \overline{ \sum_{\pi} \pi_{*}\bpf_{k}(Y) }
\end{equation*}
where $\pi: Y \to X$ varies over all birational models of $X$.
\end{prop}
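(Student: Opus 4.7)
The plan is to verify the two containments separately. For the inclusion $\overline{\sum_\pi \pi_*\bpf_k(Y)} \subseteq \Mov_k(X)$: a general fiber of a proper morphism of relative dimension $k$ is strictly movable (essentially by the definitions of the two cones), so $\bpf_k(Y) \subseteq \Mov_k(Y)$ for each birational model $Y$, and by Lemma~\ref{lem: pullpush mov cone}(1) we have $\pi_*\Mov_k(Y) \subseteq \Mov_k(X)$ for any dominant morphism. Since $\Mov_k(X)$ is a closed convex cone, the sum over all birational models and its closure sit inside $\Mov_k(X)$.

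For the reverse inclusion, $\Mov_k(X)$ is by definition the closure of the cone generated by strongly movable $\mathbb{Z}$-classes (a strictly movable class decomposes into strongly movable ones by irreducible components of its family). Hence it suffices to show that every strongly movable $\mathbb{Z}$-class $\alpha$ lies in $\pi_*\bpf_k(Y)$ for some birational morphism $\pi: Y \to X$. Fix such $\alpha$, represented by an irreducible family $p: U \to W$ with $U \subset W \times X$ whose second projection $s: U \to X$ is dominant. Using Remark~\ref{closureoffamilies}, I may replace $W$ by a projective closure and shrink back to the flat locus so that $p$ is flat and proper with $W$ quasi-projective sitting inside a projective compactification.

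The key step is to apply Raynaud--Gruson flattening to the second projection $s$: this produces a birational morphism $\pi: Y \to X$ and a proper birational modification $U_Y \to U$ such that the induced map $s_Y: U_Y \to Y$ is flat. The composition $p_Y: U_Y \to U \to W$ remains proper and is generically of relative dimension $k$; moreover $U_Y$ is irreducible and surjects onto $W$. Restricting to an open subset of $W$ if necessary to arrange relative dimension $k$ on the nose, the datum $(s_Y, p_Y, U_Y, W)$ defines a strongly basepoint free class $\alpha_Y = (s_Y|_{F_{p_Y}})_*[F_{p_Y}] \in \bpf_k(Y)$. A general fiber $F_{p_Y}$ of $p_Y$ is the strict transform of the general fiber $F_p$ of $p$; since $F_p$ dominates its image in $X$ (which is not contained in the $\pi$-exceptional locus, because $F_p$ is a member of an $X$-dominating family), pushing forward yields $\pi_*[F_{p_Y}] = [F_p]$ and hence $\pi_*\alpha_Y = \alpha$, as desired.

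The main obstacle is the gap between the defining hypotheses of strongly movable and strongly basepoint free classes: the former requires the base projection $p$ to be flat and the second projection $s$ merely dominant, while the latter swaps these roles, requiring $p$ proper of relative dimension $k$ and $s$ flat. Bridging the two definitions relies essentially on Raynaud--Gruson flattening, together with some care in managing the subtle interaction between properness, flatness, and equidimensionality as one shrinks or compactifies $W$ during the construction. A secondary technical point is to verify that all invocations of $\bpf_k(Y)$ remain valid when $Y$ is possibly singular (as the flattening need not produce a smooth $Y$); this is handled by reading the basepoint free cone with the flexible definition given in the introduction.
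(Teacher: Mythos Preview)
Your argument is correct and follows the same route as the paper: flatten the second projection $s: U \to X$ (Raynaud--Gruson) to obtain a birational model $Y \to X$ on which the strict-transform family realizes the strongly movable class as a strongly basepoint free class, then push forward. The compactification of $W$ via Remark~\ref{closureoffamilies} is unnecessary since $U \subset W \times X$ already makes $p$ proper, but this extra step is harmless; your remark about needing the introduction's more general definition of $\bpf_k$ (for possibly singular $Y$) is a fair observation about the paper's own conventions.
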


\begin{proof}
It suffices to show that any strongly movable class is contained in the right hand side.  Suppose that $p: U \to W$ is a strongly movable family of $\mathbb{Z}$-cycles.  
Flatten the map $s: U \to X$ to obtain $s': U' \to X'$.  Note that $U'$ is still irreducible since every component must dominate $X'$.  Although the map $p': U' \to W$ may not define a family of $\mathbb{Z}$-cycles, after shrinking $W$ to the flat locus of $p'$ we do obtain a family of $k$-$\mathbb{Z}$-cycles (by assigning $U'$ the same coefficient as $U$).  Furthermore the class of a cycle-theoretic fiber of $p'$ pushes forward to the class of a cycle-theoretic fiber of $p$.
\end{proof}

\section{Mobility for $\mathbb{R}$-classes} \label{mobilitysection}

Recall that if $X$ is a smooth variety of dimension $n$ and $L$ is a divisor on $X$, the volume of $L$ is
\begin{equation*}
\vol(L) := \limsup_{m \to \infty} \frac{\dim H^{0}(X,\mathcal{O}_{X}(\lfloor mL \rfloor))}{m^{n}/n!}.
\end{equation*}
The volume is an invariant of the numerical class of $L$ and is positive precisely when $[L]$ is big.  The mobility function is a generalization of the volume to arbitrary cycle classes.  We first define the mobility for classes $\alpha \in N_{k}(X)_{\mathbb{Z}}$.

\begin{defn} \label{mobilitydefn}
Let $X$ be a projective variety of dimension $n$ and let $p: U \to W$ be a family of effective $k$-$\mathbb{Z}$-cycles on $X$.  Denote the second projection map by $s: U \to X$.  The mobility count $\mc(p)$ of the family $p$ is the largest non-negative integer $b$ such that the morphism
\begin{equation*}
U \times_{W} U \times_{W} \ldots \times_{W} U \xrightarrow{s \times s \times \ldots \times s} X \times X \times \ldots \times X
\end{equation*}
is dominant, where we have $b$ terms in the product on each side.  (If the map is dominant for every positive integer $b$, we set $\mc(p) = \infty$; this can happen only when $p$ is a family of $n$-cycles.)

For a class $\alpha \in N_{k}(X)_{\mathbb{Z}}$, the mobility count $\mc(\alpha)$ is the maximum mobility count over all families of $\mathbb{Z}$-cycles $p: U \to W$ whose cycle-theoretic fibers have class $\alpha$.  We then define the mobility of $\alpha \in N_{k}(X)_{\mathbb{Z}}$ by the formula
\begin{equation*}
\mob_{X}(\alpha) = \limsup_{m \to \infty} \frac{\mc(m\alpha)}{m^{n/n-k}/n!}.
\end{equation*}
\end{defn}

\begin{exmple}
Suppose that $X$ is a smooth variety and that $L$ is a Cartier divisor on $X$.  Since general points impose codimension $1$ conditions on a linear series, the number of general points contained in elements of $|mL|$ is exactly $h^{0}(X,mL)-1$.  Thus the volume of $L$ can be interpreted as an asymptotic count of general points contained in members of $|mL|$.  (This interpretation is similar to Matsusaka's ``moving self-intersection number'' described in \cite[Definition 11.4.10]{lazarsfeld04}.)

Note that the mobility count of the family of divisors defined by $|mL|$ is essentially the same as the dimension of the space of sections.  The mobility count of the class $[mL]$ can be larger, since it accounts for non-linearly-equivalent divisors as well.  Nevertheless, since the set of all algebraically equivalent divisors is controlled by the (finite dimensional) Picard variety, these differences disappear in the asymptotic computation.  (See \cite[Example 6.16]{lehmann13} for more details.)
\end{exmple}

\cite[Lemma 6.15]{lehmann13} shows that the mobility function $\mob: N_{k}(X)_{\mathbb{Z}} \to \mathbb{R}$ is homogeneous of degree $\frac{n}{n-k}$ and thus can be extended naturally to $N_{k}(X)_{\mathbb{Q}}$.  The following theorem then extends the definition of mobility to arbitrary classes.

\begin{thrm}[\cite{lehmann13}, Theorem A] \label{continuousmobility}
Let $X$ be a projective variety.
\begin{itemize}
\item The mobility function on $N_{k}(X)_{\mathbb{Q}}$ can be extended to a continuous function $\mob: N_{k}(X) \to \mathbb{R}$.
\item The mobility is positive precisely on the big cone.
\end{itemize}
\end{thrm}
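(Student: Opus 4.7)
The plan combines two key ingredients: super-additivity of the mobility count on integer classes, and positivity of $\mob([H^{n-k}])$ for an ample class $H$. Together with the homogeneity from \cite[Lemma 6.15]{lehmann13}, these yield both continuity and the positivity characterization.

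For super-additivity $\mc(\alpha+\beta)\geq\mc(\alpha)+\mc(\beta)$, take families $p_i\colon U_i\to W_i$ of effective $\mathbb{Z}$-cycles of classes $\alpha,\beta$ achieving mobility counts $b_1,b_2$ respectively, and form the sum family over $W_1\times W_2$ whose cycle-theoretic fiber at $(w_1,w_2)$ is the sum of the two original fibers. This represents $\alpha+\beta$ and passes through any $b_1+b_2$ general points by splitting them into subsets of sizes $b_1$ and $b_2$ and covering each subset with a fiber of the corresponding original family. Applying to $m\alpha$ and $m\beta$ and taking $\limsup$ gives $\mob(\alpha+\beta)\geq\mob(\alpha)+\mob(\beta)$ on $N_k(X)_{\mathbb{Q}}$, together with monotonicity $\alpha\preceq\beta\Rightarrow\mob(\alpha)\leq\mob(\beta)$.

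For positivity of $\mob([H^{n-k}])$, construct explicit families of class $m\cdot[H^{n-k}]$ as disjoint unions of $r=m/t^{n-k}$ general complete intersections of $(n-k)$ elements of $|tH|$, whenever $t^{n-k}\mid m$. A single such complete intersection passes through $q\leq h^0(tH)-(n-k)$ general points (by picking $n-k$ linearly independent divisors from the codimension-$q$ sublinear system vanishing at those points), and $h^0(tH)\sim t^n(H^n)/n!$ as $t\to\infty$. Thus the disjoint union passes through $b=rq\sim m\cdot t^k\cdot(H^n)/n!$ points; choosing $t\sim m^{1/(n-k)}$ yields $b\sim m^{n/(n-k)}(H^n)/n!$, so $\mob([H^{n-k}])>0$.

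To conclude, fix $H$ ample, set $\omega:=[H^{n-k}]$, and choose a norm as in Corollary \ref{cor:norms} with a constant $c>0$ so that $\pm\gamma\preceq c\|\gamma\|\omega$ for every $\gamma\in N_k(X)$. For big rational $\alpha$ and small rational $\gamma$, monotonicity sandwiches
\begin{equation*}
\mob(\alpha-c\|\gamma\|\omega)\leq\mob(\alpha+\gamma)\leq\mob(\alpha+c\|\gamma\|\omega),
\end{equation*}
reducing continuity to continuity of $s\mapsto \mob(\alpha+s\omega)$ along a single ray. The latter is established in \cite{lehmann13} via a Brunn--Minkowski-type refinement $\mob(\alpha_1+\alpha_2)^{(n-k)/n}\geq\mob(\alpha_1)^{(n-k)/n}+\mob(\alpha_2)^{(n-k)/n}$ (a more careful version of the sum-family construction), which forces $s\mapsto\mob(\alpha+s\omega)^{(n-k)/n}$ to be concave and hence continuous. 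This extends $\mob$ continuously to the interior of $\Eff_k(X)$. Any $\alpha\notin\Eff_k(X)$ has no effective $\mathbb{Z}$-representative of any positive multiple, so $\mc(m\alpha)=0$ and $\mob(\alpha)=0$; continuity then forces $\mob=0$ on the boundary of $\Eff_k(X)$ as well. For the positivity characterization, any big $\alpha$ dominates $c\omega$ for some $c>0$, so $\mob(\alpha)\geq c^{n/(n-k)}\mob(\omega)>0$ by monotonicity and homogeneity. The main obstacle is establishing the Brunn--Minkowski inequality strongly enough to bridge the integer-valued super-additivity of $\mc$ to genuine continuity of the $\limsup$-defined $\mob$ on $\mathbb{R}$-classes, which is the technical heart of \cite[Theorem A]{lehmann13}.
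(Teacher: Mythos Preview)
The paper does not itself prove this theorem; it is cited from \cite{lehmann13}, so there is no in-paper argument to compare against.

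Your proposal has a genuine gap at the decisive step. After correctly reducing continuity to continuity of $s\mapsto\mob(\alpha+s\omega)$ along the single direction $\omega=[H^{n-k}]$, you invoke the inequality
\begin{equation*}
\mob(\alpha_1+\alpha_2)^{(n-k)/n}\geq\mob(\alpha_1)^{(n-k)/n}+\mob(\alpha_2)^{(n-k)/n}
\end{equation*}
to obtain concavity of $s\mapsto\mob(\alpha+s\omega)^{(n-k)/n}$ and hence continuity. But this log-concavity statement is exactly Conjecture~\ref{concaveconj} of the present paper --- an \emph{open} problem, not a theorem of \cite{lehmann13}. Lehmann is a coauthor here; had he proved log-concavity in \cite{lehmann13}, it would not be listed as a conjecture. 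Your own closing sentence effectively concedes that this step is unresolved, so the proposal is at best a conditional sketch, not a proof.

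The earlier ingredients --- super-additivity of $\mc$ via sum families, homogeneity, monotonicity under $\preceq$, positivity of $\mob(\omega)$ from explicit complete-intersection families, and the sandwich reduction to a single ray --- are correct and are indeed part of the toolkit. What is missing is an unconditional \emph{upper} estimate that does not pass through log-concavity. The route taken in \cite{lehmann13} instead bounds $\mc(\beta)$ from above in terms of the $H$-degree $H^{k}\cdot\beta$, which is linear in $\beta$; combined with the monotonicity lower bound you already have, this squeezes $\mob(\alpha+s\omega)$ between two quantities continuous in $s$ and yields continuity without any Brunn--Minkowski input. The same degree-type upper bound also forces $\mob\to 0$ at the boundary of $\Eff_{k}(X)$ directly, rather than by appealing to a continuity you have not yet established.
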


\begin{exmple}
Suppose that $X$ is a smooth projective variety and that $L$ is a Cartier divisor on $X$.  \cite[Example 6.16]{lehmann13} verifies that $\vol(L) = \mob([L])$.
\end{exmple}

An important conjecture about the mobility is:

\begin{conj} \label{concaveconj}
Let $X$ be a projective variety of dimension $n$.  Then $\mob$ is a log-concave function on $\Eff_{k}(X)$: for any classes $\alpha, \beta \in \Eff_{k}(X)$ we have
\begin{equation*}
\mob(\alpha+\beta)^{\frac{n-k}{n}} \geq \mob(\alpha)^{\frac{n-k}{n}} + \mob(\beta)^{\frac{n-k}{n}}.
\end{equation*}
\end{conj}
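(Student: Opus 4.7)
By continuity of $\mob$ on $\Eff_{k}(X)$ (Theorem \ref{continuousmobility}) and homogeneity of degree $p:=\tfrac{n}{n-k}\geq 1$, it suffices to establish the inequality for classes in $N_{k}(X)_{\mathbb{Z}}$, which reduces the conjecture to an asymptotic comparison of the mobility counts $\mc(m\alpha)$, $\mc(m\beta)$, and $\mc(m(\alpha+\beta))$ as $m\to\infty$. Equivalently, since $\mob$ is homogeneous of degree $p\geq 1$, log-concavity of $\mob^{(n-k)/n}$ is the statement that $\mob^{(n-k)/n}$ is concave, i.e., that all super-level sets of $\mob$ inside $\Eff_k(X)$ are convex. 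A naive combination of families---partitioning a general tuple of points and passing independently through cycles of classes $m\alpha$ and $m\beta$---only yields the super-additive bound $\mc(m(\alpha+\beta))\geq \mc(m\alpha)+\mc(m\beta)$, which in the limit gives $\mob(\alpha+\beta)\geq \mob(\alpha)+\mob(\beta)$. This is strictly weaker than the conjecture: when $\mob(\alpha)=\mob(\beta)=1$ log-concavity demands $\mob(\alpha+\beta)\geq 2^{p}$, not merely $\geq 2$.

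To close this gap my plan would follow the template that proves log-concavity of the volume of divisors, namely to find a convex-geometric avatar of $\mob$ to which one can apply the Brunn--Minkowski inequality. Concretely, I would approximate $\alpha$ and $\beta$ by strongly basepoint free classes on a common birational model $\pi:Y\to X$, using a Fujita-type approximation (Theorem \ref{introfujapprox} for curves, and for general $k$ the birational basepoint free description of $\Mov_k$ in Proposition \ref{movbirbpf}). For strongly basepoint free classes presented as pushforwards of general fibers of equidimensional morphisms, the mobility count can in favorable cases be expressed as an intersection degree on the family, and one would hope to identify $\mob$ with the Euclidean volume of an $(n-k)$-dimensional convex body attached to a flag-compatible filtration of sections. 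Brunn--Minkowski in Euclidean space, or an Alexandrov--Fenchel-type inequality for the corresponding intersection numbers, would then upgrade super-additivity to the required log-concavity, after which continuity of $\mob$ and of the approximation would transfer the bound back to the original $\alpha$ and $\beta$.

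The main obstacle is that both inputs needed for the final step are currently unavailable in higher codimension. No Okounkov-body-type construction is known that produces an $(n-k)$-dimensional convex body whose Euclidean volume recovers $\mob$ for an arbitrary big cycle class, and there is no known Alexandrov--Fenchel inequality for strongly basepoint free classes of codimension greater than one; even complete intersections do not obviously satisfy the sharp exponent. Without one of these tools, every direct combinatorial combination of families degenerates to mere super-additivity, which has the wrong exponent. Substantial progress on the conjecture therefore appears to require genuinely new convex-geometric or intersection-theoretic positivity tools for cycles---which is presumably why the statement is offered as a conjecture rather than a theorem.
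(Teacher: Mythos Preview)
The statement you were given is Conjecture~\ref{concaveconj} in the paper, not a theorem: the paper does not prove it and does not claim to. There is therefore no ``paper's own proof'' to compare your attempt against. Your write-up is not a proof either---and you say as much in your final paragraph---so in that sense you and the paper are in agreement.

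Your diagnosis of the difficulty is accurate and matches the paper's implicit stance. The naive family-combination argument indeed only gives super-additivity $\mob(\alpha+\beta)\geq\mob(\alpha)+\mob(\beta)$, which is strictly weaker than log-concavity when $p=\tfrac{n}{n-k}>1$. Your proposed route via Fujita-type approximation by basepoint free classes plus a Brunn--Minkowski or Alexandrov--Fenchel input is a reasonable program, and you correctly identify the two missing ingredients: an Okounkov-body construction computing $\mob$ for higher-codimension classes, and a Hodge-type inequality for basepoint free classes of codimension $>1$. Neither exists in the literature, and the paper does not supply them; it simply uses the conjecture as a hypothesis in Lemma~\ref{structureofnegativeparts}(3) and Lemma~\ref{zardecomadditivity}(2).

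One small correction: your reduction ``by continuity and homogeneity it suffices to establish the inequality for classes in $N_k(X)_{\mathbb Z}$'' is fine, but invoking Theorem~\ref{introfujapprox} as if it were available for general $k$ overstates what the paper proves---that theorem is only for curves, and Proposition~\ref{movbirbpf} gives a cone identity, not an approximation with mobility control. So even the approximation step of your program is open in codimension $\geq 2$.
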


The following easy lemmas will be used often.

\begin{lem} \label{mobilityincreases}
Let $X$ be a projective variety and let $\alpha \in N_{k}(X)$.  If $\beta \in N_{k}(X)$ is pseudo-effective, then $\mob(\alpha + \beta) \geq \mob(\alpha)$.  If additionally $\alpha$ is pseudo-effective and $\beta$ is big then the inequality is strict.
\end{lem}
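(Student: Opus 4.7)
The plan is to handle the two inequalities separately, both reducing to a family-combination argument at the level of integer classes. For the non-strict inequality, I would first treat integer $\alpha$ and integer effective $\beta$. Given a family $p: U \to W$ representing $m\alpha$ with mobility count $\mc(m\alpha)$, and an effective cycle $Z$ representing $m\beta$, I form the combined family $p'$ with support $U \cup (W \times \Supp(Z)) \subset W \times X$ and multiplicities chosen so that each cycle-theoretic fiber has class $m(\alpha+\beta)$. Since every fiber of $p$ is contained as a cycle in the corresponding fiber of $p'$, we have $\mc(m(\alpha+\beta)) \geq \mc(m\alpha)$; dividing by $m^{n/(n-k)}/n!$ and taking $\limsup$ yields $\mob(\alpha+\beta) \geq \mob(\alpha)$. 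This extends to rational classes (with $\beta$ effective) by homogeneity of $\mob$, and then to arbitrary real $\alpha$ and pseudo-effective $\beta$ by continuity (Theorem \ref{continuousmobility}) via simultaneous rational approximation of $\alpha$ and $\beta$.

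For the strict inequality, the case where $\alpha$ is not big is immediate: $\mob(\alpha) = 0$ by Theorem \ref{continuousmobility}, while $\alpha + \beta$ is big (interior of $\Eff_k(X)$ plus pseudo-effective class) and hence has positive mobility. Assume therefore that $\alpha$ is big. By openness of the big cone, I can choose a very ample divisor $H$ and $\epsilon \in \mathbb{Q}_{>0}$ such that $\gamma := [H^{n-k}]$ satisfies $\beta - \epsilon\gamma \in \Eff_k(X)$. By the non-strict part, $\mob(\alpha+\beta) \geq \mob(\alpha + \epsilon\gamma)$, so it suffices to show $\mob(\alpha + \epsilon\gamma) > \mob(\alpha)$.

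The main tool is the mobility count superadditivity $\mc(m\alpha + m'\gamma) \geq \mc(m\alpha) + \mc(m'\gamma)$, obtained by combining two families over the product parameter space $W_1 \times W_2$ and summing fibers. The essential input is that for the complete intersection class $\gamma = [H^{n-k}]$ one has $\liminf_m \mc(m\gamma)/(m^{n/(n-k)}/n!) \geq H^n > 0$: setting $j = \lfloor m^{1/(n-k)} \rfloor$, the complete intersection of $n-k$ general divisors in $|jH|$ realizes $j^{n-k}\gamma$ as a strictly movable family whose mobility count is at least $\dim|jH| \sim j^n H^n/n! \sim m^{n/(n-k)} H^n/n!$, and superadditivity of $\mc$ in $m$ interpolates this bound to all $m$. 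Choosing a subsequence $m_\ell \to \infty$ of integers divisible by the denominator of $\epsilon$ along which $\mc(m_\ell \alpha)/(m_\ell^{n/(n-k)}/n!) \to \mob(\alpha)$, the superadditivity inequality produces $\mob(\alpha + \epsilon\gamma) \geq \mob(\alpha) + \epsilon^{n/(n-k)} H^n$ for rational $\alpha$, and the uniform gap then extends to real big $\alpha$ by continuity.

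The main technical obstacle is the positive-$\liminf$ statement for the complete intersection class $\gamma$; once this is in place via the explicit family of intersections of divisors in $|jH|$ together with the superadditivity of $\mc$, the rest of the argument consists of a routine subsequence extraction and standard applications of homogeneity and continuity of $\mob$.
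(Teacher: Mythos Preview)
Your argument is correct. For the non-strict inequality you are essentially reproducing the family-combination argument that the paper cites from \cite[Lemma 6.17]{lehmann13}, together with the same continuity/homogeneity reduction, so there is no real difference there.

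For the strict inequality, however, your route diverges from the paper's. The paper uses a one-line homogeneity trick: when $\mob(\alpha)>0$, write $\alpha+\beta = (1+\epsilon)\alpha + (\beta-\epsilon\alpha)$ for small $\epsilon>0$; since $\beta$ is big and $\alpha$ is pseudo-effective, $\beta-\epsilon\alpha$ is still big, so by the non-strict part $\mob(\alpha+\beta)\geq \mob((1+\epsilon)\alpha) = (1+\epsilon)^{n/(n-k)}\mob(\alpha) > \mob(\alpha)$. This avoids any direct computation of mobility counts. Your approach instead produces an explicit quantitative gap $\mob(\alpha+\epsilon\gamma)\geq \mob(\alpha)+\epsilon^{n/(n-k)}H^{n}$ via the superadditivity of $\mc$ and a $\liminf$ estimate for complete-intersection classes. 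That estimate is correct and potentially useful in its own right, but it is considerably more work than what the lemma requires; the paper's homogeneity argument gets strictness for free once the non-strict inequality is in hand.
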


\begin{proof}
We first prove the inequality $\geq$.  Using the continuity of mobility we may assume that $\beta$ is in the big cone.  Furthermore, by continuity and homogeneity it suffices to consider the case when $\alpha$ and $\beta$ are $\mathbb{Z}$-classes.  By rescaling we may also assume that $\beta$ is represented by an effective $\mathbb{Z}$-cycle.  Then the inequality is proved by \cite[Lemma 6.17]{lehmann13}.

Suppose now that $\alpha$ is pseudo-effective and $\beta$ is big.  If $\mob(\alpha)=0$, then $\mob(\alpha+\beta)>\mob(\alpha)$ since $\alpha+\beta$ is big.  Otherwise, note that for sufficiently small $\epsilon$ we have $\alpha + \beta = (1+\epsilon)\alpha + (\beta - \epsilon \alpha)$ and $\beta - \epsilon \alpha$ is a big class.  Thus $\mob(\alpha + \beta) \geq \mob((1+\epsilon)\alpha) > \mob(\alpha)$ by homogeneity.
\end{proof}

\begin{lem} \label{addingcycle}
Let $X$ be a projective variety and let $\alpha \in N_{k}(X)_{\mathbb{Z}}$.  Let $Z$ be an effective $\mathbb{Z}$-cycle.  Then
\begin{equation*}
\mob(\alpha) = \limsup_{m \to \infty} \frac{\mc(m\alpha + [Z])}{m^{\frac{n}{n-k}}/n!}.
\end{equation*}
\end{lem}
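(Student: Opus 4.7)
The equality follows from two inequalities, both based on the monotonicity of $\mc$ under the addition of effective $\mathbb{Z}$-cycles: given any family of $\mathbb{Z}$-cycles representing a class $\beta$, appending an effective $\mathbb{Z}$-cycle representative of $\beta'$ to each fiber produces a family for $\beta + \beta'$ whose product map still dominates on the same $b$-tuples, so $\mc(\beta + \beta') \geq \mc(\beta)$. Applied with $\beta = m\alpha$ and $\beta' = [Z]$, this gives $\mc(m\alpha + [Z]) \geq \mc(m\alpha)$, and dividing by the normalization and taking $\limsup$ yields the inequality $\limsup_m \mc(m\alpha + [Z])/(m^{n/(n-k)}/n!) \geq \mob(\alpha)$.

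For the reverse inequality, fix an ample divisor $H$ on $X$ and set $\gamma := [Z] + [H^{n-k}]$; this is an effective $\mathbb{Z}$-class which is big and satisfies $\gamma - [Z] = [H^{n-k}]$ also effective. For each integer $N \geq 1$ set $\alpha_N := \alpha + \gamma/N$, a big rational class. When $m = Nj$ with $j \geq 1$, the difference
\[
m\alpha_N - (m\alpha + [Z]) = j\gamma - [Z] = (j-1)[Z] + j[H^{n-k}]
\]
has an effective $\mathbb{Z}$-cycle representative, so monotonicity of $\mc$ gives $\mc(m\alpha + [Z]) \leq \mc(m\alpha_N)$. Dividing by $m^{n/(n-k)}/n!$ and taking $\limsup$ along the subsequence $m = Nj$,
\[
\limsup_{j \to \infty} \frac{\mc(Nj\alpha + [Z])}{(Nj)^{n/(n-k)}/n!} \leq \mob(\alpha_N),
\]
and by the continuity of $\mob$ from Theorem \ref{continuousmobility}, $\mob(\alpha_N) \to \mob(\alpha)$ as $N \to \infty$.

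The main obstacle is extending the bound from the subsequence $\{Nj\}$ to the full $\limsup$ over all $m$. When $\alpha$ itself has an effective $\mathbb{Z}$-cycle representative, this is routine: for arbitrary $m$ write $m' = N\lceil m/N \rceil$ and note $(m' - m)\alpha$ has an effective representative, so monotonicity reduces the bound for $m$ to the bound for $m'$, which is a multiple of $N$. For general pseudo-effective $\alpha$ one first perturbs to $\alpha + \delta \gamma$ (big, and effective up to multiple) for small rational $\delta > 0$, applies the effective case to the perturbation, and then takes $\delta \to 0^+$ together with $N \to \infty$ in a diagonal fashion, using continuity of $\mob$ once more to recover the desired bound $\mob(\alpha)$.
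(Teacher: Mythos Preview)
Your approach is the same as the paper's: monotonicity of $\mc$ under adding an effective $\mathbb{Z}$-cycle gives the lower bound, and for the upper bound one dominates $\mc(m\alpha+[Z])$ by $\mc$ of (roughly) $m(\alpha+\tfrac{1}{N}\cdot\text{effective})$ and lets $N\to\infty$ using continuity of $\mob$. The paper does this in three sentences, using $[Z]$ itself; your auxiliary class $\gamma=[Z]+[H^{n-k}]$ is harmless but buys nothing, since $(j-1)[Z]$ is already effective for $j\geq 1$.

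You are right to isolate the passage from the subsequence $m\in N\mathbb{Z}$ to the full $\limsup$---the paper's proof elides this entirely. Your treatment when $\alpha$ has an effective representative is correct, and this is what is needed for big $\alpha$ (some multiple $m_0\alpha$ is then effective, so one rounds $m$ up to a multiple of $N$ at cost $\leq m_0+N$). However, your final perturbation step is not complete as written: replacing $\alpha$ by $\alpha+\delta\gamma$ and invoking the effective case yields control of $\mc(m(\alpha+\delta\gamma)+[Z])$, and comparing this back to $\mc(m\alpha+[Z])$ via monotonicity again forces $m$ into the progression $(1/\delta)\mathbb{Z}$, so the ``diagonal'' limit does not escape the subsequence restriction. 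If you want the lemma for all $\alpha\in N_k(X)_{\mathbb{Z}}$, dispose of the non-pseudoeffective case directly (then $m\alpha+[Z]$ is eventually not pseudoeffective, so both sides vanish), and for $\alpha$ on the boundary of $\Eff_k(X)$ give an honest argument rather than a sketch.
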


\begin{proof}
We have $\mc(m\alpha) \leq \mc(m\alpha + [Z]) \leq \mc(m\alpha + k[Z])$ for any integer $k \geq 1$.  Thus for any positive integer $j$ the right hand side of the equation is bounded above by $\mob(\alpha + \frac{1}{j}[Z])$.  We conclude by the continuity of $\mob$.
\end{proof}

The proof of the analogous result for volumes of divisors does not use the continuity of volume, and is in fact part of its proof. However, it uses the restriction sequence and the induced long exact sequence in cohomology.
Currently these have no immediate equivalent for cycles of arbitrary codimension.

\subsection{Mobility of $\mathbb{R}$-classes}
We have defined the mobility of an $\mathbb{R}$-class by extending continuously from $N_{k}(X)_{\mathbb{Q}}$ but there is a direct approach that will be more useful.

\begin{defn}
Let $X$ be a projective variety.  An acceptable cone $\mathcal{K} \subset N_{k}(X)$ is a cone generated by the union of:
\begin{itemize}
\item a compact subset of $N_{k}(X)$ with non-empty interior and contained in the interior of $\Eff_{k}(X)$, and
\item a finite set of classes of effective $\mathbb{Z}$-cycles.
\end{itemize}
\end{defn}

\begin{defn}
Let $X$ be a projective variety and let $\mathcal{K}$ be an acceptable cone in $N_{k}(X)$.  For any class $\alpha \in N_{k}(X)$ define
\begin{equation*}
\mc_{\mathcal{K}}(\alpha) := \max_{\beta \in (\alpha - \mathcal{K}) \cap N_{k}(X)_{\mathbb{Z}}} \mc(\beta).
\end{equation*}
Define
\begin{equation*}
\mob_{\mathcal{K}}(\alpha) = \limsup_{m \to \infty} \frac{\mc_{\mathcal{K}}(m\alpha)}{m^{n/n-k}/n!}.
\end{equation*}
\end{defn}

The main goal of this subsection is to show that $\mob_{\mathcal{K}}$ coincides with $\mob$.  First we show that, just as for the mobility function, $\mob_{\mathcal{K}}$ is homogeneous of weight $\frac{n}{n-k}$.

\begin{lem}
Let $X$ be a projective variety and let $\mathcal{K}$ be an acceptable cone in $N_{k}(X)$.  For any class $\alpha \in N_{k}(X)$ and any positive integer $c$ we have
\begin{equation*}
\mob_{\mathcal{K}}(c\alpha) = c^{\frac{n}{n-k}}\mob_{\mathcal{K}}(\alpha).
\end{equation*}
\end{lem}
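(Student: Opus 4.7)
The plan is to prove the equality via a two-sided argument after substituting $M = mc$ in the definition:
\[
\mob_{\mathcal{K}}(c\alpha) = c^{n/(n-k)} \limsup_{M \in c\mathbb{Z}_{>0}} \frac{\mc_{\mathcal{K}}(M\alpha)}{M^{n/(n-k)}/n!}, \quad c^{n/(n-k)}\mob_{\mathcal{K}}(\alpha) = c^{n/(n-k)} \limsup_{M \in \mathbb{Z}_{>0}} \frac{\mc_{\mathcal{K}}(M\alpha)}{M^{n/(n-k)}/n!}.
\]
The inequality $\mob_{\mathcal{K}}(c\alpha) \leq c^{n/(n-k)} \mob_{\mathcal{K}}(\alpha)$ is immediate since $c\mathbb{Z}_{>0} \subset \mathbb{Z}_{>0}$. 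For the reverse inequality I will exhibit a subsequence in $c\mathbb{Z}_{>0}$ asymptotically realizing the full $\limsup$, which comes from a monotonicity property of $M \mapsto \mc_{\mathcal{K}}(M\alpha)$.

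The key intermediate step will be the superadditivity $\mc_{\mathcal{K}}(\alpha_1 + \alpha_2) \geq \mc_{\mathcal{K}}(\alpha_1) + \mc_{\mathcal{K}}(\alpha_2)$. Given optimal $\gamma_i \in (\alpha_i - \mathcal{K}) \cap N_k(X)_{\mathbb{Z}}$, the sum $\gamma_1 + \gamma_2$ is integer and lies in $(\alpha_1 + \alpha_2) - (\mathcal{K} + \mathcal{K}) = (\alpha_1 + \alpha_2) - \mathcal{K}$ by the cone property, reducing the statement to $\mc(\beta_1 + \beta_2) \geq \mc(\beta_1) + \mc(\beta_2)$ for integer classes $\beta_i$. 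Starting from optimal families $p_i \colon U_i \to W_i$ realizing $\mc(\beta_i) = b_i$, I will form $\tilde p \colon \tilde U \to W_1 \times W_2$ by setting $\tilde U := (U_1 \times W_2) \cup (W_1 \times U_2)$ inside $(W_1 \times W_2) \times X$ with the natural coefficients inherited from the $U_i$. The cycle-theoretic fiber over $(w_1, w_2)$ is the sum of the fibers of $p_1$ over $w_1$ and $p_2$ over $w_2$, so $\tilde p$ represents $\beta_1 + \beta_2$. For $b_1 + b_2$ general points in $X$, the first $b_1$ are general in $X^{b_1}$ (projection is open), so some $Z_{1, w_1}$ contains them; similarly some $Z_{2, w_2}$ contains the remaining $b_2$; hence $\mc(\tilde p) \geq b_1 + b_2$.

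Superadditivity implies monotonicity: $\mc_{\mathcal{K}}(M_2 \alpha) \geq \mc_{\mathcal{K}}(M_1 \alpha) + \mc_{\mathcal{K}}((M_2 - M_1)\alpha) \geq \mc_{\mathcal{K}}(M_1\alpha)$ whenever $M_1 \leq M_2$. Fix a sequence $M_i \to \infty$ in $\mathbb{Z}_{>0}$ with $\mc_{\mathcal{K}}(M_i \alpha)/(M_i^{n/(n-k)}/n!) \to \mob_{\mathcal{K}}(\alpha)$ and set $M_i' := c\lceil M_i/c \rceil$; these lie in $c\mathbb{Z}_{>0}$ and satisfy $M_i \leq M_i' < M_i + c$. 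By monotonicity $\mc_{\mathcal{K}}(M_i'\alpha) \geq \mc_{\mathcal{K}}(M_i \alpha)$, and since $M_i'/M_i \to 1$ the limsup over the subsequence $c\mathbb{Z}_{>0}$ is at least $\mob_{\mathcal{K}}(\alpha)$, yielding $\mob_{\mathcal{K}}(c\alpha) \geq c^{n/(n-k)}\mob_{\mathcal{K}}(\alpha)$. The main obstacle will be carrying out the family construction carefully at the scheme level: verifying that $\tilde U$ is a reduced closed subscheme satisfying the requirements of Definition \ref{familydef} (in particular that each of its components is flat dominant of relative dimension $k$ over $W_1 \times W_2$) and that the $(b_1 + b_2)$-fold fiber product of $\tilde U$ over $W_1 \times W_2$ maps dominantly onto $X^{b_1 + b_2}$.
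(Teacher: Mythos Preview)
Your sum-of-families construction and the overall strategy are the same as the paper's, but there is a genuine gap in the step ``superadditivity implies monotonicity.'' Your inequality $\mc_{\mathcal{K}}(\alpha_1+\alpha_2)\geq \mc_{\mathcal{K}}(\alpha_1)+\mc_{\mathcal{K}}(\alpha_2)$ relies on choosing optimal $\gamma_i\in(\alpha_i-\mathcal{K})\cap N_k(X)_{\mathbb Z}$ that are \emph{represented by families}, i.e.\ are effective. When $\mc_{\mathcal{K}}(\alpha_2)=0$ this need not be possible: for $\alpha_2=(M_2-M_1)\alpha$ with $M_2-M_1$ small and $\alpha$ an irrational big class, the translated cone $(M_2-M_1)\alpha-\mathcal{K}$ can miss the effective lattice points entirely (near the apex the cone is too thin to contain a lattice point, and far from the apex one has subtracted a big class and left $\Eff_k$). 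In that case there is no family $p_2$ to feed into your $\widetilde U$, and the inequality $\mc(\gamma_1+\gamma_2)\geq \mc(\gamma_1)$ is simply false for non-effective $\gamma_2$ (take $\gamma_2=-\gamma_1$). So the blanket monotonicity $\mc_{\mathcal K}(M_1\alpha)\leq \mc_{\mathcal K}(M_2\alpha)$, and hence your rounding-to-$c\mathbb Z$ argument, is not justified.

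The paper sidesteps this by proving only the \emph{conditional} inequality: if $\mc_{\mathcal{K}}(r\alpha)>0$ then $\mc_{\mathcal{K}}((r+s)\alpha)\geq \mc_{\mathcal{K}}(s\alpha)$. Here the positivity hypothesis guarantees the optimal $\gamma$ for $r\alpha$ is effective, and one simply adds this fixed effective cycle to an optimal family for $s\alpha$ (the case $\mc_{\mathcal{K}}(s\alpha)=0$ being trivial). This conditional monotonicity is exactly the hypothesis of Proposition~\ref{lazlemma} (Lazarsfeld's Lemma~2.2.38), which then handles the passage from the full $\limsup$ to the $\limsup$ over $c\mathbb Z$; that analytic step is itself not as immediate as your rounding argument, since one may have to shift by several multiples of a fixed $r$ with $f(r)>0$ rather than by an increment $<c$. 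Your family construction for $\widetilde U$ is fine and is essentially what is behind the inequality $\mc(\beta+\gamma)\geq\mc(\beta)$ used in the paper; the issue is purely in how you deploy it.
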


\begin{proof}
Suppose that $r$ is a positive integer such that $\mc_{\mathcal{K}}(r\alpha) > 0$ and let $\gamma$ denote the class in $(r\alpha - \mathcal{K}) \cap N_{k}(X)_{\mathbb{Z}}$ of maximal mobility count.  Then for any other positive integer $s$, we have 
$$\mc_{\mathcal{K}}((r+s)\alpha) \geq \mc_{\mathcal{K}}(s\alpha).$$ To see this, choose a class $\beta \in (s\alpha - \mathcal{K}) \cap N_{k}(X)_{\mathbb{Z}}$ with maximal mobility count; then $\mc(\beta + \gamma) \geq \mc(\beta)$. We conclude by the following Proposition \ref{lazlemma}.
\end{proof}

\begin{prop}[\cite{lazarsfeld04} Lemma 2.2.38] \label{lazlemma}
Let $f: \mathbb{N} \to \mathbb{R}_{\geq 0}$ be a function.  Suppose that for any $r,s \in \mathbb{N}$ with $f(r) > 0$ we have that $f(r+s) \geq f(s)$.
Then for any $k \in \mathbb{R}_{>0}$ the function $g: \mathbb{N} \to \mathbb{R}_{\geq 0} \cup \{ \infty \}$ defined by
\begin{equation*}
g(r) := \limsup_{m \to \infty} \frac{f(mr)}{m^{k}}
\end{equation*}
satisfies $g(cr) = c^{k}g(r)$ for any $c,r \in \mathbb{N}$.
\end{prop}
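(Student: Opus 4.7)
The plan is to prove the two inequalities $g(cr) \leq c^k g(r)$ and $g(cr) \geq c^k g(r)$ separately. The first is straightforward: substituting $n = mc$ in the defining limsup gives
\begin{equation*}
g(cr) = \limsup_{m\to\infty}\frac{f(mcr)}{m^k} = c^k \limsup_{\substack{n\to\infty\\ c\mid n}}\frac{f(nr)}{n^k} \leq c^k \limsup_{n\to\infty}\frac{f(nr)}{n^k} = c^k g(r),
\end{equation*}
since the limsup of a subsequence is bounded above by the full limsup.

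For the reverse inequality, the case $g(r)=0$ is immediate, so assume $g(r)>0$ (possibly infinite). Fix a sequence $m_i \to \infty$ along which $f(m_i r)/m_i^k \to g(r)$; after dropping finitely many terms we may assume $f(m_i r)>0$ for every $i$. The key structural observation is that the set $S_+ := \{n \in \mathbb{N} : f(nr) > 0\}$ is closed under addition: if $f(ar)$ and $f(br)$ are both positive, then applying the hypothesis with $r'=ar$ and $s'=br$ yields $f((a+b)r) \geq f(br) > 0$. Hence the image $\overline{S_+} \subset \mathbb{Z}/c\mathbb{Z}$ is a nonempty additive sub-semigroup of a finite abelian group, and so is a subgroup.

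Pass to a further subsequence with $m_i \equiv \rho \pmod c$ for a fixed $\rho \in \overline{S_+}$. Because $\overline{S_+}$ is a subgroup, $-\rho \in \overline{S_+}$, so we can fix a positive integer $s \in S_+$ with $s \equiv -\rho \pmod c$. Then $M_i := m_i + s$ is divisible by $c$ for every $i$; since $f(sr)>0$, the hypothesis gives $f(M_i r) = f(sr + m_i r) \geq f(m_i r)$. As $s$ is fixed and $m_i \to \infty$ we have $M_i/m_i \to 1$, so
\begin{equation*}
\frac{f(M_i r)}{M_i^k} \;\geq\; \frac{f(m_i r)}{m_i^k}\cdot \Bigl(\frac{m_i}{M_i}\Bigr)^k \;\longrightarrow\; g(r).
\end{equation*}
Setting $N_i := M_i/c$ and unraveling the definition of $g(cr)$ yields
\begin{equation*}
g(cr) \;\geq\; \limsup_{i\to\infty}\frac{f(N_i\cdot cr)}{N_i^k} \;=\; c^k \limsup_{i\to\infty}\frac{f(M_i r)}{M_i^k} \;\geq\; c^k g(r),
\end{equation*}
which completes the proof; the same estimates, read as divergence, handle $g(r)=\infty$.

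The main obstacle is the subcase $\rho \neq 0$, where one must locate a \emph{fixed} shift $s$ in $S_+$ lying in the residue class $-\rho \pmod c$. Without such a bounded correction, the sequence $M_i$ chosen to be divisible by $c$ would not lie close enough to $m_i$ to preserve the near-optimality of $f(m_i r)/m_i^k$. The subgroup structure of $\overline{S_+}$ — a consequence of the semigroup property combined with the finiteness of $\mathbb{Z}/c\mathbb{Z}$ — is precisely what delivers this and forms the heart of the argument.
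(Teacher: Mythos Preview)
Your proof is correct. The paper does not give its own argument here; it simply cites \cite[Lemma 2.2.38]{lazarsfeld04} and remarks that the essential content of Lazarsfeld's proof is this more general statement. Your approach---one inequality from the subsequence observation, the other by shifting a near-optimal $m_i$ by a fixed element of $S_+$ into the residue class $0 \pmod c$---is exactly the mechanism behind Lazarsfeld's argument.

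One minor simplification: the full subgroup structure of $\overline{S_+}$ is more than you need. Once you have passed to a subsequence with $m_i \equiv \rho \pmod c$ and $m_i \in S_+$, you can simply take $s = (c-1)m_1$; this lies in $S_+$ by the additive closure you already established, and $s \equiv (c-1)\rho \equiv -\rho \pmod c$. So the correction term is available without invoking that a finite sub-semigroup of a group is a subgroup.
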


\begin{rmk}
Although \cite[Lemma 2.2.38]{lazarsfeld04} only explicitly addresses the volume function, the essential content of the proof is the more general statement above.
\end{rmk}

\begin{lem} \label{effectivenesslemma}
Let $X$ be a projective variety and let $\mathcal{K}$ be an acceptable cone in $N_{k}(X)$.  There is some $\mathbb{Z}$-class $\gamma$ in the interior of $\mathcal{K}$ such that for every $\beta \in \mathcal{K} \cap N_k(X)_{\mathbb{Z}}$, the class $\gamma + \beta$ contains an effective $\mathbb{Z}$-cycle. 
\end{lem}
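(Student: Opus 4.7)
The plan is to approximate $\mathcal{K}$ from above by a rational polyhedral cone $\tilde{\mathcal{K}}$ whose generators are classes of effective $\mathbb{Z}$-cycles, then combine Gordan's lemma with the surjection $Z_k(X)\twoheadrightarrow N_k(X)_{\mathbb{Z}}$ to control integer points of $\tilde{\mathcal{K}}$, and finally translate the resulting auxiliary class into the interior of the original cone $\mathcal{K}$. For the enlargement: since $\Eff_k(X)$ is the closure of the cone spanned by classes of effective $\mathbb{Z}$-cycles and $S$ is compact in its interior, for each $s\in S$ we can pick finitely many classes of effective $\mathbb{Z}$-cycles whose positive cone contains $s$ in its interior (take a $\mathbb{Q}$-simplex of effective classes straddling $s$ and clear denominators). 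Compactness then yields finitely many classes of effective $\mathbb{Z}$-cycles $\xi_1,\ldots,\xi_N$ with $S\subset\mathrm{cone}(\xi_1,\ldots,\xi_N)$. The cone $\tilde{\mathcal{K}}:=\mathrm{cone}(\xi_1,\ldots,\xi_N,[E_1],\ldots,[E_r])$ is rational polyhedral, contains $\mathcal{K}$, and has all generators in the semigroup $\mathcal{E}$ of classes of effective $\mathbb{Z}$-cycles on $X$; it thus suffices to produce $\gamma$ working against the larger set $\tilde{\mathcal{K}}\cap N_k(X)_{\mathbb{Z}}$.

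By Gordan's lemma, $\tilde{\mathcal{K}}\cap N_k(X)_{\mathbb{Z}}$ is generated as a semigroup by finitely many classes $h_1,\ldots,h_M$. Writing each $h_i$ as a $\mathbb{Q}_{\geq 0}$-combination of the $\xi_\bullet,[E_\bullet]$ and clearing denominators, some $d_i\in\mathbb{Z}_{\geq 1}$ satisfies $d_ih_i\in\mathcal{E}$. Let $R=\{\sum_i r_ih_i : 0\le r_i<d_i\}$, a finite subset of $N_k(X)_{\mathbb{Z}}$. Because $N_k(X)_{\mathbb{Z}}$ is a quotient of the free abelian group $Z_k(X)$ on subvarieties, every integer class is a difference of two elements of $\mathcal{E}$, so for each $\rho\in R$ we may fix a class $\eta_\rho\in\mathcal{E}$ with $\rho+\eta_\rho\in\mathcal{E}$. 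Set $\gamma_1:=\sum_i(d_i-1)h_i+\sum_{\rho\in R}\eta_\rho$. For any $\beta=\sum n_ih_i\in\tilde{\mathcal{K}}\cap N_k(X)_{\mathbb{Z}}$, Euclidean division $d_i-1+n_i=q_id_i+r_i$ with $0\le r_i<d_i$ and $q_i\ge 0$ produces the remainder $\rho^{\star}:=\sum r_ih_i\in R$, and
$$\gamma_1+\beta=\sum_i q_i(d_ih_i)+(\rho^{\star}+\eta_{\rho^{\star}})+\sum_{\rho\neq\rho^{\star}}\eta_\rho$$
is a non-negative integer combination of elements of $\mathcal{E}$, hence itself in $\mathcal{E}$.

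Finally, applying the density argument of the first paragraph to a point of the nonempty open cone $\mathrm{int}\,\mathcal{K}\cap\mathrm{int}\,\Eff_k(X)$ (which contains $\mathrm{int}\,\mathcal{K}_S$, where $\mathcal{K}_S$ is the cone over $S$) produces a class $\gamma_0\in\mathrm{int}\,\mathcal{K}\cap\mathcal{E}$. Since $\tfrac{1}{t}\gamma_1+\gamma_0\to\gamma_0$ and $\mathrm{int}\,\mathcal{K}$ is open, for $t\in\mathbb{Z}_{>0}$ large the class $\gamma:=\gamma_1+t\gamma_0=t\bigl(\tfrac{1}{t}\gamma_1+\gamma_0\bigr)$ lies in the cone $\mathrm{int}\,\mathcal{K}$; and for every $\beta\in\mathcal{K}\cap N_k(X)_{\mathbb{Z}}\subset\tilde{\mathcal{K}}\cap N_k(X)_{\mathbb{Z}}$ we have $\gamma+\beta=(\gamma_1+\beta)+t\gamma_0\in\mathcal{E}$, as desired. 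The main obstacle is the bookkeeping in the middle paragraph: the single correction $\gamma_1$ has to absorb every remainder $\rho^{\star}$ that Euclidean division can produce, which is what forces the extra summand $\sum_{\rho\in R}\eta_\rho$ and uses the surjection $Z_k(X)\twoheadrightarrow N_k(X)_{\mathbb{Z}}$ in an essential way.
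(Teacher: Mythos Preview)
Your proof is correct and follows the same strategy as the paper's: enclose $\mathcal{K}$ in a rational polyhedral cone generated by effective $\mathbb{Z}$-classes, invoke a semigroup-theoretic fact to produce a universal shift, and then translate into the interior of $\mathcal{K}$. The only real difference is in the middle step: the paper arranges its generating set $\mathcal{T}$ so that it spans the whole lattice $N_k(X)_{\mathbb{Z}}$ and then cites Khovanskii's result \cite[\S 3, Proposition 3]{khovanskii92} as a black box, whereas you give a self-contained argument via Gordan's lemma and an explicit finite remainder set $R$. Your approach is slightly more elementary and avoids the external reference, at the cost of some bookkeeping; the paper's is shorter but depends on unpacking Khovanskii's statement. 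The final translation by $t\gamma_0$ in your proof plays the same role as the interior class $\eta \in \mathcal{K}'^{\mathrm{int}}$ in the paper's construction.
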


\begin{proof}
By definition, there is some closed cone $\mathcal{K}'$ such that $\mathcal{K}' - \{0\}$ is contained in the interior of $\Eff_{k}(X)$ and a finite collection of classes of effective $\mathbb{Z}$-cycles $\{ \mu_{\bullet} \}$ such that $\mathcal{K}$ is generated by $\mathcal{K}'$ and the $\mu_{\bullet}$.  Let $\{ \zeta_{\bullet} \}$ be a finite set of classes of big effective $\mathbb{Z}$-cycles such that the real cone generated by the $\zeta_{\bullet}$ contains $\mathcal{K}' - \{0\}$ in its interior. 

Since $\mathcal{K}'$ has nonempty interior and is closed under scalings, it contains an interior class $\eta \in N_k(X)_{\mathbb Z}$. We can assume that $\eta$ is the class of an effective $\mathbb Z$-cycle. Let $\alpha'_{\bullet}\in \Eff_k(X)_{\mathbb Z}$ be a finite set of classes of
effective $\mathbb Z$-cycles that generate $N_k(X)_{\mathbb Z}$. For fixed large $m$, we then have that $\alpha_{\bullet}:=m\eta+\alpha'_{\bullet}$ belongs to $\mathcal{K}'^{int}$. It follows that $\eta$ and $\alpha_{\bullet}$ generate $N_k(X)_{\mathbb Z}$ as a group. 

Then \cite[\S 3 Proposition 3]{khovanskii92} applied to the set $\mathcal{T} := \{\mu_{\bullet}\} \cup \{\zeta_{\bullet}\}\cup\{\alpha_{\bullet}\}\cup\{\eta\}$ yields a class $\gamma$ such that $\gamma + \beta$ is equal to a sum of classes in $\mathcal{T}$ for every $\beta \in \mathcal{K} \cap N_{k}(X)_{\mathbb{Z}}$.
\end{proof}

\begin{prop} \label{kmobismob}
Let $X$ be a projective variety and let $\mathcal{K}$ be an acceptable cone in $N_{k}(X)$.  Then for any class $\alpha \in N_{k}(X)$ we have $\mob_{\mathcal{K}}(\alpha) = \mob(\alpha)$.
\end{prop}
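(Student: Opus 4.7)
The plan is to prove the equality in four steps, first on integer classes, then via homogeneity on rationals, and finally via a squeeze argument (using monotonicity and the continuity of $\mob$) on all real classes.

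\textbf{Step 1: equality on $N_{k}(X)_{\mathbb{Z}}$.} Suppose $\alpha \in N_{k}(X)_{\mathbb{Z}}$. The inequality $\mob_{\mathcal{K}}(\alpha) \geq \mob(\alpha)$ is immediate: since $0 \in \mathcal{K}$, the class $m\alpha$ itself lies in $(m\alpha - \mathcal{K}) \cap N_{k}(X)_{\mathbb{Z}}$, so $\mc_{\mathcal{K}}(m\alpha) \geq \mc(m\alpha)$. For the reverse inequality, let $\gamma$ be the fixed interior integer class of $\mathcal{K}$ produced by Lemma \ref{effectivenesslemma}; in particular $\gamma$ is represented by an effective $\mathbb{Z}$-cycle $Z$. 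For each $m$ choose $\beta_{m} \in (m\alpha - \mathcal{K}) \cap N_{k}(X)_{\mathbb{Z}}$ achieving $\mc_{\mathcal{K}}(m\alpha)$. Then $m\alpha - \beta_{m}$ lies in $\mathcal{K} \cap N_{k}(X)_{\mathbb{Z}}$, so Lemma \ref{effectivenesslemma} guarantees that $\gamma + (m\alpha - \beta_{m})$ is represented by an effective $\mathbb{Z}$-cycle $Z'_{m}$. Since $\beta_{m} + [Z'_{m}] = m\alpha + [Z]$, adjoining $Z'_{m}$ to any family computing $\mc(\beta_{m})$ gives $\mc(m\alpha + [Z]) \geq \mc(\beta_{m}) = \mc_{\mathcal{K}}(m\alpha)$. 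Taking limsups and applying Lemma \ref{addingcycle} yields $\mob(\alpha) \geq \mob_{\mathcal{K}}(\alpha)$.

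\textbf{Step 2: equality on $N_{k}(X)_{\mathbb{Q}}$.} Both $\mob$ (by construction) and $\mob_{\mathcal{K}}$ (by the homogeneity lemma proved just above the statement) are homogeneous of weight $n/(n-k)$, so Step 1 extends to rational classes by clearing denominators.

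\textbf{Step 3: monotonicity of $\mob_{\mathcal{K}}$.} I claim that if $\alpha' - \alpha \in \mathcal{K}$, then $\mob_{\mathcal{K}}(\alpha') \geq \mob_{\mathcal{K}}(\alpha)$. Indeed, because $\mathcal{K}$ is a convex cone, $m(\alpha'-\alpha) + \mathcal{K} \subset \mathcal{K}$, and rearranging gives the set-theoretic containment $m\alpha - \mathcal{K} \subset m\alpha' - \mathcal{K}$; intersecting with $N_{k}(X)_{\mathbb{Z}}$ yields $\mc_{\mathcal{K}}(m\alpha) \leq \mc_{\mathcal{K}}(m\alpha')$ for every $m$, and the claim follows.

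\textbf{Step 4: squeeze to arbitrary $\alpha \in N_{k}(X)$.} Fix $\alpha$ and a class $\gamma$ in the interior of $\mathcal{K}$ (such a $\gamma$ exists since $\mathcal{K}$ is full-dimensional by the definition of acceptable cone). For any $\delta > 0$, the class $\delta \gamma$ also lies in the interior of $\mathcal{K}$, so there is a neighborhood $V$ of $\delta\gamma$ contained in $\mathcal{K}$. Density of $N_{k}(X)_{\mathbb{Q}}$ then supplies a rational class $\alpha_{1}$ near $\alpha - \delta\gamma$ with $\alpha - \alpha_{1} \in V \subset \mathcal{K}$, and similarly a rational $\alpha_{2}$ near $\alpha + \delta \gamma$ with $\alpha_{2} - \alpha \in \mathcal{K}$. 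Letting $\delta \to 0$ (and taking the rational approximants correspondingly close) gives $\alpha_{1}, \alpha_{2} \to \alpha$. Combining Steps 2 and 3 yields
\begin{equation*}
\mob(\alpha_{1}) \;=\; \mob_{\mathcal{K}}(\alpha_{1}) \;\leq\; \mob_{\mathcal{K}}(\alpha) \;\leq\; \mob_{\mathcal{K}}(\alpha_{2}) \;=\; \mob(\alpha_{2}),
\end{equation*}
and letting $\alpha_{1},\alpha_{2} \to \alpha$ the outer two terms converge to $\mob(\alpha)$ by the continuity statement in Theorem \ref{continuousmobility}, forcing $\mob_{\mathcal{K}}(\alpha) = \mob(\alpha)$.

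The most delicate ingredient is Step 1, specifically the use of Lemma \ref{effectivenesslemma} to convert the integer class $m\alpha - \beta_{m}$ (which need not itself be effective) into an effective $\mathbb{Z}$-cycle after adding the \emph{fixed} integer class $\gamma$; that uniformity in $m$ is what allows Lemma \ref{addingcycle} to produce the correct asymptotic bound. The remaining steps are formal consequences of homogeneity, monotonicity in $\mathcal{K}$, and the continuity of $\mob$.
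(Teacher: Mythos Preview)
Your proof is correct and follows essentially the same approach as the paper: the integer case via Lemma \ref{effectivenesslemma} and Lemma \ref{addingcycle}, the rational case via homogeneity, and the real case via a monotonicity-plus-continuity squeeze. Your Step 3 monotonicity is stated slightly more generally (for $\alpha'-\alpha \in \mathcal{K}$ rather than just the interior), but the argument and its role are the same.
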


\begin{proof}
First suppose $\alpha$ is a $\mathbb{Z}$-class.  Let $\gamma$ be a class as in Lemma \ref{effectivenesslemma} for $\mathcal{K}$.  Then
\begin{equation*}
\mc(m\alpha) \leq \mc_{\mathcal{K}}(m\alpha) \leq \mc(m\alpha + \gamma)
\end{equation*}
and we conclude by Lemma \ref{addingcycle}.  The statement for $\mathbb{Q}$-cycles follows by homogeneity.

To extend to $\mathbb{R}$-classes, note that if $\beta$ is in the interior of $\mathcal{K}$ then $\mob_{\mathcal{K}}(\alpha + \beta) \geq \mob_{\mathcal{K}}(\alpha)$.  Using the statement for $\mathbb{Q}$-classes, we see that for any big $\mathbb{R}$-class $\alpha$
\begin{align*}
\min_{\beta \in \mathcal{K}^{int}, \alpha + \beta \in N_{k}(X)_{\mathbb{Q}}} \mob(\alpha + \beta) \geq \mob_{\mathcal{K}}(\alpha) \geq \max_{\gamma \in \mathcal{K}^{int}, \alpha - \gamma \in N_{k}(X)_{\mathbb{Q}}} \mob(\alpha - \gamma)
\end{align*}
and we conclude by continuity of mobility.
\end{proof}

\section{Zariski decompositions} \label{zardecomsection}

Just as for divisors, the Zariski decomposition of a class $\alpha \in N_{k}(X)$ identifies a ``positive'' part $P(\alpha)$ that is movable and a ``negative'' part $N(\alpha)$ that does not contribute to the positivity of $\alpha$.  In this section we define the Zariski decomposition and analyze its geometric properties.

\subsection{Zariski decompositions}

A Zariski decomposition for a class $\alpha$ identifies a movable part $P(\alpha)$ that has the same mobility.

\begin{defn} \label{zariskidecompdefn}
Let $X$ be a projective variety.  A Zariski decomposition for a big class $\alpha \in N_{k}(X)$ is a sum $\alpha = P(\alpha) + N(\alpha)$ where $P(\alpha)$ is movable, $N(\alpha)$ is pseudo-effective, and $\mob(P(\alpha)) = \mob(\alpha)$.
\end{defn}

\begin{rmk}
The rational mobility of \cite{lehmann13} can be used in place of the mobility in Definition \ref{zariskidecompdefn}.  Since these two functions share many geometric properties, most of the theory developed below will carry over to this context.

It may also be possible to develop a theory of decompositions compatible with the variation function in \cite{lehmann13} (although one could no longer require the positive part to lie in the movable cone).  This perspective seems to be captured by the $\sigma$-decomposition for cycles of \cite[Remark after III.2.13 Corollary]{nakayama04} which will be discussed in Section \ref{nakayamadecompforcycles}.

Finally, as remarked in the introduction one can develop an analogous theory for other homology theories, e.g.~singular homology of complex varieties.
\end{rmk}

We first verify that for a divisor class this decomposition agrees with the $\sigma$-decomposition of \cite{nakayama04}.

\begin{prop} \label{divisorzardecomprop}
Let $X$ be a smooth projective variety 
and let $L$ be a big divisor.  Then $[L]$ has a unique Zariski decomposition (in the sense of Definition \ref{zariskidecompdefn}) given by
\begin{equation*}
P([L]) = [P_{\sigma}(L)] \qquad \textrm{and} \qquad N([L]) = [N_{\sigma}(L)].
\end{equation*}
\end{prop}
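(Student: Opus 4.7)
Plan. The proof splits into existence and uniqueness.

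For existence, I verify the three conditions of Definition \ref{zariskidecompdefn} for $[L]=[P_\sigma(L)]+[N_\sigma(L)]$. The class $[P_\sigma(L)]$ is movable because Proposition \ref{posprodprop}(2) identifies it with the positive product $\langle L\rangle$, characterized as the unique maximal movable class $\preceq [L]$. The class $[N_\sigma(L)]$ is effective by Definition \ref{sigmazardecom}, hence pseudo-effective. For the mobility condition, $\mob$ coincides with $\vol$ on divisor classes (see the example after Theorem \ref{continuousmobility}), and the identity $\vol([L])=\vol([P_\sigma(L)])$ is immediate from Proposition \ref{posprodprop}(1) together with the orthogonality $\langle L^{n-1}\rangle\cdot N_\sigma(L)=0$: one writes $\vol(L)=\langle L^{n-1}\rangle\cdot L = \langle L^{n-1}\rangle\cdot P_\sigma(L)$ and applies the proposition to the movable class $P_\sigma(L)$ to match the two volumes.

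For uniqueness, let $[L]=M+E$ be any Zariski decomposition. Pseudo-effectivity of $E$ gives $M\preceq [L]$, and since $M\in\Mov^1(X)$ the maximality statement in Proposition \ref{posprodprop}(2) gives $M\preceq [P_\sigma(L)]$, equivalently $E\succeq [N_\sigma(L)]$. The remaining task is to show that $F:=[P_\sigma(L)]-M\in\Eff^1(X)$ vanishes. From $\vol(M)=\vol([P_\sigma(L)])$ and the Brunn--Minkowski inequality $\vol^{1/n}([P_\sigma(L)])\geq \vol^{1/n}(M)+\vol^{1/n}(F)$ one first deduces $\vol(F)=0$. Combining this with the monotonicity of $\vol$ along pseudo-effective directions, $\vol$ must be constant on the entire segment from $M$ to $[P_\sigma(L)]$; differentiating, one obtains $\langle L_t^{n-1}\rangle\cdot F=0$ for $L_t:=[P_\sigma(L)]-tF$ and every $t\in(0,1)$. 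To upgrade this orthogonality to $F=0$ I would pass to a birational modification $\pi:Y\to X$ on which $\pi^*[P_\sigma(L)]$ is approximately nef (Fujita approximation is available in arbitrary characteristic, see Section \ref{divisorcharpsection}), translate the constancy of volume on the segment into the vanishing of the intersection numbers $A^{n-k}\cdot \pi^*F^k$ for all $k\geq 1$ (by comparing coefficients of the polynomial $(A-t\pi^*F)^n$), and apply Hodge-type index inequalities to the pseudo-effective class $\pi^*F$.

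The principal difficulty is this last step of uniqueness. The orthogonality $\langle L_t^{n-1}\rangle\cdot F=0$ alone does not force $F=0$, since the positive product of a big class need not lie in the interior of the movable curve cone: for instance $\langle H\rangle=H$ on the blow-up of $\mathbb{P}^2$ at a point is orthogonal to the exceptional curve. One must harness the entire family of orthogonality relations produced by derivative vanishing, combined with the pseudo-effectivity of $F$. A cleaner alternative, if available, would be a direct Nakayama-style rigidity statement of the form: \emph{if $\Delta\in\Eff^1(X)$ satisfies $\vol(L-\Delta)=\vol(L)$, then $\Delta\preceq [N_\sigma(L)]$}. Applied to $\Delta=E$, this would force $E=[N_\sigma(L)]$ at once and complete uniqueness.
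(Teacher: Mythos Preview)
Your existence argument is correct and matches the paper, which dispatches existence in one sentence.

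For uniqueness, your reduction to showing $F=[P_\sigma(L)]-M=0$ is valid and in fact slightly cleaner than the paper's opening: you invoke the maximality of $[P_\sigma(L)]$ among movable classes $\preceq[L]$ directly via Proposition~\ref{posprodprop}(2), whereas the paper first runs a positive-product computation to show that $N(L)$ is numerically equivalent to an \emph{effective} divisor before applying \cite[III.1.14]{nakayama04}.

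The genuine gap is exactly where you flag it. Your Fujita-approximation-plus-Hodge-index sketch does not go through as written: a Fujita approximation replaces $\pi^*P_\sigma(L)$ by a nef $A$ only up to an effective error $E'$, and there is no reason the polynomial $(A-t\pi^*F)^n$ computes $\vol(L_t)$, nor that $\pi^*F$ avoids the support of $E'$. Extracting exact vanishing of the mixed numbers $A^{n-k}\cdot(\pi^*F)^k$ from an \emph{approximate} volume identity is not feasible without further input.

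The paper closes the gap by a different mechanism. Having shown $N(L)$ is effective, it writes $E=N(L)-N_\sigma(L)\geq 0$ with irreducible components $E_i$. From the volume equality and \cite[Corollary~3.6]{bfj09} one obtains $\langle P_\sigma(L)^{n-1}\rangle\cdot E_i=0$ for each $i$. The decisive ingredient you are missing is \cite[Theorem~4.9]{bfj09}: this orthogonality forces $P_\sigma(L)$ to fail to be $E_i$-big, hence $E_i\subset\mathbf{B}_+(P_\sigma(L))$. One then perturbs by $-\epsilon A$ so that each $E_i$ appears with positive coefficient in $N_\sigma(P_\sigma(L)-\epsilon A)$; adding any multiple of $E$ is then absorbed into the negative part, and letting $\epsilon\to 0$ using continuity of $P_\sigma$ on the big cone forces $E=0$. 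Your ``cleaner alternative'' rigidity statement is exactly what this argument delivers, but the engine is restricted volumes and the augmented base locus rather than Hodge index inequalities.
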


In the proof we will reference theorems that are only stated over $\mathbb{C}$; however, in Section \ref{divisorcharpsection} we have explained how all of these theorems hold over an arbitrary algebraically closed field.

\begin{proof}
It follows easily from the definitions that $[P_{\sigma}(L)]$ and $[N_{\sigma}(L)]$ form a Zariski decomposition for $[L]$.  Indeed, since $\mob([L])$ coincides with $\vol(L)$ in this situation, it suffices to show that $\vol(P_{\sigma}(L)) = \vol(L)$, and in fact it is even true that $h^{0}(X,mP_{\sigma}(L)) = h^{0}(X,mL)$ for any positive integer $m$.

We must show that this is the unique Zariski decomposition.  Suppose that $[L] = [P(L)] + [N(L)]$ is a Zariski decomposition for $[L]$, so $P(L)$ is a movable divisor with $\vol(P(L)) = \vol(L)$ and $N(L)$ is a pseudo-effective divisor.  Since $L$ is big, $0<\vol(L) = \vol(P(L))$ and so $P(L)$ is also big.

Recall that $\langle - \rangle$ denotes the positive product of \cite{bfj09}.  Note that
\begin{align*}
\vol(L) & = \langle L^{n-1} \cdot (P(L) + N(L)) \rangle \\
& \geq \langle L^{n-1} \cdot P(L) \rangle + \langle L^{n-1} \cdot N(L) \rangle \\
& \geq \langle P(L)^{n} \rangle + \langle L^{n-1} \cdot N(L) \rangle \\
& \geq \vol(L) +  \langle L^{n-1} \cdot N(L) \rangle.
\end{align*}
Let $H$ be an ample divisor such that $L-H$ is big.  By super-additivity, we see that
\begin{equation*}
H^{n-1} \cdot \langle N(L) \rangle = \langle H^{n-1} \cdot N(L) \rangle = 0.
\end{equation*}
Thus $P_{\sigma}(N(L))=0$ and $N(L)$ is numerically equivalent to the effective divisor $N_{\sigma}(N(L))$.  Since we are only interested in the numerical class, from now on we replace $N(L)$ by this effective divisor and set $P(L) = L - N(L)$.

Since $P(L)$ is movable and $N(L)$ is effective, we have $N_{\sigma}(L) \leq N(L)$ by \cite[III.1.14 Proposition]{nakayama04}.  Let $E$ denote the effective divisor $N(L) - N_{\sigma}(L)$ and let $\{E_{i}\}_{i=1}^{r}$ denote the irreducible components of $E$.  
We know that $\vol(P_{\sigma}(L) + E_{i}) = \vol(P_{\sigma}(L))$ so that by \cite[Corollary 3.6]{bfj09} we have $\langle P_{\sigma}(L)^{n-1} \rangle \cdot E_{i} = 0$.  But this implies that $P_{\sigma}(L)$ is not $E_{i}$-big by \cite[Theorem 4.9]{bfj09}, so that $E_{i}$ is contained in the augmented base locus of $P_{\sigma}(L)$.  Thus $\Supp(E)$ is contained in $\mathbf{B}_{+}(P_{\sigma}(L))$.

Fix an ample divisor $A$.  Since $\Supp(E) \subset \mathbf{B}_{+}(P_{\sigma}(L))$, for any $\epsilon>0$ there is a sufficiently small $\delta>0$ so that
\begin{equation*}
\delta E \leq N_{\sigma}(P_{\sigma}(L) - \epsilon A).
\end{equation*}
In particular $P_{\sigma}(P_{\sigma}(L) - \epsilon A + tE) = P_{\sigma}(P_{\sigma}(L) - \epsilon A)$ for any $t>0$.  By the continuity of the positive part on the big cone as in \cite[Proposition 2.9]{bfj09}, this implies that $P_{\sigma}(P_{\sigma}(L)+E) = P_{\sigma}(L)$.  In particular $P_{\sigma}(L)+E$ can not be movable unless $E=0$, showing that $P(L) = P_{\sigma}(L)$ as claimed.
\end{proof}

The main theorem of this section is the existence of Zariski decompositions.

\begin{thrm} \label{zardecomexist}
Let $X$ be a projective variety and let $\alpha \in N_{k}(X)$ be a big class.  Then there is a Zariski decomposition for $\alpha$.
\end{thrm}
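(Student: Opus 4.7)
The plan is to construct the positive part $P(\alpha)$ as a subsequential limit of movable $\mathbb{Q}$-classes $\gamma_m\preceq\alpha$ extracted from families of $m$-multiples of $\alpha$ with maximal mobility count, and then verify that the mobility is preserved in the limit. The first step is a compactness observation: fixing an ample class $h$ and the associated norm $\|\mu\|=h^k\cdot\mu$ on $\Eff_k(X)$ from Corollary \ref{cor:norms}, any $\beta\in\Mov_k(X)$ with $\alpha-\beta\in\Eff_k(X)$ satisfies $\|\beta\|\le\|\alpha\|$, so the set
\[ S:=\{\beta\in\Mov_k(X) \ : \ \alpha-\beta\in\Eff_k(X)\} \]
is closed and bounded, hence compact.

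For each integer $m\ge 1$, I fix a family of effective $\mathbb{Z}$-cycles $p_m:U_m\to W_m$ representing $m\alpha$ with $\mc(p_m)=\mc(m\alpha)$. Partition the irreducible components of $U_m$ into those dominating $X$ (forming a subfamily $p_m^{\mathrm{mov}}$) and those whose image in $X$ is a proper closed subvariety (forming $p_m^{\mathrm{rig}}$). Since $b:=\mc(m\alpha)$ general points of $X$ avoid the finitely many proper subvarieties arising as images of the components of $p_m^{\mathrm{rig}}$, any fiber of $p_m$ passing through such a collection must contain them already within its $p_m^{\mathrm{mov}}$-part. Hence $\mc(p_m^{\mathrm{mov}})\ge\mc(m\alpha)$, and the cycle-theoretic fiber of $p_m^{\mathrm{mov}}$ defines a movable $\mathbb{Z}$-class $\beta_m\preceq m\alpha$ with $\mc(\beta_m)\ge\mc(m\alpha)$. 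Set $\gamma_m:=\beta_m/m\in S$.

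By compactness of $S$ I may pass to a subsequence so that $\gamma_m\to P\in S$ and simultaneously $\mc(m\alpha)/(m^{n/(n-k)}/n!)\to\mob(\alpha)$. Set $N(\alpha):=\alpha-P\in\Eff_k(X)$. Lemma \ref{mobilityincreases} immediately gives $\mob(P)\le\mob(\alpha)$. For the reverse inequality, I will invoke Proposition \ref{kmobismob}: fix an acceptable cone $\mathcal{K}$ containing $P$ in its interior. Since $\gamma_m\to P$, I can choose positive integers $s_m\ge m$ with $s_m-m=o(m)$ but with $s_m-m$ large enough relative to $m\|P-\gamma_m\|$ so that
\[ \frac{s_m P-\beta_m}{s_m-m}=P+\frac{m}{s_m-m}(P-\gamma_m) \]
lies in a fixed neighborhood of $P$ contained in $\mathcal{K}$; then $s_m P-\beta_m\in\mathcal{K}$. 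Thus $\beta_m$ competes in the definition of $\mc_\mathcal{K}(s_m P)$, yielding $\mc_\mathcal{K}(s_m P)\ge\mc(\beta_m)\ge\mc(m\alpha)$, so that
\[ \mob(P)=\mob_\mathcal{K}(P)\ge\limsup_m\frac{\mc(m\alpha)}{s_m^{n/(n-k)}/n!}=\mob(\alpha), \]
where the final equality uses $s_m^{n/(n-k)}=m^{n/(n-k)}(1+o(1))$.

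The main obstacle is ensuring the limit point $P$ is interior to an acceptable cone, i.e.\ that $P$ is big. A complementary upper bound of $\mc(\beta_m)$ in terms of the degree $h^k\cdot\beta_m$ forces $\|\gamma_m\|$ to stay bounded below, giving $P\neq 0$. If $P$ is nonzero but happens to lie on the boundary of $\Eff_k(X)$, a small perturbation $P+\epsilon h^{n-k}\cdot[X]$ is big, the argument above gives $\mob(P+\epsilon h^{n-k}\cdot[X])\ge\mob(\alpha)$, and continuity of mobility as $\epsilon\to 0$ transfers the bound back to $P$ itself, completing the proof.
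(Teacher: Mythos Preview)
Your approach matches the paper's: extract the $X$-dominating components of a maximizing family for $m\alpha$, pass to a subsequential limit $P$ of the rescaled movable classes, and use the $\mc_{\mathcal K}$ machinery of Proposition~\ref{kmobismob} to transfer the lower bound on mobility count through to $\mob(P)$. Two issues, one a genuine gap and one an avoidable detour.

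The gap: your entire construction (families $p_m$ representing $m\alpha$) assumes $m\alpha\in N_k(X)_{\mathbb Z}$, hence $\alpha$ integral. You never explain how to pass from the integral case to a general big $\mathbb R$-class. The paper does this in two steps: homogeneity of mobility handles $\mathbb Q$-classes, and then for an arbitrary big $\mathbb R$-class one approximates by big $\mathbb Q$-classes, takes Zariski decompositions of the approximants, and extracts a convergent subsequence of positive parts (they sit in a compact set), using continuity of $\mob$ to conclude.

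The detour: your $s_m$-rescaling device, which forces $(s_mP-\beta_m)/(s_m-m)$ into a neighborhood of $P$ inside $\mathcal K$, needs $P$ itself to lie in the interior of an acceptable cone, i.e.\ $P$ big. This is what drives the excursion through an uncited degree upper bound for $\mc$ (to get $P\neq 0$) and a separate boundary-case perturbation. The paper sidesteps all of this by perturbing additively rather than multiplicatively: fix any $\gamma$ in the interior of $\mathcal K$ and observe that for each $\varepsilon>0$ one has $(P+\varepsilon\gamma)-\gamma_m\to\varepsilon\gamma\in\mathcal K^{\mathrm{int}}$ along the chosen subsequence, so $m(P+\varepsilon\gamma)-\beta_m\in\mathcal K$ for $m$ large and hence $\mob(P+\varepsilon\gamma)=\mob_{\mathcal K}(P+\varepsilon\gamma)\ge\mob(\alpha)$; then let $\varepsilon\to 0$. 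This works without any a priori information on $P$, and in fact your own final paragraph is exactly this argument in disguise (take $s_m=m$ and $\gamma=h^{n-k}\cap[X]$), so the $s_m>m$ step and the degree bound can simply be deleted.
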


\begin{proof}
We first consider the case when $\alpha \in N_{k}(X)_{\mathbb{Z}}$.  Fix a subset $\mathcal{M} \subset \mathbb{N}$ such that
\begin{equation*}
\mob(\alpha) = \lim_{m \to \infty, m \in \mathcal{M}} \frac{\mc(m\alpha)}{m^{\frac{n}{n-k}}/n!}.
\end{equation*}

For each $m \in \mathcal{M}$, let $p_{m}: U_{m} \to V_{m}$ denote a family of effective $\mathbb{Z}$-cycles with class $m\alpha$ of maximal mobility count.  We split $U_{m}$ into two pieces: those components that dominate $X$ and those that do not.  We set $\alpha_{m}$ to be the class of the first part and $\beta_{m}$ to be the class of the second part.  Since $\alpha_{m}$ is constructed by removing only those components of $U_{m}$ that do not dominate $X$, we have $\mc(\alpha_{m}) \geq \mc(m \alpha)$.  The reverse inequality is also true by Lemma \ref{mobilityincreases} since $\beta_{m}$ is the class of of an effective $\mathbb{Z}$-cycle.  Thus $\mc(\alpha_{m}) = \mc(m \alpha)$.

Note that $\alpha - \frac{1}{m}\beta_{m}$ is pseudo-effective, so that the classes $\frac{1}{m}\beta_{m}$ vary in a compact set as $m$ increases.  Let $N(\alpha)$ be an accumulation point of the set $\{\frac{1}{m}\beta_{m} \}_{m \in \mathcal{M}}$ and choose a subset $\mathcal{M}' \subset \mathcal{M}$ so that $\{\frac{1}{m}\beta_{m} \}_{m \in \mathcal{M}'}$ converges to $N(\alpha)$.  Clearly $N(\alpha)$ is pseudo-effective.  Furthermore $P(\alpha) := \alpha - N(\alpha)$ is movable, since it is the limit of the movable classes $\{ \frac{1}{m}(\alpha_{m}) \}_{m \in \mathcal{M}'}$.

It remains to show that $\mob(P(\alpha)) = \mob(\alpha)$.  
The inequality $\leq$ follows from Lemma \ref{mobilityincreases}.  Conversely, fix an acceptable cone $\mathcal{K}$ in $N_{k}(X)$.  Choose a  class $\gamma$ for $\mathcal{K}$ as in Lemma \ref{effectivenesslemma}.  Fix $\epsilon > 0$; then there are infinitely many values of $m \in \mathcal{M}'$ so that $\epsilon \gamma + P(\alpha) - \frac{1}{m}\alpha_{m}$ lies in the interior of $\mathcal{K}$.  For these $m$,
\begin{align*}
\mc_{\mathcal{K}}(mP(\alpha) + m \epsilon \gamma)  \geq \mc_{\mathcal{K}}(\alpha_{m}) \geq \mc(\alpha_{m}) = \mc(m\alpha)
\end{align*}
Dividing by $m^{n/n-k}/n!$ and taking limits, we find that for any $\epsilon>0$
\begin{align*}
\mob(P(\alpha) + \epsilon \gamma) & = \mob_{\mathcal{K}}(P(\alpha) + \epsilon \gamma) \textrm { by Proposition \ref{kmobismob}} \\
&  \geq \limsup_{m \to \infty, m \in \mathcal{M}'} \frac{\mc(m\alpha)}{m^{\frac{n}{n-k}}/n!} \\
& = \mob(\alpha) \textrm{ by construction.}
\end{align*}
Using the continuity of mobility, we see that $\mob(P(\alpha)) = \mob(\alpha)$ as claimed.  Thus every big $\mathbb{Z}$-class $\alpha$ admits a Zariski decomposition; by homogeneity, the same is true for every big $\mathbb{Q}$-class.

Finally, suppose that $\alpha$ is a big $\mathbb{R}$-class.  Let $\{ \alpha_{j} \}_{j=1}^{\infty}$ be a sequence of big $\mathbb{Q}$-classes converging to $\alpha$.  For each $\alpha_{j}$, choose a Zariski decomposition $\alpha_{j} = P_{j} + N_{j}$.  Since the $P_{j}$ vary inside a compact set, there is a subsequence converging to a class $P(\alpha)$.  Set $N(\alpha) = \alpha - P(\alpha)$.  By the continuity of mobility, $\alpha = P(\alpha) + N(\alpha)$ is a Zariski decomposition.
\end{proof}

It would be useful to know if Zariski decompositions are compatible with the mobility count and not just the mobility.

\begin{ques} \label{mobilitycountzardecomquestion}
Let $X$ be a projective variety and let $\mathcal{K} \subset N_{k}(X)$ be an acceptable cone.  Let $\alpha \in N_{k}(X)$ be a big class and let $\alpha = P(\alpha) + N(\alpha)$ be a Zariski decomposition.  Is there some $\gamma \in \Eff_{k}(X)$ such that $\mc_{\mathcal{K}}(m\alpha) \leq \mc_{\mathcal{K}}(mP(\alpha)+\gamma)$ for every $m>0$?
\end{ques}

The $\sigma$-decomposition of a divisor satisfies a number of important geometric properties.  We next study whether there are analogous statements for Zariski decompositions of cycles.

\subsection{Structure of Zariski decompositions}

For big divisor classes Zariski decompositions are unique.  We do not know whether this is true for higher codimension cycles.  In fact, even if $\alpha$ is movable Definition \ref{zariskidecompdefn} does not a priori rule out the existence of a Zariski decomposition $\alpha = P(\alpha) + N(\alpha)$ with $N(\alpha) \neq 0$.

\begin{ques}
Is there an example of a projective variety $X$ and a big class $\alpha \in N_{k}(X)$ such that $\alpha$ has more than one Zariski decomposition?
\end{ques}

Nevertheless, the set of possible positive parts of a big class $\alpha$ has some structure.

\begin{lem} \label{structureofnegativeparts}
Let $X$ be a projective variety and let $\alpha \in N_{k}(X)$ be a big class.  Let $\mathcal{P}_{\alpha}$ denote the set of positive parts of all possible Zariski decompositions of $\alpha$ and let $\mathcal{N}_{\alpha}$ denote the set of all possible negative parts.
\begin{itemize}
\item[(1)] $\mathcal{P}_{\alpha}$ and $\mathcal{N}_{\alpha}$ are compact.
\item[(2)] $\mathcal{N}_{\alpha}$ lies on the boundary of $\Eff_{k}(X)$. 
\item[(3)] Assume Conjecture \ref{concaveconj}.  Then $\mathcal{P}_{\alpha}$ and $\mathcal{N}_{\alpha}$ are convex.
\end{itemize}
\end{lem}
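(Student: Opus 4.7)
The plan is to dispatch the three parts separately, with each part relying on continuity of mobility (Theorem \ref{continuousmobility}) together with the closed-cone structure of $\Mov_{k}(X)$ and $\Eff_{k}(X)$; part (3) additionally invokes the homogeneity of mobility and Conjecture \ref{concaveconj}.

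For (1), I first establish boundedness. Any $P\in\mathcal{P}_{\alpha}$ is movable and therefore pseudo-effective, while $\alpha-P\in\mathcal{N}_{\alpha}$ is pseudo-effective by definition of a Zariski decomposition, so $0\preceq P\preceq\alpha$; symmetrically $0\preceq N\preceq\alpha$ for any $N\in\mathcal{N}_{\alpha}$. Fixing an ample class $h$ and the norm $\|\cdot\|$ of Corollary \ref{cor:norms} (so that $\|\beta\|=h^{k}\cdot\beta$ on pseudo-effective classes), pseudo-effectiveness of $\alpha-P$ yields $\|P\|=h^{k}\cdot P\le h^{k}\cdot\alpha=\|\alpha\|$, and similarly for $N$. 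For closedness, if $P_{j}\to P$ with each $P_{j}\in\mathcal{P}_{\alpha}$, then $P\in\Mov_{k}(X)$ and $\alpha-P\in\Eff_{k}(X)$ by closedness of both cones, and continuity of mobility gives $\mob(P)=\lim_{j}\mob(P_{j})=\mob(\alpha)$. The same argument handles $\mathcal{N}_{\alpha}$.

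For (2), I argue by contradiction. If some $N(\alpha)\in\mathcal{N}_{\alpha}$ lay in the interior of $\Eff_{k}(X)$, i.e.\ were big, then the corresponding positive part $P(\alpha)$ is movable and in particular pseudo-effective, so the strict form of Lemma \ref{mobilityincreases} would give $\mob(\alpha)=\mob(P(\alpha)+N(\alpha))>\mob(P(\alpha))$, contradicting the Zariski decomposition property $\mob(P(\alpha))=\mob(\alpha)$.

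For (3), I take two Zariski decompositions $\alpha=P_{i}+N_{i}$ ($i=1,2$) and form the midpoint $P:=\tfrac{1}{2}(P_{1}+P_{2})$, $N:=\tfrac{1}{2}(N_{1}+N_{2})$, so $\alpha=P+N$. Convexity of $\Mov_{k}(X)$ places $P$ in the movable cone, $N$ is manifestly pseudo-effective, and Lemma \ref{mobilityincreases} yields $\mob(P)\le\mob(\alpha)$. The reverse inequality is where Conjecture \ref{concaveconj} enters: applying log-concavity to $\tfrac{1}{2}P_{1}$ and $\tfrac{1}{2}P_{2}$, and using the weight $n/(n-k)$ homogeneity of $\mob$ (so that scaling by $\tfrac{1}{2}$ rescales $\mob^{(n-k)/n}$ by $\tfrac{1}{2}$), one obtains
$$\mob(P)^{(n-k)/n}\geq\mob\bigl(\tfrac{1}{2}P_{1}\bigr)^{(n-k)/n}+\mob\bigl(\tfrac{1}{2}P_{2}\bigr)^{(n-k)/n}=\tfrac{1}{2}\mob(P_{1})^{(n-k)/n}+\tfrac{1}{2}\mob(P_{2})^{(n-k)/n}=\mob(\alpha)^{(n-k)/n}.$$
Thus $\mob(P)=\mob(\alpha)$, so $\alpha=P+N$ is itself a Zariski decomposition and $\mathcal{P}_{\alpha}$ is convex; the convexity of $\mathcal{N}_{\alpha}=\alpha-\mathcal{P}_{\alpha}$ is immediate. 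The only real subtlety is tracking the exponents in (3); everything else reduces to continuity and the basic cone facts already established.
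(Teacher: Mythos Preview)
Your proof is correct and follows essentially the same approach as the paper: compactness via the inclusion $\mathcal{P}_{\alpha}\subset\{\beta\in\Mov_k(X)\mid \beta\preceq\alpha\}$ together with continuity of $\mob$, part (2) via the strict inequality in Lemma \ref{mobilityincreases}, and part (3) via log-concavity and homogeneity.

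One small point on (3): you only verify that the \emph{midpoint} $\tfrac{1}{2}(P_1+P_2)$ lies in $\mathcal{P}_\alpha$, then assert convexity. Either note that the identical computation works for $tP_1+(1-t)P_2$ with any $t\in[0,1]$ (applying Conjecture \ref{concaveconj} to $tP_1$ and $(1-t)P_2$ and using homogeneity to pull out $t$ and $1-t$), which is what the paper does, or else explicitly invoke the closedness of $\mathcal{P}_\alpha$ from part (1) so that midpoint convexity upgrades to convexity.
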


\begin{proof}
Note that $\mathcal{P}_{\alpha}$ is contained in the compact subset $\{ \beta \in \Mov_{k}(X) | \beta \preceq \alpha \}$.  (1) is then a consequence of the continuity of the mobility function and that fact that the movable and pseudo-effective cones are closed.  (2) follows immediately from the second part of Lemma \ref{mobilityincreases}.  To see (3), let $n$ denote the dimension of $X$.  If $P$ and $P'$ are both positive parts for $\alpha$, then Conjecture \ref{concaveconj} would imply that for any $0 \leq t \leq 1$
\begin{align*}
\mob(\alpha)^{\frac{n-k}{n}} & \geq \mob(tP + (1-t)P')^{\frac{n-k}{n}}\\
& \geq t \mob(P)^{\frac{n-k}{n}} +  (1-t)\mob(P')^{\frac{n-k}{n}} = \mob(\alpha)^{\frac{n-k}{n}}
\end{align*}
so that $tP + (1-t)P'$ is also the positive part of a Zariski decomposition of $\alpha$.
\end{proof}

\subsection{Directed property}\label{directed property}

A key property of $\sigma$-decompositions is the directed property: for a big divisor $L$, the set of all movable classes $\beta$ with $\beta \preceq [L]$ forms a directed set under the relation $\preceq$.  The class $[P_{\sigma}(L)]$ is the unique maximum of this directed set.

More generally, suppose that $\alpha \in N_{k}(X)$ is a big class such that the set $\mathcal{S} := \{ \beta \in \Mov_{k}(X) | \beta \preceq \alpha \}$ contains a unique maximal element $P$ under the relation $\preceq$.  Then $P$ is a positive part for $\alpha$: for any $\gamma \in \mathcal{S}$ we have
\begin{equation*}
\mob(\gamma) \leq \mob(P) \leq \mob(\alpha)
\end{equation*}
and Theorem \ref{zardecomexist} shows that $\mob(\gamma) = \mob(\alpha)$ for some $\gamma \in \mathcal{S}$.  However, in contrast to the situation for divisors, $\mathcal{S}$ is in general not a directed set.

\begin{rmk} \label{weakdirectedproperty}
There is a weaker property that may hold.  Say that a big class $\alpha$ has the weak directed property if the set of its positive parts forms a directed set under $\preceq$.  As explained above, this would be implied by the directed property for all movable classes dominated by $\alpha$.  However, the weak directed property is likely to be strictly weaker.

When the weak directed property holds, it allows us to distinguish an ``optimal'' Zariski decomposition.  This is particularly important for classes on the boundary of the pseudo-effective cone, as will be discussed in the next section.
\end{rmk}

\begin{exmple} \label{notdirectedexample}
We give an example of a smooth toric variety $X$ and a non-nef curve class $\alpha$ such that the set
\begin{equation*}
\mathcal{S} := \{ \beta \in \Mov_{1}(X) | \beta \preceq \alpha \}
\end{equation*}
does not form a directed set under the relation $\preceq$.

The example comes from \cite[Example 2]{pay06}.  Let $X$ be the toric variety defined by a fan in $N = \mathbb{Z}^{3}$ on the rays generated by the following
\begin{align*}
v_{1} & = (1,1,-1) \qquad v_{2} = (-1,0,-1)  & v_{3} & = (0,-1,-1) \qquad v_{4} = (1, 0,-1) \\
v_{5} & = (0,1,-1) \qquad v_{6} = (-1,-1,-1) & v_{7} & = (0,0,-1) \qquad v_{8} = (0,0,1)
\end{align*}
with maximal cones
\begin{align*}
\langle v_{1}, v_{4}, v_{8} \rangle, \, \langle v_{1}, v_{5}, v_{8} \rangle, \, \langle v_{2}, v_{5}, v_{8} \rangle, \, \langle v_{2}, v_{6}, v_{8} \rangle, \\
\langle v_{3}, v_{6}, v_{8} \rangle, \, \langle v_{3}, v_{4}, v_{8} \rangle, \, \langle v_{1}, v_{4}, v_{5} \rangle, \, \langle v_{2}, v_{5}, v_{6} \rangle, \\
\langle v_{3}, v_{4}, v_{6} \rangle, \, \langle v_{4}, v_{5}, v_{7} \rangle, \, \langle v_{5}, v_{6}, v_{7} \rangle, \, \langle v_{4}, v_{6}, v_{7} \rangle.
\end{align*}

We first identify the pseudo-effective cone of divisors.  Let $D_{i}$ denote the numerical divisor class corresponding to the ray $v_{i}$.  Then $N^{1}(X)$ is spanned by the classes $D_{1}$, $D_{2}$, $D_{3}$, $D_{7}$, and $D_{8}$; all calculations below will be done in this basis with this ordering.  The pseudo-effective cone is generated by $D_{1}$, $D_{2}$, $D_{3}$, $D_{7}$, $D_{8}$,
\begin{align*}
D_{4} & = \frac{1}{3} \left( -2,1,-2,-1,1 \right), \\
D_{5} & = \frac{1}{3} \left( -2,-2,1,-1,1 \right),  \textrm{ and }\\
D_{6} & = \frac{1}{3} \left( 1,-2,-2,-1,1 \right).
\end{align*}
(In fact $D_{8}$ is redundant.)  If $C_{ij}$ with $1\leq i<j\leq 8$ denotes the intersection of $D_i$ with $D_j$, then $C_{ij}=0$ whenever $v_i$ and $v_j$ do not span a face of the fan. The classes of the remaining $C_{ij}$ generate $\Eff_1(X)$. Furthermore we have the following intersection relations 
\begin{equation*}
\begin{array}{c|c|c|c|c|c|c|c|c}
& D_{1} & D_{2} & D_{3} & D_{4} & D_{5} & D_{6} & D_{7} & D_{8}  \\ \hline
C_{14} & -1 & 0 & 0 & 1 & 1 & 0 & 0 & 1 \\ \hline
C_{15} & -1 & 0 & 0 & 1 & 1 & 0 & 0 & 1 \\ \hline
C_{18} & -1 & 0 & 0 & 1 & 1 & 0 & 0 & 1 \\ \hline
C_{25} & 0 & -1 & 0 & 0 & 1 & 1 & 0 & 1 \\ \hline
C_{26} & 0 & -1 & 0 & 0 & 1 & 1 & 0 & 1 \\ \hline
C_{28} & 0 & -1 & 0 & 0 & 1 & 1 & 0 & 1 \\ \hline
C_{34} & 0 & 0 & -1 & 1 & 0 & 1 & 0 & 1 \\ \hline
C_{36} & 0 & 0 & -1 & 1 & 0 & 1 & 0 & 1 \\ \hline
C_{38} & 0 & 0 & -1 & 1 & 0 & 1 & 0 & 1 \\ \hline
C_{45} & 1 & 0 & 0 & -1 & -1 & 0 & 1 & 0 \\ \hline
C_{46} & 0 & 0 & 1 & -1 & 0 & -1 & 1 & 0 \\ \hline
C_{47} & 0 & 0 & 0 & 1 & 1 & 1 & -3 & 0 \\ \hline
C_{48} & 1 & 0 & 1 & -1 & 0 & 0 & 0 & 1 \\ \hline
C_{56} & 0 & 1 & 0 & 0 & -1 & -1 & 1 & 0 \\ \hline
C_{57} & 0 & 0 & 0 & 1 & 1 & 1 & -3 & 0 \\ \hline
C_{58} & 1 & 1 & 0 & 0 & -1 & 0 & 0 & 1 \\ \hline
C_{67} & 0 & 0 & 0 & 1 & 1 & 1 & -3 & 0 \\ \hline
C_{68} & 0 & 1 & 1 & 0 & 0 & -1 & 0 & 1
\end{array}
\end{equation*}
The pseudo-effective cone of curves is generated by the following curves (which we still write in the basis dual to $D_{1},D_{2},D_{3},D_{7},D_{8}$).
\begin{align*}
C_{14}=C_{15}=C_{18} & =(-1,0,0,0,1) \\
C_{25}=C_{26}=C_{28} & =(0,-1,0,0,1) \\
C_{34}=C_{36}=C_{38} & =(0,0,-1,0,1) \\
C_{47}=C_{57}=C_{67} & =(0,0,0,-3,0) \\
C_{45} & = (1,0,0,1,0) \\
C_{46} & = (0,0,1,1,0) \\
C_{56} & = (0,1,0,1,0)
\end{align*}
The other curves are
\begin{align*}
C_{48} & = C_{14} + 2 C_{45} + C_{46} + C_{57} \\
C_{58} & = C_{14} + 2 C_{45} + C_{56} + C_{57} \\
C_{68} & = C_{25} + 2 C_{56} + C_{46} + C_{57}
\end{align*}
Since $\Mov_{1}(X)$ is the dual of $\Eff^{1}(X)$, it is generated by
\begin{align*}
M_{1} & = (1,0,0,0,2) \qquad M_{2} = (0,1,0,0,2) \qquad M_{3} = (0,0,1,0,2) \\
M_{4} & = (0,0,0,1,1) \qquad M_{5}  = (0,0,0,0,1) \qquad M_{6} = (1,1,1,0,3).
\end{align*}

\noindent Now consider the curve class $$\alpha = (1,1,0,1,2).$$  
Note that $\alpha = \frac 23C_{14} + \frac23C_{25} + \frac23C_{34} + C_{47}+ \frac53C_{45} + \frac 23C_{46}+\frac 53C_{56}$ is big.  
It is not movable since the sum of the first four coordinates is greater than the last coordinate. 
We have $\alpha = M_{1} + C_{56}$ and $\alpha = M_{2} + C_{45}$. 
We claim that there is no movable curve class $\beta$ with $\alpha \succeq \beta$, $\beta \succeq M_{1}$, and $\beta \succeq M_{2}$. 
To see this, note that if we add any positive multiple of $C_{14}$, $C_{25}$, or $C_{34}$ to $M_{1}$ or $M_{2}$, we get a class that is no longer dominated by $\alpha$, since the last coordinate is too big. 
Assume $M_1+aC_{47}+bC_{45}+cC_{46}+dC_{56}$ is movable, where $a$, $b$, $c$, $d$, are nonnegative real numbers. 
By intersecting with $D_4$, $D_5$, and $D_7$, and using that $\Mov_1(X)$ is dual to $\Eff^1(X)$, we get
\begin{align*}
a-b-c&\geq0\\
a-b-d&\geq0\\
-3a+b+c+d&\geq0
\end{align*}
The only solution in $\mathbb R_{\geq 0}^4$ is $(0,0,0,0)$.\qed

\end{exmple}

\subsection{Continuity}

For surfaces, the continuity of Zariski decompositions on the big cone was established by \cite[Main Theorem]{bks04}, and the continuity of the positive part of the $\sigma$-decomposition on the big cone was established by \cite[III.1.7 Lemma]{nakayama04}.  By analogy, one expects Zariski decompositions to vary continuously on the big cone.  The following topological lemma allows us to make this expectation more precise.

\begin{lem} \label{continuityofzardecomlem}
Let $f: C \to D$ be a surjective continuous open map of compact metric spaces and let $g: C \to \mathbb{R}$ be a continuous function.  Define the set $\mathcal{S} := \{ c \in C | g(c) = \max_{c' \in f^{-1}f(c)} g(c')\}$.  For any $d \in D$, let $F_{d}$ denote the fiber of $f$ over $d$ and let $\mathcal{S}_{d}$ denote the (non-empty) intersection $F_{d} \cap \mathcal{S}$.  Then:
\begin{enumerate}
\item The function $h: D \to \mathbb{R}$ defined by $h(d) = \max_{c \in F_{d}} g(c)$ is continuous.
\item $\mathcal{S}$ is closed in $C$.
\item For any point $d \in D$ and any open neighborhood $U$ of $\mathcal{S}_{d}$, there is some neighborhood $V$ of $d$ such that $\mathcal{S}_{d'} \cap U$ is non-empty for every $d' \in V$.
\end{enumerate}
\end{lem}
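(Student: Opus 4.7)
The plan is to treat the three assertions in sequence, with the continuity of $h$ doing most of the work and the other two being corollaries of standard compactness arguments.

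For (1), I would prove upper and lower semicontinuity of $h$ separately. For upper semicontinuity, suppose $d_n \to d$ in $D$. For each $n$ choose $c_n \in F_{d_n}$ with $g(c_n) = h(d_n)$; by compactness of $C$ a subsequence converges to some $c \in C$, and continuity of $f$ forces $c \in F_d$. Thus $h(d) \geq g(c) = \lim_n h(d_n)$ along the subsequence, giving $\limsup h(d_n) \leq h(d)$. Lower semicontinuity is where the openness of $f$ is essential: given $d_n \to d$ and any $c \in F_d$, I would use openness to produce a sequence $c_n \in F_{d_n}$ converging to $c$. Explicitly, set $U_k = B(c,1/k)$; then each $f(U_k)$ is an open neighborhood of $d$, so one can choose indices $N_1 < N_2 < \ldots$ with $d_n \in f(U_k)$ for $n \geq N_k$, and pick $c_n \in U_k \cap F_{d_n}$ for $N_k \leq n < N_{k+1}$. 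Then $c_n \to c$ and $h(d_n) \geq g(c_n) \to g(c)$; taking the supremum over $c \in F_d$ attaining $h(d)$ gives $\liminf h(d_n) \geq h(d)$.

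For (2), observe that once (1) is established, $\mathcal{S}$ is the zero set of the continuous function $g - h \circ f : C \to \mathbb{R}$, hence closed.

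For (3), I would argue by contradiction. Suppose $d \in D$ and an open neighborhood $U \supset \mathcal{S}_d$ are given, but no neighborhood $V$ of $d$ has the stated property. Then there is a sequence $d_n \to d$ and points $c_n \in \mathcal{S}_{d_n}$ with $c_n \notin U$. By compactness of $C$, pass to a subsequence with $c_n \to c$; continuity of $f$ gives $c \in F_d$, and since the complement of $U$ is closed we have $c \notin U$. On the other hand, $g(c_n) = h(d_n)$ for all $n$, and by continuity of $g$ and of $h$ (from (1)) we conclude $g(c) = h(d)$, so $c \in \mathcal{S}_d \subset U$, a contradiction.

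The one genuinely subtle step is the lower semicontinuity of $h$ in (1); this is where the openness hypothesis on $f$ is indispensable, since without it the maximizers over fibers could ``jump down'' along a sequence. Once the sequence-lifting property is extracted from openness, the rest of the argument reduces to compactness and standard semicontinuity bookkeeping.
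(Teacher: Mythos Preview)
Your proof is correct, and parts (2) and (3) match the paper's argument essentially word for word. The only genuine difference is in the proof of (1).

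You prove continuity of $h$ by a direct sequential argument: upper semicontinuity via compactness of $C$ (choosing maximizers $c_n$ and extracting a convergent subsequence), and lower semicontinuity via the sequence-lifting property of open maps. This is perfectly valid. The paper instead argues more globally: since $f$ is a continuous map between compact Hausdorff spaces it is automatically closed, and one checks the set-theoretic identities
\[
h^{-1}([r,\infty)) = f\bigl(g^{-1}([r,\infty))\bigr), \qquad h^{-1}((r,\infty)) = f\bigl(g^{-1}((r,\infty))\bigr)
\]
for every $r \in \mathbb{R}$. The first set is then closed because $f$ is closed, and the second is open because $f$ is open, giving upper and lower semicontinuity at once. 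The paper's route is shorter and avoids sequences, but yours makes the role of openness (the lifting of $c \in F_d$ to nearby fibers) more visibly explicit, which is exactly what you flagged as the subtle step. Both arguments are standard; neither is deeper than the other.

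One tiny wrinkle in your upper semicontinuity step: to conclude $\limsup_n h(d_n) \leq h(d)$ you should first pass to a subsequence realizing the $\limsup$ and then extract a further convergent subsequence, rather than extracting first. This is routine and you clearly know it, but it is worth stating in the final write-up.
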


\begin{proof}
(1) Note that since $f$ is a continuous map of compact metric spaces it is closed (as well as open).  Thus for any $r \in \mathbb{R}$ the set $f(g^{-1}([r,\infty))) = h^{-1}([r,\infty))$ is closed and the set $f(g^{-1}((r,\infty))) = h^{-1}((r,\infty))$ is open.  This shows that $h$ is both upper and lower semi-continuous.

(2) The set $\mathcal{S}$ is the locus of points where $g(c) = (h \circ f)(c)$, and is thus closed.

(3) Suppose conversely.  Fix a point $d \in D$ and an open neighborhood $U$ of $\mathcal{S}_{d}$ violating the theorem.  For any open ball $B_{1/n}(d)$, we can find an element $d_{n} \in B_{1/n}(d)$ such that $\mathcal{S}_{d_{n}} \cap U$ is empty.  Let $c_{n}$ be any element of $\mathcal{S}_{d_{n}}$; then by (1) some subsequence of the $c_{n}$ converges to an element of $\mathcal{S}_{d}$, contradicting the supposition.
\end{proof}

\begin{cor} \label{continuityofzardecom}
Let $X$ be a projective variety.  Let $\mathcal{T}$ be a compact set contained in the interior of $\Eff_{k}(X)$ such that $\mathcal{T}$ is the closure of its interior.  Let $\mathcal{P}_{\mathcal{T}} \subset \mathcal{T} \times N_{k}(X)$ denote the set of pairs $(\alpha,P(\alpha))$ where $\alpha$ is a class in $\mathcal{T}$ and $P(\alpha)$ is the positive part of a Zariski decomposition for $\alpha$.  Then:
\begin{enumerate}
\item $\mathcal{P}_{\mathcal{T}}$ is closed.
\item Let $\mathcal{P}_{\alpha}$ denote the fiber of $\mathcal{P}_{\mathcal{T}}$ over $\alpha \in \mathcal{T}$.  For any point $\alpha \in \mathcal{T}$ and for any open neighborhood $U$ of $\mathcal{P}_{\alpha}$, there is a neighborhood $V$ of $\alpha$ such that every $\beta \in V$ has a Zariski decomposition with $P(\beta) \subset U$.
\end{enumerate}
\end{cor}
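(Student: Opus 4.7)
The plan is to apply Lemma \ref{continuityofzardecomlem} to the following data. Set $D = \mathcal{T}$,
\[
\mathcal{C} := \{(\alpha, P) \in \mathcal{T} \times N_k(X) : P \in \Mov_k(X),\ \alpha - P \in \Eff_k(X)\},
\]
with $f: \mathcal{C} \to \mathcal{T}$ the first projection and $g(\alpha, P) := \mob(P)$. I will first check that $\mathcal{C}$ is compact using the norm from Corollary \ref{cor:norms}: for any $(\alpha, P) \in \mathcal{C}$ we have $P \in \Eff_k(X)$ and $P \preceq \alpha$, so $\|P\| \leq \|\alpha\|$ is bounded uniformly as $\alpha$ ranges over the compact set $\mathcal{T}$. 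Surjectivity of $f$ is Theorem \ref{zardecomexist}, continuity of $g$ is Theorem \ref{continuousmobility}, and Lemma \ref{mobilityincreases} gives $\mob(P) \leq \mob(\alpha)$ along the fibers. Thus the fiberwise maximizers of $g$ form exactly $\mathcal{P}_\mathcal{T}$, and the fiberwise maximum of $g$ is the continuous function $\alpha \mapsto \mob(\alpha)$.

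For part (1), the set $\mathcal{P}_\mathcal{T}$ is the preimage of $0$ under the continuous function $(\alpha, P) \mapsto \mob(\alpha) - \mob(P)$ on $\mathcal{C}$, hence closed.

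Part (2) I will prove by contradiction, mirroring the argument in part (3) of Lemma \ref{continuityofzardecomlem}. Suppose the statement fails at some $\alpha \in \mathcal{T}$: there is an open neighborhood $U$ of $\mathcal{P}_\alpha$ and a sequence $\alpha_n \to \alpha$ in $\mathcal{T}$ such that every Zariski decomposition of $\alpha_n$ has positive part outside $U$. I pick $(\alpha_n, P_n) \in \mathcal{P}_\mathcal{T}$ with $P_n \notin U$, and use compactness of $\mathcal{C}$ to extract a convergent subsequence $(\alpha_n, P_n) \to (\alpha, P)$. Part (1) gives $(\alpha, P) \in \mathcal{P}_\mathcal{T}$, so $P \in \mathcal{P}_\alpha \subset U$; openness of $U$ then forces $P_n \in U$ eventually, contradicting the choice of $P_n$.

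The one technical wrinkle is that the openness hypothesis on $f$ in Lemma \ref{continuityofzardecomlem} may fail when $\alpha - P$ lies on the boundary of $\Eff_k(X)$. However, openness is only used in the lemma to establish continuity of the fiberwise supremum $h$, which in our case coincides with the already-continuous mobility function by Theorem \ref{continuousmobility}. The remaining steps of the lemma's proof therefore transfer with no further difficulty, and I expect the bulk of the work to be the verification that $\mathcal{C}$ is compact and that its fiberwise maximizers recover $\mathcal{P}_\mathcal{T}$.
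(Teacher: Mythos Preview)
Your proof is correct and follows essentially the same approach as the paper: you define the same compact set $\mathcal{C}$ (which the paper calls $M_{\mathcal{T}}$), identify $\mathcal{P}_{\mathcal{T}}$ as the fiberwise maximizers of $\mob$, and deduce both conclusions. The only difference is in how openness of the projection is handled: the paper verifies it directly using that $M_{\mathcal{T}}$ is convex and equal to the closure of its interior, whereas you bypass the openness check by observing that the fiberwise supremum $h$ coincides with the already-continuous function $\mob$, which is all that openness was used for in the lemma. Both routes are valid and of comparable length.
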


\begin{proof}
Define the set $M_{\mathcal{T}} \subset \mathcal{T} \times N_{k}(X)$ consisting of the pairs $(\alpha,\gamma)$ where $\alpha \in \mathcal{T}$ and $\gamma \in \Mov_{k}(X) \cap (\alpha - \Eff_k(X))$.  Let $\pi_{1}$ and $\pi_{2}$ denote the two projection maps from $\mathcal{M}_{\mathcal{T}}$.  Note that $M_{\mathcal{T}}$ is compact and $\mathcal{P}_{\mathcal{T}}$ is the subset of $M_{\mathcal{T}}$ consisting of the elements $(\alpha,\gamma)$ that satisfy
\begin{equation*}
\mob(\gamma) = \max_{\beta \in \pi_{1}^{-1}(\alpha)} \mob(\beta).
\end{equation*}
$\mathcal{M}_{\mathcal{T}}$ is convex and is the closure of its interior.  Thus $\pi_{1}|_{\mathcal{M}_{\mathcal{T}}}$ is open: any open neighborhood $U$ of a point $p \in \mathcal{M}_{\mathcal{T}}$ contains a full-dimensional subset of $\mathcal{M}_{\mathcal{T}}$, so $\pi_{1}(U)$ contains an open neighborhood of $\pi_{1}(p)$.  We conclude by applying Lemma \ref{continuityofzardecomlem} to $\pi_{1}: \mathcal{M}_{\mathcal{T}} \to \mathcal{T}$ and $\mob \circ \pi_{2}: \mathcal{M}_{\mathcal{T}} \to \mathbb{R}$.
\end{proof}

Corollary \ref{continuityofzardecom} allows us to extend the definition of a Zariski decomposition to any pseudo-effective class in a natural way.

\begin{defn} \label{psefzardecomdef}
Let $X$ be a projective variety.  Consider the subset $M \subset \Eff_{k}(X) \times \Eff_{k}(X)$ that consists of all pairs $(\beta,P(\beta))$ where $\beta$ is a big class and $P(\beta)$ is the positive part of a Zariski decompositions of $\beta$.  A Zariski decomposition for a pseudo-effective class $\alpha$ is any expression $\alpha = P(\alpha) + N(\alpha)$ such that $(\alpha,P(\alpha))$ lies in the closure of $M$.
\end{defn}

By Corollary \ref{continuityofzardecom} this definition agrees with our earlier definition when $\alpha$ is big.  Note that positive parts of pseudo-effective classes are still movable and negative parts are still pseudo-effective.  Furthermore Lemma \ref{structureofnegativeparts} extends immediately to pseudo-effective classes. 

\begin{caut} This definition differs from the established conventions for divisor classes.  When $\alpha$ is a divisor class on the boundary of the pseudo-effective cone, with our definition the set of possible negative parts may be infinite (see Examples \ref{infinitesurfaceexample} and \ref{nakayamaexample}).  Usually one works only with $N_{\sigma}(L)$ which is defined as the unique effective divisor whose class is the minimum under the relation $\preceq$.

In arbitrary codimension, one might still hope to be able to choose a ``distinguished'' Zariski decomposition in some way.  Example \ref{notdirectedexample} shows that, in contrast to the situation for divisors, the set of movable classes dominated by $\alpha$ may not be a directed set.  However, we do not know of an example where the weak directed property of Remark \ref{weakdirectedproperty} fails.  Indeed, the weak directed property would probably follow from an affirmative answer to Question \ref{subadditivityques}.(2).

An alternative approach to choosing a ``distinguished'' Zariski decomposition is outlined in Section \ref{minlengthsection}.
\end{caut}

\begin{exmple} \label{boundaryzardecomexample}
Assume $\alpha$ is a boundary class in $\Eff_k(X)$. If $\alpha$ is not contained in the interior of any face of $\Eff_k(X)$ that intersects $\Mov_k(X)$, then $\alpha=0+N(\alpha)$ is the unique Zariski decomposition of $\alpha$.
\end{exmple}

For divisors Zariski decompositions are unique on the big cone by Proposition \ref{divisorzardecomprop}.  However, this uniqueness does not extend to the pseudo-effective boundary.  The following examples show that a divisor class $\alpha$ on the boundary of the pseudo-effective cone may have infinitely many Zariski decompositions.  Furthermore, it is possible for the negative part of a Zariski decomposition to be nef.

\begin{exmple} \label{infinitesurfaceexample}
Suppose that $S$ is a smooth surface.  Then for any fixed prime divisor $\Gamma$ the function $\sigma_{\Gamma}$ varies continuously on the entire pseudo-effective cone of $S$.  (This is verified for big classes in \cite[Main Theorem]{bks04} and for all pseudo-effective classes in \cite[III.1.19 Proposition]{nakayama04}).  However, in general $N_{\sigma}$ is only lower semicontinuous, and in fact there are two distinct behaviors for a pseudo-effective curve class $\alpha$:
\begin{itemize}
\item $N_{\sigma}$ is continuous at $\alpha$.  This is always the case for big curve classes by \cite{bks04}.   More generally, it holds if there are finitely many prime divisors $\Gamma_{i}$ and an open subset $U$ of $\alpha$ such that $N_{\sigma}$ of any pseudo-effective class contained in $U$ is supported on the $\Gamma_{i}$.

In this case $\alpha$ has a unique Zariski decomposition in the sense of Definition \ref{psefzardecomdef}.  To see this, note that if we take big classes $\beta_{i}$ whose limit is $\alpha$, the negative parts $N_{\sigma}(\beta_{i})$ must converge to $N_{\sigma}(\alpha)$.   Thus the unique Zariski decomposition of $\alpha$ consists of the numerical classes of its Fujita-Zariski decomposition.

\item $N_{\sigma}$ is not continuous at $\alpha$.  A well-known example is when $S$ is the blow-up of $\mathbb{P}^{2}$ at nine very general points and $\alpha$ is the class of $-K_{X}$.

In this case $\alpha$ has infinitely many Zariski decompositions in the sense of Definition \ref{psefzardecomdef}.  Indeed, write $\alpha = P+N$ for the Fujita-Zariski decomposition of $\alpha$.  Since $N_{\sigma}$ is not continuous, there must be a sequence of big curve classes $\beta_{i}$ such that $\lim_{i \to \infty} N_{\sigma}(\beta_{i}) \succ N$.  In other words, there is some $\gamma$ with $0 \prec \gamma \preceq P$ such that
\begin{equation*}
\alpha = (P - \gamma) + ( N + \gamma)
\end{equation*}
is a Zariski decomposition for $\alpha$.  Since the log-concavity of the volume function for divisors is known, Lemma \ref{structureofnegativeparts}.(3) shows that the set of all possible negative parts for $\alpha$ is convex; thus for any $0 \leq t \leq 1$ the expression
\begin{equation*}
\alpha = (P - t\gamma) + ( N +  t\gamma)
\end{equation*}
is again a Zariski decomposition.
\end{itemize} 
\end{exmple}

\begin{exmple} \label{nakayamaexample}
The same dichotomy holds for divisors in higher dimensions.  In fact, an example of \cite{nakayama04} shows that even $\sigma_{\Gamma}$ is not necessarily continuous: let $S$ be an abelian surface and let $L$ be a divisor on $S$ that is non-zero and nef but not big.  Set $X = \mathbb{P}_{S}(\mathcal{O} \oplus \mathcal{O}(-L))$ and let $\pi: S \to X$ denote the projection.  Let $E$ be the section defined by the projection of the defining bundle onto $\mathcal{O}(-L)$.  Then $\pi^{*}L + E$ is nef, but $E$ occurs with coefficient one in the negative part of nearby divisors of the form $\pi^{*}\alpha + E$ where $\alpha$ is on the pseudo-effective boundary of $\Eff_{1}(S)$.

In this case, by taking limits we see that the nef class $\alpha = [\pi^{*}L + E]$ admits a Zariski decomposition $\alpha = P + N$ with $N \neq 0$.  Since $\alpha$ also admits the trivial Zariski decomposition, it has infinitely many Zariski decompositions $\alpha = (P + tN) + (1-t)N$ for $0 \leq t \leq 1$ by using the log-concavity of the volume function for divisors and Lemma \ref{structureofnegativeparts}.(3).
\end{exmple}

\subsection{Subadditivity}

The negative parts of Zariski decompositions behave well with respect to certain kinds of sums.  We can only prove the most general statements if we assume Conjecture \ref{concaveconj}.

\begin{lem} \label{zardecomadditivity}
Let $X$ be a projective variety and let $\alpha \in \Eff_{k}(X)$.  Suppose that $\alpha = P(\alpha) + N(\alpha)$ is a Zariski decomposition for $\alpha$.
\begin{enumerate}
\item For $0 \leq t < 1$, the expression $P(\alpha) + (1-t)N(\alpha)$ is a Zariski decomposition for $\alpha - tN(\alpha)$.
\item Assume Conjecture \ref{concaveconj}.  Then for $t>0$, the expression $P(\alpha) + (1+t)N(\alpha)$ is a Zariski decomposition for $\alpha + tN(\alpha)$.
\end{enumerate}
\end{lem}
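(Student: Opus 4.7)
Both parts assert that a specific sum is a Zariski decomposition of a specific class. The movability of the positive part $P(\alpha)$ is immediate from the assumption, and the pseudo-effectiveness of the negative parts $(1-t)N(\alpha)$ and $(1+t)N(\alpha)$ follows from $N(\alpha)\in\Eff_k(X)$ together with the signs of the scalars (in (1) we use $0 \le 1-t$). The only substantive content is the equality $\mob(P(\alpha))=\mob(\alpha\pm tN(\alpha))$, and my plan is to establish the two cases by separate sandwich arguments.

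For (1), set $\beta=\alpha-tN(\alpha)=P(\alpha)+(1-t)N(\alpha)$. Since both $(1-t)N(\alpha)$ and $tN(\alpha)$ are pseudo-effective, the monotonicity part of Lemma \ref{mobilityincreases} yields
\[
\mob(P(\alpha))\ \le\ \mob(\beta)\ \le\ \mob(\alpha),
\]
and the extremes are equal by the hypothesis that $\alpha=P(\alpha)+N(\alpha)$ is a Zariski decomposition. Hence $\mob(\beta)=\mob(P(\alpha))$, so the decomposition $\beta=P(\alpha)+(1-t)N(\alpha)$ satisfies all three conditions of Definition \ref{zariskidecompdefn}.

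For (2), set $\beta=\alpha+tN(\alpha)$. The lower bound $\mob(\beta)\ge\mob(P(\alpha))$ again follows from Lemma \ref{mobilityincreases}. For the reverse inequality, I would exploit the identity $(1+t)\alpha=\beta+tP(\alpha)$, both summands being pseudo-effective, and apply Conjecture \ref{concaveconj}:
\[
\mob((1+t)\alpha)^{\frac{n-k}{n}}\ \ge\ \mob(\beta)^{\frac{n-k}{n}}+\mob(tP(\alpha))^{\frac{n-k}{n}}.
\]
Since $\mob$ is homogeneous of weight $n/(n-k)$ and $\mob(P(\alpha))=\mob(\alpha)$, this reads
\[
(1+t)\mob(\alpha)^{\frac{n-k}{n}}\ \ge\ \mob(\beta)^{\frac{n-k}{n}}+t\,\mob(\alpha)^{\frac{n-k}{n}},
\]
whence $\mob(\beta)\le\mob(\alpha)=\mob(P(\alpha))$, closing the sandwich. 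This application of log-concavity is the main (and essentially the only) non-routine step; everything else is monotonicity and homogeneity.

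If $\alpha$ lies on the boundary of $\Eff_k(X)$, the claim passes through the limit definition of Zariski decompositions for non-big classes: choose a sequence of big classes $\alpha_j\to\alpha$ with Zariski decompositions $\alpha_j=P_j+N_j\to P(\alpha)+N(\alpha)$, apply the big case to each $\alpha_j$ to get Zariski decompositions $\alpha_j\pm tN_j=P_j+(1\mp t)(-N_j)$ of the shifted classes, and take the limit, using continuity of $\mob$ and the description of $\Eff_k(X)$-boundary Zariski decompositions as the closure of the set of pairs $(\alpha,P(\alpha))$ for big $\alpha$.
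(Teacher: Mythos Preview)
Your proof is correct and essentially identical to the paper's: the sandwich via Lemma \ref{mobilityincreases} for (1), and the same identity $(1+t)\alpha = \bigl(P(\alpha)+(1+t)N(\alpha)\bigr) + tP(\alpha)$ fed into Conjecture \ref{concaveconj} for (2), with the boundary case handled by passing to limits. (There is a small typo in your limit argument---``$(1\mp t)(-N_j)$'' should read ``$(1\pm t)N_j$''---but the intent is clear.)
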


\begin{proof}
(1) is clear for big classes; it extends to pseudo-effective classes by taking closures.  Similarly it suffices to prove (2) when $\alpha$ is big.  In this case note that $\mob(P(\alpha)) \leq \mob(P(\alpha) + (1+t)N(\alpha))$ by Lemma \ref{mobilityincreases}.  Conversely, assuming the log-concavity of the mobility function as in Conjecture \ref{concaveconj}, we have for any $t>0$
\begin{align*}
\mob^{\frac{n-k}n}(P(\alpha)+(1+t)N(\alpha)) & \leq 
\mob^{\frac{n-k}n}((1+t)(P(\alpha)+N(\alpha))) - \mob^{\frac{n-k}n}(tP(\alpha)) \\
& \leq (1+t) \mob^{\frac{n-k}{n}}(\alpha) - t \mob^{\frac{n-k}{n}}(P(\alpha)) \\
& = \mob^{\frac{n-k}{n}}(P(\alpha)).
\end{align*}
\end{proof}

We can prove a slightly weaker statement than Lemma \ref{zardecomadditivity} (2) without relying on Conjecture \ref{concaveconj}.  In fact, the following statement works in a much more general setting.

\begin{lem} \label{zardecomstabilization}
Let $X$ be a projective variety and let $\alpha \in N_{k}(X)$ be a big class.  Suppose that $\beta \in \Eff_{k}(X)$ lies on an isolated extremal ray of $\Eff_{k}(X)$ that is not movable.  Set $\gamma_{t} := \alpha + t \beta$ for $t>0$.
\begin{enumerate}
\item The positive parts of the classes $\gamma_{t}$ for any $t>0$ are all contained in a bounded set.  In particular the values of $\mob(\gamma_{t})$ are bounded above.
\item For some sufficiently large $t_{0}$ there is a Zariski decomposition $\gamma_{t_{0}} = P + N$ such that for any $t > t_{0}$ the expression $\gamma_{t} = P + (N + t\beta)$ is a Zariski decomposition of $\gamma_{t}$.
\item There is some $t_{0} > 0$ such that $\mob(\gamma_{t_{0}}) = \sup_{t} \mob(\gamma_{t})$.
\end{enumerate}
\end{lem}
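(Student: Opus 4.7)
The plan is to prove (1), then (3), then deduce (2) from (3), all using a good separating functional for $\beta$. Since $\mathbb{R}_{\geq 0}\beta$ is an extremal face of $\Eff_{k}(X)$, I first pick $\ell \in N^{k}(X)$ with $\ell \geq 0$ on $\Eff_{k}(X)$, $\ell(\beta) = 0$, and $\ell^{-1}(0) \cap \Eff_{k}(X) = \mathbb{R}_{\geq 0}\beta$; because $\beta$ is not movable, $\ell > 0$ on $\Mov_{k}(X) \setminus \{0\}$.

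For (1), any movable class $P \preceq \gamma_{t}$ satisfies $\ell(P) \leq \ell(\gamma_{t}) = \ell(\alpha)$, independently of $t$. Equipping $N_{k}(X)$ via Corollary \ref{cor:norms} with a norm $\|\cdot\|$ that coincides with $h^{k} \cdot -$ on $\Eff_{k}(X)$, compactness of $\Mov_{k}(X) \cap \{\|\cdot\| = 1\}$ together with the strict positivity of $\ell$ there give a constant $c > 0$ with $\ell \geq c\|\cdot\|$ on $\Mov_{k}(X)$. Hence any positive part $P_{t}$ of $\gamma_{t}$ satisfies $\|P_{t}\| \leq \ell(\alpha)/c$, and by continuity of $\mob$ the values $\mob(\gamma_{t}) = \mob(P_{t})$ are uniformly bounded above.

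For (3), Lemma \ref{mobilityincreases} together with (1) shows that $M := \sup_{t > 0}\mob(\gamma_{t}) < \infty$. I would choose $t_{j} \to \infty$ with $\mob(\gamma_{t_{j}}) \to M$ and pick positive parts $P_{j}$; by (1) they are bounded, so after passing to a subsequence $P_{j} \to P^{\ast} \in \Mov_{k}(X)$, with $\mob(P^{\ast}) = M$ by continuity. From $\gamma_{t_{j}} - P_{j} \in \Eff_{k}(X)$ we obtain $\alpha - P_{j} \in \Eff_{k}(X) - t_{j}\beta \subseteq \Eff_{k}(X) - \mathbb{R}_{\geq 0}\beta$. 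The main obstacle is to show that the convex cone $\Eff_{k}(X) - \mathbb{R}_{\geq 0}\beta$ is closed; this is precisely where the hypothesis that $\beta$ spans an \emph{isolated} extremal ray is essential. Granting this, $\alpha - P^{\ast}$ itself lies in $\Eff_{k}(X) - \mathbb{R}_{\geq 0}\beta$, so $\alpha + t_{0}\beta - P^{\ast} \in \Eff_{k}(X)$ for some $t_{0} \geq 0$, whence $M = \mob(P^{\ast}) \leq \mob(\gamma_{t_{0}}) \leq M$, and the supremum is attained.

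To justify the closedness of $\Eff_{k}(X) - \mathbb{R}_{\geq 0}\beta$, I would use the standard fact that at an isolated extreme point of a compact convex set one can separate this point from a closed convex complement, which on the level of cones yields a closed salient subcone $C \subseteq \Eff_{k}(X)$ with $\beta \notin C$ and $\Eff_{k}(X) = \mathbb{R}_{\geq 0}\beta + C$. Salience of $\Eff_{k}(X)$ forces $C \cap \mathbb{R}\beta = \{0\}$, so taking a compact base of $C$ and pushing it to the quotient $N_{k}(X)/\mathbb{R}\beta$ produces a compact base for $q(C)$, which is therefore closed; consequently $\Eff_{k}(X) - \mathbb{R}_{\geq 0}\beta = C + \mathbb{R}\beta = q^{-1}(q(C))$ is closed. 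Part (2) is then immediate: fix $t_{0}$ from (3) and any Zariski decomposition $\gamma_{t_{0}} = P + N$, so $\mob(P) = M$; for any $t > t_{0}$, the class $\gamma_{t} - P = N + (t - t_{0})\beta$ is pseudo-effective as a sum of pseudo-effective classes, and the bounds $\mob(P) \leq \mob(\gamma_{t}) \leq M = \mob(P)$ give equality, so $\gamma_{t} = P + \bigl(N + (t - t_{0})\beta\bigr)$ is a Zariski decomposition.
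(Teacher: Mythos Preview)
Your proof is correct and follows essentially the same strategy as the paper's: a separating functional for~(1), a limiting argument for the positive parts combined with the closedness of $\Eff_{k}(X)-\mathbb{R}_{\geq 0}\beta$ for~(3), and~(2) as an immediate consequence. The paper phrases the closedness step as ``$g(\Eff_{k}(X))$ is closed in the quotient by $\mathbb{R}\beta$'' and asserts this directly from isolated extremality, whereas you supply a justification via the decomposition $\Eff_{k}(X)=\mathbb{R}_{\geq 0}\beta + C$; this is a welcome clarification of a point the paper leaves implicit. One cosmetic remark: in~(3) your argument yields $t_{0}\geq 0$, but if $t_{0}=0$ then monotonicity of $\mob$ in $t$ forces $\mob(\gamma_{t})=M$ for all $t>0$, so any positive $t_{0}$ works.
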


\begin{rmk}
Lemma \ref{zardecomstabilization} (1) holds for any $\beta \in \partial \Eff_{k}(X)$ that is contained in a hyperplane $H$ supporting $\Eff_{k}(X)$ but not intersecting $\Mov_{k}(X) - \{ 0 \}$.
\end{rmk}

\begin{proof}
Since $\beta$ lies on an isolated extremal ray, there is a function $L \in N^{k}(X)$ that vanishes on $\beta$ and takes non-negative values on $\Eff_{k}(X)$ and strictly positive values on $\Mov_{k}(X) - \{ 0 \}$.  For any $p>0$ the set  of classes $\xi \in \Mov_{k}(X)$ satisfying $L(\xi) \leq p$ is compact.  Note that for any $t>0$ we have $L(\gamma_{t}) = L(\alpha)$, showing that all possible movable parts of $\gamma_{t}$ for $t>0$ lie in the compact set of classes $\xi \in \Mov_{k}(X)$ with $L(\xi) \leq L(\alpha)$.  This proves (1).

To prove (2) and (3), we first show that the set
\begin{equation*}
\mathcal{C} = \{ \xi \in N_{k}(X) | \xi = \gamma_{t} - \tau \textrm{ for some }t>0 \textrm{ and some } \tau \in \Eff_{k}(X) \}
\end{equation*}
is closed.  Let $V$ be the vector space obtained as the quotient $g: N_{k}(X) \to V$ by the ray generated by $\beta$.  Since $\beta$ lies on an isolated extremal ray, $g(\Eff_{k}(X))$ is a closed subset of $V$.  Since
\begin{equation*}
\mathcal{C} = \alpha + (-1) \cdot g^{-1}g(\Eff_{k}(X)),
\end{equation*}
it is also closed.

Let $\mathcal{P}_{\gamma_{t}}$ denote the union of all the positive parts for $\gamma_{t}$.  Since each $\mathcal{P}_{\gamma_{t}}$ is non-empty, by (1) there must be an accumulation point $P$ of the sets $\mathcal{P}_{\gamma_{t}}$ as $t$ increases.  Then $\mob(P)=\sup_{t>0}\mob(\gamma_{t})$ by continuity of mobility.  Since $\mathcal{C}$ is closed, $P$ lies in $\mathcal{C}$.  Thus, there is some pseudo-effective $N$ such that $P+N = \gamma_{t_{0}}$ for some $t_{0} > 0$.  Then for $t \geq t_{0}$, we have
\begin{equation*}
\mob(\gamma_{t}) \leq \sup_{t>0} \mob(\gamma_{t}) = \mob(P) \leq \mob(\gamma_{t})
\end{equation*}
yielding an equality and establishing (2) and (3).
\end{proof}

The $\sigma$-decomposition satisfies a number of other (sub)additivity properties.  It is unclear whether the Zariski decomposition satisfies the same properties in higher codimension since the directed property fails.

\begin{ques} \label{subadditivityques}
Let $\alpha \in N_{k}(X)$ be a big class and let $\alpha = P+N$ be a Zariski decomposition.
\begin{enumerate}
\item Suppose that $\gamma \succeq 0$.  Is there a Zariski decomposition of $\alpha + \gamma$ with positive part $P' \succeq P$?
\item Suppose that $\gamma$ lies in the interior of the movable cone.  Is there a Zariski decomposition of $\alpha + \gamma$ with positive part $P' \succeq P + \gamma$?
\end{enumerate}
\end{ques}

\subsection{Effectiveness of negative parts}

In contrast to the situation for divisors, the negative part of a Zariski decomposition may not be the class of an effective cycle.

\begin{exmple}
Let $S$ be a smooth surface with a nef curve class $\beta$ that generates an isolated extremal ray of $N_{1}(S)$ and is not represented by an effective cycle.  An example of such a surface $S$ is given by Mumford (see \cite[Example 1.5.2]{lazarsfeld04}): let $C$ be a curve of genus $\geq 2$.  \cite{hartshorne70} verifies that $C$ carries a degree $0$ rank $2$ vector bundle $E$ satisfying
\begin{equation*}
H^{0}(C,\Sym^{m}E \otimes D) = 0
\end{equation*}
for every positive integer $m$ and every divisor $D$ of degree $\leq 0$.  We can take $S$ to be $\mathbb{P}_{C}(\mathcal{E})$ and set $\beta$ to be a nef class generating the unique boundary ray which is not the fiber of the projection map.

Fix an ample divisor $H$ on $S$ and consider the $\mathbb{P}^{1}$-bundle $\pi: X \to S$ defined by the sheaf $\mathcal{O}_{S} \oplus \mathcal{O}_{S}(-H)$.  The map $\pi$ has a section $T = \mathbb{P}(\mathcal{O}_{S})$; let $\alpha$ denote the pushforward of the class $\beta$ on $T$ under the inclusion map.

Note that $\alpha$ generates a extremal ray of $\Eff_{1}(X)$.  Indeed, if $D$ is a nef divisor on $S$ such that $\beta \cdot D = 0$ then $\alpha \cdot \pi^{*}D=0$.  Since $\beta$ is isolated, such divisors $D$ generate a codimension $1$ subspace of $N_{1}(S)$.  Furthermore, $\alpha$ has vanishing intersection against the nef divisor $T' = \mathbb{P}(\mathcal{O}_{S}(-H))$.  So the nef divisors that have vanishing intersection with $\alpha$ generate a codimension $1$ subspace of $N_{2}(X)$, showing that $\alpha$ must be an isolated extremal ray.  Since $\alpha$ is not nef, and hence not movable, we have $\alpha = N(\alpha)$ is the unique Zariski decomposition for $\alpha$.

However, $\alpha$ is not the class of any effective curve.  If it were, since $\alpha \cdot T < 0$ this curve would have a component $C$ contained in $T$.  Since $\alpha$ is extremal, $[C]$ would be proportional to $\alpha$, contradicting the chosen properties of $\beta$.
\end{exmple}

Nevertheless, the following conjecture predicts that negative parts can be understood by analyzing proper subvarieties of $X$.  Note that when $\alpha$ is a divisor class the conjecture exactly predicts the effectiveness of the negative part: if $Y$ is a scheme of dimension $\leq n-1$, the only pseudo-effective classes in $N_{n-1}(Y)$ are sums of the classes of components of $Y$ and thus are effective classes.

\begin{conj} \label{negativepartispushedforward}
Let $X$ be a projective variety and let $\alpha = P(\alpha) + N(\alpha)$ be a Zariski decomposition.  Then there is some proper closed subscheme $i: Y \subsetneq X$ and a pseudo-effective class $\beta \in N_{k}(Y)$ such that $N(\alpha) = i_{*}\beta$.
\end{conj}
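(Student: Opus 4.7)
The strategy is to return to the explicit construction in the proof of Theorem \ref{zardecomexist} and identify the support of the negative part. By homogeneity and continuity of the mobility, we may reduce to the case where $\alpha \in N_k(X)_{\mathbb{Z}}$ is big. Recall that the existence proof produces, along a subsequence $\mathcal{M}' \subset \mathbb{N}$, families $p_m : U_m \to V_m$ of effective $\mathbb{Z}$-cycles of class $m\alpha$ with maximal mobility count, splits $m\alpha = \alpha_m + \beta_m$ according to whether components of $U_m$ dominate $X$, and establishes $\tfrac{1}{m}\beta_m \to N(\alpha)$. Since $k < \dim X$, every $k$-dimensional subvariety of $X$ already sits in a proper closed subscheme; in particular each non-dominating cycle $\beta_m$ is supported on some proper $Y_m \subsetneq X$. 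The conjecture thus asserts that one can choose a single proper closed $Y$ into which all the $Y_m$ fit in the limit.

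The plan is to extract a bounded family of supports and apply Noetherianity. Fix a norm $\|\cdot\|$ on $N_k(X)$ coming from an ample polarization via Corollary \ref{cor:norms}; then $\|\beta_m\| \leq m\|\alpha\|$. Passing to reduced supports and absorbing multiplicities into the family data, one reduces to bounding the degrees of the reduced subvarieties $V_{m,j}$ appearing in $\beta_m$. If one can show, after a refined choice of mobility-maximizing families, that these reduced components lie in a set of bounded Chow degree, then only finitely many irreducible components $W_1, \ldots, W_r$ of the Chow scheme of $X$ are visited, and $Y := \bigcup_\ell \overline{\bigcup W_\ell}$ is a proper closed subscheme containing all supports. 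The class $\beta := N(\alpha) \in N_k(Y)$ obtained by restricting $N(\alpha)$ along $i_*$ then lies in $\Eff_k(Y)$ as a limit of classes of effective cycles supported on $Y$.

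The main obstacle is establishing the boundedness of the reduced components $V_{m,j}$. A priori, mobility-maximizing families could have non-dominating components of arbitrarily large degree with no common support, and Example \ref{notdirectedexample} shows that the most direct analogue of the divisor-theoretic directed property --- which in the divisor case (via \cite[III.1.14 Proposition]{nakayama04}) forces $N_\sigma(L)$ to be a single effective divisor with bounded support --- fails in higher codimension. One attack is to show that after passing to a subsequence, the classes $\tfrac{1}{m}\beta_m$ can be chosen to lie in a fixed minimal face $F$ of $\Eff_k(X)$ containing $N(\alpha)$, and to prove separately that every face of $\Eff_k(X)$ whose relative interior is disjoint from $\Mov_k(X)$ is spanned by classes supported on a common proper subscheme. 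Another is to exploit the log-concavity predicted by Conjecture \ref{concaveconj} to force non-dominating supports from distinct families to coincide. Either route appears to require a finer structural theorem about $\partial\Eff_k(X)$ near $N(\alpha)$ than is currently available, which is why we state the result only as a conjecture.
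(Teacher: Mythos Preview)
This statement is labeled a \emph{Conjecture} in the paper, and the paper does not prove it in general. Your proposal correctly acknowledges this in its final sentence: you sketch a plausible line of attack via boundedness of the non-dominating supports, identify the obstruction (no control over the Chow degrees of the reduced components, and the failure of the directed property in higher codimension), and conclude that the statement remains conjectural. In that sense your write-up is accurate and there is no proof in the paper to compare against.

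Two remarks are nevertheless worth making. First, even if your boundedness argument went through, it would only establish the conjecture for the particular Zariski decomposition produced by the existence proof of Theorem~\ref{zardecomexist}. Since uniqueness of Zariski decompositions is left open in the paper, this would not immediately cover an arbitrary decomposition $\alpha = P(\alpha) + N(\alpha)$ as the conjecture is stated. (A related minor sloppiness: you cannot simply ``restrict $N(\alpha)$ along $i_*$''; you must show the approximating classes $\tfrac{1}{m}\beta_m$ are bounded as classes on $Y$, which is a separate compactness step.)

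Second, the paper does prove a weakened version of the conjecture for curve classes ($k=1$): for \emph{some} Zariski decomposition of a pseudo-effective curve class $\alpha$, the negative part is pushed forward from a proper subscheme. The method there is entirely different from your Chow-boundedness sketch. One reduces, via alteration and extremality, to the case where $X$ is smooth and $\alpha = N(\alpha)$ generates a non-movable extremal ray of $\Eff_1(X)$; then, using that $\Mov_1(X) = \Nef_1(X)$, there is an irreducible divisor $D$ with $\alpha \cdot D < 0$, and one produces the desired $\beta \in \Eff_1(D)$ as an accumulation point of the $D$-supported portions of a sequence of effective cycles approximating $\alpha$. This argument relies on the curve--divisor duality and does not extend in any obvious way to $k > 1$, which is consistent with your assessment that the general case requires structural input about $\partial\Eff_k(X)$ that is not currently available.
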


We prove (a slightly weaker version of) Conjecture \ref{negativepartispushedforward} for curve classes.

\begin{prop}
Let $X$ be a projective variety and let $\alpha \in \Eff_{1}(X)$.  Then for some Zariski decomposition $\alpha = P(\alpha) + N(\alpha)$ there is a proper closed subscheme $i: Y \subsetneq X$ and a pseudo-effective class $\beta \in N_{1}(Y)$ such that $N(\alpha) = i_{*}\beta$.
\end{prop}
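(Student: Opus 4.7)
My plan is to reduce to the case of smooth $X$, exploit the BDPP identification $\Mov_1(X) = \Nef_1(X)$, and then identify a suitably chosen negative part with a pushforward using the explicit construction from the proof of Theorem~\ref{zardecomexist}.

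First I would reduce to $X$ smooth. If $\pi\colon \widetilde X \to X$ is a smooth alteration, then Proposition~\ref{prop:surjeff} lets me write $\alpha = \pi_*\widetilde\alpha$ for some $\widetilde\alpha \in \Eff_1(\widetilde X)$. A Zariski decomposition $\widetilde\alpha = \widetilde P + \widetilde i_*\widetilde\beta$ on $\widetilde X$ with $\widetilde Y \subsetneq \widetilde X$ pushes forward to $\alpha = \pi_*\widetilde P + i_*(\pi|_{\widetilde Y})_*\widetilde\beta$, whose first summand is movable by Corollary~\ref{cor:surj mov} and whose support $\pi(\widetilde Y)$ is proper in $X$ since $\pi$ is generically finite; the small technical wrinkle is checking compatibility of $\mob$ with $\pi_*$ so that $\mob_X(\pi_*\widetilde P) = \mob_X(\alpha)$. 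Assuming henceforth that $X$ is smooth, BDPP (Section~\ref{divisorcharpsection}) gives $\Mov_1(X) = \Nef_1(X)$. Consider
\begin{equation*}
S = \{ P \in \Nef_1(X) : P \preceq \alpha \text{ and } \mob(P) = \mob(\alpha) \}.
\end{equation*}
This set is nonempty by Theorem~\ref{zardecomexist}. A $\preceq$-chain in $S$ is a monotone sequence in $N_1(X)$ bounded above by $\alpha$; its limit remains in $S$ by closedness of $\Nef_1(X)$ and continuity of $\mob$. Zorn's lemma produces a $\preceq$-maximal element $P \in S$; set $N = \alpha - P$. Maximality forces $N$ to contain no nonzero nef sub-class, for if $0 \neq M \in \Nef_1(X)$ satisfied $M \preceq N$, then $P + M \in \Nef_1(X)$, $P + M \preceq \alpha$, and the squeeze $\mob(\alpha) = \mob(P) \leq \mob(P + M) \leq \mob(\alpha)$ would place $P + M$ in $S$ strictly above $P$.

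It remains to show such an $N$ is pushed forward from a proper closed subscheme. I apply the construction in the proof of Theorem~\ref{zardecomexist} to $N$ (perturbed by a small movable class to ensure bigness, if needed): take families $p_m\colon U_m \to V_m$ of cycles of class $mN$ of maximal mobility count, split each $U_m$ into its dominating contribution $\alpha_m$ and non-dominating contribution $\beta_m$, and extract accumulation points $\tfrac{1}{m}\alpha_m \to P(N)$ and $\tfrac{1}{m}\beta_m \to N(N)$. The structural property just established forces $P(N) = 0$, so $\tfrac{1}{m}\beta_m \to N$, with each $\beta_m$ supported on a proper subscheme $Y_m \subsetneq X$. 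Corollary~\ref{cor:norms} bounds the $H$-degree of $\tfrac{1}{m}\beta_m$, and by thresholding the irreducible components of $\beta_m$ according to their rescaled multiplicity, I confine the essential contributions to finitely many components of the Chow scheme of bounded degree; each such component either parametrizes a dominating family (which would contribute a nonzero nef sub-class to the limit $N$, contradicting the structural property) or sweeps out a fixed proper subvariety. A diagonal extraction in the threshold then produces a single proper $Y \subsetneq X$ whose pushforward cone $i_*\Eff_1(Y)$ contains $N$.

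\textbf{Main obstacle.} The hard step is the last one: extracting a single proper $Y$ rather than a varying family $Y_m$. Naive compactness of Chow varieties does not apply because individual irreducible components of $\beta_m$ can have degree growing linearly with $m$. The critical leverage is the ``no nonzero nef sub-class'' property of $N$ combined with BDPP, which rules out dominating Chow families in the limit and confines the surviving non-dominating families, which are of bounded degree, to finitely many components whose swept-out supports assemble into a single proper $Y$.
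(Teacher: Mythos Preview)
Your reduction to the smooth case has a genuine gap. You lift $\alpha$ to some $\widetilde\alpha \in \Eff_1(\widetilde X)$ via Proposition~\ref{prop:surjeff}, find a Zariski decomposition $\widetilde\alpha = \widetilde P + \widetilde N$ upstairs, and push forward. But the pushforward of a Zariski decomposition under an alteration need not be a Zariski decomposition: you need $\mob_X(\pi_*\widetilde P) = \mob_X(\alpha)$, and you only have $\mob_{\widetilde X}(\widetilde P) = \mob_{\widetilde X}(\widetilde\alpha) \leq \mob_X(\alpha)$ together with $\mob_{\widetilde X}(\widetilde P) \leq \mob_X(\pi_*\widetilde P) \leq \mob_X(\alpha)$. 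Nothing forces the middle inequality to be an equality. The paper avoids this entirely by reducing to extremal rays \emph{first}: it observes that any $N(\alpha)$ can be rewritten so that it is a sum of classes on non-movable extremal rays of $\Eff_1(X)$ (move any movable summands into $P(\alpha)$), so it suffices to treat the case where $\alpha$ itself generates a non-movable extremal ray and $P(\alpha)=0$. In that case the statement to be proved involves no mobility at all, and it pushes forward under alterations without difficulty.

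Your final extraction step is also not justified. You want to confine the non-dominating contributions $\beta_m$ to finitely many Chow components of bounded degree, but irreducible components of $\beta_m$ can have degree growing linearly in $m$ even when they are non-dominating (a degree-$m$ curve sitting inside a fixed surface, say). The thresholding you sketch does not control this. The paper's argument here is completely different and much shorter: once $X$ is smooth and $\alpha$ is a non-movable extremal ray, BDPP gives a single irreducible divisor $D$ with $\alpha \cdot D < 0$. Approximating $\alpha$ by effective $\mathbb{R}$-cycles $C_j$, the part $C_j'$ of each $C_j$ supported in $D$ has $H$-degree bounded above (by $C_j\cdot H$) and bounded away from zero (since $C_j'\cdot D < -\epsilon/2$). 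Hence the classes $[C_j']$ in $N_1(D)$ have a nonzero accumulation point $\beta$, and extremality of $\alpha$ forces $i_*\beta$ to be a rescaling of $\alpha$. The single divisor $D$ is your $Y$; no limiting over varying supports is needed.
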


\begin{proof}
Any $\alpha$ has a Zariski decomposition such that $N(\alpha)$ is a sum of classes on extremal rays of $\Eff_{1}(X)$ that do not lie in the movable cone.  Thus it suffices to show the statement when $\alpha = N(\alpha)$ generates an extremal ray of $\Eff_{1}(X)$ but is not movable.

We first reduce to the case when $X$ is smooth.  Let $\pi: X' \to X$ be a nonsingular alteration; by Proposition \ref{prop:surjeff} and Corollary \ref{cor:surj mov} there is a $\beta \in \Eff_{1}(X')$ lying on an extremal ray not contained in the movable cone that satisfies $\pi_{*}\beta = \alpha$.  Since the desired statement pushes forward under alterations, we may replace $X$ by $X'$ to assume that $X$ is smooth.  In this situation we have $\Mov_{1}(X) = \Nef_{1}(X)$ by \cite[Theorem 0.2]{bdpp04} (see Section \ref{divisorcharpsection} for the extension to characteristic $p$).

Now, let $D$ be an effective divisor such that $\alpha \cdot D  < 0$.  By removing some components of $D$ we may assume that $D$ is irreducible reduced. Set $\epsilon = -\alpha \cdot D$ and let $i: D \to X$ denote the inclusion.  We show that $\alpha$ is the pushforward of a pseudo-effective curve class on $D$.

Let $\{ C_{j} \}_{j=1}^{\infty}$ be a sequence of effective $\mathbb{R}$-cycles such that $\lim_{j \to \infty} [C_{j}] = \alpha$.  For $j$ sufficiently large we have $C_{j} \cdot D < -\epsilon/2$.  For such $j$ there is an effective cycle $C_{j}' \leq C_{j}$ with support contained in $D$ satisfying $C_{j}' \cdot D < -\epsilon/2$.

Fix an ample divisor $H$ on $X$; since the intersections $|C_{j}' \cdot D|$ have a positive lower bound, so do the intersections $C_{j}' \cdot H$.  For each $i$, let $\gamma_{j}$ denote the class of $C_{j}'$ in $N_{1}(D)$.  Since $\gamma_{j} \cdot i^{*}H$ is bounded above and below, the classes $\gamma_{j}$ have a non-zero accumulation point $\beta \in \Eff_{1}(D)$.  Then $i_{*}\beta$ is a non-zero pseudo-effective class satisfying $i_{*}\beta \preceq \alpha$.  Since $\alpha$ is on an extremal ray, this is only possible if $i_{*}\beta$ is proportional to $\alpha$.  Thus, after rescaling $\beta$ we have $i_{*}\beta = \alpha$.
\end{proof}

\begin{rmk} \label{rem:pushforwardforcurves} We are not sure if the proposition holds for \emph{every} decomposition of $\alpha$.  However, the situation improves if we slightly alter the definition of the Zariski decomposition: suppose we include the condition that $P(\alpha)$ has maximal degree with respect to a fixed polarization on $X$.  Then \emph{any} negative part $N(\alpha)$ will be a sum of classes on extremal rays that are not contained in the movable cone, and so the conclusion of the proposition will always hold. See Section \ref{minlengthsection} for a brief discussion of a similar approach for cycle classes of arbitrary codimension.
\end{rmk}

\section{Birational maps and Zariski decompositions} \label{birmapsection}

One of the main applications of the $\sigma$-decomposition is to understand the exceptional loci of morphisms.  In this section we use Zariski decompositions to understand the behavior of higher codimension cycles under birational maps.

Suppose that $\pi: Y \to X$ is a birational morphism of smooth varieties.  For any pseudo-effective divisor $D$ on $X$ and any $\pi$-exceptional effective divisor $E$ we have $P_{\sigma}(\pi^{*}D + E) = P_{\sigma}(\pi^{*}D)$.  Corollary \ref{excepstabilityofzardecom} shows that the positive parts of arbitrary classes satisfy a similar stabilization property.  We call this ``stable positive part'' a movable transform.  When $X$ is not smooth, we must instead work with an ``asymptotic strict transform'' construction.  An important consequence is that for any $\alpha \in \Eff_{k}(X)$, there is a preimage $\beta \in \Eff_{k}(Y)$ with the same mobility.

We finish this section with a discussion of Fujita approximations for higher codimension cycles.  In \cite{fujita94} Fujita shows that for complex projective varieties the volume of a big divisor $L$ can be approximated by the volume of an ample divisor $A$ on a birational model $\phi: Y \to X$ satifying $\phi_{*}A \preceq L$.  It is natural to wonder whether an analogous statement holds true for the mobility.  Proposition \ref{movbirbpf} suggests we should phrase the problem using the basepoint free cone.

\begin{ques} \label{fujitaapproxques1}
Let $X$ be a smooth projective variety and let $\alpha \in N_{k}(X)$ be a big class.  Choose $\epsilon>0$; is there a birational model $\pi: Y \to X$ and a class $\beta \in \bpf_{k}(Y)$ such that $\mob(\beta) > \mob(\alpha) - \epsilon$ and $\pi_{*}\beta \preceq \alpha$?
\end{ques}

There is an equivalent formulation of Fujita's approximation theorem for divisors in terms of the $\sigma$-decomposition.  Let $L$ be a big divisor on $X$; for suitably chosen birational models $\pi: Y \to X$, the movable transform $P_{\sigma}(\phi^{*}L)$ gets ``closer and closer'' to the nef cone.  Again, the correct analogue for cycles should use the basepoint free cone:  can we ensure that a movable transform becomes ``closer to basepoint free'' as we pass to higher and higher models?

\begin{ques} \label{fujitaapproxques2}
Let $X$ be a smooth projective variety and let $\alpha \in N_{k}(X)$ be a big class.  Fix a class $\xi$ in the interior of $\bpf_{k}(X)$.  Choose $\epsilon>0$; is there a birational model $\pi: Y \to X$, a movable transform $P$ of $\alpha$, and an element $\beta \in \bpf_{k}(Y)$ such that $P \preceq \beta + \epsilon \pi^{*} \xi$?
\end{ques}

These two versions of Fujita approximation are more-or-less equivalent.  We show that both questions have an affirmative answer for curve classes on smooth projective varieties over $\mathbb{C}$.

\begin{rmk}
One can also pose Questions \ref{fujitaapproxques1} and \ref{fujitaapproxques2} using other positive cones of cycles.  Some possibilities are the complete intersection cone and the pliant cone as defined in \cite{fl13}.
\end{rmk}

\subsection{The movable cone and birational maps}

\begin{prop}\label{birmovpush}
Let $\pi: Y \to X$ be a birational map of projective varieties and let $\alpha \in \Mov_{k}(X)$.  If $\pi_{*}\alpha=0$ then $\alpha = 0$.
\end{prop}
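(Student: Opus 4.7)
I proceed under the natural reading $\alpha \in \Mov_k(Y)$ (so that $\pi_*\alpha$ is defined). The plan is to manufacture, from a hypothetical nonzero $\alpha$, a nonzero zero-cycle class $H^k \cdot \alpha$ on $Y$ whose pushforward to $X$ is forced to vanish, producing a contradiction.

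Fix an ample class $H_X \in N^1(X)$ and set $H := \pi^*H_X \in N^1(Y)$. Then $H$ is nef as the pullback of a nef class, and big as the pullback of an ample class under a surjective birational morphism (explicitly, $H^n = H_X^n > 0$ for $n = \dim Y$ by the projection formula). Suppose for contradiction that $\alpha \neq 0$. Iterating Lemma \ref{lem:movintnef} with the nef class $H$ yields $H^j \cdot \alpha \in \Mov_{k-j}(Y)$ for every $0 \leq j \leq k$. Iterating Corollary \ref{cor:bigandnefcor} shows moreover that each $H^j \cdot \alpha$ is nonzero: at each step we intersect a nonzero movable class with the big and nef class $H$. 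In particular $H^k \cdot \alpha$ is a nonzero class in $\Mov_0(Y)$.

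On the other hand, the projection formula yields
\begin{equation*}
\pi_*(H^k \cdot \alpha) \;=\; \pi_*\bigl((\pi^*H_X)^k \cdot \alpha\bigr) \;=\; H_X^k \cdot \pi_*\alpha \;=\; 0.
\end{equation*}
Since $Y$ is a connected projective variety, the degree map identifies $N_0(Y)$ with $\mathbb{R}$, and the birational morphism $\pi$ preserves degrees of zero-cycles. Hence $\deg(H^k \cdot \alpha) = \deg\pi_*(H^k \cdot \alpha) = 0$, forcing $H^k \cdot \alpha = 0$ in $N_0(Y)$ and contradicting the previous paragraph.

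The main obstacle is to preserve both movability and nonvanishing through the $k$ successive intersections with $H$; the first is handled by Lemma \ref{lem:movintnef} (using nefness of $H$), and the second by Corollary \ref{cor:bigandnefcor} (using that $H$ is big and nef). Once the nonzero class $H^k \cdot \alpha \in \Mov_0(Y)$ is produced, the projection formula together with a degree comparison closes the argument.
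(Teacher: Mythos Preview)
Your proof is correct and follows essentially the same approach as the paper's: both fix $H = \pi^*H_X$ for $H_X$ ample on $X$, use Lemma \ref{lem:movintnef} to preserve movability under intersection with the nef class $H$, and use Corollary \ref{cor:bigandnefcor} to control (non)vanishing. The only cosmetic difference is that you run the induction forward (from $\alpha \neq 0$ to $H^k \cdot \alpha \neq 0$, then contradict via the projection formula and a degree comparison), whereas the paper runs it backward (from $\alpha \cdot \pi^*H^k = 0$ down to $\alpha = 0$).
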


\begin{proof}
Fix an ample divisor $H$ on $X$.  Note that if $\pi_{*}\alpha=0$ then $\alpha \cdot \pi^{*}H^{k} = 0$.  By Lemma \ref{lem:movint}.\ref{lem:movintnef} we have that $\alpha \cdot \pi^{*}H^{k-1}$ is movable; thus Lemma \ref{lem:movint}.\ref{cor:bigandnefcor} shows that $\alpha \cdot \pi^{*}H^{k-1} = 0$.  Repeating this argument inductively we find that $\alpha = 0$.
\end{proof}

\begin{cor}
Let $\pi: Y \to X$ be a birational map of projective varieties and let $\alpha \in \Eff_{k}(Y)$.  If $\pi_{*}\alpha = 0$ then $\alpha = N(\alpha)$ is the unique Zariski decomposition for $\alpha$.
\end{cor}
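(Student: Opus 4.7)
The plan is to reduce to the previous proposition by a short pushforward argument. Suppose $\alpha = P(\alpha) + N(\alpha)$ is any Zariski decomposition of $\alpha$. By construction (or by taking limits, in the pseudo-effective case), $P(\alpha) \in \Mov_{k}(Y)$ and $N(\alpha) \in \Eff_{k}(Y)$, since both cones are closed.

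Apply $\pi_{*}$ to the decomposition. Since pseudo-effectiveness is preserved under pushforward (Proposition \ref{prop:surjeff}, or the basic property noted after Definition \ref{def:eff}), both $\pi_{*}P(\alpha)$ and $\pi_{*}N(\alpha)$ lie in $\Eff_{k}(X)$. Their sum is $\pi_{*}\alpha = 0$, and $\Eff_{k}(X)$ is salient, so $\pi_{*}P(\alpha) = 0$ and $\pi_{*}N(\alpha) = 0$.

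Now $P(\alpha) \in \Mov_{k}(Y)$ with $\pi_{*}P(\alpha) = 0$, so Proposition \ref{birmovpush} gives $P(\alpha) = 0$, and hence $N(\alpha) = \alpha$. This shows that every Zariski decomposition of $\alpha$ has the form $\alpha = 0 + \alpha$; since Theorem \ref{zardecomexist} (extended to the pseudo-effective boundary) guarantees at least one such decomposition exists, this trivial decomposition is both valid and unique. No step is truly an obstacle here—the only mild point is verifying that the positive and negative parts still lie in the movable and pseudo-effective cones respectively when $\alpha$ is on the pseudo-effective boundary, which is immediate from the closedness of these cones and the definition via limits.
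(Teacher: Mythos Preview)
Your proof is correct and is exactly the intended argument: the paper states this corollary without proof, treating it as immediate from Proposition~\ref{birmovpush}, and you have filled in the details in the natural way (pushforward, salience of $\Eff_k(X)$, then apply the proposition). Your remark about the boundary case is also on point and matches the paper's observation that positive and negative parts remain movable and pseudo-effective, respectively, under the limit definition.
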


\begin{cor} \label{compactpushforwards}
Let $\pi: Y \to X$ be a birational morphism of projective varieties.  Fix a class $\alpha \in \Eff_{k}(X)$.  Then the set of classes $S := \{\beta \in \Mov_{k}(Y) | \alpha \succeq \pi_{*}\beta \}$ is compact.
\end{cor}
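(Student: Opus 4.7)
The plan is to prove $S$ is compact by showing separately that it is closed and bounded in the finite-dimensional space $N_k(Y)$. Closedness is immediate: $\pi_*:N_k(Y)\to N_k(X)$ is linear, hence continuous, so $\pi_*^{-1}(\alpha-\Eff_k(X))$ is closed, and $S$ is its intersection with the closed cone $\Mov_k(Y)$.

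For boundedness, fix a very ample $H_X$ on $X$ and a very ample $H_Y$ on $Y$. By Corollary \ref{cor:norms} applied to $H_Y$, it suffices to bound the quantity $H_Y^k\cdot\beta$ as $\beta$ ranges over $S$, since on $\Eff_k(Y)\supset\Mov_k(Y)$ this intersection number equals $\|\beta\|$ in a fixed norm. The idea is to replace the ample class $H_Y$ on $Y$ by the nef pullback $\pi^*H_X$, at the cost of a multiplicative constant, and then pass to $X$ via the projection formula. Since $\pi$ is birational, $\pi^*H_X$ is big on $Y$, so we may choose $m\in\mathbb{Z}_{>0}$ such that $D:=m\pi^*H_X-H_Y$ is numerically equivalent to an effective Cartier divisor. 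Setting $P:=\sum_{j=0}^{k-1}(m\pi^*H_X)^j H_Y^{k-1-j}\in N^{k-1}(Y)$, we have the telescoping identity
\[
(m\pi^*H_X)^k-H_Y^k = D\cdot P.
\]
The class $P$ is nef, being a sum of products of nef divisor classes (via Theorem \ref{cor:ci nef} and closure). For $\beta\in S$, Lemma \ref{lem:psefintmov} gives $D\cdot\beta\in\Eff_{k-1}(Y)$ (applied to the effective divisor representing $D$), so pairing with the nef class $P$ yields $P\cdot D\cdot\beta\geq 0$. Therefore
\[
H_Y^k\cdot\beta \;\leq\; (m\pi^*H_X)^k\cdot\beta \;=\; m^k\,H_X^k\cdot\pi_*\beta \;\leq\; m^k\,H_X^k\cdot\alpha,
\]
where the middle equality is the projection formula and the last inequality uses that $H_X^k$ lies in the interior of $\Nef^k(X)$ (Theorem \ref{cor:ci nef}) together with $\alpha-\pi_*\beta\in\Eff_k(X)$. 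The right-hand side depends only on $\alpha$, so $\|\beta\|$ is uniformly bounded on $S$.

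The main obstacle is the boundedness step, specifically bridging between the ample class $H_Y$ on $Y$ and the pullback $\pi^*H_X$ which is merely big and nef: once one identifies the trick of writing $m\pi^*H_X-H_Y$ as effective and factoring the difference $(m\pi^*H_X)^k-H_Y^k$ as a product of an effective class and a nef class, everything reduces cleanly to the nef/pseudo-effective pairing together with the projection formula, both of which are guaranteed by the preliminaries of Section \ref{sec:positive cones}.
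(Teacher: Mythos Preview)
Your proof is correct. The closedness step is the same as the paper's, but the boundedness argument differs in route.

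The paper factors through Proposition \ref{birmovpush}: since $\ker(\pi_*)\cap\Mov_k(Y)=\{0\}$, one may choose a linear functional $L\in N^k(Y)$ vanishing on $\ker(\pi_*)$, non-negative on $\Eff_k(Y)$, and strictly positive on $\Mov_k(Y)\setminus\{0\}$; then $S$ sits inside the compact slice $\{\xi\in\Mov_k(Y):L(\xi)\leq L(\beta_0)\}$ for any preimage $\beta_0$ of $\alpha$. (Concretely $L=(\pi^*H_X)^k$ works, though the paper leaves this implicit.) Your argument instead bypasses Proposition \ref{birmovpush} entirely: you take the same functional $(\pi^*H_X)^k$ and prove the \emph{explicit} comparison $H_Y^k\cdot\beta\leq m^k(\pi^*H_X)^k\cdot\beta$ on $\Mov_k(Y)$ via the telescoping factorization and Lemma \ref{lem:psefintmov}. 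This is a more hands-on, quantitative bound: it yields an effective constant in terms of $m$, $H_X$, $H_Y$, whereas the paper's separating-hyperplane step only gives existence. Conversely, the paper's formulation isolates the geometric content cleanly in Proposition \ref{birmovpush} (``movable $\pi$-exceptional classes vanish''), which is reusable elsewhere. Both arguments ultimately rest on the same intersection-theoretic input, namely that effective Cartier divisors intersect movable classes non-negatively.
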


\begin{proof}
Clearly $S$ is closed, so it suffices to prove that $S$ is bounded.  Let $M$ denote $\ker(\pi_{*})$.  By Proposition \ref{birmovpush} we know that $M \cap \Mov_{k}(Y) = \{ 0 \}$.  Let $L \in N^{k}(Y)$ be a linear function that vanishes on $M$, is non-negative on $\Eff_{k}(Y)$, and is positive on $\Mov_{k}(Y) - \{ 0 \}$.  Let $\beta \in N_{k}(Y)$ be any class that pushes forward to $\alpha$; then $S$ is contained in the compact subset of $\Mov_{k}(Y)$ consisting of classes $\xi$ satisfying $L(\xi) \leq L(\beta)$.
\end{proof}

The following question would further elucidate the structure of the $\pi$-exceptional classes.

\begin{ques} \label{locpolypushforward}
Let $\pi: Y \to X$ be a birational map of projective varieties.  Let $M = \ker(\pi_{*}) \cap \Eff_{k}(Y)$.  Is there a cone $C \subset \Eff_{k}(Y)$ disjoint from $M - \{0\}$ such that $\Eff_{k}(Y) = M + C$?
\end{ques}

\subsection{Mobility and birational maps} \label{movablepullbacksection}

Suppose that $\pi: Y \to X$ is a birational map of projective varieties. \cite[Lemma 6.8]{lehmann13} shows that for any $\beta \in \Eff_{k}(Y)$ we have $\mob(\beta) \leq \mob(\pi_{*}\beta)$.  In particular, for any $\alpha \in \Eff_{k}(X)$
\begin{equation*}
\sup_{\pi_{*}\beta = \alpha}\mob(\beta) \leq \mob(\alpha).
\end{equation*}
In this section we show that the supremum on the left-hand side is achieved by some class, and moreover this class has the same mobility as $\alpha$.  The following definition codifies a slightly different perspective on the problem.

\begin{defn} \label{movpullbackdefn}
Let $\pi: Y \to X$ be a morphism of projective varieties and let $\alpha \in \Eff_{k}(X)$ be a big class.  We say that $\gamma \in \Mov_{k}(Y)$ is a movable transform for $\alpha$ if
\begin{enumerate}
\item $\mob(\gamma) = \mob(\alpha)$, and
\item $\pi_{*}\gamma \preceq \alpha$.
\end{enumerate}
\end{defn}

\noindent Note that $\pi_{*}\gamma$ is the positive part of a Zariski decomposition for $\alpha$.  Furthermore, the existence of a movable transform immediately implies the existence of a preimage $\beta \in \pi_{*}^{-1}\alpha$ such that $\mob(\beta) = \mob(\alpha)$.  We discuss the uniqueness of movable transforms in Remark \ref{movpullbackuniqueremark}.

\begin{exmple}\label{ex:movtransformdiv}
Suppose that $\pi: Y \to X$ is a birational morphism of smooth projective varieties. For a big Cartier divisor $L$ on $X$, the (unique) movable transform of $[L]$ is $[P_{\sigma}(\pi^{*}L)]$. (From $\mob([P_{\sigma}(\pi^{*}L)])=\vol(P_{\sigma}(\pi^{*}L))=\vol(\pi^*L)=\vol(L)=\mob([L])$ we see that $[P_{\sigma}(\pi^{*}L)]$ is indeed a movable transform. The uniqueness is a consequence of Proposition \ref{divisorzardecomprop} and Remark \ref{movpullbackuniqueremark}.)
\end{exmple}

\begin{lem} \label{stricttransformlem}
Let $\pi: Y \to X$ be a birational morphism of projective varieties.  For any class $\alpha \in N_{k}(X)_{\mathbb{Z}}$, there is a movable class $\beta \in N_{k}(Y)_{\mathbb{Z}}$ such that $\mc(\beta) \geq \mc(\alpha)$ and $\alpha-\pi_{*}\beta$ is the class of an effective $\mathbb{Z}$-cycle.
\end{lem}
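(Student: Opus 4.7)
The plan is to construct $\beta$ explicitly from a strict transform family. First I would choose a family of effective $\mathbb{Z}$-cycles $p \colon U \to W$ on $X$ whose cycle-theoretic fibers have class $\alpha$ and whose mobility count equals $\mc(\alpha)$.

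Next I would split $U$ into the union $U_1$ of its irreducible components dominating $X$ and the union $U_2$ of the remaining components (keeping the original coefficients on each piece). Writing $\alpha_1$, $\alpha_2$ for the classes of the cycle-theoretic fibers of the induced subfamilies, we have $\alpha = \alpha_1 + \alpha_2$ and $\alpha_2$ is manifestly represented by an effective $\mathbb{Z}$-cycle. A key observation (also used in the proof of Theorem \ref{zardecomexist}) is that components of $U$ not dominating $X$ cannot help the $b$-fold fibered product $U\times_W\cdots\times_W U$ dominate $X^b$ for any $b\geq 1$, since at a general tuple in $X^b$ only dominant components can provide preimages. Hence $p|_{U_1} \colon U_1 \to W$ has the same mobility count as $p$, namely $\mc(\alpha)$.

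I would then apply Construction \ref{stricttransformconstr} to $p|_{U_1}$ along the birational map $\pi^{-1} \colon X \dashrightarrow Y$; after shrinking $W$ this yields a family $p' \colon U' \to W$ of effective $\mathbb{Z}$-cycles on $Y$. By Remark \ref{strict transform strictly movable} every component of $U'$ dominates $Y$, so the class $\beta$ of a cycle-theoretic fiber of $p'$ is strictly movable. Since $\pi$ restricts to a birational morphism on each component of the strict transform of a general cycle-theoretic fiber of $p|_{U_1}$, pushforward recovers the original cycle, giving $\pi_*\beta = \alpha_1$ and hence $\alpha - \pi_*\beta = \alpha_2$, which is effective.

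The main point to verify is $\mc(\beta) \geq \mc(\alpha)$, and the plan is to show that $p'$ itself realizes this bound. Let $V \subset X$ be the open locus where $\pi$ is an isomorphism and $V' = \pi^{-1}(V)$. Set $b := \mc(\alpha)$. A general tuple $(y_1,\ldots,y_b) \in Y^b$ lies in $(V')^b$, and the image $(\pi(y_1),\ldots,\pi(y_b)) \in X^b$ is general. Dominance of $(U_1)^{\times_W b} \to X^b$ produces $w \in W$ and lifts $u_i \in U_1$ with $p(u_i)=w$ and $s(u_i) = \pi(y_i)$; since these target points are general, each $u_i$ lies in the open subscheme from which the strict transform is built, yielding a point $(w,y_i) \in U'$. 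Thus $(U')^{\times_W b} \to Y^b$ is dominant, so $\mc(\beta) \geq \mc(p') \geq b = \mc(\alpha)$.
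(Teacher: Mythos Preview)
Your proof is correct and follows essentially the same approach as the paper's: choose a family of maximal mobility count, discard the non-dominant components, and take the strict transform family on $Y$. The only difference is that where the paper invokes \cite[Lemma 6.8]{lehmann13} for the equality $\mc(q) = \mc(p') = \mc(p)$, you supply the direct argument for $\mc(\beta) \geq \mc(\alpha)$ via general tuples in $V'^{\,b}$.
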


\begin{proof}
Let $p: U \to V$ be a family representing $\alpha$ of maximal mobility count.  Let $p': U' \to V$ be the subfamily consisting of components that dominate $X$.  By \cite[Lemma 6.8]{lehmann13} the strict transform family $q$ of $p'$ (defined in Construction \ref{stricttransformconstr}) satisfies $\mc(q) = \mc(p') = \mc(p)$.  Since the class of the cycle-theoretic fibers of $q$ is movable, we can set $\beta$ to be this class.
\end{proof}

\begin{prop} \label{existenceofmovpull}
Let $\pi: Y \to X$ be a birational map of projective varieties.  Any big class $\alpha \in N_{k}(X)$ admits a movable transform $\beta \in N_{k}(Y)$.
\end{prop}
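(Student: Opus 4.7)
The plan is to run the same asymptotic construction used in the proof of Theorem \ref{zardecomexist} for Zariski decompositions, but lift each family upstairs to $Y$ via strict transforms before passing to the limit. The compactness required to extract a subsequential limit will come from Corollary \ref{compactpushforwards}, and the mobility-count comparison will be controlled by the acceptable-cone formalism of Proposition \ref{kmobismob} together with the crucial inequality $\mc(\beta_m)\ge\mc(m\alpha)$ coming from Lemma \ref{stricttransformlem}.

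By homogeneity I first reduce to the case $\alpha\in N_k(X)_{\mathbb Z}$; the $\mathbb Q$-case follows by scaling and the $\mathbb R$-case is treated at the end via continuity. Choose a subset $\mathcal M\subset\mathbb N$ with
\[
\mob(\alpha)=\lim_{m\in\mathcal M}\frac{\mc(m\alpha)}{m^{n/(n-k)}/n!}.
\]
For each $m\in\mathcal M$ apply Lemma \ref{stricttransformlem} to $m\alpha$ to obtain a movable integral class $\beta_m\in\Mov_k(Y)\cap N_k(Y)_{\mathbb Z}$ with $\mc(\beta_m)\ge\mc(m\alpha)$ and $m\alpha-\pi_*\beta_m\in\Eff_k(X)$. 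In particular $\pi_*\left(\tfrac1m\beta_m\right)\preceq\alpha$, so by Corollary \ref{compactpushforwards} the sequence $\{\tfrac1m\beta_m\}_{m\in\mathcal M}$ lies in a compact subset of $\Mov_k(Y)$. Pass to a subsequence $\mathcal M'\subset\mathcal M$ so that $\tfrac1m\beta_m\to\beta$ for some $\beta\in\Mov_k(Y)$. Since $\Eff_k(X)$ is closed and $\pi_*$ is continuous, $\alpha-\pi_*\beta\in\Eff_k(X)$.

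It remains to show $\mob(\beta)=\mob(\alpha)$. The inequality $\mob(\beta)\le\mob(\alpha)$ follows from $\mob(\beta)\le\mob(\pi_*\beta)\le\mob(\alpha)$, where the first inequality is \cite[Lemma 6.8]{lehmann13} and the second is Lemma \ref{mobilityincreases}. For the reverse inequality, fix an acceptable cone $\mathcal K\subset N_k(Y)$ and a class $\gamma$ attached to $\mathcal K$ as in Lemma \ref{effectivenesslemma}. For any $\epsilon>0$, the convergence $\tfrac1m\beta_m\to\beta$ places $\epsilon\gamma+\beta-\tfrac1m\beta_m$ in the interior of $\mathcal K$ for infinitely many $m\in\mathcal M'$, so $\beta_m$ is one of the integral classes that appears in the max defining $\mc_{\mathcal K}\!\left(m\beta+m\epsilon\gamma\right)$. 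Consequently
\[
\mc_{\mathcal K}\!\left(m\beta+m\epsilon\gamma\right)\ \ge\ \mc(\beta_m)\ \ge\ \mc(m\alpha).
\]
Dividing by $m^{n/(n-k)}/n!$, letting $m\to\infty$ in $\mathcal M'$, and invoking Proposition \ref{kmobismob} yields $\mob(\beta+\epsilon\gamma)\ge\mob(\alpha)$, and continuity of $\mob$ (Theorem \ref{continuousmobility}) as $\epsilon\to 0$ gives $\mob(\beta)\ge\mob(\alpha)$.

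Finally, to treat a big $\mathbb R$-class $\alpha$, approximate it by a sequence of big $\mathbb Q$-classes $\alpha_j\to\alpha$ and let $\beta_j$ be a movable transform of $\alpha_j$ produced by the argument above. Since $\pi_*\beta_j\preceq\alpha_j$ and the $\alpha_j$ lie in a compact set, Corollary \ref{compactpushforwards} again furnishes a subsequential limit $\beta\in\Mov_k(Y)$ with $\pi_*\beta\preceq\alpha$, and continuity of mobility forces $\mob(\beta)=\mob(\alpha)$. I expect the only nontrivial obstacle to be step 6 — the mobility-count transfer — which is why the proof must be routed through the acceptable-cone machinery rather than working directly at the level of $\mc$; everything else is a transcription of the Zariski-decomposition existence proof to the birational setting.
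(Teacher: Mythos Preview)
Your proof is correct and follows essentially the same approach as the paper: construct $\beta_m$ via Lemma \ref{stricttransformlem}, extract a limit $\beta$ using the compactness of Corollary \ref{compactpushforwards}, and compare mobilities through the acceptable-cone machinery of Proposition \ref{kmobismob}. The only cosmetic difference is that the paper separates out the intermediate class $\alpha_m$ (the movable part on $X$) before strict-transforming, whereas you absorb this into a single invocation of Lemma \ref{stricttransformlem}; the content is identical.
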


\begin{proof}
We first consider the case when $\alpha \in N_{k}(X)_{\mathbb{Z}}$.  Choose an acceptable cone $\mathcal{K}$ in $N_{k}(Y)$ and fix a class $\gamma$ in the interior of $\mathcal{K}$.  Choose a subset $\mathcal{M} \subset \mathbb{Z}$ such that
\begin{equation*}
\mob(\alpha) = \lim_{m \to \infty, m \in \mathcal{M}} \frac{\mc(m\alpha)}{m^{\frac{n}{n-k}}/n!}.
\end{equation*}
For each $m \in \mathcal{M}$, let $p_{m}: U_{m} \to W_{m}$ denote the movable components of a family of maximal mobility count for $m\alpha$.  Let $\alpha_{m}$ denote the class of $p_{m}$; note that $m\alpha \succeq \alpha_{m}$.  Let $\beta_{m}$ be the class of the strict transform family of $p_{m}$ on $Y$.  By Corollary \ref{compactpushforwards}, the classes $\frac{1}{m}\beta_{m}$ are contained in a compact subset of $\Mov_{k}(Y)$.  Let $\beta$ be an accumulation point; note that $\beta \in \Mov_{k}(Y)$ and $\pi_{*}\beta \preceq \alpha$.  Fix $\epsilon > 0$.   There are infinitely many values of $m \in \mathcal{M}$ such that
\begin{align*}
\mc_{\mathcal{K}}(m\beta + m \epsilon \gamma) & \geq \mc_{\mathcal{K}}(\beta_{m}) \\
& \geq \mc(\beta_{m}) \\
& \geq \mc(\alpha_{m}) \textrm{ by Lemma \ref{stricttransformlem}} \\
& \geq \mc(m\alpha)
\end{align*}
Dividing by $m^{n/n-k}/n!$ and taking limits over integers in $\mathcal{M}$, Proposition \ref{kmobismob} implies
\begin{align*}
\mob(\beta + \epsilon \gamma) \geq \mob(\alpha).
\end{align*}
Letting $\epsilon$ tend to $0$ and using the continuity of $\mob$, we see that $\mob(\beta) \geq \mob(\alpha)$.  The reverse inequality follows from \cite[Lemma 6.8]{lehmann13}.

The case when $\alpha \in N_{k}(X)_{\mathbb{Q}}$ follows by rescaling.  For arbitrary $\alpha \in N_{k}(X)$, let $\{ \alpha_{i} \}$ be a sequence in $N_{k}(X)_{\mathbb{Q}}$ tending to $\alpha$ and let $\{ \beta_{i} \}$ be a sequence of movable transforms in $N_{k}(Y)$.  Then any accumulation point $\beta$ for the $\beta_{i}$ is a movable transform for $\alpha$.
\end{proof}

An important consequence is the ability to find Zariski decompositions that are stable upon adding pseudo-effective $\pi$-exceptional classes.

\begin{cor} \label{excepstabilityofzardecom}
Let $\pi: Y \to X$ be a birational map of projective varieties.  Suppose $\alpha \in \Eff_{k}(X)$ is a big class and let $\beta \in \Eff_{k}(Y)$ be a movable transform for $\alpha$.  For any class $\gamma \in \ker(\pi_{*}) \cap \Eff_{k}(Y)$ we have a Zariski decomposition of $\beta + \gamma$ with $P(\beta + \gamma) = \beta$.
\end{cor}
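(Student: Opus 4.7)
The plan is to verify the three defining conditions of a Zariski decomposition for $\beta+\gamma=\beta+\gamma$. By hypothesis $\beta\in\Mov_k(Y)$ and $\gamma\in\Eff_k(Y)$, so the movability of the positive part and the pseudo-effectiveness of the negative part are automatic. What remains is the equality $\mob(\beta+\gamma)=\mob(\beta)$, which implies in particular that $\beta+\gamma$ is big (so the decomposition is well-posed), since $\mob(\beta)=\mob(\alpha)>0$ as $\alpha$ is big.

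The lower bound $\mob(\beta+\gamma)\geq \mob(\beta)$ follows directly from Lemma \ref{mobilityincreases} applied to the pseudo-effective class $\gamma$. For the upper bound, I would squeeze $\mob(\beta+\gamma)$ between $\mob(\alpha)$ and $\mob(\beta)$, using the fact that $\beta$ is a movable transform in two different ways. First, $\pi_*(\beta+\gamma)=\pi_*\beta+\pi_*\gamma=\pi_*\beta$, because $\gamma\in\ker(\pi_*)$. Since $\pi_*\beta\preceq\alpha$ by the definition of a movable transform, Lemma \ref{mobilityincreases} gives $\mob(\pi_*\beta)\leq\mob(\alpha)$. Second, the pushforward inequality for birational maps from \cite[Lemma 6.8]{lehmann13}, quoted at the start of Section \ref{movablepullbacksection}, yields
\[
\mob(\beta+\gamma)\leq \mob(\pi_*(\beta+\gamma))=\mob(\pi_*\beta)\leq \mob(\alpha)=\mob(\beta),
\]
where the last equality is again the defining property of a movable transform.

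Chaining this with the lower bound gives $\mob(\beta+\gamma)=\mob(\beta)$, and hence $\beta+\gamma=\beta+\gamma$ satisfies Definition \ref{zariskidecompdefn}. There is really no technical obstacle here; the content of the corollary is simply that movable transforms behave like $P_{\sigma}$ for divisors in that the positive part is insensitive to adding $\pi$-exceptional pseudo-effective noise. The only point to be mindful of is using the two inequalities involving $\pi_*$ in the correct direction: one pushforward decreases mobility (because $\pi$ is birational), and the other comparison on $X$ uses monotonicity of $\mob$ under $\preceq$.
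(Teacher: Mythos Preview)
Your proof is correct and follows essentially the same route as the paper's: both arguments reduce to showing $\mob(\beta+\gamma)=\mob(\beta)$ via the chain $\mob(\beta+\gamma)\leq\mob(\pi_*(\beta+\gamma))=\mob(\pi_*\beta)\leq\mob(\alpha)=\mob(\beta)$ together with the trivial lower bound from Lemma \ref{mobilityincreases}. The paper compresses this into a single sentence (``by definition of a movable transform $\beta$ has the maximal mobility among all $\beta'\in\Eff_k(Y)$ such that $\pi_*\beta'\preceq\alpha$''), whereas you spell out the two inequalities behind that claim explicitly; there is no substantive difference.
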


\begin{proof}
By definition of a movable transform $\beta$ has the maximal mobility among all $\beta' \in \Eff_{k}(Y)$ such that $\pi_{*}\beta' \preceq \alpha$.
\end{proof}

\begin{rmk} \label{movpullbackuniqueremark}
Let $X$ be a smooth projective variety over $\mathbb{C}$ and let $\pi: Y \to X$ be a birational map.  In this situation one can show that $\ker(\pi_{*})$ is spanned by $\pi$-exceptional effective classes (see \cite[Proposition 3.8]{fl14}).  In particular, any two preimages $\beta_{1}$ and $\beta_{2}$ of $\alpha$ are dominated by a third preimage $\beta_{3}$ under the relation $\preceq$.

Let $\alpha$ be a big class on $X$.  By Corollary \ref{compactpushforwards} all the movable transforms for $\alpha$ are contained in a compact set.  Using Proposition \ref{prop:surjeff} and the previous paragraph, we see that for some sufficiently large $\pi$-exceptional effective class $\gamma$ every movable transform $\beta$ for $\alpha$ satisfies $\beta \preceq \pi^{*}\alpha + \gamma$.  Then the movable transforms for $\alpha$ are precisely the positive parts of Zariski decompositions for $\pi^{*}\alpha + \gamma$.  In particular, if Zariski decompositions of big classes are unique on $Y$ then $\alpha$ admits a unique movable transform.
\end{rmk}

We can now give a more explicit description of movable transforms for curve classes.  Proposition \ref{movpullbackforcurves} indicates that ``morally''  the usual pullback coincides with the movable transform for a movable curve class.  Note however that the actual statement is slightly weaker: even if $\alpha$ is movable, a priori we may need to pass to a different positive part $P(\alpha)$ to be able to apply the statement (although this seems unlikely to be necessary).

\begin{lem} \label{stricttransformcurveslem}
Let $\pi: Y \to X$ be a birational morphism of smooth varieties.  Let $p$ be a strictly movable family of curves on $X$ with class $\alpha$ and let $\beta \in N_{k}(Y)$ denote the class of the strict transform of a general member of $p$.  Then $\pi^{*}\alpha \succeq \beta$.
\end{lem}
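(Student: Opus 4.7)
The plan is to exhibit $\pi^*\alpha-\beta$ as the numerical class of an effective $1$-cycle on $Y$ supported on the exceptional locus of $\pi$. Since the equality is about numerical classes determined by a general member, it suffices to check, for a general $C$ in the family $p$, that $\pi^*[C]-[C']$ is represented by an effective cycle on $Y$. Because $p$ dominates $X$ and the image $\pi(E)\subset X$ of the exceptional locus $E\subset Y$ has codimension at least two (as $X$ and $Y$ are smooth and $\pi$ is birational), a general $C$ is not contained in $\pi(E)$; consequently $\pi|_{C'}\colon C'\to C$ is birational, so $[C']=\beta$ and $\pi_*[C']=[C]=\alpha$.

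The main step is to compute $\pi^*[C]$ via the graph $\Gamma_\pi\subset Y\times X$. Factor $\pi=p_2\circ \Gamma_\pi$. Since $X$ is smooth, $p_2$ is flat; since $Y$ and $X$ are smooth, so is $Y\times X$, and $\Gamma_\pi\hookrightarrow Y\times X$ is a regular embedding of codimension $n=\dim X$. Under the identification $\Gamma_\pi\cong Y$,
\begin{equation*}
\pi^*[C] \;=\; \Gamma_\pi^{\,!}\,p_2^*[C] \;=\; \Gamma_\pi^{\,!}[Y\times C],
\end{equation*}
where $\Gamma_\pi^{\,!}$ denotes the refined Gysin pullback; the resulting class is supported on $\Gamma_\pi\cap (Y\times C)\cong \pi^{-1}(C)\subset Y$. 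By Fulton's deformation to the normal cone \cite[Chapter 6]{fulton84}, the refined Gysin pullback of a subvariety through a regular embedding into a smooth ambient variety is always represented by an \emph{effective} cycle, obtained as a positive combination of base components of the normal cone $C_{\pi^{-1}(C)/(Y\times C)}$.

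To conclude, decompose $\pi^{-1}(C)=C'\cup\bigcup_i \pi^{-1}(x_i)$, where $\{x_i\}=C\cap\pi(E)$ is a finite set. Over the open dense locus of $C$ where $\pi$ is an isomorphism, $\Gamma_\pi$ and $Y\times C$ meet transversally, so the refined intersection contributes $[C']$ with multiplicity exactly one. The remaining contributions yield an effective $1$-cycle $\gamma$ supported in $\bigcup_i\pi^{-1}(x_i)\subset E$, giving $\pi^*[C]=[C']+[\gamma]$ and hence $\pi^*\alpha-\beta=[\gamma]\succeq 0$. The main subtlety is the decomposition: one must verify the multiplicity-one transversality for the strict transform component while tracking that the residual contributions over each $x_i$ remain effective, both of which are routine inside Fulton's framework but deserve care. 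An alternative route, available in characteristic zero, is to factor $\pi$ into a sequence of blow-ups along smooth centers via weak factorization; the formula $\pi^*[C]=[C']+(\text{effective cycle on the exceptional divisor})$ is then transparent one blow-up at a time and composes to the claim.
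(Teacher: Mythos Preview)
Your main argument has a genuine gap. You assert that the refined Gysin pullback $\Gamma_\pi^{\,!}[Y\times C]$ is ``always represented by an effective cycle, obtained as a positive combination of base components of the normal cone.'' The normal cone $C_{\pi^{-1}(C)/(Y\times C)}$ does have an effective fundamental class, but what enters the Gysin construction is $s^*$ of that class, where $s$ is the zero section of $N_{\Gamma_\pi/(Y\times X)}\cong\pi^*T_X$. The zero-section Gysin map does \emph{not} preserve effectivity in general: already for the line bundle $N=\mathcal O_{\mathbb P^1}(-1)$ one has $s^*[\text{zero section}]=c_1(N)\cap[\mathbb P^1]=-[\mathrm{pt}]$. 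Fulton's positivity theorems (e.g.\ \cite[Chapter 12]{fulton84}) give effectivity only under hypotheses such as global generation of the normal bundle, and $\pi^*T_X$ need not be globally generated. Since a general curve $C$ in the family may still meet the image of the exceptional locus, the fibers $\pi^{-1}(x_i)$ can have dimension $>1$; the resulting excess contributions involve Segre classes with no sign control, so the ``residual contributions remain effective'' step is exactly where the argument breaks.

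Your alternative route also does not work as stated. Weak factorization connects $Y$ and $X$ by a zigzag of blow-ups and blow-downs; it does not factor the morphism $\pi$ itself as a composition of smooth blow-ups (that is an open problem in general). Even granting such a factorization, composing the single-blow-up formula requires pulling back the effective exceptional correction from one stage through the next blow-up, which is again an instance of the very statement you are trying to prove.

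The paper avoids cycle-level effectivity entirely and proves only the pseudo-effectivity actually claimed. Since $Y$ is smooth, it suffices to show $D\cdot(\pi^*\alpha-\beta)\geq 0$ for every nef divisor $D$ on $Y$. By the Negativity of Contraction Lemma one writes $D=\pi^*\pi_*D-E$ with $E$ effective and $\pi$-exceptional; the projection formula together with $\pi_*\beta=\alpha$ then collapses the intersection to $D\cdot(\pi^*\alpha-\beta)=E\cdot\beta$, which is non-negative because the strict transform of a general member of $p$ has no component contained in $\mathrm{Supp}(E)$.
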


\begin{proof}
Let $D$ be a nef divisor on $Y$.  The divisor $D + \pi^{*}\pi_{*}(-D)$ is $\pi$-nef and $\pi$-exceptional.  By the Negativity of Contraction Lemma (which holds in any characteristic), there is an effective $\pi$-exceptional divisor $E$ such that $-E = D + \pi^{*}\pi_{*}(-D)$.  By the projection formula
\begin{align*}
D \cdot (\pi^{*}\alpha - \beta) & = (\pi^{*}\pi_{*}D - E) \cdot (\pi^{*}\alpha - \beta) \\
& = \pi_{*}D \cdot \alpha - \pi_{*}D \cdot \pi_{*}\beta + E \cdot \beta \\
& = E \cdot \beta. 
\end{align*}
Since $\beta$ is the class of the strict transform of a curve not contained in a $\pi$-exceptional center, $E \cdot \beta \geq 0$.  Since $D$ was an arbitrary nef divisor, we have obtained the claim.
\end{proof}

\begin{prop}  \label{movpullbackforcurves}
Let $\pi: Y \to X$ be a birational morphism of smooth varieties.  For any big class $\alpha \in \Eff_{1}(X)$ there is some positive part $P(\alpha)$ such that $\mob(\pi^{*}P(\alpha)) = \mob(\alpha)$.  In particular $\pi^{*}P(\alpha)$ is a movable transform of $\alpha$.
\end{prop}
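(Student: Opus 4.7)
The plan is to combine the existence of movable transforms (Proposition \ref{existenceofmovpull}) with Lemma \ref{stricttransformcurveslem} to show that for a movable curve class $\beta$ on $Y$ we have $\pi^{*}\pi_{*}\beta \succeq \beta$. Proposition \ref{existenceofmovpull} applied to $\alpha$ produces $\beta \in \Mov_{1}(Y)$ with $\pi_{*}\beta \preceq \alpha$ and $\mob(\beta) = \mob(\alpha)$. I would then \emph{define} $P(\alpha) := \pi_{*}\beta$. By Lemma \ref{lem: pullpush mov cone}(1) this lies in $\Mov_{1}(X)$, and the chain
\[
\mob(\alpha) = \mob(\beta) \leq \mob(\pi_{*}\beta) = \mob(P(\alpha)) \leq \mob(\alpha),
\]
in which the first inequality is \cite[Lemma 6.8]{lehmann13} and the second is Lemma \ref{mobilityincreases} applied to the pseudo-effective class $\alpha - P(\alpha)$, shows that $P(\alpha)$ is a positive part for $\alpha$.

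The main obstacle is the reverse inequality $\mob(\pi^{*}P(\alpha)) \geq \mob(\alpha)$, since the opposite inequality $\mob(\pi^{*}P(\alpha)) \leq \mob(\pi_{*}\pi^{*}P(\alpha)) = \mob(P(\alpha))$ is immediate. For this I claim $\pi^{*}\pi_{*}\beta - \beta \in \Eff_{1}(Y)$ for every $\beta \in \Mov_{1}(Y)$. To establish the claim, I would first treat a strictly movable $\mathbb{Z}$-class $\beta$ represented by an irreducible family $p\colon U \to W$ of $\mathbb{Z}$-cycles on $Y$ dominating $Y$. By Remark \ref{rmk:move away} we may assume the general cycle-theoretic fiber $C$ has no component contained in the exceptional locus of $\pi$; consequently $\pi_{*}C$ is a 1-cycle on $X$ whose strict transform on $Y$ is $C$ itself. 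The induced family on $X$ is strictly movable by Lemma \ref{lem: pullpush mov cone}(1), so Lemma \ref{stricttransformcurveslem} gives $\pi^{*}\pi_{*}\beta \succeq \beta$. Since the set $\{\beta \in \Mov_{1}(Y) \mid \pi^{*}\pi_{*}\beta - \beta \in \Eff_{1}(Y)\}$ is a closed convex subcone of $\Mov_{1}(Y)$ containing all strictly movable $\mathbb{Z}$-classes, it equals $\Mov_{1}(Y)$.

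Applying the claim to our specific $\beta$ yields $\pi^{*}P(\alpha) \succeq \beta$, and then Lemma \ref{mobilityincreases} gives $\mob(\pi^{*}P(\alpha)) \geq \mob(\beta) = \mob(\alpha)$. Combined with the opposite inequality, this yields $\mob(\pi^{*}P(\alpha)) = \mob(\alpha)$. Finally, since $Y$ is smooth the pullback $\pi^{*}P(\alpha)$ of the nef (by \cite{bdpp04}) class $P(\alpha)$ is again nef, hence movable, and $\pi_{*}\pi^{*}P(\alpha) = P(\alpha) \preceq \alpha$, so $\pi^{*}P(\alpha)$ satisfies the defining properties of a movable transform in the sense of Definition \ref{movpullbackdefn}.
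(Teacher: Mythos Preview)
Your proof is correct, but it takes a genuinely different route from the paper's. The paper unwinds the construction in Proposition \ref{existenceofmovpull}: it tracks the approximating movable classes $\alpha_{m}$ on $X$ (the movable parts of families of maximal mobility count for $m\alpha$) together with their strict-transform classes $\beta_{m}$ on $Y$, applies Lemma \ref{stricttransformcurveslem} at each stage to obtain $\beta_{m} \preceq \pi^{*}\alpha_{m}$, and then passes to accumulation points $\beta$ and $P$ to conclude $\beta \preceq \pi^{*}P$.

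You instead use Proposition \ref{existenceofmovpull} as a black box and isolate the cleaner general statement that $\pi^{*}\pi_{*}\gamma \succeq \gamma$ for every $\gamma \in \Mov_{1}(Y)$, proved by pushing a dominating family down to $X$ and observing that, because $\pi$ is birational and the general curve avoids the exceptional locus, the strict transform back on $Y$ recovers the original curve. This is a nice intermediate result in its own right; it also avoids reopening the limit argument of Proposition \ref{existenceofmovpull}. The paper's approach, on the other hand, stays closer to the actual sequences computing mobility and is what one would naturally write if one were proving Propositions \ref{existenceofmovpull} and \ref{movpullbackforcurves} in one pass. Two minor points of wording: your ``irreducible family'' should be ``strongly movable'' in the paper's terminology, and when you cite Lemma \ref{lem: pullpush mov cone}(1) for the pushforward family being strictly movable you are really appealing to the construction in its proof, not just the statement about cones.
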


\begin{proof}
By continuity and homogeneity it suffices to consider the case when $\alpha \in N_{k}(X)_{\mathbb{Z}}$.

We use the same setup and notation as in the proof of Proposition \ref{existenceofmovpull}.  There we constructed a movable transform $\beta$ for $\alpha$ as an accumulation point of the strict transform classes $\frac{1}{m}\beta_{m}$.  Let $\mathcal{M}' \subset \mathcal{M}$ be a subset such that the corresponding classes $\frac{1}{m}\beta_{m}$ limit to $\beta$.

Let $P \in N_{k}(X)$ denote an accumulation point of the classes $\frac{1}{m}\alpha_{m}$ for $m \in \mathcal{M}'$.  After again shrinking $\mathcal{M}'$, we may assume that $P$ is a limit of the classes $\frac{1}{m}\alpha_{m}$.  Lemma \ref{stricttransformcurveslem} and Remark \ref{strict transform strictly movable} indicate that for each $m \in \mathcal{M}'$ we have $\beta_{m} \preceq \pi^{*}\alpha_{m}$.  Thus $\beta \preceq \pi^{*}P$.  Since
\begin{equation*}
\mob(\alpha) = \mob(\beta) \leq \mob(\pi^{*}P) \leq \mob(P) \leq \mob(\alpha)
\end{equation*}
we see that $P$ satisfies the desired property.
\end{proof}

\begin{cor} \label{stablemovpullbackforcurves}
Let $X$ be a smooth variety.  For any big class $\alpha \in \Eff_{1}(X)$ there is some positive part $P(\alpha)$ such that $\mob(\pi^{*}P(\alpha)) = \mob(\alpha)$ for any birational morphism $\pi: Y \to X$ with $Y$ smooth.
\end{cor}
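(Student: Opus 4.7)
The plan is to re-do the construction of Proposition \ref{movpullbackforcurves} but choose the positive part $P$ once and for all (independently of $\pi$) inside $X$, and then verify that the same $P$ works for every smooth birational model $Y$. I will argue that the class $P$ produced by the proof of Theorem \ref{zardecomexist} already has this uniform property; the key point is that the subsequence needed to pass strict transforms to the limit on a particular $Y$ can be chosen \emph{after} $P$ has been fixed. This avoids having to form common smooth refinements of finitely many models, which would be problematic in positive characteristic.

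Assume first that $\alpha\in N_1(X)_{\mathbb Z}$. For each $m$ pick a family $p_m:U_m\to W_m$ of effective $\mathbb Z$-cycles of class $m\alpha$ with maximal mobility count, let $\alpha_m^d$ denote the class of those components of $U_m$ that dominate $X$, and let $\mathcal M'\subset\mathbb N$ be a subsequence along which $\mc(m\alpha)m^{-n/(n-1)}n!\to\mob(\alpha)$ and $\tfrac1m\alpha_m^d\to P$ for some $P\in\Mov_1(X)$ with $P\preceq\alpha$. Just as in the proof of Theorem \ref{zardecomexist}, $\mob(P)=\mob(\alpha)$ and $\alpha=P+(\alpha-P)$ is a Zariski decomposition of $\alpha$. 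Now fix any birational morphism $\pi:Y\to X$ with $Y$ smooth. The strict transform construction applied to the dominating part of each $p_m$ (Construction \ref{stricttransformconstr} and Lemma \ref{stricttransformlem}) yields families $q_m$ on $Y$ whose classes $\gamma_m$ satisfy $\mc(\gamma_m)\ge\mc(\alpha_m^d)=\mc(m\alpha)$ and $\pi_*\gamma_m=\alpha_m^d\preceq m\alpha$. Corollary \ref{compactpushforwards} bounds the classes $\tfrac1m\gamma_m$ in a compact subset of $\Mov_1(Y)$, so after passing to a further subsequence $\mathcal M''\subset\mathcal M'$ (which may depend on $\pi$) we can assume $\tfrac1m\gamma_m\to\gamma\in\Mov_1(Y)$. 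By Lemma \ref{stricttransformcurveslem} (applied componentwise to the dominating family) we have $\gamma_m\preceq\pi^*\alpha_m^d$, and passing to the limit gives $\gamma\preceq\pi^*P$.

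It remains to prove $\mob(\pi^*P)=\mob(\alpha)$. The same acceptable cone argument used at the end of the proofs of Theorem \ref{zardecomexist} and Proposition \ref{existenceofmovpull} applies verbatim on $Y$: fix an acceptable cone $\mathcal K\subset N_1(Y)$ and an interior $\mathbb Z$-class $\delta$, and for each $\epsilon>0$ observe that for all sufficiently large $m\in\mathcal M''$ the class $\epsilon\delta+\gamma-\tfrac1m\gamma_m$ lies in $\mathcal K^{\mathrm{int}}$, so $\mc_{\mathcal K}(m\gamma+m\epsilon\delta)\ge\mc(\gamma_m)\ge\mc(m\alpha)$; dividing by $m^{n/(n-1)}/n!$, invoking Proposition \ref{kmobismob}, and letting $\epsilon\to0$ gives $\mob(\gamma)\ge\mob(\alpha)$. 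Combined with $\gamma\preceq\pi^*P$ and Lemma \ref{mobilityincreases}, this yields $\mob(\pi^*P)\ge\mob(\gamma)\ge\mob(\alpha)$. The reverse inequality is automatic, since $\pi^*P$ is nef (and hence pseudo-effective) on $Y$, $\pi_*\pi^*P=P$, and \cite[Lemma 6.8]{lehmann13} gives $\mob(\pi^*P)\le\mob(P)=\mob(\alpha)$. Finally, for a big $\mathbb Q$-class $\alpha$ we use homogeneity, and for an arbitrary big $\mathbb R$-class $\alpha$ we approximate by big $\mathbb Q$-classes $\alpha_j\to\alpha$, choose positive parts $P_j$ of each $\alpha_j$ as above, extract an accumulation point $P_{j_k}\to P$ (which is itself a positive part of $\alpha$ by continuity of $\mob$), and use the continuity of $\mob$ and of $\pi^*$ to conclude $\mob(\pi^*P)=\lim_k\mob(\pi^*P_{j_k})=\lim_k\mob(\alpha_{j_k})=\mob(\alpha)$ for every smooth $\pi$. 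The main subtlety, and the reason for choosing $P$ before $\pi$, is precisely this need to quantify over all birational models simultaneously; once the construction is carried out on $X$, the strict-transform inequality of Lemma \ref{stricttransformcurveslem} feeds the argument on each $Y$ independently.
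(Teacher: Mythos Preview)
Your proof is correct, and it takes a genuinely different route from the paper.

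The paper's argument is a compactness/finite--intersection--property argument: the set $\mathcal{P}$ of all positive parts of $\alpha$ is compact (Lemma \ref{structureofnegativeparts}); for each smooth birational $\pi$ the subset $\mathcal{P}_\pi\subset\mathcal{P}$ of positive parts satisfying $\mob(\pi^*P)=\mob(\alpha)$ is closed and nonempty (Proposition \ref{movpullbackforcurves}); and any finite intersection $\cap_i\mathcal{P}_{\pi_i}$ contains $\mathcal{P}_\psi$ for a smooth $\psi$ simultaneously dominating all the $\pi_i$, so the full intersection is nonempty. Your argument is instead a direct construction: you fix the subsequence $\mathcal{M}'$ and the limit $P$ on $X$ \emph{before} choosing $\pi$, and then for each $\pi$ you refine $\mathcal{M}'$ further (in a $\pi$-dependent way) to extract $\gamma$ on $Y$ and push the strict-transform inequality of Lemma \ref{stricttransformcurveslem} to the limit. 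This is essentially the proof of Proposition \ref{movpullbackforcurves} with the order of choices reversed.

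What your approach buys is exactly what you flag: it works uniformly in positive characteristic. The paper's FIP step needs, for finitely many smooth birational models $Y_i\to X$, a common smooth birational refinement $Y_\psi\to X$; this is immediate in characteristic~$0$ by resolving the dominant component of the fiber product, but is not known in characteristic~$p$. Your construction sidesteps this entirely, since once $P$ is fixed on $X$ the verification for each $\pi$ is carried out on that single $Y$. The paper's argument, on the other hand, is shorter and more conceptual once Proposition \ref{movpullbackforcurves} is in hand, and makes transparent the role of compactness of the set of positive parts. Your passage to $\mathbb{R}$-classes is also handled cleanly: the accumulation point $P$ of the $P_j$ is extracted independently of $\pi$, and then continuity of $\mob$ and linearity of $\pi^*$ give the conclusion for every $\pi$ at once.
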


\begin{proof}
Let $\mathcal{P}$ denote the set of possible positive parts for $\alpha$.  $\mathcal{P}$ is compact by Lemma \ref{structureofnegativeparts}.  For each birational morphism $\pi: Y \to X$ let $\mathcal{P}_{\pi}$ denote the set of positive parts of $\alpha$ satisfying the stated condition for $\pi$.  Each $\mathcal{P}_{\pi}$ is closed and is non-empty by Proposition \ref{movpullbackforcurves}.  Furthermore for any finite set $\{ \pi_{i} \}$ the intersection $\cap_{i} \mathcal{P}_{\pi_{i}}$ is non-empty since it contains $\mathcal{P}_{\psi}$ for any $\psi$ that simultaneously lies above all the $\pi_{i}$.  Since $\mathcal{P}$ is compact, the common intersection $\cap_{\pi} \mathcal{P}_{\pi}$ is non-empty.
\end{proof}

It would be interesting to find other examples of ``positive'' classes whose movable transform coincides with the pullback.  The following conjecture identifies some possibilities.

\begin{conj} \label{conj:bpfpullback}
Let $X$ be a smooth variety and let $\alpha \in \bpf_{k}(X)$.  Then for any birational morphism $\pi: Y \to X$ the class $\pi^{*}\alpha$ is a movable transform for $\alpha$.
\end{conj}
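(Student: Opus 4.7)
By linearity of $\pi^*$, closedness of $\Mov_k(Y)$, and continuity of $\mob$ (Theorem \ref{continuousmobility}), it suffices to verify the movable-transform conditions when $\alpha$ is strongly basepoint free, realized by data $(s\colon U\to X,\, p\colon U\to W)$ as in the definition of $\bpf_k(X)$. The extension from strongly basepoint free classes to the full cone $\bpf_k(X)$ would be handled by continuity, using that the easy inequality $\mob(\pi^*\cdot)\leq\mob(\cdot)$ from \cite[Lemma 6.8]{lehmann13} is preserved under limits. The pushforward condition $\pi_*\pi^*\alpha\preceq\alpha$ is automatic and in fact holds with equality: since $\pi$ is proper birational and $X$ is smooth, the projection formula gives $\pi_*\pi^*\alpha=(\deg\pi)\cdot\alpha=\alpha$. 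Thus the two remaining tasks are (i) $\pi^*\alpha\in\Mov_k(Y)$ and (ii) $\mob(\pi^*\alpha)=\mob(\alpha)$.

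For (i), I would construct a family on $Y$ realizing $\pi^*\alpha$ by base change. Form the Cartesian square
\begin{equation*}
\begin{CD}
U_Y @>s_Y>> Y \\
@V q VV @VV \pi V \\
U @>s>> X
\end{CD}
\end{equation*}
in which $s_Y$ is flat (base change of $s$) and $q$ is proper (base change of $\pi$). Let $U_Y^{\circ}$ denote the union of components of $U_Y$ that dominate $Y$: these are the main components, each birational to a component of $U$ via $q$, while any extra components sit entirely over $\mathrm{Exc}(\pi)$. After restricting to an open $W^{\circ}\subset W$ on which $p\circ q|_{U_Y^{\circ}}$ is flat (generic flatness), we obtain a family of $k$-cycles on $Y$ whose source dominates $Y$ in each component, so the class of a generic cycle-theoretic fiber lies in $\Mov_k(Y)$. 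This class equals $\pi^*\alpha$ by the refined Gysin base-change identity $\pi^!(s|_{F_p})_*[F_p] = (s_Y|_{F_p\times_X Y})_*\,q^![F_p]$, available because $X$ is smooth: the left side is $\pi^*\alpha$, and the right side, for generic $w\in W^{\circ}$, is the pushforward of the fundamental class of the main component of $F_p\times_X Y$ (any excess contributions over $\mathrm{Exc}(\pi)$ live in dimensions below $k$ and vanish after numerical pushforward).

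For (ii), the bound $\mob(\pi^*\alpha)\leq\mob(\alpha)$ is \cite[Lemma 6.8]{lehmann13}. For the reverse, given a family $p_m\colon U_m\to W_m$ of class $m\alpha$ realizing $\mc(m\alpha)$, Lemma \ref{stricttransformlem} yields a movable class $\beta_m\in N_k(Y)$ with $\mc(\beta_m)\geq\mc(m\alpha)$ and $\pi_*\beta_m\preceq m\alpha$; any accumulation point $\beta$ of $\tfrac1m\beta_m$ is then a movable transform of $\alpha$, as in the proof of Proposition \ref{existenceofmovpull}. The objective is to show $\beta=\pi^*\alpha$, equivalently that the effective $\pi$-exceptional correction $\beta_m-\pi^*(m\alpha)$ grows only sublinearly in $m^{n/(n-k)}$. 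The \textbf{main obstacle} is precisely this step: an optimal family $p_m$ for $m\alpha$ need not respect the basepoint-free structure of $\alpha$, and its strict transform can carry nontrivial exceptional classes that persist asymptotically. A natural attack is to perturb $p_m$ by a convex combination with the $m$-th ``self-translate'' family built from $p$ (itself basepoint free, so its strict transform realizes $m\pi^*\alpha$ on the nose by the argument in (i)), and to show that the basepoint-freeness of $\alpha$ allows this perturbation to lose only $o(m^{n/(n-k)})$ in mobility count. Making this perturbation quantitative, or alternatively adapting a Hodge/orthogonality-style argument from \cite{bdpp04} beyond codimension one, is the heart of the conjecture.
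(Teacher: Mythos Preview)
The statement is a \emph{conjecture} in the paper, not a theorem; there is no proof to compare against. The paper offers only special cases as evidence: Proposition~\ref{movpullbackforcurves} for $k=1$, and Proposition~\ref{prop:homvarpullback} for $X$ homogeneous over $\mathbb{C}$. Your proposal is therefore not a failed reproduction of the paper's argument but an outline of where the difficulty lies, and on that score it is accurate.

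A few comments on the details. For (i), your fibered-product construction is correct but can be streamlined. Since $s_Y$ is flat (base change of $s$) and $Y$ is a variety, going-down for flat maps forces every irreducible component of $U_Y$ to dominate $Y$; there are no ``excess components over $\mathrm{Exc}(\pi)$'' to discard. After restricting $W$ to the open locus where the fibers of $p\circ q$ have dimension $k$, one obtains data exhibiting $\pi^*\alpha$ as strongly basepoint free on $Y$, hence movable. The paper itself remarks that $\bpf_k$ is preserved under pullback, so this step is not in doubt.

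For (ii) you have correctly located the obstruction, though your phrasing is slightly off: there is no reason to expect $\beta_m-m\pi^*\alpha$ to be effective and $\pi$-exceptional. What one actually needs is the reverse inequality $\beta_m\preceq m\pi^*\alpha$ (or an asymptotic version), from which $\mob(\pi^*\alpha)\geq\mob(\alpha)$ would follow as in the proof of Proposition~\ref{movpullbackforcurves}. For curves this inequality is supplied by Lemma~\ref{stricttransformcurveslem}, ultimately via the Negativity of Contraction Lemma for divisors. For homogeneous $X$ it comes from Kleiman transversality after a group translate (Lemma~\ref{lem:homblowup}), which forces strict transforms to agree with $\pi^*$ on the nose. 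Neither mechanism is available for general $k$ and general smooth $X$, and your proposed perturbation by the ``self-translate'' family does not come with any control on the mobility-count loss. This is indeed the heart of the conjecture, and it remains open.
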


\noindent As evidence for this conjecture, we prove a statement of this form for varieties over $\mathbb{C}$ admitting a transitive action by a group variety.  For such varieties every pseudo-effective class is basepoint free; one can see this by constructing families using the group action.   Proposition \ref{prop:homvarpullback} shows that the pullback of these pseudo-effective classes  is always a movable transform.

As usual we let $\pi_{*}^{-1}$ denote the strict transform operation on cycles.

\begin{lem} \label{lem:homblowup}
Let $X$ be a projective variety over $\mathbb C$ acted upon transitively by a group
variety $G$ and let $\pi: Y \to X$ be a birational map of varieties.  For any effective $k$-cycle $Z$, the cycle $gZ$ satisfies $\pi^{*}[gZ] = [\pi_{*}^{-1}gZ]$ for general $g \in G$. 
\end{lem}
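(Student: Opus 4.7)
The plan is to combine Kleiman's transversality theorem with Fulton's graph construction of $\pi^*$. By linearity I first reduce to the case where $Z$ is an irreducible $k$-subvariety of $X$. Let $E_1, \dots, E_r$ be the irreducible components of the exceptional locus of $\pi$ and set $B_j := \pi(E_j) \subsetneq X$; since $\pi$ is birational one has $c_j := \operatorname{codim}_X B_j > \operatorname{codim}_Y E_j =: d_j \geq 1$, so the generic fiber of $E_j \to B_j$ has positive dimension $c_j - d_j$.

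Since $X$ is $\mathbb{C}$-homogeneous under $G$, Kleiman's transversality theorem applies: for general $g \in G$ the translate $gZ$ meets each $B_j$ properly, so $\dim(gZ \cap B_j) = k - c_j$ (or the intersection is empty). I would then analyze the scheme-theoretic preimage $W := \pi^{-1}(gZ)$. It splits into the strict transform $\pi_*^{-1}(gZ)$, which has dimension $k$ and maps birationally onto $gZ$, together with possibly further components contained in some $E_j$; any such extra component has dimension at most $\dim(gZ \cap B_j) + (c_j - d_j) = k - d_j < k$. Hence $W$ has pure dimension $k$, its unique top-dimensional component is the strict transform, and the multiplicity there equals $1$ because $\pi$ restricts to an isomorphism at the generic point of that component.

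Since $X$ is smooth (being homogeneous), Fulton's graph construction gives $\pi^*[gZ] = \gamma_\pi^![Y \times gZ]$, where $\gamma_\pi \colon Y \hookrightarrow Y \times X$ is the graph embedding, a regular embedding of codimension $n = \dim X$. As $\gamma_\pi^{-1}(Y \times gZ) = W$ has the expected pure dimension $k$, the refined Gysin pullback reduces to the fundamental cycle by \cite[Corollary 6.2.1]{fulton84}, yielding $\pi^*[gZ] = [W] = [\pi_*^{-1}(gZ)]$ in $N_k(Y)$. The main obstacle is the dimension bookkeeping in ruling out extra $k$-dimensional components of $W$ lying inside the exceptional locus; the exceptionality inequality $c_j > d_j$ is precisely what makes this work, so no ``excess intersection'' correction is needed.
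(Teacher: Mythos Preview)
Your strategy---Kleiman transversality combined with the graph construction of $\pi^*$---is sound and does lead to a proof, but the dimension bound for the extra components has a gap. You estimate that a component $W' \subseteq E_j$ of $W = \pi^{-1}(gZ)$ satisfies $\dim W' \le \dim(gZ \cap B_j) + (c_j - d_j)$, treating $c_j - d_j$ as a bound on the fibre dimension of $E_j \to B_j$. But $c_j - d_j$ is only the \emph{generic} fibre dimension; special fibres can be larger, and nothing in your argument prevents $gZ \cap B_j$ from meeting the jumping locus. The clean fix is to apply Kleiman's theorem directly to the morphism $\pi|_{E_j}\colon E_j \to X$ rather than to the inclusion $B_j \hookrightarrow X$: for general $g$ one gets $\dim\bigl(E_j \cap \pi^{-1}(gZ)\bigr) \le \dim E_j + k - n = k - d_j < k$, which is exactly the bound you want and uses only $d_j \ge 1$ (so the detour through $c_j > d_j$ becomes unnecessary). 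A minor wording issue: $W$ need not have \emph{pure} dimension $k$, but this is harmless since $\gamma_\pi^![Y\times gZ] \in A_k(W)$ and lower-dimensional pieces contribute nothing; the multiplicity along the strict transform is $1$ because $W$ is reduced there. The relevant statements in Fulton are in \S7.1, not Corollary~6.2.1.

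For comparison, the paper argues differently: via the projection formula and Hironaka it dominates $Y$ by a composition of smooth-centre blow-ups $X' \to X$, reducing to a single such blow-up; Kleiman then forces $gZ$ to meet the centre properly, and Fulton's blow-up formula (Corollary~6.7.2) gives the conclusion by induction. Your direct approach avoids both the reduction step and the blow-up formula, handling the excess-intersection bookkeeping by hand instead.
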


\begin{proof}
By the projection formula, we can assume that $\pi$ is a sequence
of blow-ups $\pi_i:X_{i+1}\to X_i$ with smooth centers $Y_i\subsetneq X_i$.
By Kleiman's lemma, for general $g\in G$, the subset $gZ\times_XY_i\subset X_i$ is either
empty or has the expected dimension. Consequently the strict transform
of $gZ$ meets $Y_i$ properly. We conclude by induction using
\cite[Corollary 6.7.2]{fulton84}.
 \end{proof}


\begin{cor} \label{cor:homblowuppreservesmov}
Let $X$ be a projective variety over $\mathbb C$ acted upon transitively by a group
variety $G$ and let $\pi: Y \to X$ be a birational map.  Then $\pi^{*}\Eff_{k}(X) \subset \Mov_{k}(Y)$. 
 \end{cor}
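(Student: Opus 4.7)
The approach is to reduce to an integral effective cycle and then exhibit $\pi^{*}[Z]$ explicitly as a strongly movable class, built from $G$-translates of $Z$ via Construction \ref{stricttransformconstr}.

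First, since $\pi^{*}$ is a continuous linear map between finite-dimensional vector spaces and $\Mov_{k}(Y)$ is a closed convex cone, and since $\Eff_{k}(X)$ is the closure of the cone generated by classes of integral $k$-dimensional subvarieties of $X$, it suffices to show $\pi^{*}[Z]\in\Mov_{k}(Y)$ for every integral $k$-dimensional subvariety $Z\subset X$.

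Next, I would produce a canonical irreducible family deforming $Z$ on $X$. Let $\widetilde U\subset G\times X$ be the image of the morphism $G\times Z\to G\times X$ sending $(g,z)\mapsto(g,g\cdot z)$, so that $\widetilde U=\{(g,x)\mid x\in gZ\}$. Then $\widetilde U$ is irreducible of dimension $\dim G+k$, and its second projection to $X$ is surjective because $G$ acts transitively. After restricting the first projection to an open subset $W\subset G$ over which it is flat of relative dimension $k$, one obtains an irreducible family $p:\widetilde U|_{W}\to W$ of effective $k$-cycles on $X$, each with class $[Z]$, whose total space dominates $X$.

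Then I would pass to $Y$. Applying Construction \ref{stricttransformconstr} to $p$ and the birational map $\pi^{-1}:X\dashrightarrow Y$ yields a family $p':\widetilde U'\to W'$ of effective $k$-cycles on $Y$, where $\widetilde U'$ is the strict transform of $\widetilde U$ under the birational morphism $(\mathrm{id}_{G},\pi):G\times Y\to G\times X$. Because $\widetilde U$ is irreducible and dominates $X$, and $\pi$ is an isomorphism over a dense open subset of $X$, the strict transform $\widetilde U'$ is also irreducible and dominates $Y$; hence $p'$ is a strongly movable family on $Y$ in the sense of Section \ref{sec:positive cones}. The general cycle-theoretic fiber of $p'$ is the strict transform $\pi^{-1}_{*}(gZ)$, and by Lemma \ref{lem:homblowup} it satisfies $[\pi^{-1}_{*}(gZ)]=\pi^{*}[gZ]=\pi^{*}[Z]$, where the last equality uses that $gZ$ is algebraically equivalent to $Z$ since $G$ is connected. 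Therefore $\pi^{*}[Z]$ is strongly movable.

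The main, though mild, technical point is checking that the strict transform of $\widetilde U$ under $(\mathrm{id}_{G},\pi)$ really is irreducible and dominates $Y$, and that after a suitable flattening the resulting data satisfies Definition \ref{familydef}; both are routine given that $\pi$ is a birational isomorphism on a dense open.
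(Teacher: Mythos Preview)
Your proposal is correct and follows essentially the same approach as the paper: construct the $G$-orbit family of $Z$, take its strict transform on $Y$, and invoke Lemma~\ref{lem:homblowup} to identify the class of a general member with $\pi^*[Z]$. The paper packages the verification that the strict transform family dominates $Y$ and that its general fiber is the strict transform of the corresponding fiber on $X$ into Remark~\ref{strict transform strictly movable}, which is exactly the ``mild technical point'' you flag at the end.
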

 
\begin{proof}
It suffices to prove that the pullback of the class of an effective $k$-$\mathbb{Z}$-cycle on $X$ is movable.  Consider the family of $k$-$\mathbb{Z}$-cycles $p: U \to G$ constructed by acting on $Z$ by the group $G$.  Note that every component of this family dominates $X$.  Thus, the strict transform family of $p$ also satisfies this property.  Lemma \ref{lem:homblowup} and Remark \ref{strict transform strictly movable} together show that the pullback $\pi^{*}[Z]$ is the class of this strict transform family, showing that $\pi^{*}[Z]$ is movable.
\end{proof}

\begin{prop} \label{prop:homvarpullback}
Let $X$ be a projective variety over $\mathbb C$ acted upon transitively by a group
variety $G$ and let $\alpha \in \Eff_{k}(X)$ be a big class.  For any birational map $\pi: Y \to X$ the class $\pi^{*}\alpha$ is a movable transform for $\alpha$.
\end{prop}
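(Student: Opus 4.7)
The plan is to verify the two conditions of Definition \ref{movpullbackdefn} for $\gamma := \pi^*\alpha$. The containment $\pi^*\alpha \in \Mov_k(Y)$ is immediate from Corollary \ref{cor:homblowuppreservesmov}, while $\pi_*\pi^*\alpha = \alpha \preceq \alpha$ follows from the projection formula, so the only substantive task is the mobility equality $\mob(\pi^*\alpha) = \mob(\alpha)$. The inequality $\mob(\pi^*\alpha) \leq \mob(\alpha)$ is immediate from \cite[Lemma 6.8]{lehmann13} applied to the birational morphism $\pi$, together with $\pi_*\pi^*\alpha = \alpha$.

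For the reverse inequality, the strategy is to construct, for each large $m$, a family $q_m$ of effective $k$-$\mathbb{Z}$-cycles on $Y$ of class $m\pi^*\alpha$ with $\mc(q_m) \geq \mc(m\alpha)$; passing to the $\limsup$ after dividing by $m^{n/(n-k)}/n!$ then gives $\mob(\pi^*\alpha) \geq \mob(\alpha)$. Choose $p_m: U_m \to W_m$ on $X$ of class $m\alpha$ realizing $\mc(m\alpha)$, and form the $G$-translated family $\tilde{p}_m : G \times U_m \to G \times W_m$ whose cycle-theoretic fiber over $(g,w)$ is $g \cdot Z_w$ (still of class $m\alpha$, since $G$ acts by automorphisms of $X$). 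Let $q_m$ be the strict transform of $\tilde p_m$ under $\pi$ in the sense of Construction \ref{stricttransformconstr}. A family-level refinement of Lemma \ref{lem:homblowup}, obtained by Kleiman transversality applied to the $G$-action on the smooth centers of a factorization of $\pi$ into blow-ups, produces an open dense subset $S \subseteq G \times W_m$ over which the cycle-theoretic fiber of $q_m$ has the expected numerical class $\pi^*[g \cdot Z_w] = m\pi^*\alpha$.

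To compare mobility counts, set $b = \mc(p_m)$ and choose $b$ general points $y_1, \ldots, y_b \in Y$. Shrinking the general locus, we may assume each $y_i$ lies in the isomorphism locus of $\pi$, so that $x_i := \pi(y_i)$ is a general $b$-tuple in $X^b$. By the definition of $\mc(p_m) = b$, there exists $w \in W_m$ with $x_i \in Z_w$ for all $i$, so that $(e,w)$ lies in the incidence subvariety
\[
\mathcal{I} := \{(g,w) \in G \times W_m : x_i \in g \cdot Z_w \text{ for all } i\}.
\]
The same family-level Kleiman argument shows that the generic point of the component of $\mathcal{I}$ through $(e,w)$ parametrizes a cycle to which Lemma \ref{lem:homblowup} applies, so $\mathcal{I} \cap S$ is non-empty. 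For any $(g,w) \in \mathcal{I} \cap S$, the corresponding fiber of $q_m$ is a cycle of class $m\pi^*\alpha$ passing through each $x_i$, and hence through each $y_i$ by the isomorphism hypothesis on the $y_i$. Thus $\mc(q_m) \geq b = \mc(p_m)$, as desired.

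The main obstacle is making rigorous the family-level sharpening of Lemma \ref{lem:homblowup}, both in producing the open dense $S$ on which the strict transforms have the expected class, and in establishing that $\mathcal{I} \cap S$ is non-empty. Both statements reduce to Kleiman transversality applied to the $G$-action on products of the smooth blow-up centers in a chosen factorization of $\pi$, applied in parallel to the total space of the family $p_m$.
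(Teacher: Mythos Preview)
Your overall strategy is sound and matches the paper's core idea, but the execution takes an unnecessary detour and leaves two loose ends.

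\textbf{The incidence-variety argument is not needed.} Once you form the $G$-translated family $\tilde p_m$ over $G\times W_m$, every component $G\times U_{m,i}$ already dominates $X$ (the image of $(g,u)\mapsto g\cdot s(u)$ is all of $X$ by transitivity). Hence the strict transform construction of Construction~\ref{stricttransformconstr} discards nothing, and \cite[Lemma~6.8]{lehmann13} (as used in Lemma~\ref{stricttransformlem}) gives directly $\mc(q_m)=\mc(\tilde p_m)\geq \mc(p_m)$. There is no need to pick specific points, form $\mathcal I$, or worry about whether $\mathcal I$ meets $S$.

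\textbf{The ``family-level sharpening'' of Lemma~\ref{lem:homblowup} is immediate.} The numerical class of a family is determined by a single general fiber. Applying Lemma~\ref{lem:homblowup} to one general fiber $gZ_w$ (general $(g,w)$) shows the strict transform has class $\pi^*[gZ_w]=m\pi^*\alpha$, and this is then the class of the whole family $q_m$. No elaborate Kleiman argument over the family is required.

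\textbf{Non-integral $\alpha$.} Your argument, as written, only treats $\alpha\in N_k(X)_{\mathbb Z}$. You need a brief continuity/homogeneity step to pass to rational and then real big classes.

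\textbf{Comparison with the paper.} The paper's proof carries the same content but is organized differently: it translates by a \emph{single} general $g\in G$ (which also ensures no component maps into the non-isomorphism locus, so nothing is discarded), then invokes the limiting machinery of Proposition~\ref{movpullbackforcurves} to produce, for arbitrary big $\alpha$, a positive part $P(\alpha)$ with $\mob(\pi^*P(\alpha))=\mob(\alpha)$; finally it uses Corollary~\ref{cor:homblowuppreservesmov} (that $\pi^*$ preserves pseudo-effectiveness here) to sandwich $\mob(\pi^*P(\alpha))\leq\mob(\pi^*\alpha)\leq\mob(\alpha)$. Your direct route avoids that machinery, which is a genuine simplification once the two points above are fixed.
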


\begin{proof}
Suppose $\alpha' \in N_{k}(X)_{\mathbb{Z}}$ is an effective class.  Let $p: U \to W$ be a family of $\mathbb{Z}$-cycles representing $\alpha'$ of maximal mobility count.  Replace $p$ by the family $p'$ obtained by acting on $p$ by a general element of $G$; note that this does not affect the mobility count.  By Lemma \ref{lem:homblowup}, the strict transform family of $p'$ has the same class as $\pi^{*}\alpha'$.

By applying Corollary \ref{cor:homblowuppreservesmov}, we may proceed just as in the proof of Proposition \ref{movpullbackforcurves} to prove that for any birational map $\pi: Y \to X$ from a smooth variety $Y$ there is a positive part $P(\alpha)$ for $\alpha$ such that $\pi^{*}P(\alpha)$ is a movable pullback for $\alpha$.   Since pulling back from $X$ preserves pseudo-effectiveness by Corollary \ref{cor:homblowuppreservesmov}, $\pi^{*}\alpha \succeq \pi^{*}P(\alpha)$.  Thus $\mob(\pi^{*}\alpha) = \mob(\alpha)$ and by Corollary \ref{cor:homblowuppreservesmov} the class $\pi^{*}\alpha$ is movable.  Since this is true for any smooth model $Y$, it is also true for any birational model admitting a morphism to $X$.
\end{proof}

\subsection{Fujita approximations for curves}

We next prove a Fujita approximation-type result for curve classes on smooth varieties over $\mathbb{C}$.  The essential tools are the orthogonality theorem of \cite{bdpp04} and the $\sigma$-decomposition.

\begin{lem} \label{positiveproductandcurvepullback}
Let $X$ be a smooth projective variety over $\mathbb{C}$ of dimension $n$ and let $L$ be a big divisor on $X$.  Suppose that $\pi: Y \to X$ is a birational morphism from a smooth projective variety $Y$.  Then $\pi^{*}\langle L^{n-1} \rangle = \langle \pi^{*}L^{n-1} \rangle$.
\end{lem}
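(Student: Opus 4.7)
The plan is to prove the equality by showing that both classes pair identically with every element of $N^{1}(Y)$. The argument rests on two ingredients: the pushforward compatibility $\pi_{*}\langle (\pi^{*}L)^{n-1}\rangle = \langle L^{n-1}\rangle$, and the vanishing of both classes against every $\pi$-exceptional prime divisor.

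First I would establish the pushforward compatibility using the directed-limit description from Construction \ref{posprodchar0}. By refining to a common resolution, every Fujita-type approximation $\phi:W\to X$ of $L$ (with $\phi^{*}L=A+E$, $A$ nef, $E$ effective) may be assumed to factor as $\phi=\pi\circ\mu$ with $\mu:W\to Y$; the projection formula shows this refinement does not change the class $\phi_{*}(A^{n-1})$. The data $(\mu,A)$ is then a Fujita approximation of $\pi^{*}L$ on $Y$, and conversely every approximation of $\pi^{*}L$ on $Y$ composes with $\pi$ to one of $L$ on $X$. Since $(\pi\mu)_{*}(A^{n-1})=\pi_{*}\mu_{*}(A^{n-1})$, applying $\pi_{*}$ to the directed supremum defining $\langle (\pi^{*}L)^{n-1}\rangle$ on $Y$ produces exactly the directed supremum defining $\langle L^{n-1}\rangle$ on $X$.

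Next I would show $\langle (\pi^{*}L)^{n-1}\rangle \cdot F = 0$ for every $\pi$-exceptional prime divisor $F$ on $Y$. For any decomposition $\pi^{*}L \sim A+E$ with $A$ ample on $Y$ and $E$ effective, restricting to a positive-dimensional fiber $C$ of $\pi$ contained in $F$ gives $A|_{C}+E|_{C}=0$ since $\pi^{*}L|_{C}=0$; as $A|_{C}$ has strictly positive degree, $C\subseteq \Supp(E)$, forcing $F\subseteq\Supp(E)$ for every such $E$. Thus $F\subseteq \mathbf{B}_{+}(\pi^{*}L)$. By the restricted volume characterization of \cite[Theorem 4.9]{bfj09} (valid in the paper's setting as noted in Construction \ref{posprodchar0}), this forces $\langle (\pi^{*}L)^{n-1}\rangle \cdot F = \vol_{Y|F}(\pi^{*}L) = 0$. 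The analogous vanishing $\pi^{*}\langle L^{n-1}\rangle\cdot F=0$ is immediate from the projection formula, since $\pi_{*}F=0$.

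To conclude, I would use the splitting $N^{1}(Y)=\pi^{*}N^{1}(X)\oplus \mathrm{Exc}(\pi)$, where $\mathrm{Exc}(\pi)$ is spanned by the $\pi$-exceptional prime divisor classes. For any $D\in N^{1}(Y)$, write $D=\pi^{*}\pi_{*}D+F_{D}$ with $F_{D}\in \mathrm{Exc}(\pi)$. Combining the projection formula with the two identities above,
\begin{equation*}
\pi^{*}\langle L^{n-1}\rangle \cdot D = \langle L^{n-1}\rangle \cdot \pi_{*}D = \pi_{*}\langle (\pi^{*}L)^{n-1}\rangle \cdot \pi_{*}D = \langle (\pi^{*}L)^{n-1}\rangle \cdot \pi^{*}\pi_{*}D = \langle (\pi^{*}L)^{n-1}\rangle \cdot D.
\end{equation*}
Since $N^{1}(Y)$ separates points of $N_{1}(Y)=N^{n-1}(Y)$, the two classes coincide. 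The main obstacle is the vanishing $\langle (\pi^{*}L)^{n-1}\rangle \cdot F = 0$ on exceptional $F$: a naive attempt via Proposition \ref{posprodprop}(1) only yields vanishing on $\Supp(N_{\sigma}(\pi^{*}L))$, which can be strictly smaller than the divisorial part of $\mathbf{B}_{+}(\pi^{*}L)$ (e.g.~the exceptional divisor $E$ on $\mathrm{Bl}_{p}\mathbb{P}^{2}$ satisfies $\sigma_{E}(\pi^{*}H)=0$ because $\pi^{*}H$ is basepoint free). This is why the stronger restricted volume formula of \cite{bfj09} is essential.
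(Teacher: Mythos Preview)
Your proposal is correct and follows essentially the same strategy as the paper: reduce to the pushforward identity $\pi_{*}\langle(\pi^{*}L)^{n-1}\rangle=\langle L^{n-1}\rangle$ together with the vanishing of $\langle(\pi^{*}L)^{n-1}\rangle$ against every $\pi$-exceptional prime divisor, then conclude via the splitting $N^{1}(Y)=\pi^{*}N^{1}(X)\oplus\mathrm{Exc}(\pi)$.

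The only real difference is in how the exceptional vanishing is verified. You show directly that $F\subseteq\mathbf{B}_{+}(\pi^{*}L)$ by arguing that every positive-dimensional $\pi$-fibre in $F$ must lie in the support of the effective part of any decomposition $\pi^{*}L\equiv A+E$, and then invoke \cite[Theorem 4.9]{bfj09}. The paper instead observes that $(\pi^{*}L+\epsilon E)|_{E}$ fails to be pseudo-effective for every $\epsilon>0$, so $\pi^{*}L$ is not $E$-big in the sense of \cite[Definition 4.1]{bfj09}, and then cites \cite[Lemma 4.10]{bfj09}. These are two phrasings of the same phenomenon (Lemma 4.10 in \cite{bfj09} is essentially the restricted-volume vanishing extracted from Theorem 4.9), so neither approach gains anything substantive over the other. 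Your closing remark correctly identifies why the naive route through Proposition \ref{posprodprop}(1) is insufficient.
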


\begin{proof}
It is an easy consequence of the definition that $\pi_{*}\langle \pi^{*}L^{n-1} \rangle = \langle L^{n-1} \rangle$, so it suffices to check that $\langle \pi^{*}L^{n-1} \rangle \cdot E = 0$ for any irreducible $\pi$-exceptional divisor $E$.  Note that for any $\epsilon > 0$ the restriction of $\pi^{*}L + \epsilon E$ to $E$ is not pseudo-effective.  Thus, $\pi^{*}L$ is not $E$-big in the sense of \cite[Definition 4.1]{bfj09}.  Then \cite[Lemma 4.10]{bfj09} yields the claim.
\end{proof}

\begin{thrm} \label{pospartbecomesbpfforcurves}
Let $X$ be a smooth projective variety over $\mathbb{C}$ and let $\alpha$ be a big movable curve class.  Fix a curve class $\xi$ in the interior of $\Mov_{1}(X)$.  Then for any $\mu > 0$, there is a birational morphism $\phi: Y \to X$ and a curve class $\beta \in \bpf_{1}(Y)$ such that
\begin{equation*}
\phi^{*}(\alpha - \mu \xi) \preceq \beta \preceq \phi^{*}(\alpha+\mu \xi).
\end{equation*}
\end{thrm}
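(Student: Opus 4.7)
The strategy combines Fujita approximation for big divisors with an orthogonality estimate, together with Lemma~\ref{positiveproductandcurvepullback} on the compatibility of positive products with birational pullback. First, I would use the characterization $\Mov_{1}(X) = \Nef_{1}(X)$ for smooth $X$ (Section~\ref{divisorcharpsection}) and the BDPP structure theorem to approximate the big movable class $\alpha$ by a finite positive combination of classes of the form $\langle L^{n-1}\rangle$ for big divisors $L$ (possibly after passing to a birational model). By linearity of pullback and of $\preceq$, and by splitting the error $\mu\xi$ among the summands, the problem reduces to the case $\alpha = \langle L^{n-1}\rangle$ for a single big divisor $L$ on $X$.

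Next, I would apply Fujita approximation for divisors to $L$: for any $\epsilon>0$ there is a birational morphism $\phi:Y\to X$ from a smooth projective variety $Y$ and an ample $\mathbb{Q}$-divisor $A$ on $Y$ with $\phi^{*}L - A$ represented by an effective $\mathbb{Q}$-divisor $E$ and $\vol(L) - A^{n} < \epsilon$. Set $\beta := A^{n-1}$; as a complete intersection of ample divisors this lies in $\bpf_{1}(Y)$. By Lemma~\ref{positiveproductandcurvepullback} we have $\phi^{*}\alpha = \langle \phi^{*}L^{n-1}\rangle$, and by super-additivity of positive products $\phi^{*}\alpha \succeq A^{n-1} = \beta$, so $\delta := \phi^{*}\alpha - \beta$ is pseudo-effective. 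Expanding $\langle\phi^{*}L^{n-1}\rangle\cdot\phi^{*}L = \vol(L)$ via Proposition~\ref{posprodprop} and writing $\phi^{*}L = A+E$, together with $A^{n-1}\cdot A = A^{n}$, yields
\[
\delta\cdot A + \langle \phi^{*}L^{n-1}\rangle\cdot E \;=\; \vol(L) - A^{n} \;<\; \epsilon.
\]
Both summands on the left are non-negative (pseudo-effective intersected with nef, and movable intersected with effective), so $\delta\cdot A < \epsilon$.

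The main obstacle is promoting this single intersection bound to the pseudo-effectiveness statement $\delta \preceq \mu\phi^{*}\xi$ (which is equivalent to $\phi^{*}(\alpha-\mu\xi) \preceq \beta$; the inequality $\beta \preceq \phi^{*}(\alpha+\mu\xi)$ follows immediately from $\delta \in \Eff_{1}(Y)$ and $\phi^{*}\xi \in \Eff_{1}(Y)$). The crucial point is that $\phi^{*}\xi$ is big on $Y$: since $\xi$ lies in the interior of $\Mov_{1}(X)\subseteq \Eff_{1}(X)$ it is big on $X$, and for any nonzero nef divisor $D$ on $Y$ the Negativity of Contraction Lemma implies $D$ is not $\phi$-exceptional, hence $\phi_{*}D \neq 0$ is pseudo-effective on $X$ and $\phi^{*}\xi \cdot D = \xi\cdot\phi_{*}D > 0$ by bigness of $\xi$. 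Thus $\mu\phi^{*}\xi$ lies in the interior of $\Eff_{1}(Y)$ and admits an open neighborhood contained in $\Eff_{1}(Y)$. By Corollary~\ref{cor:norms}, $\delta\mapsto A\cdot\delta$ restricts to a norm on the salient cone $\Eff_{1}(Y)$, so the bound $\delta\cdot A<\epsilon$ forces $\|\delta\|$ to be small; choosing $\epsilon$ sufficiently small relative to $\mu$ and the ``interiorness'' of $\phi^{*}\xi$ yields $\mu\phi^{*}\xi - \delta \in \Eff_{1}(Y)$, completing the argument.
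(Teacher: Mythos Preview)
Your reduction to the case $\alpha = \langle L^{n-1}\rangle$ and the orthogonality estimate $\delta\cdot A < \epsilon$ are fine, and indeed mirror the paper's overall architecture. The gap is in the final step, where you try to upgrade $\delta\cdot A < \epsilon$ to $\delta \preceq \mu\phi^{*}\xi$.

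The problem is a hidden dependence on $\epsilon$. The Fujita approximation model $\phi:Y\to X$, the ample divisor $A$, and therefore both the norm of Corollary~\ref{cor:norms} \emph{and} the amount by which $\phi^{*}\xi$ sits in the interior of $\Eff_{1}(Y)$, all change with $\epsilon$. So you cannot ``choose $\epsilon$ sufficiently small relative to $\mu$ and the interiorness of $\phi^{*}\xi$'': the latter quantity is itself a function of $\epsilon$ with no a~priori lower bound. Concretely, in a typical Fujita approximation $A$ is only barely ample in the exceptional directions, so the $A$-norm becomes degenerate there as $\epsilon\to 0$; the bound $\delta\cdot A<\epsilon$ then says nothing uniform about the size of $\delta$ in those directions, and there is no reason the shrinking ball around $\mu\phi^{*}\xi$ inside $\Eff_{1}(Y)$ should contain it.

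The paper avoids this circularity by arranging the error to be a \emph{pullback from $X$}. Instead of raw Fujita approximation it invokes Proposition~\ref{psigmaapproachesnef}, which produces models $\phi_m:Y_m\to X$ and big nef divisors $B_m$ with
\[
P_{\sigma}(\phi_{m}^{*}L) - \tfrac{1}{m}\phi_{m}^{*}G \;\leq\; B_m \;\leq\; P_{\sigma}(\phi_{m}^{*}L)
\]
for a \emph{fixed} effective (in fact ample) divisor $G$ on $X$. Taking $(n-1)$st powers and using Lemma~\ref{positiveproductandcurvepullback} one gets
\[
\phi_m^{*}\big\langle (L-\tfrac{1}{m}G)^{n-1}\big\rangle \;\preceq\; B_m^{n-1} \;\preceq\; \phi_m^{*}\langle L^{n-1}\rangle = \phi_m^{*}\alpha,
\]
so the discrepancy $\phi_m^{*}\alpha - B_m^{n-1}$ is bounded above by $\phi_m^{*}\big(\alpha - \langle(L-\tfrac{1}{m}G)^{n-1}\rangle\big)$. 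The bracketed class lives on $X$ and tends to $0$ there by continuity of the positive product; once it is $\preceq \mu\xi$ \emph{on $X$} (where nothing is moving), pulling back a movable curve class gives the desired inequality on $Y_m$. The paper also explicitly notes that this construction is stable under passing to higher models, which is what lets one handle the finite-sum reduction on a common $Y$; your sketch glosses over this point as well.
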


\begin{rmk}
\cite{bdpp04} shows that $\Mov_{1}(X)$ is the closure of the cone generated by the pushforward of basepoint free classes.  The point of Theorem \ref{pospartbecomesbpfforcurves} is to show that the basepoint free classes constructed in this way are also ``almost pullbacks.''
\end{rmk}

The proof uses the following statement.

\begin{prop}[\cite{lehmann12}, Proposition 3.7] \label{psigmaapproachesnef}
Let $X$ be a smooth projective variety over $\mathbb{C}$ and let $L$ be a big divisor with $L \geq 0$.  Then there is an effective divisor $G$ so that for any sufficiently large $m$ there is a smooth birational model $\phi_{m}: Y_{m} \to X$ and a big and nef divisor $B_{m}$ on $Y_{m}$ with
\begin{equation*}
P_{\sigma}(\phi_{m}^{*}L) - \frac{1}{m}\phi_{m}^{*}G \leq B_{m} \leq P_{\sigma}(\phi_{m}^{*}L).
\end{equation*}
\end{prop}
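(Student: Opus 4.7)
The plan is to use the Fujita approximation theorem to produce $B_m$ and then to use the maximality characterization of $P_\sigma$ together with the orthogonality relation of \cite{bdpp04} to control the gap between $B_m$ and $P_\sigma(\phi_m^*L)$.

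First, apply the Fujita approximation theorem of \cite{takagi07} and \cite{lm09} (which works over any algebraically closed field): for each $m$ sufficiently large, there is a smooth birational model $\phi_m\colon Y_m\to X$ together with a decomposition $\phi_m^*L\sim_{\mathbb{R}} B_m+F_m$ where $B_m$ is a big and nef $\mathbb{Q}$-divisor, $F_m$ is effective, and $\vol(B_m)>\vol(L)-\varepsilon_m$ for any prescribed $\varepsilon_m\to 0$ (say $\varepsilon_m=m^{-(n+1)}$). Since $B_m$ is nef, it is in particular movable, and $B_m\preceq \phi_m^*L$ numerically; the maximality property of $P_\sigma$ among movable classes dominated by a pseudo-effective divisor, \cite[III.1.14 Proposition]{nakayama04}, yields $B_m\preceq P_\sigma(\phi_m^*L)$, establishing the right-hand inequality. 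Up to replacing $B_m$ by an integral model on a further blow-up, we may assume $B_m$ is a $\mathbb{Z}$-divisor with the same properties.

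For the left-hand inequality, let $D_m:=P_\sigma(\phi_m^*L)-B_m$. It is pseudo-effective (in fact numerically represented by an effective divisor, since it is dominated by the effective $F_m$). The strategy now is to show $D_m\preceq \tfrac{1}{m}\phi_m^*G$ for a uniformly chosen effective divisor $G$ on $X$. The idea is to take $G$ to be an effective divisor on $X$ whose support contains $\Supp(N_\sigma(L))$ together with every prime divisorial component of $\mathbf{B}_+(L)$, with large enough multiplicity along each. Outside of this locus, $L$ is ``ample enough'' that the Fujita approximation is essentially sharp, and the positive product identity $\langle L^{n-1}\rangle\cdot P_\sigma(L)=\vol(L)$ of Proposition~\ref{posprodprop} combined with $\vol(B_m)\to\vol(L)$ forces the intersection of $\phi_{m*}D_m$ with any complete-intersection curve class to tend to zero at rate $O(1/m)$. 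Splitting $D_m$ into a $\phi_m$-exceptional part plus a strict-transform part, the strict-transform part is supported on components coming from $\Supp(G)$ with coefficient $O(1/m)$, and the exceptional part is absorbed into $\tfrac{1}{m}\phi_m^*G$ by a standard application of the Negativity of Contraction Lemma (since $G$ is big and can be chosen so that $\tfrac{1}{m}\phi_m^*G-D_m$ restricted to exceptional fibers is relatively nef).

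The main obstacle will be the third step: producing a single $G$ on $X$ that works uniformly in $m$, and upgrading the volume/intersection decay $\vol(L)-\vol(B_m)\to 0$ into a coefficient-wise bound $D_m\preceq\tfrac{1}{m}\phi_m^*G$. The delicate point is that the set of primes appearing in $D_m$ with nonzero coefficient \emph{a priori} depends on $m$ (through the choice of model $Y_m$), and one must argue that all such primes either lie over a fixed divisorial locus on $X$ or are $\phi_m$-exceptional; assembling the multiplicities along these finitely many components and then rescaling to produce $G$ is where the coefficients of $G$ must be chosen judiciously, and where an extra use of \cite[Theorem 4.9]{bfj09} (characterizing the support of $N_\sigma$ via positive-product vanishing) plays the key role.
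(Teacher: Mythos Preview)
The paper does not prove this proposition; it is quoted verbatim from \cite{lehmann12}, Proposition 3.7, and used as a black box in the proof of Theorem \ref{pospartbecomesbpfforcurves}. So there is no in-paper proof to compare against. That said, your argument has a genuine gap.

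Your right-hand inequality is fine: once $B_m$ is nef and $\phi_m^*L - B_m$ is effective, the maximality characterization of $P_\sigma$ from \cite[III.1.14 Proposition]{nakayama04} gives $B_m \le P_\sigma(\phi_m^*L)$ immediately.

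The left-hand inequality, however, does not follow from what you have written. You try to pass from the volume estimate $\vol(L)-\vol(B_m)<\varepsilon_m$ to a coefficient-wise bound $D_m := P_\sigma(\phi_m^*L)-B_m \le \tfrac{1}{m}\phi_m^*G$. There is no mechanism for this. Small volume defect controls intersection numbers of $D_m$ against $\langle L^{n-1}\rangle$, but it does \emph{not} control the multiplicities of $D_m$ along $\phi_m$-exceptional primes: those primes are invisible to any class pulled back from $X$, so no amount of intersecting with $\phi_m^*(\text{something})$ bounds their coefficients. Your appeal to the Negativity of Contraction Lemma does not work either: you would need $\tfrac{1}{m}\phi_m^*G - D_m$ to be $\phi_m$-nef, but $\phi_m^*G$ is $\phi_m$-trivial, so this is asking that $-D_m = B_m - P_\sigma(\phi_m^*L)$ be $\phi_m$-nef, and there is no reason for that (movable minus nef is not anti-nef in general). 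You yourself flag this step as ``the main obstacle,'' and indeed it is not an obstacle but a dead end for this line of argument.

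The actual proof in \cite{lehmann12} does not go through volume estimates at all. It is a direct construction: one takes $G$ of the form $K_X + (n+2)H$ for $H$ very ample, uses Nadel vanishing and Castelnuovo--Mumford regularity to show that $\mathcal{O}_X(mL+G)\otimes\mathcal{J}(\Vert mL\Vert)$ is globally generated for every $m$, lets $\phi_m$ be a log resolution of the asymptotic multiplier ideal with $\phi_m^{-1}\mathcal{J}(\Vert mL\Vert)=\mathcal{O}_{Y_m}(-E_m)$, and sets $B_m=\phi_m^*L - \tfrac{1}{m}E_m$. Global generation gives nefness of $mB_m+\phi_m^*G$, hence of $B_m + \tfrac{1}{m}\phi_m^*G$; the comparison $E_m \le mN_\sigma(\phi_m^*L) \le E_m + \phi_m^*G$ (coming from the standard relation between asymptotic multiplier ideals and asymptotic orders of vanishing) then yields both inequalities simultaneously. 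This is why the hypothesis is ``over $\mathbb{C}$'': the argument genuinely uses Kodaira-type vanishing, not just Fujita approximation.
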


\begin{proof}[Proof of Theorem \ref{pospartbecomesbpfforcurves}:]
Let $n$ denote the dimension of $X$.  First consider the case when there is a big divisor $L$ such that $\langle L^{n-1} \rangle = \alpha$.  By replacing $L$ by a numerically equivalent divisor we may assume that $L$ is effective.  Apply Proposition \ref{psigmaapproachesnef} to $L$ to obtain morphisms $\phi_{m}$ and a divisor $G$.  Note that we may make $G$ more effective without changing the conclusion of the proposition; in particular, we may assume that $G$ is effective and ample.

Suppose that $m$ is sufficiently large so that $L - \frac{1}{m}G$ is pseudo-effective.  Since $\phi_{m}^{*}G$ is big and nef, we have an inequality $P_{\sigma}(\phi_{m}^{*}L) - \frac{1}{m}\phi_{m}^{*}G \geq P_{\sigma}(\phi_{m}^{*}(L - \frac{1}{m}G))$.  In particular
\begin{align*}
\phi_{m}^{*} \left\langle \left(L - \frac{1}{m}G \right)^{n-1} \right\rangle  & = 
\left\langle \phi_{m}^{*}\left(L - \frac{1}{m}G \right)^{n-1} \right\rangle \textrm{ by Lemma \ref{positiveproductandcurvepullback}} \\
& = \left\langle P_{\sigma}\left(\phi_{m}^{*}\left(L - \frac{1}{m}G \right)\right)^{n-1} \right\rangle \\
& \preceq B_{m}^{n-1}
\end{align*}
where in the second step we have applied the invariance of the positive product under replacing a pseudo-effective divisor by its positive part (see \cite[Proposition 4.13]{lehmann12}) and in the last step we have applied the super-additivity of the positive product (see \cite[Proposition 2.9]{bfj09}).
Similarly $B_{m}^{n-1} \preceq \langle P_{\sigma}(\phi_{m}^{*}L)^{n-1} \rangle = \phi_{m}^{*} \langle L^{n-1} \rangle$.

By choosing $m$ sufficiently large, we may ensure that $\mu \xi - (\alpha - \langle (L - \frac{1}{m}G)^{n-1} \rangle)$ is movable by the continuity of the positive product.  Since the pullback of a movable class is again movable, this implies that $\phi_{m}^{*}(\alpha-\mu \xi) \preceq B_{m}^{n-1} \preceq \phi_{m}^{*}\alpha$.  We conclude this case by noting that $B_{m}^{n-1}$ is a baspoint free class, since it is a limit of complete intersections of ample divisors.  

Note an important feature of this construction: we may replace $\phi_{m}: Y_{m} \to X$ by any higher birational model and may pullback $B_{m}$ without changing the validity of the argument.  Indeed, the only fact about $Y_{m}$ that we used is the inequality from Proposition \ref{psigmaapproachesnef}.  But this inequality remains true on higher birational models: since $\pi^{*}P_{\sigma}(L) \geq P_{\sigma}(\pi^{*}L)$ for any birational map $\pi$, the inequality on the left is preserved.  \cite[III.1.14 Proposition]{nakayama04} shows the the inequality on the right is also preserved.

Next suppose that $\alpha$ lies in the interior of $\Mov_{1}(X)$.  Note that by \cite[0.2 Theorem]{bdpp04} $\Mov_{1}(X)$ is the closure of the cone generated by classes of the form $\langle L^{n-1} \rangle$ for big divisors $L$.  Thus $\alpha$ can be written as a finite sum of classes $\{ \gamma_{i} \}_{i=1}^{r}$ of the form $\gamma_{i} = \langle L_{i}^{n-1} \rangle$ for big divisors $L_{i}$.  For each $i$, we can apply the above construction to $\gamma_{i}$ with the class $\xi$ and the constant $\frac{1}{r}\mu$.  As remarked above, the construction actually can be done on any higher birational model; in particular, we can find one smooth model $\pi: Y \to X$ so that the construction for each $\gamma_{i}$ can be carried out on $Y$.  The result is for each $i$ a class $\beta_{i} \in \bpf_{1}(Y)$ such that
\begin{equation*}
\pi^{*}(\gamma_{i}- (\mu/r) \xi ) \preceq \beta_{i}  \preceq \pi^{*}\gamma_{i}.
\end{equation*}
Set $\beta = \sum_{i}\beta_{i}$; since $\bpf_{1}(Y)$ is convex by definition, $\beta$ satisfies the desired properties.

Finally, we prove the statement for an arbitrary big and movable class $\alpha$.  Note that $\alpha + \frac{\mu}{2}\xi$ is in the interior of $\Mov_{1}(X)$.  Thus, we can apply the statement for this class with the constant $\mu/2$ to find a birational model $\pi: Y \to X$ and a class $\beta \in \bpf_{1}(Y)$ such that
\begin{equation*}
\pi^{*}\alpha \preceq \beta \preceq \pi^{*}(\alpha + \mu \xi).
\end{equation*}
\end{proof}

\begin{thrm} \label{fujapproxcurves}
Let $X$ be a smooth projective variety over $\mathbb{C}$ and let $\alpha$ be a big curve class.  For any $\epsilon > 0$, there is a birational map $\pi: Y \to X$ and an element $\beta \in \bpf_{1}(Y)$ such that $\mob(\beta) > \mob(\alpha) - \epsilon$ and $\pi_{*}\beta \preceq \alpha$.
\end{thrm}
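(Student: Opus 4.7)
My plan is to reduce the theorem to Theorem~\ref{pospartbecomesbpfforcurves} by replacing $\alpha$ with a slightly shrunken Zariski positive part; the slack thereby gained will be just enough to absorb the unavoidable $\mu\xi$ error in the upper bound of the sandwich produced by that theorem.

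First I will select a well-behaved Zariski positive part. Since $\alpha$ is big, Theorem~\ref{zardecomexist} gives a Zariski decomposition $\alpha = P + N$, and by Corollary~\ref{stablemovpullbackforcurves} I may choose $P$ so that $\mob(\pi^{*}P) = \mob(\alpha)$ for every smooth birational $\pi : Y \to X$. Fix any class $\xi$ in the interior of $\Mov_{1}(X)$ (for example $\xi = H^{n-1}$ for an ample divisor $H$). Choose $\delta \in (0,1)$ small enough that $(1-\delta)^{n/(n-1)}\mob(\alpha) > \mob(\alpha) - \epsilon$; then, since $\delta P$ is big and hence interior to $\Eff_{1}(X)$, fix $\mu > 0$ small enough that $\mu\xi \preceq \delta P$.

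Next I will apply Theorem~\ref{pospartbecomesbpfforcurves} to the big movable class $(1-\delta)P + (\mu/2)\xi$, which actually lies in the interior of $\Mov_{1}(X)$ because $\xi$ does, with error parameter $\mu/2$. This produces a smooth birational model $\pi : Y \to X$ and a class $\beta \in \bpf_{1}(Y)$ satisfying
\[
\pi^{*}\bigl((1-\delta)P\bigr) \;\preceq\; \beta \;\preceq\; \pi^{*}\bigl((1-\delta)P + \mu\xi\bigr).
\]
Pushing forward the upper inclusion gives $\pi_{*}\beta \preceq (1-\delta)P + \mu\xi \preceq P \preceq \alpha$, where the middle inequality uses the choice $\mu\xi \preceq \delta P$.

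For the mobility estimate, the lower inclusion shows that $\beta - \pi^{*}((1-\delta)P)$ is pseudo-effective on $Y$, so Lemma~\ref{mobilityincreases} together with the homogeneity of $\mob$ of degree $n/(n-1)$ yields
\[
\mob(\beta) \;\geq\; \mob\bigl(\pi^{*}((1-\delta)P)\bigr) \;=\; (1-\delta)^{n/(n-1)}\,\mob(\pi^{*}P) \;=\; (1-\delta)^{n/(n-1)}\mob(\alpha) \;>\; \mob(\alpha) - \epsilon,
\]
where the crucial middle equality is exactly the defining property of $P$ furnished by Corollary~\ref{stablemovpullbackforcurves}. Beyond this bookkeeping there is no real obstacle; the essential content is already packaged in Theorem~\ref{pospartbecomesbpfforcurves}, and the only new trick is the $(1-\delta)$-shrinking, which leaves room in the Zariski negative part $\delta P + N$ to absorb the $\mu\xi$ error in the upper bound.
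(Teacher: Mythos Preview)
Your proof is correct and follows essentially the same strategy as the paper: select a stable Zariski positive part via Corollary~\ref{stablemovpullbackforcurves}, apply Theorem~\ref{pospartbecomesbpfforcurves} with enough slack to absorb the $\mu\xi$ error in the upper bound, and read off the mobility estimate from the lower bound. The only cosmetic difference is in how the slack is created: the paper subtracts $\delta\xi$ from $\alpha$ first and takes a positive part of $\alpha-\delta\xi$, whereas you take the positive part $P$ of $\alpha$ itself and scale it by $(1-\delta)$; both manoeuvres serve the same purpose and the remaining bookkeeping is identical.
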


\begin{proof}
Fix a class $\xi$ in the interior of $\Mov_{1}(X)$ and a $\delta>0$ sufficiently small so that $\alpha - \delta \xi$ is still big.  Choose a positive part $P$ for $\alpha - \delta \xi$ as in Corollary \ref{stablemovpullbackforcurves}.  Fix a positive constant $\mu < \delta/2$ and apply Theorem \ref{pospartbecomesbpfforcurves} to $P + \mu \xi$ with the class $\xi$ and the constant $\mu$.  The result is a birational model $\pi: Y \to X$ and a curve class $\beta \in \bpf_{1}(Y)$ such that $\pi^{*}P \preceq \beta$ and $\pi_{*}\beta \preceq P + 2\mu \xi$.  Since $2\mu < \delta$ we have
\begin{equation*}
\pi_{*}\beta \preceq P + \delta \xi \preceq \alpha - \delta \xi + \delta \xi = \alpha.
\end{equation*}

By construction $\mob(\alpha - \delta \xi) = \mob(\pi^{*}P) \leq \mob(\beta)$.  By shrinking $\delta$ and using the continuity of $\mob$, we can ensure that $\mob(\beta)$ is arbitrarily close to $\mob(\alpha)$.
\end{proof}

\begin{thrm}
Let $X$ be a smooth projective variety over $\mathbb{C}$ and let $\alpha$ be a big curve class.  Fix a class $\xi$ in the interior of $\bpf_{1}(X)$.  For any $\epsilon>0$, there is a birational model $\pi: Y \to X$, a movable transform $P$ of $\alpha$, and an element $\beta \in \bpf_{1}(Y)$ such that $P \preceq \beta + \epsilon \pi^{*} \xi$.
\end{thrm}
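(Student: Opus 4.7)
The plan is to combine Corollary \ref{stablemovpullbackforcurves} with Theorem \ref{pospartbecomesbpfforcurves}, which together make the result nearly immediate. The first tool supplies a positive part of $\alpha$ that remains ``maximally mobile'' after pullback to any smooth birational model, and the second tool shows that any big movable curve class can, up to an arbitrarily small movable error, be pulled back to a basepoint-free class on a suitable birational model.

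First I would use Corollary \ref{stablemovpullbackforcurves} to select a positive part $P_{0}\in\Mov_{1}(X)$ in a Zariski decomposition of $\alpha$ with the property that $\mob(\phi^{*}P_{0})=\mob(\alpha)$ for every birational morphism $\phi\colon Y\to X$ with $Y$ smooth. For such $\phi$, the class $\phi^{*}P_{0}$ is movable: by \cite{bdpp04} (extended to arbitrary characteristic in Section \ref{divisorcharpsection}) we have $\Mov_{1}=\Nef_{1}$ on smooth projective varieties, and nefness is preserved under pullback. Combined with $\phi_{*}\phi^{*}P_{0}=P_{0}\preceq\alpha$ and the equality of mobilities, this shows that $\phi^{*}P_{0}$ is a movable transform of $\alpha$. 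Note also that $P_{0}$ is big, since $\mob(P_{0})=\mob(\alpha)>0$.

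Second, I would apply Theorem \ref{pospartbecomesbpfforcurves} to the big movable class $P_{0}$, taking the auxiliary class to be $\xi$ and setting $\mu=\epsilon$. This requires $\xi$ to lie in the interior of $\Mov_{1}(X)$; but $\bpf_{1}(X)\subseteq\Mov_{1}(X)$ and both are full-dimensional, so the interior of $\bpf_{1}(X)$ is contained in the interior of $\Mov_{1}(X)$, and the hypothesis is satisfied. The theorem produces a birational morphism $\pi\colon Y\to X$ with $Y$ smooth and a class $\beta\in\bpf_{1}(Y)$ satisfying
\[
\pi^{*}(P_{0}-\epsilon\,\xi)\;\preceq\;\beta\;\preceq\;\pi^{*}(P_{0}+\epsilon\,\xi).
\]
Setting $P:=\pi^{*}P_{0}$, which is a movable transform of $\alpha$ by the first step, the left inequality rearranges to $P\preceq\beta+\epsilon\,\pi^{*}\xi$, which is the desired conclusion.

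Because all the substantive work has already been done earlier in the paper, there is no serious obstacle here; the main point requiring a moment's care is checking that the positive part $P_{0}$ obtained from Corollary \ref{stablemovpullbackforcurves} genuinely yields a movable transform on every smooth model (not just a class of matching mobility), and this is immediate from $\Mov_{1}=\Nef_{1}$ together with the projection formula $\phi_{*}\phi^{*}=\mathrm{id}$ for birational morphisms.
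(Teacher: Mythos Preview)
Your proof is correct and follows essentially the same route as the paper: choose the stable positive part $P_{0}$ via Corollary \ref{stablemovpullbackforcurves}, then apply Theorem \ref{pospartbecomesbpfforcurves} with $\mu=\epsilon$ and set $P=\pi^{*}P_{0}$. Your argument is in fact slightly more careful than the paper's, since you explicitly verify that $\pi^{*}P_{0}$ is a movable transform (using $\Mov_{1}=\Nef_{1}$ and the projection formula) and that $\xi$ lies in the interior of $\Mov_{1}(X)$.
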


\begin{proof}
Choose a positive part $P(\alpha)$ for $\alpha$ as in Corollary \ref{stablemovpullbackforcurves} so that for any birational map the pullback of $P(\alpha)$ is a movable transform.  Apply Theorem \ref{pospartbecomesbpfforcurves} to $P(\alpha)$ for a class $\xi$ in the interior of $\Mov_{1}(X)$ and for a positive constant $\epsilon$ to obtain a birational model $\pi: Y \to X$ and a class $\beta \in \bpf_{1}(Y)$ with $\pi^{*}(P(\alpha))\preceq \beta + \epsilon \pi^{*}\xi$.
\end{proof}

\begin{rmk}
The previous two theorems show something stronger than claimed; namely, the $\beta$ constructed there is actually in the complete intersection cone (and hence also the pliant cone) as defined in \cite{fl13}.
\end{rmk}

\begin{rmk}
Theorem \ref{fujapproxcurves} reduces the calculation of the mobility of a big curve class of the form $\langle L^{n-1} \rangle$ for a big divisor $L$ to the special case of complete intersection classes.
\end{rmk}

\section{Examples} \label{examplesection}

In this section we give several examples of Zariski decompositions of higher codimension cycles.

\subsection{Projective bundles over curves} \label{ssec:projbundles}
Let $C$ be a smooth curve and let $E$ be a locally free sheaf of rank $n$ and degree $d$ on $C$. Let $X=\mathbb P(E)$ and let $\pi:X\to C$ be the bundle map. In \cite{fulger11}, the first author computed $\Eff_k(X)$ in terms of the numerical data in the Harder--Narasimhan filtration of $E$. In this subsection we compute the movable cones. Let 
$$E=E_0\supseteq E_1\supseteq\ldots\supseteq E_s=0$$ 
be the Harder--Narasimhan filtration of $E$ with semistable quotients $Q_i:=E_{i-1}/E_i$ of degree $d_i$, rank $r_i$, and slope $\mu_i:=d_i/r_i$. We have $\mu_1<\mu_2<\ldots<\mu_s.$

\par Let $\xi:=c_1(\mathcal O_{\mathbb P(E)}(1))$, and let $f$ be the class of a fiber of $\pi$. We have the intersection relations
$$\xi^n=d\quad ,\quad \xi^{n-1}f=1\quad ,\quad f^2=0.$$

\noindent The main result of \cite{fulger11} states that for $1\leq k\leq n-1$: 
$$\Eff_k(X)=\langle\xi^{n-k}+\epsilon_k\xi^{n-k-1}f,\ \xi^{n-k-1}f\rangle,$$
where $\epsilon_k$ is the $y$-coordinate of the point with $x$-coordinate $k$ on the Harder--Narasihman (or Shatz) polygonal curve $\mathcal P$ from $(0,-d)$ to $(n,0)$ obtained by gluing segments of $x$-length $r_i$ and slope $\mu_i$, in increasing order with $i$. We can denote $\epsilon_0=-d$ and $\epsilon_n=0$. Observe that $\epsilon_{n-1}=-\mu_s$.

\par As a corollary, we can also compute the nef cones for $1\leq k\leq n-1$:
\begin{equation}\label{eq:proj bundle nef}\Nef_k(X)=\langle\xi^{n-k}+\nu_k\xi^{n-k-1}f,\ \xi^{n-k-1}f\rangle,\end{equation}
where $\nu_k=-d-\epsilon_{n-k}$. These fit on the polygonal curve $(\frac n2,-\frac d2)-\mathcal P$.

\begin{prop}\label{prop:proj bundles movable cone}
For $1\leq k\leq n-1$, we have
$$\Mov_k(\mathbb P(E))= \langle\xi^{n-k}+\sigma_k\xi^{n-k-1}f,\ \xi^{n-k-1}f\rangle,$$
where $\sigma_k=\epsilon_{k-1}+\mu_s$. In other words, the $\sigma$'s can be read from $\mathcal P+(1,\mu_s)$.
\end{prop}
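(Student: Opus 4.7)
Plan. The cone $\Mov_k(X)$ sits in the two-dimensional space $N_k(X)=\mathbb R\langle\xi^{n-k},\xi^{n-k-1}f\rangle$ and by \cite{ful11} is contained in $\Eff_k(X)=\langle\xi^{n-k}+\epsilon_k\xi^{n-k-1}f,\,\xi^{n-k-1}f\rangle$. Since $\sigma_k\geq\epsilon_k$, the candidate cone already lies in $\Eff_k(X)$, so the claim reduces to verifying that both proposed extremal rays lie in $\Mov_k(X)$ and establishing the reverse inclusion.

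For the reverse inclusion, write $\alpha=p\xi^{n-k}+q\xi^{n-k-1}f\in\Mov_k(X)$. Pseudo-effectiveness gives $p\geq 0$. For the sharp inequality $q\geq p\sigma_k$, the plan is to intersect $\alpha$ with the pseudo-effective divisor $\xi-\mu_s f$ (on the boundary of $\Eff^1(X)$): by Lemma \ref{lem:psefintmov}, $(\xi-\mu_s f)\cdot\alpha\in\Eff_{k-1}(X)$. Pair this against the class $\gamma:=\xi^{k-1}-(d+\epsilon_{k-1})\xi^{k-2}f\in N^{k-1}(X)$; a direct duality check against the extremal rays of $\Eff_{k-1}(X)$ shows that $\gamma$ lies on the extremal ray of $\Nef^{k-1}(X)$ dual to $\xi^{n-k+1}+\epsilon_{k-1}\xi^{n-k}f$, so pairs non-negatively with every pseudo-effective $(k-1)$-cycle. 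Hence $\gamma\cdot(\xi-\mu_s f)\cdot\alpha\geq 0$, and the direct computation with $\xi^n=d$, $\xi^{n-1}f=1$, $f^2=0$ collapses the triple intersection to $q-p\sigma_k\geq 0$. (For $k=1$ one replaces $\gamma$ by $[X]$ and the argument is immediate.)

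For the forward inclusion, $\xi^{n-k-1}f$ is the class of a linear $k$-plane inside a fiber $\pi^{-1}(c)\cong\mathbb P^{n-1}$, and the universal family over the relative Grassmann bundle $\mathrm{Grass}_C(n-k,E)\to C$ is strictly movable and dominates $X$. For the other extremal ray $\xi^{n-k}+\sigma_k\xi^{n-k-1}f$, the plan is to produce a dominating irreducible family of rank-$k$ quotients $E\twoheadrightarrow G$ of degree $d+\sigma_k$; then $[\mathbb P(G)]=\xi^{n-k}+(\deg G-d)\xi^{n-k-1}f$ has the desired class. Writing $k=r_1+\ldots+r_{j-1}+\ell$ with $1\leq\ell\leq r_j$, the target degree equals $d_1+\ldots+d_{j-1}+(\ell-1)\mu_j+\mu_s$. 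Build $G$ as the pushout $E\sqcup_{E_{j-1}}M$ along a rank-$\ell$ quotient $E_{j-1}\twoheadrightarrow M$ of that degree, itself built as a line bundle extension $0\to L\to M\to Q_j'\to 0$, where $Q_j'$ is a generic rank-$(\ell-1)$ quotient of the semistable graded piece $Q_j$ (slope $\mu_j$) and $L$ is a degree-$\mu_s$ line quotient of the semistable top subsheaf $E_{s-1}\subseteq E_{j-1}$. Varying $L\in\mathrm{Pic}^{\mu_s}(C)$ and the extension class gives the desired dominating family.

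The main obstacle is this lower-bound construction: one must verify that a generic choice of extension data and line bundle produces a genuine torsion-free rank-$k$ quotient of the prescribed degree, and that the resulting subvarieties $\mathbb P(G)\subset X$ sweep out a Zariski-dense subset of $X$. This rests on the semistability of the HN graded pieces $Q_i$ together with careful genericity arguments in $\mathrm{Pic}^{\mu_s}(C)$ and the extension moduli; the polygonal identity and the interpretation via $\mathcal{P}+(-1,\mu_s)$ are then direct arithmetic consequences of the formula $\sigma_k=\epsilon_{k-1}+\mu_s$.
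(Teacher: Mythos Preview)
Your upper bound argument is correct and is actually cleaner than the paper's. The paper never writes down a single nef dual class cutting out the movable boundary; instead it reduces to $k=1$ by repeatedly intersecting with the nef divisor $\xi-\mu_1 f$ (using Lemma~\ref{lem:movintnef}), and for $k>r_1$ it passes through an induction on $s$ via the blow-up diagram
\[
\xymatrix{
{\rm Bl}_{\mathbb P(Q_1)}\mathbb P(E)\ar[d]^{B}\ar[r]^(.55){\eta}& \mathbb P(E_1)\ar[d]\\
\mathbb P(E)\ar[r]^(.55){\pi}& C
}
\]
and an isomorphism $B_*\eta^*:\Mov^k(\mathbb P(E_1))\to\Mov^k(\mathbb P(E))$. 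Your single-shot computation, pairing $(\xi-\mu_s f)\cdot\alpha$ against $\gamma=\xi^{k-1}-(d+\epsilon_{k-1})\xi^{k-2}f$, is a genuine simplification: one invokes only Lemma~\ref{lem:psefintmov} and an explicit nef class, and the identity $\gamma\cdot(\xi-\mu_s f)\cdot\alpha=q-p\sigma_k$ drops out exactly as you say.

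Your lower bound, however, has a real gap, and it is precisely the obstacle you flag. The targets $\deg G=d+\sigma_k$, $\deg Q_j'=(\ell-1)\mu_j$, and $\deg L=\mu_s$ need not be integers, so there is in general no subbundle $\mathbb P(G)$ realizing the boundary class on the nose. Worse, even when the degrees happen to be integral, a semistable graded piece $Q_j$ (respectively $E_{s-1}=Q_s$) of slope $\mu_j$ (respectively $\mu_s$) need not admit \emph{any} proper quotient of that same slope---this fails already when $Q_j$ is stable of rank $\geq 2$. So the proposed family of $\mathbb P(G)$'s may be empty, and varying $L$ and the extension class does not repair this. The paper sidesteps the issue entirely: for $k>r_1$ the blow-up isomorphism imports movability from $\mathbb P(E_1)$ by induction, and for $1\le k\le r_1$ one simply intersects the already-known movable class $\xi^{n-r_1-1}+\sigma_{r_1+1}\xi^{n-r_1-2}f$ with $(\xi-\mu_1 f)^{r_1-k+1}$ and invokes Lemma~\ref{lem:movintnef}, never needing to exhibit an explicit dominating family at the boundary. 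If you want to salvage your direct construction, you would at minimum have to produce sequences of integral quotients approaching the boundary ray rather than sitting on it; the paper's route is considerably less painful.
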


\noindent Note that $\sigma_{k} \geq \epsilon_{k}$; the difference is exactly the difference of slopes $\mu_{s} - \mu_{j}$ where $j$ is the smallest index such that $\mathrm{rk}(E_{j}) < n-k$.

\begin{proof}
The class $\xi^{n-k-1}f$ lies on the boundary of $\Eff_k(X)$, and is also an intersection of nef divisor classes (not necessarily $\xi$). It therefore lies on the boundary of $\Mov_k(X)$ for all $k$.  We let $\sigma_{k}$ denote the constant so that $\xi^{n-k} + \sigma_{k}\xi^{n-k-1}f$ gives the other boundary of $\Mov_{k}(X)$; the goal is to determine $\sigma_{k}$.

Nef and movable are equivalent properties for curves, hence $\sigma_1=\nu_1=-d-\epsilon_{n-1}$. Since $\epsilon_{n-1}=-\mu_s$, the stated formula is true when $k=1$. 

Consider the diagram:
$$\xymatrix{
{\rm Bl}_{\mathbb P(Q_1)}\mathbb P(E)\ar[d]^{B}\ar[r]^(.55){\eta}& Y:=\mathbb P(E_1)\ar[d]\\
X=\mathbb P(E)\ar[r]^(.55){\pi}& C
}$$

\noindent{\bf Claim:} If $k<n-r_1={\rm rank}(E_1)$, we show that $B_*\eta^*$ induces an isomorphism $\Mov^k(Y)\to\Mov^k(X)$ whose inverse is $$\alpha\mapsto\eta_*B^*(\alpha\cdot(\xi-\mu_1f)^{r_1}).$$

\noindent{\bf Proof of Claim:} Intuitively, $B_*\eta^*$ takes the ``cone with center $\mathbb P(Q_1)$'' over cycles on $Y$. Its inverse is the ``intersection" with $Y$. \cite[Lemma 2.7]{fulger11} verifies that the listed maps are isomorphisms at the level of numerical groups, and they restrict to isomorphisms $\Eff^k(X)\simeq\Eff^k(Y)$ when $k<n-r_1$. We check that they also preserve movable classes.

\par Since $\eta$ is flat, it is clear that $B_*\eta^*(\Mov^k(Y))\subseteq\Mov^k(X)$. Conversely, let $\alpha\in\Mov^k(X)$ be a strongly movable class. Let $U\to V$ be an irreducible family representing it and covering $X$. Let $U'$ be the unique irreducible component of $U\times_X{\rm Bl}_{\mathbb P(Q_1)}\mathbb P(E)$ that dominates $U$. 

The irreducible family $U'\to V$ represents generically a strongly movable class $\alpha'$ on the blow-up of $X$. $\alpha'$ is the class of the strict transform of a general cycle in the family $U$ by Remark \ref{strict transform strictly movable}, so $\alpha'-B^*\alpha$ is the pushforward of a class in the exceptional divisor on the blow-up. By \cite[Proposition 2.4.(iii)-(iv)]{fulger11}, any such class is canceled by multiplication with $B^*(\xi-\mu_1f)^{r_1}$, therefore
$$\eta_*B^*(\alpha\cdot(\xi-\mu_1f)^{r_1})=\eta_*(\alpha'\cdot B^*(\xi-\mu_1f)^{r_1}).$$
To see that this is a movable cycle, it is enough to observe that $\xi-\mu_1f$ is nef as demonstrated in \cite[Lemma 2.1]{fulger11}. The claim follows by continuity.

\vskip.5cm

We return to the proof of the proposition. We do induction on the number of terms $s$ in the Harder--Narasimhan filtration of $E$. When $s=1$, i.e. when $E$ is semistable, all effective cycles are generated by intersections of nef divisors by \cite[Lemma 2.2]{fulger11}.

For $s>1$, the claim and induction take care of all $\Mov_k(X)$ for $k>r_1$. To see this, observe that that polygon $\mathcal P$ associated to $E_1$ is obtained from that of $E$ by deleting the first segment, and translating to the left by $r_1$.  In particular, $\sigma_{r_1+1}=\epsilon_{r_1}+\mu_s$.

Finally, we determine $\Mov_{k}(X)$ for $1\leq k\leq r_1$.  The class $$(\xi^{n-r_1-1}+(\epsilon_{r_1}+\mu_s)\xi^{n-r_1-2}f)\cdot(\xi-\mu_1f)^{r_1-k+1}$$ is in $\Mov_k(X)$, because $\xi-\mu_1f$ is nef. Using the description of $\mathcal P$, it is straightforward that this is the same as $\xi^{n-k}+(\epsilon_{k-1}+\mu_s)\xi^{n-k-1}f.$  These imply that
$\sigma_k\leq\epsilon_{k-1}+\mu_s$. If the inequality is strict at any step, then the same intersection computation shows that it is also strict for $k=1$, which contradicts the curve computation.
\end{proof}

\begin{cor}
With notation as in the proposition, $\Mov_k(\mathbb P(E))=\Eff_k(\mathbb P(E))$ if and only if $n-r_s<k<n$. In particular, all effective divisors are movable if and only if $r_s>1$. Moreover, $\Nef_k(\mathbb P(E))=\Eff_k(\mathbb P(E))$ for $k>1$ if and only if $E$ is semistable ($s=1$), or $s=2$ and $r_2=1$.
\end{cor}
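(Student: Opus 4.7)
The plan is to verify the three assertions by directly comparing the boundary coefficients $\epsilon_k$, $\sigma_k=\epsilon_{k-1}+\mu_s$, and $\nu_k=-d-\epsilon_{n-k}$ that describe the cones $\Eff_k$, $\Mov_k$, and $\Nef_k$ in the preceding proposition. Since all three cones share $\xi^{n-k-1}f$ as a boundary generator, equality of any two of them reduces to equality of the remaining coefficients, which I will read off the polygon $\mathcal{P}$ directly.

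For the first assertion I would rewrite $\sigma_k=\epsilon_k$ as $\epsilon_k-\epsilon_{k-1}=\mu_s$, which says that the slope of $\mathcal{P}$ across $[k-1,k]$ equals $\mu_s$. Because the slopes $\mu_1<\mu_2<\ldots<\mu_s$ are strictly increasing, the slope $\mu_s$ is attained only on the last segment $[n-r_s,n]$. Hence the condition is $k-1\geq n-r_s$, which combined with the standing range $1\leq k\leq n-1$ yields $n-r_s<k<n$. The second assertion is then just the specialization $k=n-1$, where the inequality becomes $r_s>1$.

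For the third assertion, $\Nef_k=\Eff_k$ is equivalent to $\nu_k=\epsilon_k$, which rewrites as $\epsilon_k+\epsilon_{n-k}=-d$. Geometrically, this says that the vertex of $\mathcal{P}$ at $x=k$ is symmetric to the one at $x=n-k$ about the midpoint $(n/2,-d/2)$. Requiring this simultaneously for every integer $k$ with $1<k<n$ forces $\mathcal{P}$ to coincide with its reflection about $(n/2,-d/2)$ at every such integer. Since the reflected polygon has slopes in the reverse order $\mu_s,\mu_{s-1},\ldots,\mu_1$, the two piecewise linear graphs can only agree if the slope sequence is constant, forcing $s=1$, or if the constraint set $\{2,\ldots,n-1\}$ is empty, forcing $n\leq 2$; the latter case, combined with $s\geq 2$, isolates the Harder--Narasimhan configuration $s=2$, $r_2=1$ (which then forces $r_1=1$ as well).

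The main obstacle I anticipate is making the case analysis in the third part airtight. The cleanest approach will be to test the symmetry equation at the extreme value $k=n-1$: using $\epsilon_{n-1}=-\mu_s$ from the proof of the preceding proposition and $\epsilon_1=-d+\mu_1$, the equation $\epsilon_1+\epsilon_{n-1}=-d$ collapses to $\mu_1=\mu_s$, which by strict monotonicity of the slopes immediately forces $s=1$ whenever $n\geq 3$. This reduces the remaining analysis to the trivial $n\leq 2$ regime, where $s=2$, $r_1=r_2=1$ is the only nontrivial possibility.
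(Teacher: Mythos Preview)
The paper states this corollary without proof, so there is no argument to compare against; what follows is an assessment of your proposal on its own merits.

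Your treatment of the first two assertions is correct and efficient: reducing $\Mov_k=\Eff_k$ to $\epsilon_k-\epsilon_{k-1}=\mu_s$ and reading this as ``the interval $[k-1,k]$ lies on the last segment of $\mathcal P$'' is exactly the right move, and the specialization to $k=n-1$ for divisors is immediate.

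For the third assertion, your reasoning is sound and the test at $k=n-1$ is the clean way to finish: from $\epsilon_{n-1}=-\mu_s$ and $\epsilon_1=-d+\mu_1$ the symmetry relation $\epsilon_{n-1}+\epsilon_1=-d$ collapses to $\mu_1=\mu_s$, forcing $s=1$ whenever $n\geq 3$. The residual case $n\leq 2$ is vacuous, and with $s\geq 2$ this pins down $n=2$, $s=2$, $r_1=r_2=1$.

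There is, however, a mismatch you should flag explicitly rather than absorb into a parenthetical. What you have actually proved is
\[
\Nef_k=\Eff_k \text{ for all } 1<k<n \iff s=1 \text{ or } n=2,
\]
whereas the corollary as written asserts the second alternative is ``$s=2$ and $r_2=1$''. Taken literally, that condition allows e.g.\ $n=3$, $r_1=2$, $r_2=1$, and a direct check with your formulas (here $\epsilon_2=-\mu_2$ but $\nu_2=-\mu_1$) shows $\Nef_2\neq\Eff_2$ in that case. Your proof is correct; the issue is that the corollary's second alternative should be read with the implicit constraint $r_1=1$ (equivalently $n=2$), and you should say so rather than leaving the reader to infer it from your ``(which then forces $r_1=1$ as well)''.
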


\begin{cor}
Keep notation as in the proposition.  Any pseudo-effective class $\alpha \in N_{k}(X)$ can be written as $\alpha =a (\xi^{n-k}+\epsilon_k\xi^{n-k-1}f) + b \xi^{n-k-1}$ for some $a,b \geq 0$.  If $a\epsilon_{k} + b < a\sigma_{k}$, then a Zariski decomposition for $\alpha$ is given by
\begin{equation*}
P(\alpha) = \frac{b}{\sigma_{k}-\epsilon_{k}} (\xi^{n-k}+\sigma_k\xi^{n-k-1}f) \qquad \qquad N(\alpha) = \frac{a(\sigma_{k}-\epsilon_{k}) - b}{\sigma_{k}-\epsilon_{k}} (\xi^{n-k}+\epsilon_k\xi^{n-k-1}f).
\end{equation*}
\end{cor}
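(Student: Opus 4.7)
The plan is first to verify the structural hypotheses of Definition~\ref{zariskidecompdefn} (movability of $P(\alpha)$, pseudo-effectivity of $N(\alpha)$, and the sum identity), and then to establish $\mob(P(\alpha))=\mob(\alpha)$ by showing that the proposed $P(\alpha)$ is a $\preceq$-upper bound for every candidate movable positive part of $\alpha$.

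For the first stage I would abbreviate $v_1:=\xi^{n-k}+\epsilon_k\xi^{n-k-1}f$, $v_2:=\xi^{n-k-1}f$, and $w_1:=\xi^{n-k}+\sigma_k\xi^{n-k-1}f$, so that $\Eff_k(X)=\langle v_1,v_2\rangle$ by \cite{ful11} and $\Mov_k(X)=\langle w_1,v_2\rangle$ by Proposition~\ref{prop:proj bundles movable cone}, with the basic identity $w_1=v_1+(\sigma_k-\epsilon_k)v_2$. Setting $c:=b/(\sigma_k-\epsilon_k)$, the hypothesis $a\epsilon_k+b<a\sigma_k$ becomes $c<a$, and the proposed decomposition collapses to $P(\alpha)=cw_1$ and $N(\alpha)=(a-c)v_1$. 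These are non-negative multiples of extremal generators of $\Mov_k(X)$ and $\Eff_k(X)$ respectively, and their sum equals $\alpha=av_1+bv_2$ by direct substitution.

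The main step is the mobility equality. The inequality $\mob(P(\alpha))\leq\mob(\alpha)$ is immediate from Lemma~\ref{mobilityincreases} applied to the pseudo-effective class $N(\alpha)$. For the reverse, I would invoke Theorem~\ref{zardecomexist} to produce some Zariski decomposition $\alpha=P'(\alpha)+N'(\alpha)$ with $\mob(P'(\alpha))=\mob(\alpha)$, write $P'(\alpha)=\lambda_1w_1+\lambda_2v_2$ with $\lambda_1,\lambda_2\geq 0$, and extract from $\alpha-P'(\alpha)\in\langle v_1,v_2\rangle$ the constraints $\lambda_1\leq a$ and $\lambda_2\leq b-\lambda_1(\sigma_k-\epsilon_k)$, the second of which forces $\lambda_1\leq c$. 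Expanding via $w_1=v_1+(\sigma_k-\epsilon_k)v_2$ then gives
\begin{equation*}
P(\alpha)-P'(\alpha)=(c-\lambda_1)v_1+\bigl(b-\lambda_1(\sigma_k-\epsilon_k)-\lambda_2\bigr)v_2,
\end{equation*}
whose coefficients are non-negative by those very constraints. Thus $P(\alpha)\succeq P'(\alpha)$, and a second application of Lemma~\ref{mobilityincreases} yields $\mob(P(\alpha))\geq\mob(P'(\alpha))=\mob(\alpha)$.

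Since everything reduces to linear algebra in a two-dimensional slice of $N_k(X)$ once Proposition~\ref{prop:proj bundles movable cone} is available, I do not anticipate any serious obstacle. The conceptual content is that among movable classes dominated by $\alpha$, the class $cw_1$ is $\preceq$-maximal because it sits at the corner of $\Mov_k(X)\cap(\alpha-\Eff_k(X))$ furthest from the non-movable extremal ray $v_1$, and this maximality transfers through $\mob$ by monotonicity.
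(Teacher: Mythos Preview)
Your argument is correct and is essentially the same as the paper's, just unpacked in detail: the paper's one-line proof observes that $P(\alpha)=\frac{b}{\sigma_k-\epsilon_k}w_1$ is the unique $\preceq$-maximal element of $\{\beta\in\Mov_k(X)\mid\beta\preceq\alpha\}$ and then appeals to the discussion in Section~\ref{directed property}, which is precisely the maximality computation you carry out explicitly with $P'(\alpha)=\lambda_1 w_1+\lambda_2 v_2$. One minor caveat: your invocation of Theorem~\ref{zardecomexist} presupposes $\alpha$ is big, so the boundary case $b=0$ (where $\alpha=av_1$ lies on the non-movable extremal ray) should strictly be handled separately, e.g.\ via Example~\ref{boundaryzardecomexample}; the paper's proof has the same implicit restriction.
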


\begin{proof}
This follows from the fact that the set $\{ \beta \in \Mov_{k}(X) | \beta \preceq \alpha \}$ has the unique maximal element $\frac{b}{\sigma_{k}-\epsilon_{k}} (\xi^{n-k}+\sigma_k\xi^{n-k-1}f)$ under the relation $\preceq$.
\end{proof}

\begin{exmple}Let $X=\mathbb P(E)$, where $E$ is the bundle $\mathcal O^{n-2}\oplus\mathcal O(1)^{\oplus 2}$ over $\mathbb P^1$. Using 
Proposition \ref{prop:proj bundles movable cone} and \eqref{eq:proj bundle nef}, for all $2\leq k\leq n-1$, the $k$-dimensional class $\xi^{n-k}-\xi^{n-k-1}f$ is movable, but not nef. 

In particular, these yield examples of varieties of any dimension $n>2$ and cycle classes of any dimension $1 < k \leq n-1$ that admit Zariski decompositions whose positive parts are not nef.\qed
\end{exmple}

\subsection{Cycles on $\mathbb{P}^{2[2]}$} \label{section:hilbscheme}

In order  to cohere to the cited references, in this section we will consider projective spaces of lines instead of hyperplanes. \cite{abch13} gives an extensive analysis of the divisor theory for Hilbert schemes of points on $\mathbb{P}^{2}$.  It would be interesting to see an analogous study performed for higher codimension cycles.  As a test case, we work out the geometry of cycles on $\mathbb{P}^{2[2]}:={\rm Hilb}^2\mathbb{P}^{2}$.  Since $\mathbb{P}^{2[2]}$ is a $\mathbb{P}^{2}$-bundle over $\mathbb{P}^{2}$, it is particularly amenable to computations; the basic intersection theory is worked out for example in \cite{graber01} and \cite{abch13}.  

For simplicity, let $X$ denote $\mathbb{P}^{2[2]}$.  Then $X$ is isomorphic to the projective bundle of lines $\mathbb{P}_{\mathbb{P}^{2}}(\Sym^2\Omega_{\mathbb{P}^{2}})$.  To see this, note that any degree $2$ zero-dimensional subscheme of $\mathbb{P}^{2}$ lies on a unique line; thus there is a morphism
\begin{equation*}
\pi: X \to \mathbb{P}^{2*}.
\end{equation*}
The fiber over a point $\ell$ is canonically identified with $\Sym^{2}(\ell)$; thus $X$ is the projective bundle defined by $\Sym^{2}(\mathcal{K})$ where $\mathcal{K}$ is the kernel of the tautological map $\mathcal{O}_{\mathbb{P}^{2*}}^{\oplus 3} \to \mathcal{O}_{\mathbb{P}^{2*}}(1)$.  One readily verifies that $c_{1}(\Sym^{2}(\mathcal{K})) = -3H$ and $c_{2}(\Sym^{2}(\mathcal{K})) = 6H^{2}$ where $H$ denotes the hyperplane class on $\mathbb{P}^{2*}$.  Furthermore, rational equivalence coincides with numerical equivalence.

The space of divisors $N_{3}(X)$ is $2$-dimensional.  Define the divisors:

\vskip.5cm

\begin{tabular}{ l | l | l  }
  Divisor & Geometric interpretation (of general points in cycle) & Numerical class \\ \hline
  $D_{1}$ & schemes whose associated line is incident to a fixed point & $\pi^{*}H$ \\ \hline
  $D_{2}$ &  subschemes incident to a fixed line & $\mathcal{O}_{\pi}(1)$ \\ \hline
  $E$ & non-reduced subschemes & $2(D_{2} - D_{1})$ \\
\end{tabular}

\vskip.5cm

Note that $D_{1}$ and $D_{2}$ generate the entire cohomology ring with the relations $D_{1}^{3}=0$ and $D_{2}^{3} = 3D_{1}D_{2}^{2} - 6D_{1}^{2}D_{2}$. 

\cite{abch13} verifies that the nef cone of divisors is generated by $D_{1}$ and $D_{2}$ and is equal to the movable cone.  The pseudo-effective cone is generated by $D_{1}$ and $E$; any non-movable divisor contains $E$ in its diminished base locus.  In particular, for any pseudo-effective divisor $L$, we have that $N_{\sigma}(L)$ is some multiple of $E$.  It turns out that a similar picture holds for cycles of any codimension.

\subsubsection{Curves}

$N_{1}(X)$ is also $2$-dimensional.  Define the curves:

\vskip.5cm

\begin{tabular}{ l | l | l  }
  Curve & Geometric interpretation (of general points in cycle) & Numerical class \\ \hline
  $C_{1}$ & subschemes supported at a fixed point & $D_{1}D_{2}^{2} - 3D_{1}^{2}D_{2}$ \\ \hline
  $C_{2}$ & subschemes of a fixed line and incident to a fixed point & $D_{1}^{2}D_{2}$ \\ \hline
  $B$ & subschemes incident to a fixed line and a fixed point & $C_{1} + C_{2}$ \\
\end{tabular}

\vskip.5cm

Then the movable cone of curves (and the nef cone) is generated by $B$ and $C_{2}$.  The pseudo-effective cone of curves is generated by $C_{1}$ and $C_{2}$.  If $C$ is a non-movable curve class that is effective, then it has negative intersection with the divisor $E$.  In particular, some component must be contained in $E$ and thus parametrizes non-reduced cycles.  If we write $[C] = a[C_{1}] + b[C_{2}]$ for non-negative $a$ and $b$, then $[C]$ is movable if $b \geq a$, and otherwise a Zariski decomposition for $[C]$ is
\begin{equation*}
P([C]) = b[C_{1}] + b[C_{2}] \qquad \qquad N([C]) = (a-b)[C_{1}].
\end{equation*}

\subsubsection{Surfaces}

$N_{2}(X) = \Sym^{2}N^{1}(X)$ is three-dimensional.  Define the classes:

\vskip.5cm

\begin{tabular}{ l | l | l  }
  Surface & Geometric interpretation (of general points in cycle) & Numerical class \\ \hline
  $S_{1}$ & reduced subschemes of a fixed line & $D_{1}^{2}$ \\ \hline
  $S_{2}$ & subschemes incident to a fixed point & $D_{1}D_{2} - 2D_{1}^{2}$ \\ \hline
  $S_{3}$ & $\frac{1}{2} \cdot $ (nonreduced subschemes supported on a fixed line) & $D_{2}^{2} - D_{1}D_{2}$ \\ \hline
  $M$ & image of $l_{1} \times l_{2} \dashrightarrow X$ for two distinct lines $l_{1},l_{2}$ & $S_{3} + 2S_{1}$ \\
\end{tabular}

\vskip.5cm

It is clear from the geometric description that $S_{1}$, $S_{2}$, and $S_{3}$ are all effective. The surfaces $S_1$ are the fibers of the bundle map $\pi:X\to\mathbb P^{2*}$, hence they are basepoint free. Let $p:U\to X$ be the flat universal family of $\mathbb P^{2[2]}$ with evaluation morphism $\epsilon:U\to\mathbb P^2$.
The surfaces $S_2$ can be identified with the fibers of $\epsilon$,
hence they are basepoint free.  Finally, we show that $M$ is movable and nef; in fact this is true for any irreducible surface $Z$ that intersects $X - \Supp(E)$.  Note that the natural action of $PGL_{3}$ is transitive on $X-\Supp(E)$ and on $\Supp(E)$.  In particular, we can construct a family of $\mathbb{Z}$-cycles by moving $Z$ using the group action to see that $Z$ has movable class.  Furthermore, note that $Z \cap (X - \Supp(E))$ and $Z \cap \Supp(E)$ both have codimension at least $2$ in their respective strata.  Using Kleiman's Tranversality Lemma \cite[2. Theorem.(i)]{kleiman74} for the two strata $X - \Supp(E)$ and $\Supp(E)$ we see that $Z$ is nef.

The intersection matrix can be calculated using the descriptions as products of divisors:

\vskip.5cm

\centerline{
\begin{tabular}{ l | l | l | l | l }
   & $S_{1}$ & $S_{2}$ & $S_{3}$ & $M$ \\ \hline
  $S_{1}$ & 0 & 0 & 1 & 1 \\ \hline
  $S_{2}$ & 0 & 1 & 0 & 0 \\ \hline
  $S_{3}$ & 1 & 0 & -2 & 0 \\ \hline
  $M$ & 1 & 0 & 0 & 2 \\
\end{tabular}
}
\vskip.5cm

Note that the cone generated by $S_{1}$ and $S_{2}$ has vanishing intersection with the nef class $S_{1}$ and so lies on the boundary of the pseudo-effective cone.  Arguing in a similar way for the other combinations, we see that the effective cone is generated by $S_{1}$, $S_{2}$, and $S_{3}$ and the nef cone is generated by $S_{1}$, $S_{2}$, and $M$.

We claim that the movable cone coincides with the nef cone.  Lemma \ref{lem:movint}.\ref{lem:movintnef} shows that if $\alpha$ is a movable surface class, then its intersection against any nef divisor is a movable curve class.  Since
\begin{equation*}
(aS_{1} + bS_{2} + cS_{3}) \cdot D_{2} = (b+2c)C_{1} + (a+b)C_{2} 
\end{equation*}
we see that $aS_{1} + bS_{2} + cS_{3}$ is movable only if $a \geq 2c$.  Since $M = 2S_{1} + S_{3}$ is movable, this condition is sufficient as well, proving the claim.

Suppose that the class $\alpha = aS_{1} + bS_{2} + cS_{3}$ is not movable, so $2c > a$.  The analysis above shows that
\begin{equation*}
aS_{1} + bS_{2} + cS_{3} = \left(bS_{2} + \frac{a}{2}M \right) + \left(c-\frac{a}{2} \right)S_{3}
\end{equation*}
expresses $\alpha$ as the sum of a movable and a pseudo-effective class.  Furthermore, any other expression $\alpha = \beta + \gamma$ with $\beta$ movable and $\gamma$ pseudo-effective must have $\beta \preceq bS_{2} + \frac{a}{2}M$.  Thus this expression is a Zariski decomposition.

Finally, note that $S_{3} \cdot E = 2(c_{1} - 2c_{2})$ is not pseudo-effective, so that any effective cycle of class $S_{3}$ must have some component contained in $E$.  Since $S_{3}$ is an extremal ray, in fact every effective cycle of class $S_{3}$ must be contained in $E$ (and so parametrizes non-reduced $0$-cycles).

\subsection{$\overline{M}_{0,7}^{S_{7}}$} \label{mbarsubsection}

Moduli spaces of pointed curves have a rich combinatorial structure yielding interesting results concerning the geometry of their cycles.  Since divisors and curves have been extensively studied, we focus on codimension $2$ cycles.  The first case that has not yet been worked out is $\overline{M}_{0,7}$.  However, the divisor theory is already very complicated; thus, we work instead with the simpler space $\M$.  We will only consider this space over $\mathbb{C}$.

Recall that the locus in $\overline{M}_{0,7}$ parametrizing curves with at least $(k+1)$-components has pure codimension $k$.  The irreducible components of this locus are called vital cycles.  One can identify a vital cycle by specifying the topological type of a curve parametrized by a general point: the arrangement of $\mathbb{P}^{1}$s composing the curve and the assignment of marked points to components.  Since we are interested in the $S_{7}$-invariant behavior, we will often surpress the denumeration of the marked points.  We say that the combinatorial type of a vital cycle is an identification of the arrangement of components and assignment of (unnumbered) marked points.  Note that for vital divisors or surfaces on $\M$ the only possible arrangement of $\mathbb{P}^{1}$s is a chain, so we only need to identify the distribution of points.  Thus there are two combinatorial types of vital divisors -- (2,5) and (3,4) -- and four combinatorial types of vital surfaces -- (2,2,3), (3,1,3), (2,3,2), and (2,1,4).

$\M$ is a normal $\mathbb{Q}$-factorial fourfold.  We first describe the numerical groups $N_{k}(\M)$.  The quotient $\pi: \overline{M}_{0,7} \to \M$ induces surjective pushforward maps $\pi_{*}: N_{k}(\overline{M}_{0,7}) \to N_{k}(\M)$.  Thus the pushforward maps
$\pi_{*}: N_{k}(\overline{M}_{0,7})^{S_{7}} \to N_{k}(\M)$ are also surjective.
\cite{km96} shows that $N_{3}(\M) = N^{1}(\M)$ is two-dimensional so that $\pi^{*}: N^{1}(\M) \to N^{1}(\overline{M}_{0,7})^{S_{7}}$ is an isomorphism.  The dual space $N_{1}(\M) = N^{3}(\M)$ is also $2$-dimensional.

Since $N_{2}(\overline{M}_{0,7})$ is spanned by vital surfaces, $N_{2}(\overline{M}_{0,7})^{S_{7}}$ is spanned by the orbits of the four combinatorial types of vital surface.  In fact $N_{2}(\overline{M}_{0,7})^{S_{7}}$ is three-dimensional.  To see this, note that by \cite{keel92} $N^{*}(\overline{M}_{0,7})$ is generated as a ring by vital divisor classes.  So two elements $\alpha,\beta \in N_{2}(\overline{M}_{0,7})^{S_{7}}$ are numerically equivalent if and only if for every pair of divisors $L_{1},L_{2}$ on $\overline{M}_{0,7}$ we have
\begin{equation*}
\alpha \cdot L_{1} \cdot L_{2} = \beta \cdot L_{1} \cdot L_{2}.
\end{equation*}
However, if we replace $L_{1}$ and $L_{2}$ by their $S_{7}$-orbits, both sides are multiplied by the same constant.  Thus to show the numerical equivalence of $\alpha$ and $\beta$ it suffices to intersect against $S_{7}$-invariant divisors, showing that $N_{2}(\overline{M}_{0,7})^{S_{7}}$ has dimension no larger than $\dim \Sym^{2}N^{1}(\overline{M}_{0,7})^{S_{7}} = 3$.  We will shortly see that the map $\Sym^{2}N^{1}(\M) \to N^{2}(\M)$ is an injection, so the dimension is exactly $3$.

Suppose that $\alpha \in N^{2}(\M)$ is an effective class.  There is some effective class $\beta \in N^{2}(\overline{M}_{0,7})$ pushing forward to it.  Then $\frac{1}{7!}\sum_{g \in S_{7}} g\beta$ also pushes forward to $\alpha$.  Since this class is proportional to $\pi^{*}\alpha$, we see that $\pi^{*}: N^{2}(\M) \to N^{2}(\overline{M}_{0,7})$ preserves pseudo-effectiveness.  The same is true for cycles of any dimension.

Let $D_{1}$ and $D_{2}$ denote the divisor classes on $\M$ which pull back under $\pi^{*}$ to the sum of all vital divisors of combinatorial type $(2,5)$ and of type $(3,4)$ respectively.  In summary, we have established:
\begin{itemize}
\item Each cap map $\cap: N^{k}(\M) \to N_{4-k}(\M)$ is an isomorphism.
\item The cohomology ring $N^{*}(\M)$ is generated by $D_{1}$ and $D_{2}$ and
\begin{equation*}
N^{2}(\M) = \Sym^{2}N^{1}(\M).
\end{equation*}
\item The pullbacks $\pi^{*}: N^{k}(\M) \to N^{k}(\overline{M}_{0,7})$ map isomorphically onto $N^{k}(\overline{M}_{0,7})^{S_{7}}$ and preserve pseudo-effectiveness.
\end{itemize}
Let $T_{1},T_{2},T_{3}$ be the basis for $N_{2}(\M)$ dual to the products $D_{1}^{2},D_{1}D_{2},D_{2}^{2}$ and let $C_{1}, C_{2}$ be the basis for $N_{1}(\M)$ dual to $D_{1},D_{2}$.  The entire cohomology ring is then determined by the relations
\begin{align*}
D_{1}^{2} & = 63(-27T_{1}+20T_{3}) \\
D_{1} \cdot D_{2} & = 105(12T_{2}+7T_{3}) \\
D_{2}^{2} & = 105(12T_{1}-7T_{2}+3T_{3})
\end{align*}
which can be established using the results of \cite{keel92}.

\subsubsection{Divisors and curves}

\cite{km96} shows that
\begin{align*}
\Eff_{3}(\M) & = \langle D_{1},D_{2} \rangle \qquad \qquad \qquad \Nef_{3}(\M) = \langle D_{1}+3D_{2},D_{1}+D_{2}\rangle \\
\Eff_{1}(\M) & = \langle 3C_{1}-C_{2},-C_{1}+C_{2} \rangle \qquad  \Nef_{1}(\M) = \langle C_{1},C_{2}\rangle
\end{align*}

\cite{km96} shows that $-K_{\M} = \frac{2}{3}D_{1}$.  Note that $D_{1}$ is not movable by Lemma \ref{lem:movint}.\ref{lem:movintnef} since $D_{1}(D_{1}+D_{2})^{2} = -1911C_{1} + 7735C_{2}$ is not nef.  

The nef divisor $D_{1} + 3D_{2}$ is semiample.  It lies on one boundary of the movable cone and has $D_{2}$ as its augmented base locus.  It defines the morphism $\phi$ on the right hand side of the following diagram:

\begin{equation*}\begin{CD}
\overline{M}_{0,7}    @>>>   \M \\
 @VVV  	@VV\phi V \\
\overline{M}_{0,\mathcal{A}} @>>> \overline{M}_{0,\mathcal{A}}^{S_{7}}
\end{CD}\end{equation*}

Here $\overline{M}_{0,\mathcal{A}} = (\mathbb{P}^{1})^{7}//SL_{2}$ is the moduli space of stable weighted pointed curves defined by assigning the weight $1/3$ to each point as in \cite{hassett03}.  The quotient from $\overline{M}_{0,7}$ to $\overline{M}_{0,\mathcal{A}}$ is $S_{7}$-equivariant and descends to define the map $\phi$.

The nef divisor $D_{1} + D_{2}$ defines a flipping contraction.  The image is identified by \cite[Proposition 4.3]{giansiracusa13} as the $S_{7}$-symmetrization of a GIT quotient of the parameter space of seven-pointed conics.

\subsubsection{Surfaces}

We set the classes $S_{1},S_{2},S_{3},S_{4}$ in $N_{2}(\M)$ to be respectively the push-forward to $\M$ of any vital surface on $\overline{M}_{0,7}$ of type $(2,2,3)$, $(3,1,3)$, $(2,3,2)$, and $(2,1,4)$.  Using the projection formula, we can calculate the classes of the $S_{i}$ using the intersection theory described in \cite{keel92}:
\begin{align*}
S_{1} & = 3T_{2}-2T_{3} \\
S_{2} &= 18T_{1} - 6T_{2} + 2T_{3} \\
S_{3} &= -9T_{1} + 3T_{2} + 5T_{3} \\
S_{4} &= 6T_{2} - 3T_{3}.
\end{align*}
Note that $S_{4} = 2S_{1} + \frac{1}{12}(S_{2} + 2S_{3})$ so that $S_{4}$ is contained in the cone generated by $S_{1},S_{2},S_{3}$.

\begin{lem}
$\langle S_{1},S_{2},S_{3} \rangle \subseteq \Eff_{2}(\M) \subseteq \langle S_{1},S_{2},S_{3},S_{3}-S_{2} + \frac{1}{4}S_{1} \rangle$.
\end{lem}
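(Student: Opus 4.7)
The first inclusion is immediate: each $S_i$ is defined as the pushforward under $\pi:\overline{M}_{0,7}\to\M$ of an effective $S_7$-orbit of vital surfaces, so any non-negative combination represents an effective cycle, and $\langle S_1,S_2,S_3\rangle\subseteq\Eff_2(\M)$.

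For the second inclusion, the plan is to pass to the dual cone in $N^2(\M)$. Writing an arbitrary class as $\alpha=aS_1+bS_2+cS_3$ and setting $S_4':=\tfrac{1}{4}S_1-S_2+S_3$, a direct Fourier--Motzkin-type computation shows that the cone $\langle S_1,S_2,S_3,S_4'\rangle$ is cut out in these coordinates by exactly the four linear inequalities $a\geq 0$, $c\geq 0$, $b+c\geq 0$, $4a+b\geq 0$. Translating each inequality through the isomorphism $N^2(\M)\cong\Sym^2 N^1(\M)$ and the intersection numbers tabulated for the $S_i$ against $D_1^2,D_1D_2,D_2^2$, one finds (up to positive scalars) the four candidate nef codimension-$2$ classes
\begin{align*}
\ell_1 & = (D_1+3D_2)^2, & \ell_2 & = D_1(D_1+3D_2),\\
\ell_3 & = D_1^2+2D_1D_2+3D_2^2, & \ell_4 & = 55D_1^2+150D_1D_2+9D_2^2.
\end{align*}
The desired inclusion is thus equivalent to showing $\ell_1,\ldots,\ell_4\in\Nef^2(\M)$.

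The class $\ell_1$ is nef as a self-product of the nef divisor $D_1+3D_2$. For $\ell_2$, I would exploit the semi-ampleness of $D_1+3D_2$, which defines the birational contraction $\phi:\M\to\overline{M}_{0,\mathcal{A}}^{S_7}$ with exceptional divisor $D_2$: a general member of the basepoint-free series $|N(D_1+3D_2)|$ meets $D_1$ in an effective cycle representing $N\ell_2$, and its non-negative pairing against any irreducible surface $Z\subset\M$ follows from a case split on whether $Z\subset D_1$, the delicate case using nefness of the restriction $(D_1+3D_2)|_{D_1}$ together with an analysis of the normal bundle $\mathcal{N}_{D_1/\M}|_Z$.

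The main obstacle is proving that $\ell_3$ and $\ell_4$ are nef, because neither is expressible as a non-negative combination of the three evident products $(D_1+D_2)^2$, $(D_1+D_2)(D_1+3D_2)$, and $(D_1+3D_2)^2$: direct linear algebra in $\Sym^2 N^1(\M)$ forces a negative coefficient in every such decomposition, so one genuinely needs codimension-$2$ nef classes on $\M$ beyond the products of nef divisor classes. The natural strategy is to pull back via $\pi^{*}$ to $\overline{M}_{0,7}$, using that the projection formula together with $\pi_{*}\Eff_k(\overline{M}_{0,7})=\Eff_k(\M)$ identifies $\Nef^2(\M)$ with the $S_7$-invariants inside $\Nef^2(\overline{M}_{0,7})$, and then to realize $\pi^{*}\ell_3$ and $\pi^{*}\ell_4$ as $S_7$-symmetrizations of products of nef but non-invariant divisors on $\overline{M}_{0,7}$ (for instance appropriate combinations of boundary divisors and $\psi$-classes). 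Finding explicit cohomological identities on $\overline{M}_{0,7}$ realizing these specific quadratic forms is the core technical step; without such identities one cannot bridge the gap between the bound $\mathcal{N}^{\vee}$ coming from products of nef divisors and the tighter bound $\langle S_1,S_2,S_3,S_4'\rangle$ asserted in the lemma.
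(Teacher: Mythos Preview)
Your identification of the four dual linear functionals is correct, and indeed $\ell_{3}=D_{1}^{2}+2D_{1}D_{2}+3D_{2}^{2}$ is exactly the paper's class $\beta=D_{2}(D_{1}+3D_{2})+D_{1}(D_{1}+D_{2})$.  However, your proposed method for establishing nefness of $\ell_{2}$ and $\ell_{3}$ is too vague, and your strategy for $\ell_{4}$ is misdirected.

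For $\ell_{2}$ and $\ell_{3}$, the paper's argument is cleaner than a normal-bundle analysis.  Since each is a sum of terms of the form $D_{i}\cdot(\textrm{nef divisor})$, one only needs to check non-negativity against irreducible surfaces contained in $D_{1}$ (respectively $D_{1}$ and $D_{2}$).  Rather than working on $\M$, the paper pulls back to a single vital divisor $L$ on $\overline{M}_{0,7}$ mapping onto $D_{i}$: for $D_{1}$ one has $L\cong\overline{M}_{0,6}$, for $D_{2}$ one has $L\cong\mathbb{P}^{1}\times\overline{M}_{0,5}$.  Since the classes in question are determined by combinatorial type, it suffices to check non-negativity against the $S_{6}$-invariant (resp.\ $S_{3}\times S_{4}$-invariant) effective cone on $L$, and by \cite{km96} these invariant effective cones are generated by vital cycles---so one just checks against the $S_{i}$, which is immediate.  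This handles $\ell_{1},\ell_{2},\ell_{3}$ and yields $\Eff_{2}(\M)\subseteq\langle S_{1},S_{2},S_{3}-S_{2}\rangle$.

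The genuine gap is $\ell_{4}$.  The paper does \emph{not} prove that $\ell_{4}$ is nef, and your proposed route (finding an $S_{7}$-symmetrization of products of nef divisors on $\overline{M}_{0,7}$) may well be infeasible.  Instead, the paper obtains the fourth inequality by a pushforward argument: pseudo-effectiveness is preserved under $\psi_{*}\pi^{*}$, where $\psi:\overline{M}_{0,7}\to\overline{M}_{0,6}^{S_{6}}$ forgets a marked point and quotients by $S_{6}$.  Computing $\psi_{*}\pi^{*}(aS_{3}+bS_{1}+c(S_{3}-S_{2}))$ in terms of the two boundary divisors on $\overline{M}_{0,6}^{S_{6}}$ and using the known effective cone there forces $c\leq 4b$, which is exactly the missing facet.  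This sidesteps nefness of $\ell_{4}$ entirely.
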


\begin{proof}
The class $(D_{1}+3D_{2})^{2}$ is nef since it is the square of a nef divisor.  It has vanishing intersection against $S_{1}$ and $S_{2}$, so it must cut out a face of $\Eff_{2}(\M)$.

The class $\alpha := D_{1} (D_{1} + 3D_{2})$ has vanishing intersection against $S_{2}$ and $S_{3}$ and positive intersection with $S_{1}$.  We check that it is a nef class, so that it cuts out a face of $\Eff_{2}(\M)$.  Since $(D_{1} + 3D_{2})$ is a nef divisor, it suffices to show that $\alpha$ has non-negative intersection against every effective surface contained in $D_{1}$.  Let $L \subset \overline{M}_{0,7}$ be a vital divisor of type $(2,5)$, so that $\pi: L \to D_{1}$ is a surjective morphism.  Note that $L \cong \overline{M}_{0,6}$.  Since classes on $\M$ are determined by their combinatorial type, it suffices to check that $\pi^{*}(D_{1} (D_{1}+3D_{2}))|_{L}$ has non-negative intersection against every effective $S_{6}$-invariant divisor on $L$.  However, \cite{km96} shows that the $S_{6}$-invariant cone of effective divisors on $\overline{M}_{0,6}$ is generated by orbits of vital cycles.  As $\alpha$ has non-negative intersection with the $S_{i}$, it is indeed a nef class.

The class $\beta := D_{2}(D_{1}+3D_{2}) + D_{1}(D_{1}+D_{2})$ has vanishing intersection against $S_{1}$ and $S_{3} - S_{2}$. We next show that $\beta$ is nef.  Since $D_{1}+3D_{2}$ and $D_{1}+D_{2}$ are nef, it suffices to show that $\beta$ has non-negative intersection against effective surfaces contained in $D_{1}$ and in $D_{2}$.  However, there are vital divisors  $L_{1} \cong \overline{M}_{0,6}$ and $L_{2} \cong \mathbb{P}^{1} \times \overline{M}_{0,5}$ that map surjectively onto $D_{1}$ and $D_{2}$ via $\pi$.  By the same argument as in the previous paragraph, we conclude by noting that $\beta$ is non-negative against the $\pi$-pushforward of every vital surface.  This shows that $\Eff_{2}(\M) \subseteq \langle S_{1},S_{2},S_{3}-S_{2} \rangle$.

Finally, we show that the class $\gamma = aS_{3} + bS_{1} + c(S_{3}-S_{2})$ for non-negative $a,b,c$ is not pseudo-effective if $c > 4b$.  Let $\psi: \overline{M}_{0,7} \to \overline{M}_{0,6}^{S_{6}}$ be the composition of the morphism that forgets the first marked point and the quotient by $S_{6}$.  We let $B_{(4,2)}$ and $B_{(3,3)}$ denote the divisors on $\overline{M}_{0,6}^{S_{6}}$ which are the pushforward of a vital divisor on $\overline{M}_{0,6}$ of type $(4,2)$ or $(3,3)$ respectively.  \cite{km96} shows that $B_{(4,2)}$ and $B_{(3,3)}$ generate the pseudo-effective cone.

Since $\pi^{*}$ preserves pseudo-effectiveness, we see that $\gamma$ can be pseudo-effective only if $\psi_{*}\pi^{*}\gamma$ is pseudo-effective.  But $\psi_{*}\pi^{*}\gamma = 6!(4(a+c)B_{(4,2)} + (4b-c)B_{(3,3)})$ is only pseudo-effective if $c \leq 4b$.
\end{proof}

We next give a partial description of $\Mov_{2}(\M)$.  Lemma \ref{lem:movint}.\ref{lem:psefintmov} and Lemma \ref{lem:movint}.\ref{lem:movintnef} allow us to bound the movable cone using intersection theory.  The results are demonstrated in the following picture which shows a ``barycentric cross-section'' of $\Eff_{2}(\M)$.  The movable cone must be contained in the shaded region; the dotted lines reflect the fact that we have not identified $\Eff_{2}(\M)$ precisely.

\begin{figure}[h] 
\includegraphics[scale=0.6]{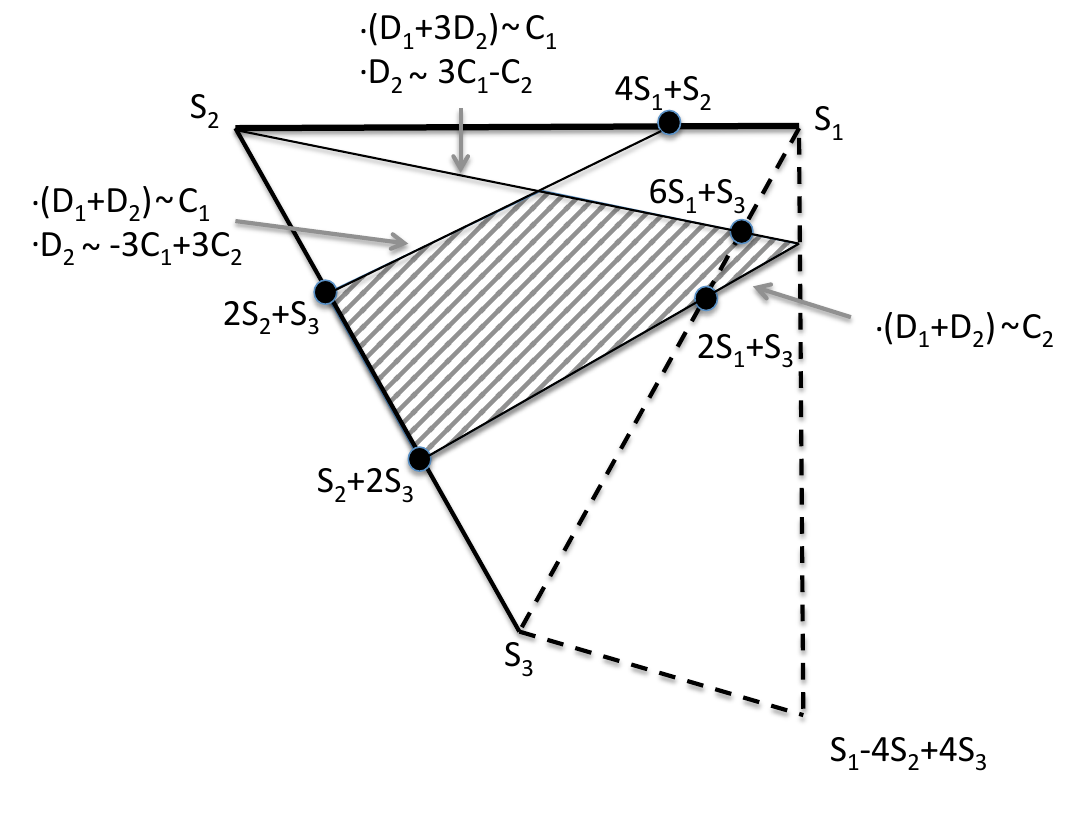}
\caption{A cross-section of $\Eff_{2}(\M)$}
\label{fig:figure1}
\end{figure}

Note that $S_{1}$, $S_{2}$, and $S_{3}$ are all non-movable extremal rays.  Thus each is its own negative part.

We can now analyze Zariski decompositions in certain regions of the cone.  To demonstrate the method, we focus only on the cone $\mathcal{R}$ generated by $S_{1}$, $S_{2}$, and the class $\gamma = 12S_{1} + 7S_{2} + 2S_{3}$.  (Note that $\gamma$ generates an extremal ray of the shaded region in the picture.)  The geometry of this portion of the cone is controlled by the morphism $\phi: \M \to \overline{M}_{0,\mathcal{A}}^{S_{7}}$ described above.  This morphism contracts both $S_{1}$ and $S_{2}$; thus $\mathcal{R}$ consists of classes of the form $\phi^{*}A^{2} + E$ where $A$ is an ample divisor and $E$ is effective $\phi$-exceptional.

\begin{claim}
For $\alpha \in \mathcal{R}$ let $\alpha = a_{1}S_{1} + a_{2}S_{2} + a_{3}\gamma$ be the unique such expression.  Let $P(\gamma)$ be the positive part of a Zariski decomposition for $\gamma$.  Then $a_{3}P(\gamma)$ is a positive part for $\alpha$.
\end{claim}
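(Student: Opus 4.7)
The plan is to compute $\mob(\alpha)$ via the birational morphism $\phi: \M \to \overline{M}_{0,\mathcal{A}}^{S_{7}}$. Writing $\alpha = a_3\gamma + a_1 S_1 + a_2 S_2$, it suffices to prove the single mobility identity $\mob(\alpha) = \mob(a_3 P(\gamma))$: the other two requirements for a Zariski decomposition are automatic, since $a_3 P(\gamma) \in \Mov_2(\M)$ and $\alpha - a_3 P(\gamma) = a_3 N(\gamma) + a_1 S_1 + a_2 S_2 \in \Eff_2(\M)$. Monotonicity (Lemma \ref{mobilityincreases}) immediately gives $\mob(a_3 P(\gamma)) \leq \mob(\alpha)$, so only the reverse inequality requires real work.

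Since $\phi$ contracts $S_1$ and $S_2$ as surfaces, the $2$-cycle classes $\phi_* S_1$ and $\phi_* S_2$ vanish, so $\phi_*\alpha = a_3 \phi_*\gamma$. Using the description of $\mathcal{R}$ as the set of classes of the form $\phi^* A^2 + E$ with $E$ effective $\phi$-exceptional, I write $\gamma = c_0 \phi^* A^2 + E_\gamma$ with $A$ ample on $\overline{M}_{0,\mathcal{A}}^{S_{7}}$, $c_0 > 0$, and $E_\gamma \in \langle S_1, S_2\rangle_{\geq 0}$ (positivity of $c_0$ holds because $\gamma$ has a nonzero $S_3$ coefficient and so is not in $\langle S_1, S_2\rangle$). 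The projection formula gives $\phi_*\gamma = c_0 A^2$, and then \cite[Lemma 6.8]{lehmann13} yields
\[
\mob(\alpha) \leq \mob(\phi_*\alpha) = (a_3 c_0)^2 \mob(A^2).
\]

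The crucial step is to show $\mob(\gamma) = \mob(\phi_*\gamma)$. The inequality $\leq$ is again \cite[Lemma 6.8]{lehmann13}; the inequality $\geq$ reduces, using monotonicity applied to $\gamma \succeq c_0\phi^* A^2$ together with the homogeneity of $\mob$, to the single statement $\mob(\phi^* A^2) \geq \mob(A^2)$. I prove this by pulling back families: an optimal family of cycles on the target of class $m^2 A^2$ can be arranged so that its general member avoids the exceptional locus of $\phi$, and the pullback is then a family on $\M$ of class $m^2 \phi^* A^2$. Because $\phi$ is birational, any $b$ general points of $\M$ map to $b$ general points of $\overline{M}_{0,\mathcal{A}}^{S_{7}}$, and the pulled-back cycle contains the $p_i$ whenever the original contains their images. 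This gives $\mc(m^2 \phi^* A^2) \geq \mc(m^2 A^2)$ for every $m$, hence $\mob(\phi^* A^2) \geq \mob(A^2)$.

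Combining everything, $\mob(\alpha) \leq (a_3 c_0)^2 \mob(A^2) = a_3^2 \mob(\gamma) = \mob(a_3\gamma) = \mob(a_3 P(\gamma))$, where the last equality uses the definition of a Zariski decomposition for $\gamma$ and the homogeneity of $\mob$. The main obstacle is precisely the equality $\mob(\phi^* A^2) = \mob(A^2)$ --- a special case of Conjecture \ref{conj:bpfpullback} --- which is tractable here because $A^2$ admits the explicit pullback construction of families described above.
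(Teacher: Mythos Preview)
Your argument has a genuine gap at the step you yourself flag as the ``main obstacle.'' You claim that an optimal family on $\overline{M}_{0,\mathcal{A}}^{S_7}$ representing $m[A^2]$ ``can be arranged so that its general member avoids the exceptional locus of $\phi$,'' and that the resulting pullback family then has class $m\phi^*A^2$. Neither assertion is justified. At best, by passing to the movable part of the family, you can ensure that no \emph{component} of the general member lies in the image of the exceptional locus; but that image has dimension $\geq 2$ in a fourfold, so a general surface will still meet it. Consequently the strict transform of a general member is a movable class $\beta_m$ with $\phi_*\beta_m = m[A^2]$, but you have no control forcing $\beta_m = m\phi^*A^2$. (The analogue for curves, Lemma \ref{stricttransformcurveslem}, only gives $\beta_m \preceq \pi^*\alpha$, and even that relies on the Negativity Lemma; there is no such statement for surfaces.) What you are asserting is exactly Conjecture \ref{conj:bpfpullback} for this map, and the paper explicitly leaves the corresponding statement for $\gamma$ as a conjecture immediately after the Claim.

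The paper's own proof avoids this entirely by a short cone-theoretic argument that does not touch $\phi$. Take \emph{any} positive part $P$ of $\alpha$; since $P\in\Mov_2(\M)$ lies in the shaded region of Figure \ref{fig:figure1}, for which $\gamma$ is an extremal ray, one reads off that $a_3\gamma - P$ lies in the cone $\langle S_1,S_2\rangle_{\geq 0}$, hence is pseudo-effective. Monotonicity then gives $\mob(\alpha)=\mob(P)\leq\mob(a_3\gamma)\leq\mob(\alpha)$, so $\mob(a_3\gamma)=\mob(\alpha)$ and $a_3 P(\gamma)$ is a positive part. Note that this proves the Claim \emph{without} identifying $P(\gamma)$; your approach, if it worked, would actually prove the stronger open Conjecture that $\frac{2}{231}(D_1+3D_2)^2$ is a positive part for $\gamma$. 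That is the sense in which you are overshooting: the Claim only needs the cone geometry, not the birational invariance of mobility under $\phi$.
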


\begin{proof}
It suffices to show that $\mob(a_{3}\gamma) = \mob(\alpha)$. Let $P$ be a positive part for $\alpha$.  Since $P \in \Mov_{2}(\M)$, Figure \ref{fig:figure1} shows that there are positive constants $b_{1}, b_{2}$ such that $P + b_{1}S_{1} + b_{2}S_{2} = a_{3}\gamma$.  Thus $\mob(P) \leq \mob(a_{3}\gamma) \leq \mob(\alpha) = \mob(P)$ and the conclusion follows.
\end{proof}

To conclude the description of these Zariski decompositions, we would need to identify a Zariski decomposition for $\gamma$.  Note that $\gamma = \frac{2}{231}(D_{1}+3D_{2})^{2} + \frac{15}{11}S_{2}$.  Thus identifying a positive part for $\gamma$ is equivalent to identifying a movable transform for the square of an ample divisor over the map $\phi$.

\begin{conj}
The decomposition $P = \frac{2}{231}(D_{1}+3D_{2})^{2}$ and $N= \frac{15}{11}S_{2}$ is a Zariski decomposition for $\gamma$.
\end{conj}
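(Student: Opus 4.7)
\noindent My plan is to verify each of the three conditions in Definition~\ref{zariskidecompdefn}. That $P = \frac{2}{231}(D_{1}+3D_{2})^{2}$ is movable will follow from the fact that $D_{1}+3D_{2}$ is semiample---it defines the morphism $\phi$---so $(D_{1}+3D_{2})^{2}$ lies in $\bpf_{2}(\M) \subseteq \Mov_{2}(\M)$. That $N = \frac{15}{11}S_{2}$ is pseudo-effective is immediate since $S_{2}$ is the class of an effective surface. The inequality $\mob(P) \leq \mob(\gamma)$ is free from Lemma~\ref{mobilityincreases}, since $\gamma - P = N$ is pseudo-effective. The heart of the argument is therefore the reverse inequality $\mob(\gamma) \leq \mob(P)$.

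To prove this I will exploit the birational morphism $\phi : \M \to \overline{M}_{0,\mathcal{A}}^{S_{7}}$. The weighted stability condition forces the two outer three-point components of a generic curve of type $(3,1,3)$ to be contracted, so $\phi$ collapses $S_{2}$ to a point; in particular $\phi_{*}S_{2} = 0$. Let $A$ denote the ample class on the base with $\phi^{*}A$ proportional to $D_{1}+3D_{2}$. Then the projection formula gives $\phi_{*}\gamma = \phi_{*}P = cA^{2}$ for the appropriate constant $c$, and \cite[Lemma 6.8]{lehmann13} yields $\mob(\gamma) \leq \mob(\phi_{*}\gamma) = \mob(cA^{2})$. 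The problem thereby reduces to the inequality $\mob(\phi^{*}A^{2}) \geq \mob(A^{2})$, which is a particular instance of Conjecture~\ref{conj:bpfpullback}.

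To establish this inequality I would lift optimal families from the base to $\M$ via strict transforms. Given a family $p : U \to W$ of effective $\mathbb{Z}$-cycles on $\overline{M}_{0,\mathcal{A}}^{S_{7}}$ of class $mA^{2}$ whose mobility count approaches $\mob(mA^{2})$, Construction~\ref{stricttransformconstr} produces a strict transform family $p' : U' \to W$ on $\M$. Since $\phi$ contracts the divisor $D_{2}$ to the boundary $\phi(D_{2})$, which has codimension two in the fourfold base, a general member of $p$ meets $\phi(D_{2})$ in dimension zero; its preimage inside $D_{2}$ is then at most one-dimensional, so no two-dimensional exceptional component enters the strict transform, and the general cycle-theoretic fiber of $p'$ has class exactly $m\phi^{*}A^{2}$. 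Moreover, for $N$ general points of $\M$ (which avoid $D_{2}$), their images are general on the base, so a member of $p$ passes through all $N$ images and its strict transform passes through the original $N$ points. Hence $\mc(m\phi^{*}A^{2}) \geq \mc(mA^{2})$, and dividing by $m^{2}/24$ and taking $\limsup$ yields $\mob(\phi^{*}A^{2}) \geq \mob(A^{2})$.

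The main obstacle will be the last paragraph: ensuring that the strict transform of a generic member of $p$ has class exactly $m\phi^{*}A^{2}$ with no $\phi$-exceptional contributions, and securing the required genericity. This combines the dimension count involving $\phi(D_{2})$ with a Bertini-type transversality argument, and one must also contend with the fact that $\overline{M}_{0,\mathcal{A}}^{S_{7}}$ is only a GIT quotient rather than a smooth variety. Conceptually the approach parallels the proofs of Proposition~\ref{movpullbackforcurves} and Proposition~\ref{prop:homvarpullback}, adapted to the specific geometry of the weighted moduli space. Once established, the Claim concerning the region $\mathcal{R}$ from the preceding subsection would automatically extend this explicit Zariski decomposition throughout $\mathcal{R}$.
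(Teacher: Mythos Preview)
The statement you are addressing is labeled a \emph{conjecture} in the paper and is not proved there. The paper only observes that it would follow from Conjecture~\ref{conj:bpfpullback} (or from \cite[Question~7.1]{lehmann13}) and stops. Your first two paragraphs recover exactly this reduction: $P$ is movable, $N$ is effective, and the missing inequality $\mob(\gamma)\leq\mob(P)$ boils down to $\mob(\phi^{*}A^{2})\geq\mob(A^{2})$, which is the relevant special case of Conjecture~\ref{conj:bpfpullback}. Up to this point your proposal matches the paper.

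The gap is in your third paragraph, where you try to prove that special case. The claim that the strict transform of a general member of $p$ has class \emph{exactly} $m\phi^{*}A^{2}$ is the heart of the matter and is not established. For codimension-two cycles there is no formula of the shape (pullback) $=$ (strict transform) $+$ (exceptional); the equality of pullback and strict-transform classes in \cite[Corollary~6.7.2]{fulton84} requires the cycle to meet the blow-up centers in the expected dimension. In Proposition~\ref{prop:homvarpullback} this transversality is forced by a transitive group action on the base via Kleiman's lemma, but $\overline{M}_{0,\mathcal{A}}^{S_{7}}$ carries no such action. Your dimension count (``a general member of $p$ meets $\phi(D_{2})$ in dimension zero'') presupposes that members of $p$ are in general position relative to $\phi(D_{2})$, yet $p$ is an \emph{arbitrary} family realizing the mobility count, over which you have no control; nothing rules out that every optimal family consists of surfaces containing or tangent to $\phi(D_{2})$. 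The curve-case tool you invoke, Lemma~\ref{stricttransformcurveslem}, gives only the inequality $\beta_{m}\preceq m\pi^{*}\alpha$ and depends on the Negativity of Contraction Lemma for divisors, which has no known analogue in codimension two. You are right to flag this as ``the main obstacle,'' but it is not a technicality to be cleaned up afterward---it is precisely why the statement remains a conjecture.
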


\noindent This would follow from Conjecture \ref{conj:bpfpullback}.  It would also follow from an affirmative answer to \cite[Question 7.1]{lehmann13} which predicts the value of the mobility for classes of the form $H^{n-k}$ where $H$ is a nef divisor.

\section{Comparison against other notions} \label{comparisonsection}

We now compare Definition \ref{zariskidecompdefn} against several other options.  

\subsubsection{Nakayama-Zariski decomposition} \label{nakayamadecompforcycles}

In \cite[Remark after III.2.13 Corollary]{nakayama04}, Nakayama explains how his arguments for divisors can be extended to define a $\sigma$-decomposition $\alpha = P_{\sigma}(\alpha) + N_{\sigma}(\alpha)$ for cycle classes of any codimension.  Some of the main features of this decomposition are:
\begin{enumerate}
\item For any effective $\mathbb{R}$-cycle $Z$ of class $\alpha$ we have $N_{\sigma}(\alpha) \leq Z$.
\item The coefficients of $N_{\sigma}(\alpha)$ vary continuously as $\alpha$ varies in the big cone.
\end{enumerate}
Note that (1) and (2) together imply that any movable class $\alpha$ must have $N_{\sigma}(\alpha)=0$.   In particular, we see:

\begin{prop}
Let $X$ be a smooth projective variety and let $\alpha \in N_{k}(X)$ be a big class.  Then for any Zariski decomposition $\alpha = P(\alpha) + N(\alpha)$ we have that $N(\alpha) \succeq N_{\sigma}(\alpha)$.
\end{prop}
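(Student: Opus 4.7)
The plan is to reduce the comparison $N(\alpha)\succeq N_\sigma(\alpha)$ to a coefficientwise comparison $\sigma_\Gamma(\alpha)\le \sigma_\Gamma(N(\alpha))$ at each prime $k$-subvariety $\Gamma$, via Nakayama's defining formula $N_\sigma(\beta)=\sum_\Gamma \sigma_\Gamma(\beta)\Gamma$. The proof rests on three ingredients that mirror the divisor case.

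First I would verify that $P(\alpha)$ is big. Since $\mob(P(\alpha))=\mob(\alpha)>0$ and Theorem \ref{continuousmobility} says mobility is positive precisely on the big cone, $P(\alpha)$ lies in the interior of $\Eff_k(X)$. Next I would check that $N_\sigma(\mu)=0$ for every big movable $\mu$. Writing $\mu$ as a limit of big strictly movable $\mathbb{Q}$-classes $\mu_j$ and using property (2) of $N_\sigma$, it suffices to show $\sigma_\Gamma(\mu_j)=0$ for every prime $k$-cycle $\Gamma$; this follows from Remark \ref{rmk:move away}, which lets us represent $\mu_j$ by effective cycles in a dominating family whose general member has support avoiding any fixed proper closed subset, hence multiplicity zero along $\Gamma$. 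Third, I would observe that $\sigma_\Gamma$ is subadditive on big classes: summing effective representatives of $\beta_1$ and $\beta_2$ produces an effective representative of $\beta_1+\beta_2$, and multiplicity is additive, so the infimum defining $\sigma_\Gamma$ gives $\sigma_\Gamma(\beta_1+\beta_2)\le \sigma_\Gamma(\beta_1)+\sigma_\Gamma(\beta_2)$.

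I would then combine these as follows. Fix a big movable class $h$ (for instance a complete intersection, which is interior to $\Mov_k(X)$) and $\epsilon>0$. Writing
\[
\alpha+2\epsilon h \;=\;\bigl(P(\alpha)+\epsilon h\bigr)+\bigl(N(\alpha)+\epsilon h\bigr),
\]
both summands are big, and the first is movable, so by the second ingredient $\sigma_\Gamma(P(\alpha)+\epsilon h)=0$. Subadditivity then gives $\sigma_\Gamma(\alpha+2\epsilon h)\le \sigma_\Gamma(N(\alpha)+\epsilon h)$ for every $\Gamma$. Letting $\epsilon\to 0$ and using continuity on the big cone yields $\sigma_\Gamma(\alpha)\le \sigma_\Gamma(N(\alpha))$. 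Finally, property (1) applied to any effective representative of $N(\alpha)+\epsilon h$ shows $[N_\sigma(N(\alpha)+\epsilon h)]\preceq N(\alpha)+\epsilon h$, and taking $\epsilon\to 0$ gives $[N_\sigma(N(\alpha))]\preceq N(\alpha)$. Since only finitely many primes contribute nontrivially to either $N_\sigma$, the coefficientwise inequality yields
\[
[N_\sigma(\alpha)]\;=\;\sum_\Gamma \sigma_\Gamma(\alpha)[\Gamma]\;\preceq\;\sum_\Gamma \sigma_\Gamma(N(\alpha))[\Gamma]\;=\;[N_\sigma(N(\alpha))]\;\preceq\; N(\alpha),
\]
which is the desired inequality.

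The main obstacle is not the logical structure above but the foundational input: the proposition invokes Nakayama's $\sigma$-decomposition for cycles of arbitrary codimension, whose subadditivity, continuity on the big cone, and finite support of $N_\sigma$ are asserted by analogy with the divisor case in \cite{nakayama04} but are not developed in this paper. A fully rigorous argument would therefore need to first record careful analogues of the relevant statements from \S\ref{divisorcharpsection} at the level of $k$-cycles, after which the argument above is essentially formal.
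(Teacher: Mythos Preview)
Your argument is correct and rests on the same three ingredients as the paper's proof: bigness of $P(\alpha)$, the vanishing $N_\sigma(\mu)=0$ for big movable $\mu$, and a subadditivity inequality for $\sigma_\Gamma$ together with continuity on the big cone. The paper packages these slightly differently. Rather than decomposing $\alpha+2\epsilon h$ symmetrically and then tracking $\sigma_\Gamma(N(\alpha))$ at the boundary of $\Eff_k(X)$, the paper uses the subadditivity in the asymmetric form
\[
N_\sigma(\alpha)\;\preceq\;[Z]+N_\sigma(\alpha-[Z])
\]
for any effective cycle $Z$ with $\alpha-[Z]$ big (this is the cycle analogue of \cite[III.1.14 Proposition]{nakayama04}). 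Taking $Z$ of class $N(\alpha)+\epsilon\gamma$ gives $\alpha-[Z]=P(\alpha)-\epsilon\gamma$, which stays in the big cone; one then lets $\epsilon\to 0$ and uses only continuity of $N_\sigma$ on the big cone to conclude. This avoids your detour through $N_\sigma(N(\alpha))$, where $N(\alpha)$ lies on $\partial\Eff_k(X)$ (Lemma \ref{structureofnegativeparts}(2)) and need not even have an effective representative; your limiting step $[N_\sigma(N(\alpha)+\epsilon h)]\to [N_\sigma(N(\alpha))]$ then requires a uniform finite-support statement for $\sigma_\Gamma$ along the ray, which is true in Nakayama's framework but is an extra input. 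Both routes are fine; the paper's is marginally cleaner for exactly this reason. Your closing caveat about the foundational status of the cycle $\sigma$-decomposition applies equally to the paper, which likewise cites \cite{nakayama04} for these properties.
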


\begin{proof}
Replicating the proof of \cite[III.1.14 Proposition]{nakayama04} for cycles, one sees that the $\sigma$-decomposition has the following property: if $Z$ is an effective $\mathbb{R}$-cycle such that $\alpha - [Z]$ is big, then
\begin{equation*}
N_{\sigma}(\alpha) \preceq Z + N_{\sigma}(\alpha - [Z]).
\end{equation*}
Let $\gamma$ be a big class and let $\epsilon >0$ be sufficiently small so that $P(\alpha) - \epsilon \gamma$ is big.  Since there is an effective $\mathbb{R}$-cycle representing $N(\alpha) + \epsilon \gamma$, the above inequality shows that for any sufficiently small $\epsilon > 0$
\begin{equation*}
N_{\sigma}(\alpha) \preceq N(\alpha) + \epsilon \gamma + N_{\sigma}(\alpha - N(\alpha) - \epsilon \gamma).
\end{equation*}
As $\epsilon$ tends to $0$, we see that $N_{\sigma}(\alpha - N(\alpha) - \epsilon \gamma)$ tends to $N_{\sigma}(P(\alpha)) = 0$ by the continuity of the coefficients of $N_{\sigma}$.   Taking a limit as $\epsilon$ tends to $0$ concludes the proof. 
\end{proof}

However, the $\sigma$-decomposition is not the same as the Zariski decomposition.  Usually the $\sigma$-decomposition removes much less; $P_{\sigma}(\alpha)$ is often not movable.

\begin{exmple}
Let $X$ be the blow-up of a smooth threefold at a point.  Let $E$ denote the exceptional divisor on $X$ and let $L$ be a line in $E$.  Suppose that $C$ lies in the interior of $\Mov_{1}(X)$.  For sufficiently small $t$ the big class $\alpha = [L] + t[C]$ is not movable since it has negative intersection with $E$.  However since $L$ deforms to cover a surface we have $N_{\sigma}(\alpha) = 0$.
\end{exmple}

\subsubsection{BCK decomposition}
Suppose we have a $\mathbb{Q}$-vector space $V$ with an inner product $\langle -,-\rangle$ and a basis $\{v_{i}\}_{i=1}^{r}$ of $V$ satisfying
\begin{equation*}
\langle v_{i},v_{j} \rangle \geq 0 \textrm{ for }i \neq j.
\end{equation*}
\cite{bck12} then describes a linear algebra theory of Zariski decompositions for vectors in $V$.  For effective curves on surfaces, one recovers the Zariski decomposition of a curve $C$ by setting the $v_{i}$ to be the components of $C$.

In principle this decomposition can sometimes be applied to half-dimensional classes $N_{k}(X)$ on a smooth variety of dimension $2k$.  However, since the components of a $k$-cycle $Z$ may no longer satisfy the condition $Z_{i} \cdot Z_{j} \geq 0$ for $i \neq j$ there does not seem to be a canonical way of applying the theory of \cite{bck12}.

Even when every pair of different components of the cycle $Z$ intersects positively, there may be no relationship between the two decompositions.  This is perhaps unsurprising, as the decomposition of \cite{bck12} is based on the ``nefness'' of the components of $Z$ and not their ``mobility''.  The following example gives an extreme case.

\begin{exmple}
Let $X$ be the projective bundle over $\mathbb P^1$ corresponding
to the vector bundle 
$$E=\mathcal O(-2)\oplus\mathcal O\oplus\mathcal O(1)^{\oplus 2}.$$ 
With the notation of \S\ref{ssec:projbundles}, the class of the irreducible surface $Z=\mathbb P(\mathcal O(-2)\oplus\mathcal O(1))$ on $X$ is $\xi^2-\xi f$, which is movable by Proposition \ref{prop:proj bundles movable cone}. Furthermore, $[Z]^2=-2$.
 
Since $Z$ is movable, $P([Z]) = [Z]$ is a Zariski decomposition. On the other hand, $Z$ is its own negative part in the BCK decomposition.
\end{exmple}

\subsection{Alternative approaches}

Finally, we briefly discuss two other possible approaches to Zariski definitions.

\subsubsection{Minimal length vectors} \label{minlengthsection}

Fix an ample divisor $A$ on a smooth projective variety $X$.  By Corollary \ref{cor:norms} one can define a ``geometric norm'' $\Vert - \Vert$ on the space $N_{k}(X)$ such that for any class $\alpha \in \Eff_{k}(X)$ we have
\begin{equation*}
\Vert \alpha \Vert = A^{k} \cdot \alpha.
\end{equation*}
Using the directed property of the negative parts of $\sigma$-decompositions, we see that for a pseudo-effective divisor $L$ the class $[N_{\sigma}(L)]$ coincides with the element in $\{ \alpha \in \Eff^{1}(X) | [L] - \alpha \in \Mov^{1}(X) \}$ that has minimal length under a geometric norm.  It would be interesting to see if in other codimensions one can interpret the Zariski decomposition using minimal length vectors.  (Since the directed property no longer holds in higher codimensions, it would probably be necessary to change the norm depending on the class $\alpha$.  One would then like to have an explicit construction of the norm depending on the birational properties of $\alpha$.)

It might be impossible to construct every Zariski decomposition using this approach.  However, this technique can still be useful for identifying a ``best choice'' of a Zariski decomposition.  See Remark \ref{rem:pushforwardforcurves} for an application of this idea. 

\subsubsection{Birational definition}

For a big divisor $L$, $P_{\sigma}(L)$ can be interpreted using Fujita approximations.  More precisely, if for positive integers $t$ we let $B_{t}$ denote the pushforward of the ample part of a $\frac{1}{t}$-Fujita approximation, then $P_{\sigma}(L)$ is the limit of the $B_{t}$ as $t$ increases.  If one can construct Fujita approximations for arbitrary classes as discussed in Section \ref{birmapsection}, then it seems likely that one could use these to provide an alternative definition of Zariski decompositions.

\nocite{*}
\bibliographystyle{amsalpha}
\bibliography{zardecomcycles}

\def\cprime{$'$}
\providecommand{\bysame}{\leavevmode\hbox to3em{\hrulefill}\thinspace}
\providecommand{\MR}{\relax\ifhmode\unskip\space\fi MR }
\providecommand{\MRhref}[2]{%
  \href{http://www.ams.org/mathscinet-getitem?mr=#1}{#2}
}
\providecommand{\href}[2]{#2}
\begin{thebibliography}{ABCH13}

\bibitem[ABCH13]{abch13}
Daniele Arcara, Aaron Bertram, Izzet Coskun, and Jack Huizenga, \emph{The
  minimal model program for the {H}ilbert scheme of points on {$\Bbb{P}^2$} and
  {B}ridgeland stability}, Adv. Math. \textbf{235} (2013), 580--626.

\bibitem[BCK12]{bck12}
Thomas Bauer, Mirel Caib{\u{a}}r, and Gary Kennedy, \emph{Zariski
  decomposition: a new (old) chapter of linear algebra}, Amer. Math. Monthly
  \textbf{119} (2012), no.~1, 25--41.

\bibitem[BDPP13]{bdpp04}
S{\'e}bastien Boucksom, Jean-Pierre Demailly, Mihai P{\u{a}}un, and Thomas
  Peternell, \emph{The pseudo-effective cone of a compact {K}\"ahler manifold
  and varieties of negative {K}odaira dimension}, J. Algebraic Geom.
  \textbf{22} (2013), no.~2, 201--248.

\bibitem[BFJ09]{bfj09}
S{\'e}bastien Boucksom, Charles Favre, and Mattias Jonsson,
  \emph{Differentiability of volumes of divisors and a problem of {T}eissier},
  J. Algebraic Geom. \textbf{18} (2009), no.~2, 279--308.

\bibitem[BKS04]{bks04}
Thomas Bauer, Alex K{\"u}ronya, and Tomasz Szemberg, \emph{Zariski chambers,
  volumes, and stable base loci}, J. Reine Angew. Math. \textbf{576} (2004),
  209--233. \MR{2099205 (2005h:14012)}

\bibitem[Bou04]{boucksom04}
S{\'e}bastien Boucksom, \emph{Divisorial {Z}ariski decompositions on compact
  complex manifolds}, Ann. Sci. \'Ecole Norm. Sup. (4) \textbf{37} (2004),
  no.~1, 45--76.

\bibitem[Cut86]{MR835801}
Steven~Dale Cutkosky, \emph{Zariski decomposition of divisors on algebraic
  varieties}, Duke Math. J. \textbf{53} (1986), no.~1, 149--156.

\bibitem[Cut13]{cutkosky13}
\bysame, \emph{{Teissier's problem on inequalities of nef divisors over an
  arbitrary field}}, 2013, arXiv:1304.1218 [math.AG].

\bibitem[DELV11]{delv11}
Olivier Debarre, Lawrence Ein, Robert Lazarsfeld, and Claire Voisin,
  \emph{Pseudoeffective and nef classes on abelian varieties}, Compos. Math.
  \textbf{147} (2011), no.~6, 1793--1818.

\bibitem[dJ96]{dejong96}
Aise~Johan de~Jong, \emph{Smoothness, semi-stability and alterations}, Inst.
  Hautes \'Etudes Sci. Publ. Math. (1996), no.~83, 51--93.

\bibitem[FL14a]{fl14}
Mihai Fulger and Brian Lehmann, \emph{Kernels of numerical pushforwards}, 2014,
  arXiv:1407.6455 [math.AG].

\bibitem[FL14b]{fl13}
\bysame, \emph{Positive cones of dual cycle classes}, 2014, arXiv:1408.5154
  [math.AG].

\bibitem[Fuj79a]{fujita79}
Takao Fujita, \emph{On {Z}ariski problem}, Proc. Japan Acad. Ser. A Math. Sci.
  \textbf{55} (1979), no.~3, 106--110. \MR{531454 (80j:14029)}

\bibitem[Fuj79b]{MR531454}
\bysame, \emph{On {Z}ariski problem}, Proc. Japan Acad. Ser. A Math. Sci.
  \textbf{55} (1979), no.~3, 106--110.

\bibitem[Fuj86]{MR816221}
\bysame, \emph{Zariski decomposition and canonical rings of elliptic
  threefolds}, J. Math. Soc. Japan \textbf{38} (1986), no.~1, 19--37.

\bibitem[Fuj94]{fujita94}
\bysame, \emph{Approximating {Z}ariski decomposition of big line bundles},
  Kodai Math. J. \textbf{17} (1994), no.~1, 1--3.

\bibitem[Ful84]{fulton84}
William Fulton, \emph{Intersection theory}, Ergebnisse der Mathematik und ihrer
  Grenzgebiete (3) [Results in Mathematics and Related Areas (3)], vol.~2,
  Springer-Verlag, Berlin, 1984.

\bibitem[Ful11]{fulger11}
Mihai Fulger, \emph{Cones of effective cycles on projective bundles over
  curves}, Math. Z. \textbf{269} (2011), no.~1-2, 449--459.

\bibitem[Gia13]{giansiracusa13}
Noah Giansiracusa, \emph{Conformal blocks and rational normal curves}, J.
  Algebraic Geom. \textbf{22} (2013), no.~4, 773--793.

\bibitem[Gra01]{graber01}
Tom Graber, \emph{Enumerative geometry of hyperelliptic plane curves}, J.
  Algebraic Geom. \textbf{10} (2001), no.~4, 725--755.

\bibitem[Har70]{hartshorne70}
Robin Hartshorne, \emph{Ample subvarieties of algebraic varieties}, Lecture
  Notes in Mathematics, Vol. 156, Springer-Verlag, Berlin-New York, 1970, Notes
  written in collaboration with C. Musili. \MR{0282977 (44 \#211)}

\bibitem[Has03]{hassett03}
Brendan Hassett, \emph{Moduli spaces of weighted pointed stable curves}, Adv.
  Math. \textbf{173} (2003), no.~2, 316--352.

\bibitem[Kaw87]{MR927965}
Yujiro Kawamata, \emph{The {Z}ariski decomposition of log-canonical divisors},
  Algebraic geometry, {B}owdoin, 1985 ({B}runswick, {M}aine, 1985), Proc.
  Sympos. Pure Math., vol.~46, Amer. Math. Soc., Providence, RI, 1987,
  pp.~425--433.

\bibitem[Kaw88]{MR924674}
\bysame, \emph{Crepant blowing-up of {$3$}-dimensional canonical singularities
  and its application to degenerations of surfaces}, Ann. of Math. (2)
  \textbf{127} (1988), no.~1, 93--163.

\bibitem[Kee92]{keel92}
Sean Keel, \emph{Intersection theory of moduli space of stable {$n$}-pointed
  curves of genus zero}, Trans. Amer. Math. Soc. \textbf{330} (1992), no.~2,
  545--574.

\bibitem[Kho92]{khovanskii92}
Askold~G. Khovanski{\u\i}, \emph{The {N}ewton polytope, the {H}ilbert
  polynomial and sums of finite sets}, Funktsional. Anal. i Prilozhen.
  \textbf{26} (1992), no.~4, 57--63, 96.

\bibitem[Kle74]{kleiman74}
Steven~L. Kleiman, \emph{The transversality of a general translate}, Compos.
  Math. \textbf{28} (1974), no.~3, 287--297.

\bibitem[KM96]{km96}
Sean Keel and James M{\textsuperscript{c}}Kernan, \emph{{Contractible extremal
  rays on $\overline{M}_{0,n}$}}, 1996, arXiv:alg-geom/9607009v1.

\bibitem[KM13]{MR3010168}
Alex K{\"u}ronya and Catriona Maclean, \emph{Zariski decomposition of
  b-divisors}, Math. Z. \textbf{273} (2013), no.~1-2, 427--436.

\bibitem[Laz04]{lazarsfeld04}
Robert Lazarsfeld, \emph{Positivity in algebraic geometry}, Ergebnisse der
  Mathematik und ihrer Grenzgebiete. 3. Folge. A Series of Modern Surveys in
  Mathematics [Results in Mathematics and Related Areas. 3rd Series. A Series
  of Modern Surveys in Mathematics], vol.~48, Springer-Verlag, Berlin, 2004,
  Classical setting: line bundles and linear series.

\bibitem[Leh13a]{lehmann12}
Brian Lehmann, \emph{Comparing numerical dimensions}, Algebra Number Theory
  \textbf{7} (2013), no.~5, 1065--1100.

\bibitem[Leh13b]{lehmann13}
\bysame, \emph{Geometric characterizations of big cycles}, 2013,
  arXiv:1309.0880 [math.AG].

\bibitem[LM09]{lm09}
Robert Lazarsfeld and Mircea Musta{\c{t}}{\u{a}}, \emph{Convex bodies
  associated to linear series}, Ann. Sci. \'Ec. Norm. Sup\'er. (4) \textbf{42}
  (2009), no.~5, 783--835.

\bibitem[Mor86]{MR857230}
Atsushi Moriwaki, \emph{Semiampleness of the numerically effective part of
  {Z}ariski decomposition}, J. Math. Kyoto Univ. \textbf{26} (1986), no.~3,
  465--481.

\bibitem[Mus13]{mustata11}
Mircea Musta{\c{t}}{\u{a}}, \emph{The non-nef locus in positive
  characteristic}, A celebration of algebraic geometry, Clay Math. Proc.,
  vol.~18, Amer. Math. Soc., Providence, RI, 2013, pp.~535--551. \MR{3114955}

\bibitem[Nak04]{nakayama04}
Noboru Nakayama, \emph{Zariski-decomposition and abundance}, MSJ Memoirs,
  vol.~14, Mathematical Society of Japan, Tokyo, 2004.

\bibitem[Pay06]{pay06}
Sam Payne, \emph{Stable base loci, movable curves, and small modifications, for
  toric varieties}, Math. Z. \textbf{253} (2006), no.~2, 421--431.

\bibitem[Pro03]{pro04}
Yuri~G. Prokhorov, \emph{On the {Z}ariski decomposition problem}, Tr. Mat.
  Inst. Steklova \textbf{240} (2003), no.~Biratsion. Geom. Linein. Sist.
  Konechno Porozhdennye Algebry, 43--72, translation in Proc. Steklov Inst.
  Math. 2003, no. ~1 (240), 37--65.

\bibitem[Sho03]{MR1993750}
Vyacheslav~V. Shokurov, \emph{Prelimiting flips}, Tr. Mat. Inst. Steklova
  \textbf{240} (2003), no.~Biratsion. Geom. Linein. Sist. Konechno Porozhdennye
  Algebry, 82--219, translation in Proc. Steklov Inst. Math. 2003, no. 1 (240),
  75--213.

\bibitem[Tak07]{takagi07}
Satoshi Takagi, \emph{Fujita's approximation theorem in positive
  characteristics}, J. Math. Kyoto Univ. \textbf{47} (2007), no.~1, 179--202.

\bibitem[Tsu92]{MR1193172}
Hajime Tsuji, \emph{Analytic {Z}ariski decomposition}, Proc. Japan Acad. Ser. A
  Math. Sci. \textbf{68} (1992), no.~7, 161--163.

\bibitem[Zar62]{zariski62}
Oscar Zariski, \emph{The theorem of {R}iemann-{R}och for high multiples of an
  effective divisor on an algebraic surface}, Ann. of Math. (2) \textbf{76}
  (1962), 560--615.

\end{thebibliography}

\end{document}